\newcommand{\myitem}[1]{%
\item[#1]\protected@edef\@currentlabel{#1}%
}
\providecommand{\customgenericname}{}
\newcommand{\newcustomtheorem}[2]{%
  \newenvironment{#1}[1]
  {%
   \renewcommand\customgenericname{#2}%
   \renewcommand\theinnercustomgeneric{##1}%
   \innercustomgeneric
  }
  {\endinnercustomgeneric}
}
\newtheorem{thm}{Theorem}[section]
\newtheorem{corollary}[thm]{Corollary}
\newtheorem{lemma}[thm]{Lemma}
\newtheorem{proposition}[thm]{Proposition}
\newtheorem*{thm*}{Theorem}
\newtheorem*{corollary*}{Corollary}
\newtheorem*{lemma*}{Lemma}
\newtheorem*{proposition*}{Proposition}
\theoremstyle{definition}
\newtheorem{definition}[thm]{Definition}
\newtheorem*{definition*}{Definition}
\newtheorem{remark}[thm]{Remark}
\newtheorem*{remark*}{Remark}
\newcommand{\bb}[1]{\mathbb{#1}}
\newcommand{\ssf}[1]{\mathsf{#1}}
\newcommand{\lip}{\mathsf{lip}}
\newcommand{\Lip}{\mathsf{Lip}}
\newcommand{\Ric}{\mathsf{Ric}}
\newcommand{\Hess}{\mathsf{Hess}}
\newcommand{\RCD}{\mathsf{RCD}}
\newcommand{\length}{\mathrm{L}}
\newcommand{\aH}{\mathsf{H}}
\newcommand{\sd}{\mathsf{d}}
\newcommand{\R}{\mathsf{R}}
\newcommand{\de}{{\rm d}}
\newcommand{\m}{\mathfrak{m}}
\newcommand{\mres}{\mathbin{\vrule height 1.6ex depth 0pt width
0.13ex\vrule height 0.13ex depth 0pt width 1.3ex}}
\newcommand{\customlabel}[2]{%
   \protected@write \@auxout {}{\string \newlabel {#1}{{#2}{\thepage}{#2}{#1}{}} }%
   \hypertarget{#1}{#2}
}
\title{On manifolds with almost non-negative Ricci curvature and integrally-positive $k^{th}$-scalar curvature}
\author{Alessandro Cucinotta \and Andrea Mondino}
\begin{document}

\maketitle

\begin{abstract}
    We consider manifolds with almost non-negative Ricci curvature and strictly positive integral lower bounds on the sum of the lowest $k$ eigenvalues of the Ricci tensor.

    If $(M^n,g)$ is a Riemannian manifold  satisfying such curvature bounds for $k=2$, then we show that $M$  is contained in a neighbourhood of controlled width of an isometrically embedded $1$-dimensional sub-manifold. From this, we deduce several metric and topological consequences: $M$ has at most linear volume growth and at most two ends, it has bounded 1-Urysohn width, the first Betti number of $M$ is bounded above by $1$, and there is precise information on elements of infinite order in  $\pi_1(M)$.

    If $(M^n,g)$ is a Riemannian manifold  satisfying such bounds for $k\geq 2$, then 
    we show that $M$ has at most $(k-1)$-dimensional behavior at large scales. 
    
    If $k=n={\rm dim}(M)$, so that the integral lower bound is on the scalar curvature, assuming in addition that the $(n-2)$-Ricci curvature is non-negative, we prove that the dimension drop at large scales improves to $n-2$. 
    
    From the above results we deduce topological restrictions, such as upper bounds on the first Betti number.
\end{abstract}

\section*{Contents}
\contentsline {section}{\numberline {1}Introduction}{1}{section.1}%
\contentsline {section}{\numberline {2}Preliminaries}{6}{section.2}%
\contentsline {section}{\numberline {3}Integral estimates on scalar curvature and first geometric applications}{9}{section.3}%
\contentsline {subsection}{\numberline {3.1}Proof of Theorem \ref {CT1}}{9}{subsection.3.1}%
\contentsline {subsection}{\numberline {3.2} \parbox[t]{0.8\textwidth}{First geometric applications in almost non-negative Ricci and integrally-positive scalar curvature}}{15}{subsection.3.2}%
\contentsline {section}{\numberline {4}Thin metric spaces}{23}{section.4}%
\contentsline {section}{\numberline {5}\parbox[t]{0.8\textwidth}{Large scale geometry of almost non-negative Ricci and integrally-positive  scalar curvature}}{31}{section.5}%
\contentsline {subsection}{\numberline {5.1}Sufficient curvature conditions for a manifold to be thin}{31}{subsection.5.1}%
\contentsline {subsection}{\numberline {5.2}Large scale geometry of thin manifolds}{34}{subsection.5.2}%

\section{Introduction}
A renowned conjecture  by Yau asserts that 
if $(M^n,g,p)$ is a pointed Riemannian manifold of dimension $n$ with $\Ric_M \geq - \lambda$, then 
\[
\int_{B_1(p)} \R \, \de \ssf{Vol} \leq C(\lambda,n),
\]
where $\R$ denotes the scalar curvature. Under the assumption of non-negative sectional curvature, the aforementioned bound on the scalar curvature was obtained by Petrunin in \cite{PetruninYau}. A non-collapsed version of Yau's problem was then proposed by Naber in \cite{Naberconjectures}, where he conjectured that if $(M^n,g,p)$ is a pointed Riemannian manifold with $\Ric_M \geq - \lambda$ and $\ssf{Vol}(B_1(p)) \geq v$, then $\fint_{B_1(p)} \R \, \de \ssf{Vol} \leq C(\lambda,n,v)$.
If true, Naber's version of Yau conjecture would allow to define a scalar curvature measure on non-collapsed Ricci limit spaces (see again \cite{Naberconjectures}).

These conjectures are still open and, more generally, the geometric interplay between Ricci and scalar curvature bounds is still not fully understood. 
A step forward in this direction was carried out in a series of recent papers
showing that $3$-manifolds with non-negative Ricci curvature and scalar curvature bounded from below by a strictly positive constant have at most linear volume growth at infinity (see \cite{wangvolumethreemanifolds, Chodoshvolumethreemanifolds}).
This is a particular case of a more general conjecture by Gromov  \cite{LargeGromov} stating that a non-compact pointed Riemannian manifold $(M^n,g,p)$ of dimension $n$ with non-negative Ricci curvature and scalar curvature bounded below by a strictly positive constant should satisfy
\begin{equation} \label{GVC}
\limsup_{t \to + \infty} \frac{ \ssf{Vol}(B_t(p))}{t^{n-2}} < + \infty.
\end{equation}

Without the condition on the Ricci curvature, manifolds with uniformly positive scalar curvature can have arbitrarily large volume growth (by the surgery results in \cite{GromovLawsonSurgery1,GromovLawsonSurgery2}).
Nevertheless such $3$-manifolds have been shown to have bounded Urysohn $1$-width (see \cite{Katzscalar,GromovScalar,Liokumovitchscalar,liokumovitchwang} and, assuming also non-negative Ricci curvature, \cite{ZhuGeometry}). This class of results enters in the framework of another more general conjecture by Gromov  \cite{GromovScalar}, stating that a manifold of dimension $n$ with scalar curvature bounded below by a strictly positive constant should have bounded Urysohn $(n-2)$-width.
\\

Motivated and inspired by the above conjectures and results, we next discuss the framework and the main findings of the present work.
\\
Given a manifold $(M^n,g)$ and $k \in \bb{N}$, $k \leq n$, we denote by $\R_k$ the sum of the lowest $k$ eigenvalues, counted with their multiplicity, of the Ricci tensor $\Ric_M$. In analogy with the terminology used in \cite{ShenKricci}, if $\R_k \geq c$, we say that the $k^{th}$-scalar curvature of $M$ is bounded below by $c$ (note that this is different from the convention used in \cite{WolfsonKricci}, where the Ricci curvature is said to be $k$-positive if $\R_k \geq 0$).
In this paper, we study the geometry of manifolds with almost non-negative Ricci curvature and $k^{th}$-scalar curvature bounded from below in integral sense by a positive constant (see \cite{Moull1,Moull2,Moull3,Moull4,Moull5} for previous literature on manifolds with $\R_k \geq c$).
As a byproduct, we also obtain new results on manifolds with almost non-negative Ricci curvature and a positive lower integral bound on the scalar curvature.

Broadly speaking, we show that a lower Ricci curvature bound combined with a positive lower bound on the $k^{th}$-scalar curvature forces the manifold to have $(k-1)$-dimensional behavior at large scales. Moreover, if $k$ coincides with the dimension of the manifold (so that the lower bound is on the scalar curvature), the dimensional bound on the geometry at large scales can be improved to $(k-2)$, matching in this way the dimension-drop appearing in the aforementioned conjectures by Gromov.
Our approach 
relies on techniques from metric geometry in which a limit space arises from a sequence of manifolds (see the foundational papers \cite{ChCo1, Colding1,ChcoStructure2,ChCo3}).
\\

The first main result of the paper is Theorem \ref{CT1}, which shows the continuity of certain integral functionals of $\R_k$ under Gromov-Hausdorff convergence. 
We recall that, if Yau conjecture holds, given a sequence $(M_i,g_i,p_i) \to (X,\sd,x)$,
one can define a scalar curvature measure $\mu$ on $X$ as a weak limit $\R_{M_i} \ssf{Vol}_i \to \mu$ (see \cite[Conjecture 2.18]{Naberconjectures}). Hence, continuity results for $\int \R$ under GH convergence would ensure that, if $X$ is itself a smooth manifold, then $\mu=\R_{X} \ssf{Vol}$. Theorem \ref{CT1} proves a result along these lines when $X=\bb{R}^n$ (see Remark \ref{R5}).
Other continuity results for the integral of the scalar curvature were previously obtained in the presence of a lower bound on the sectional curvature in \cite[Theorem B3]{PetruninPoli} (after Perelman) and \cite{lebedeva2022curvaturetensorsmoothablealexandrov}. 
We refer to Definition \ref{D1} for the definition of $(k,\delta)$-symmetric and $\delta$-regular balls in a manifold. Given $a,b \in \bb{R}$, we denote $a \wedge b:=\min\{a,b\}$ and $a \vee b:=\max\{a,b\}$.

\begin{thm} \label{CT1}
    Let $L,k,n \in \bb{N}$ with $k \leq n$ and $s \in (0,1)$ be fixed. Then 
    \begin{align*}
    &\sup \Big\{\fint_{B_1(x)} \R_{k} \wedge L \, \de \ssf{Vol} : (M,g,x) \text{ has } \Ric_{M} \geq -\delta, \text{ and } B_{10}(x) \text{ is  $(k,\delta)$-symmetric} 
    \Big\} ,
    \\
    &  \sup \Big\{\fint_{B_1(x)} |\R|^s \, \de \ssf{Vol} : (M,g,x) \text{ has } \Ric_{M} \geq -\delta, \text{ and } B_{10}(x) \text{ is } \delta \text{-regular} 
    \Big\}  
    \end{align*}
    converge to $0$ as $\delta \searrow 0$.
    \end{thm}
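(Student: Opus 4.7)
The plan is to argue by contradiction and compactness. If the first supremum does not vanish, I extract a sequence $(M_i^n,g_i,x_i)$ with $\Ric_{M_i}\ge -\delta_i\to 0$, $B_{10}(x_i)$ being $(k,\delta_i)$-symmetric, and $\fint_{B_1(x_i)}\R_k\wedge L\,\de\ssf{Vol}_i\ge\epsilon_0>0$. By the Cheeger--Colding almost-splitting theorem, for each $i$ I would produce harmonic functions $b_1,\ldots,b_k$ on $B_5(x_i)$ whose gradient Gram matrix $G_{jl}:=\langle\nabla b_j,\nabla b_l\rangle$ is close to the identity in $L^2(B_2)$, whose Hessians are small in $L^2(B_2)$, with all these ``error'' quantities tending to $0$ as $\delta_i\to 0$; the Cheng--Yau gradient estimate gives uniform $L^\infty$ control of the $\nabla b_j$'s. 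A fixed cutoff $\phi\in C_c^\infty(B_2)$ with $\phi\equiv 1$ on $B_1$ will be used throughout.

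Summing the Bochner identity $\Ric(\nabla b_j,\nabla b_j)=\tfrac12\Delta|\nabla b_j|^2-|\Hess b_j|^2$, integrating against $\phi$ and exploiting $\int\Delta\phi=0$ gives
\[
\int\phi\sum_j\Ric(\nabla b_j,\nabla b_j)\,\de\ssf{Vol}_i=\tfrac12\int\Delta\phi\Bigl(\textstyle\sum_j|\nabla b_j|^2-k\Bigr)\de\ssf{Vol}_i-\int\phi\sum_j|\Hess b_j|^2\,\de\ssf{Vol}_i\to 0.
\]
Writing $R_{jl}:=\Ric(\nabla b_j,\nabla b_l)$, the positive semidefiniteness of $\Ric+\delta g$ together with the Courant--Fischer characterization of the sum of the lowest $k$ eigenvalues, applied on the $k$-plane $V:=\mathrm{span}\{\nabla b_j\}$, yields the pointwise bound (away from a negligible set where $V$ degenerates)
\[
\R_k+k\delta\le\mathrm{trace}\bigl(G^{-1}(R+\delta G)\bigr)=\mathrm{trace}(R+\delta G)+\mathrm{trace}\bigl((G^{-1}-I)(R+\delta G)\bigr),
\]
with the right-hand side nonnegative. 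The leading term $\mathrm{trace}(R+\delta G)=\sum_j\Ric(\nabla b_j,\nabla b_j)+\delta\sum_j|\nabla b_j|^2$ integrates to $o(1)$ by the Bochner step.

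The main obstacle is the correction term: $\|G^{-1}-I\|_{op}$ is only controlled in $L^2$, while $R+\delta G$ is not bounded above. I would resolve this by a good-set/bad-set split using $E_\eta:=\{\|G-I\|_{op}\le\eta\}$. On $E_\eta$ one has $\|G^{-1}-I\|_{op}\le C\eta$, so since $R+\delta G\succeq 0$,
\[
|\mathrm{trace}((G^{-1}-I)(R+\delta G))|\le C\eta\,\mathrm{trace}(R+\delta G),
\]
whose $\phi$-weighted integral is $C\eta\cdot o(1)$. On $E_\eta^c$, Chebyshev gives $\ssf{Vol}(E_\eta^c)\le\eta^{-2}\|G-I\|_{L^2}^2=\eta^{-2}\cdot o(1)$, and here one uses the trivial bound $\R_k\wedge L+k\delta\le L+k\delta$. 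Combining,
\[
\int\phi(\R_k\wedge L+k\delta)\,\de\ssf{Vol}_i\le o(1)+C\eta\cdot o(1)+(L+k\delta)\eta^{-2}o(1);
\]
sending $\delta_i\to 0$ at fixed $\eta$ and then $\eta\to 0$ gives $\int\phi\,\R_k\wedge L\to 0$, contradicting $\epsilon_0>0$. Note that the truncation at $L$ is what makes the bad-set term controllable.

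For the second statement, a $\delta$-regular ball is $(n,\delta)$-symmetric, so the first part with $k=n$ gives $\int_{B_1}\R\wedge L\,\de\ssf{Vol}\to 0$ for each $L>0$. Since $\R\ge -n\delta$, Chebyshev then gives $\ssf{Vol}(\{\R>t\}\cap B_1)\to 0$ for every $t>0$. To upgrade this to $\int|\R|^s\to 0$, I would use the subadditivity $|\R|^s\le(|\R|\wedge L)^s+(|\R|-L)_+^s$. The bulk term is handled by Jensen's inequality
\[
\int(|\R|\wedge L)^s\,\de\ssf{Vol}\le\ssf{Vol}(B_1)^{1-s}\Bigl(\int|\R|\wedge L\,\de\ssf{Vol}\Bigr)^s\to 0,
\]
while the tail $\int(|\R|-L)_+^s$ is estimated by a layer-cake representation together with a diagonal choice $L=L(\delta)\to\infty$ slowly enough that the quantitative rates extracted from the first part still force vanishing. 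The concavity of $t\mapsto t^s$ for $s<1$ is exactly what permits closing the tail estimate.
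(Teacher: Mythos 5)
Your treatment of the first supremum is correct and, despite the different packaging, is essentially the paper's own argument: Bochner applied to the components of an almost-splitting map against a good cutoff, a pointwise comparison of $\R_k$ with $\sum_j\Ric(\nabla u_j,\nabla u_j)$ on the set where the gradient Gram matrix is close to the identity (your Courant--Fischer/min-max step plays the role of the paper's matrix Lemma \ref{L14}), a Chebyshev volume bound for the complementary set, and the truncation at $L$ to absorb it. The only cosmetic remarks are that the splitting-map estimates are $L^1$ rather than $L^2$ (harmless, given the uniform gradient bound) and that the final limit $\eta\to 0$ is not even needed.

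The second statement, however, has a genuine gap. You propose to deduce it from the first part with $k=n$ via $|\R|^s\le(|\R|\wedge L)^s+(|\R|-L)_+^s$ and a diagonal choice $L=L(\delta)\to\infty$. The bulk term is fine, but the tail cannot be closed this way: for each fixed $L$ the first part only gives $\fint_{B_1(x)}\R\wedge L\,\de\ssf{Vol}\to 0$, hence at best $\ssf{Vol}(\{\R>t\}\cap B_1(x))\lesssim t^{-1}\big(\epsilon(L,\delta)+n\delta\big)\ssf{Vol}(B_1(x))$ for $t\le L$, with no rate that is uniform in $L$ (a bound on $\fint\R\wedge L$ uniform in $L$ is exactly the open Yau/Naber-type estimate; cf.\ Remark \ref{R5}) and no information at all about $\ssf{Vol}(\{\R>t\})$ for $t>L(\delta)$ beyond monotonicity. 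On the exceptional set, whose volume is only known to be $O(\epsilon/L)$, the scalar curvature can be arbitrarily large for fixed $\delta$, so the layer-cake integral $s\int_{0}^{\infty}t^{s-1}\ssf{Vol}(\{\R>L+t\})\,\de t$ is simply not controlled by any choice of $L(\delta)$; concavity of $t\mapsto t^s$ at the unit scale does not help. This is precisely why the paper does not argue at a single scale: Lemma \ref{L2} produces, besides the good set, a covering of the bad set by balls satisfying the weighted bound $\sum_j\ssf{Vol}(B_{r_j}(x_j))\,r_j^{-2s}\le\epsilon\,\ssf{Vol}(B_1(x))$, obtained from the $L^2$ Hessian bound by a Vitali covering with maximal-scale selection; then $\delta$-regularity propagates to all smaller balls and nearby centers (Theorem \ref{T1}), so the scale-invariant form of Corollary \ref{C1} can be applied inside each bad ball (note $|\R|^s$ scales like $r^{-2s}$, exactly matching the covering weight -- this is where $s<1$ genuinely enters) and the construction iterated, yielding a geometric series and exhausting $B_1(x)$ up to a $\ssf{Vol}$-null set. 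This multi-scale iteration, which also uses the hypothesis $k=n$ in an essential way (regularity, unlike mere $(k,\delta)$-symmetry, persists at all scales), is the missing ingredient in your sketch, and without it the second claim does not follow from the first.
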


The previous result is the technical tool that will be used to study manifolds with a positive uniform lower bound on $\R_k$. 
\\
\\
As a first application of Theorem \ref{CT1}, in Theorem \ref{thmIntMain} we show that manifolds of any dimension with integrally positive $\R_2$ (and almost non-negative Ricci curvature) are contained in a neighbourhood of controlled width of an isometrically embedded $1$-dimensional manifold. 

The result applies also to $3$-manifolds with integrally positive scalar curvature and, assuming either an integral sectional curvature lower bound or a strong non-collapsing assumption, gives a new precise description of the macroscopic behavior of these spaces. Let $v,\epsilon, \delta,L \in (0,+\infty)$, $n \in \bb{N}$, $s \in (0,1)$, and consider the following sets of conditions on a manifold $(M^n,g)$.

\begin{enumerate}[label = (\roman*)]
        \item \label{Condition1Int}  $n\geq 2$, $\ssf{Ric}_{M} \geq -\delta$, and
    \begin{equation*} 
     \fint_{B_1(x)} \R_{2} \wedge L \,\de\ssf{Vol}  \geq \epsilon, \quad \forall x \in M.
    \end{equation*}
    \item\label{condition2Int} $n=3$, $\ssf{Ric}_{M} \geq -\delta$, and
    \begin{equation*} 
    \fint_{B_1(x)} \ssf{Sec}_{M} \wedge 0 \, \de \ssf{Vol} \geq -\delta, \quad  \fint_{B_1(x)} \R_{M} \wedge L \,\de\ssf{Vol}  \geq \epsilon, \quad \forall x \in M.
    \end{equation*}
    \item \label{condition3Int} $n=3$, $\ssf{Ric}_{M} \geq -\delta$, and
    \begin{equation*} 
     \fint_{B_1(x)} |\R_{M}|^s \,\de\ssf{Vol}  \geq \epsilon, \quad \ssf{Vol}(B_1(x)) \geq v, \quad \forall x \in M.
    \end{equation*}
    \end{enumerate}

Given $r>0$ and a subset $A \subset M$, the $r$-neighbourhood of $A$ in $M$ is the set of points whose distance from $A$ is less than $r$.

\begin{thm} \label{thmIntMain}
Let $v,\epsilon,L \in (0,+\infty)$, $n \in \bb{N}$, $s \in (0,1)$ be fixed.
    There exist $C,\delta>0$ with the following property. If a manifold $(M^n,g)$ satisfies one of the conditions \ref{Condition1Int}, \ref{condition2Int}, \ref{condition3Int}, then
    there exists a $1$-dimensional connected manifold $I$ (possibly with boundary) and a distance preserving map $\phi:I \to M$, such that $M$ is contained in the $C$-neighbourhood of $\phi(I)$.
    \end{thm}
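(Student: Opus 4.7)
The plan is to argue by contradiction, using Theorem \ref{CT1} to force any pointed Gromov--Hausdorff limit of the $M_j$'s to be a $1$-manifold, and then to produce a $2$-splitting that contradicts this. Assume the conclusion fails: there exist sequences $\delta_j\searrow 0$, $C_j\nearrow\infty$ and manifolds $(M_j^n,g_j)$ satisfying one of \ref{Condition1Int}--\ref{condition3Int} with parameter $\delta_j$, such that no connected $1$-manifold $I$ and distance-preserving $\phi\colon I\to M_j$ has $M_j\subset N_{C_j}(\phi(I))$. In particular the diameter of $M_j$ diverges, and a selection argument exploiting the freedom in the choice of $\phi$ in the negated conclusion produces $p_j,q_j\in M_j$ with $d(p_j,q_j)\to\infty$, a minimizing segment $\gamma_j$ from $p_j$ to $q_j$, and a point $y_j\in M_j$ whose foot $m_j$ on $\gamma_j$ lies at distance $\to\infty$ from both endpoints of $\gamma_j$ and satisfies $d(y_j,m_j)\to\infty$.

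I then pass to a pointed subsequential limit $(M_j,\sd_{g_j},m_j)\to(X,\sd,\x)$; the almost non-negative Ricci bound and standard stability make $X$ an $\RCD(0,n)$ space. Since each integral hypothesis in \ref{Condition1Int}--\ref{condition3Int} holds at every point of $M_j$, Theorem \ref{CT1} supplies a uniform $\delta_0=\delta_0(v,\epsilon,L,n,s)>0$ below which no ball $B_{10}(z)\subset M_j$ can be $(2,\delta_0)$-symmetric: in case \ref{Condition1Int} this is immediate, in case \ref{condition2Int} one additionally uses the integral lower bound on sectional curvature to convert the lower bound on $\R$ into one on $\R_2$ in dimension $3$, and in case \ref{condition3Int} non-$\delta_0$-regularity combined with the non-collapsing $\ssf{Vol}(B_1)\geq v$ upgrades to non-$(2,\delta)$-symmetry in dimension $3$. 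By the stability of these quantitative notions under pointed Gromov--Hausdorff convergence, no ball of $X$ admits a $2$-splitting, and $\RCD$ stratification theory then forces the essential dimension of $X$ to be at most one; thus $X$ is isometric to a connected $1$-manifold, and in particular the tangent cone at $\x$ is (at most) one-dimensional.

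The contradiction follows from producing two transverse geodesic directions at $\x$ in $X$: $\gamma_j$, minimizing and long on both sides of $m_j$, converges to a bi-infinite geodesic line $\ell\subset X$ through $\x$, while $[m_j,y_j]$, minimizing of length $\to\infty$ and meeting $\gamma_j$ at angle $\geq\pi/2$ at $m_j$ by the first variation formula (since $m_j$ realises the distance from $y_j$ to $\gamma_j$), converges to a geodesic ray $r$ from $\x$ transverse to $\ell$; the Cheeger--Gromoll splitting theorem in the $\RCD(0,n)$ setting then yields $X=\bb{R}\times Y$ with $Y$ a non-trivial $\RCD$ space containing the image of $r$, forcing the tangent cone at $\x$ to be at least two-dimensional. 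The main obstacle is the selection argument for $p_j,q_j,y_j,m_j$ in the first paragraph: one needs $m_j$ well inside $\gamma_j$ (so that the limit of $\gamma_j$ is a line rather than merely a ray, which is essential to invoke the splitting theorem) \emph{and} $d(y_j,m_j)\to\infty$ (so that the perpendicular limit is a non-trivial ray), and both properties use in an essential way that the negated conclusion denies the neighbourhood bound for \emph{every} admissible $\phi$, not just one specific choice. A secondary technical point, in case \ref{condition3Int}, is the translation from non-$\delta$-regularity to non-$(2,\delta)$-symmetry in dimension $3$ via the volume non-collapsing; otherwise the argument is a clean combination of Theorem \ref{CT1}, the stability and splitting theorems for $\RCD$ spaces, and the classification of essential dimension-one $\RCD$ spaces as $1$-manifolds.
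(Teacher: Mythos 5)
Your overall skeleton (argue by contradiction, use Theorem \ref{CT1} to rule out $(2,\delta)$-symmetric unit balls, and exploit a line-plus-perpendicular-ray configuration via the splitting theorem) is in the spirit of the paper's Theorem \ref{thmSNPGeneral}, but two steps are genuine gaps. The first is the ``selection argument'' you yourself flag: from the bare negation of the conclusion (no $1$-manifold $I$ and distance preserving $\phi$ with $M_j\subset N_{C_j}(\phi(I))$) you need a long minimizing segment $\gamma_j$ together with a point $y_j$ with $\sd(y_j,\gamma_j)\to\infty$ whose foot $m_j$ lies at distance $\to\infty$ from both endpoints of $\gamma_j$. Nothing in the negated conclusion produces this directly: a priori every far-away point could project only near the endpoints of every long segment, which is exactly the situation allowed by the segment neighbourhood property of Definition \ref{Condition}. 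In fact the implication ``negated conclusion $\Rightarrow$ such a configuration exists'' is precisely the contrapositive of Theorem \ref{thm(SNP)} ((SNP) with constants $R,D$ implies $M$ lies in the $200R$-neighbourhood of a distance-preserving image of a $1$-manifold), whose proof occupies Section \ref{S1} -- the construction of the dominating ray in Proposition \ref{P10} and of the isometrically embedded circle in the compact case, Proposition \ref{IsometriaCerchio}. Asserting the selection without proof therefore leaves out the main metric content of the argument; choosing, say, a diameter-realizing segment does not obviously work, and no alternative construction is offered.

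The second gap is the endgame. Theorem \ref{CT1} is a unit-scale statement, and the hypotheses \ref{Condition1Int}--\ref{condition3Int} do not rescale: $\fint_{B_1(x)}\R_2\wedge L\geq\epsilon$ gives no lower bound on $\fint_{B_r(x)}r^2\R_2\wedge L$ for $r\ll1$. Hence what you actually obtain is only that no ball of radius comparable to $1$ in $M_j$ (or in the limit $X$) is $(2,\delta_0)$-symmetric; this does \emph{not} bound the essential dimension of $X$ nor make $X$ a $1$-manifold. For instance, $S^2(1)\times S^2(\epsilon_j)$ satisfies \ref{Condition1Int} with fixed $\epsilon,L$ and converges to $S^2(1)$, whose essential dimension is $2$ and all of whose tangent cones are $\bb{R}^2$; so ``the tangent cone at the basepoint is at least two-dimensional'' is not in contradiction with anything you have established. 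The correct way to close the argument, as in the proof of Theorem \ref{thmSNPGeneral}, stays at scale one: once $X=\bb{R}\times Y$ with $Y$ non-compact containing a ray, recenter at points diverging along that ray and pass to a further (unrescaled) limit of the form $\bb{R}^2\times Y'$, which by a diagonal argument is again a unit-scale pGH limit of the original manifolds, and only then apply Theorem \ref{CT1} (resp.\ Theorem \ref{CT2} in cases \ref{condition2Int} and \ref{condition3Int}, where moreover the reduction you sketch -- converting the integral bounds into non-$(2,\delta)$-symmetry -- is exactly what those statements encode and should be cited rather than re-derived informally). This second gap is fixable along the paper's lines, but only after the configuration of the first paragraph has actually been produced.
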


Theorem \ref{thmIntMain} has far reaching implications both for the metric and the topological properties of manifolds satisfying \ref{Condition1Int}, \ref{condition2Int}, or \ref{condition3Int}. These are collected in Theorem \ref{thmInt2} below.
We denote by $\mathrm{b}_1(M)$ the first Betti number of $M$.
Given a loop $\gamma$ in $M$, we denote by $[\gamma]$ its equivalence class in the fundamental group $\pi_1(M)$. We recall that a metric space $(X,\sd)$ has $1$-Urysohn width $\leq d$ if there exists a $1$-simplex $Y$, and a continuous map $\pi:X \to Y$, such that $\mathrm{diam}(f^{-1}(y)) \leq d$ for every $y \in Y$.

\begin{thm} \label{thmInt2}
    Let $v,\epsilon,L \in (0,+\infty)$, $n \in \bb{N}$, $s \in (0,1)$ be fixed.
    There exist $C,\delta>0$ with the following property. Let $(M^n,g)$ be a manifold satisfying one of the conditions \ref{Condition1Int}, \ref{condition2Int}, \ref{condition3Int}. Then, 
    \begin{enumerate}
    \item \label{point1}
        $\sup_{x \in M} \ssf{Vol}(B_t(x)) \leq Ct$, for all $t>0$.
       \item \label{point2}
       If $\Ric_M \geq 0$, then $\inf_{x \in M} \ssf{Vol}(B_1(x))>0$.
       \item $M$ has $1$-Urysohn width $\leq C$.
       \item $M$ has at most two ends.
    	\item $\mathrm{b}_1(M) \leq 1$.
    	\item $\pi_1(M)$ is infinite if and only if $M$ is compact and its universal cover is non-compact.
    	\item If there exists a loop $\gamma \subset M$ such that $[\gamma] \in \pi_1(M)$ has infinite order, then $M$ is contained in a $C$-neighbourhood of $\gamma$.
       \end{enumerate}
       \end{thm}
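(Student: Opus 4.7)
The plan is to use Theorem~\ref{thmIntMain} as a structural blackbox throughout: it supplies a distance-preserving embedding $\phi:I\to M$ of a $1$-dimensional manifold whose image is $C$-dense in $M$, so that $M$ has the shape of a quasi one-dimensional tube, and every conclusion below will be a metric or topological consequence of this tube structure. A key observation I will use repeatedly is that the hypotheses \ref{Condition1Int}--\ref{condition3Int} involve averages of pointwise curvature data on unit balls, and hence pass to any Riemannian cover of $M$; in particular Theorem~\ref{thmIntMain} will apply to the universal cover $\widetilde{M}$, producing an analogous one-dimensional axis $\tilde\phi:\tilde I\to\widetilde{M}$.

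For (1), I would cover $B_t(x)$ by the preimage under nearest-point projection to $\phi(I)$ of a subarc of $I$ of length at most $2(t+C)$; that subarc is tiled by $O(t)$ balls of radius $C+1$, and under $\ssf{Ric}_M\geq-\delta$ the Bishop--Gromov inequality bounds each such tube slice uniformly, giving $\ssf{Vol}(B_t(x))\leq Ct$. Conclusion (3) is immediate from the same tube structure: any divergent sequence in $M$ projects to a divergent sequence in the one-dimensional $\phi(I)$, and a connected $1$-manifold with boundary has at most two ends. For (2), if $M$ is compact there is nothing to prove; otherwise $\phi(I)$ is itself unbounded, and with $\ssf{Ric}_M\geq 0$ I would extract a line in $M$ (directly if $I=\mathbb{R}$, or by asymptotic shifting of basepoints if $I$ is a half-line) and apply the Cheeger--Gromoll splitting theorem to get $M=\mathbb{R}\times N$ with $N$ compact of diameter $\leq 2C$; then $\ssf{Vol}(B_1(\cdot))$ is translation invariant along $\mathbb{R}$ and continuously positive on $N$, hence uniformly bounded below.

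For the topological statements I would pass to covers. For (4), if $\mathrm{b}_1(M)\geq 2$ the abelianization surjects onto $\mathbb{Z}^2$, so the corresponding cover $\widehat{M}$ inherits the curvature hypotheses and therefore has linear volume growth by (1); yet any free cocompact $\mathbb{Z}^2$-action forces at least quadratic growth, a contradiction. For (5), $\pi_1(M)$ acts freely and isometrically on $\widetilde M$, which sits inside the $C$-tube about $\tilde I$; each deck transformation must approximately preserve $\tilde I$ (otherwise orbits would leave the tube), producing a homomorphism $\pi_1(M)\to\mathsf{Isom}(\tilde I)$ with finite kernel (by bounded transverse width and freeness) and discrete image in $\mathsf{Isom}(\mathbb R)$. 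Thus $\pi_1(M)$ is virtually cyclic; if infinite, the finite-index $\mathbb Z$-subgroup has a bounded fundamental domain on $\widetilde M$ whose compact quotient is a finite cover of $M$, so $M$ itself must be compact. For (6), I would lift $\gamma$ at $\tilde x\in\widetilde M$ and concatenate the $[\gamma]$-translates $\{[\gamma]^n\tilde\gamma\}_{n\in\mathbb Z}$ into a bi-infinite path $\tilde\gamma_\infty$; since $[\gamma]$ has infinite order it acts as a nontrivial translation on $\tilde I$ by (5), so the projection of $\tilde\gamma_\infty$ covers $\tilde I$ up to bounded error. This gives that $\widetilde M$ lies in a $C'$-neighbourhood of $\tilde\gamma_\infty$, which descends to $M$ lying in a $C'$-neighbourhood of $\gamma$.

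The hard part I anticipate is (2) when the axis $I$ is only a ray: Cheeger--Gromoll cannot be applied directly, so one must either upgrade the ray to a line via asymptotic arguments or invoke the continuity supplied by Theorem~\ref{CT1} to rule out volume collapse along a subsequence. A further delicate point underlying (5) and (6) is the coarse uniqueness of $\tilde I$ inside its $C$-tube, which is what makes the induced action of $\pi_1(M)$ on the axis well-defined and enables the group-theoretic arguments to proceed rigorously.
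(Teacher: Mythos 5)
Your items (1) and (3) follow the paper's own route (cover $B_t(x)$ by boundedly many balls of radius comparable to $C$ centered along a subarc of $\phi(I)$, then Bishop--Gromov; ends controlled by the ends of the axis), so those are fine. The first genuine gap is item (2): your plan is to extract a line and apply Cheeger--Gromoll, but when the axis $I$ is a ray no line need exist in $M$, and ``asymptotic shifting of basepoints'' only produces a line in a pointed \emph{limit} space, not in $M$ itself, so the splitting theorem cannot be invoked. You flag this as ``the hard part'' but leave it unresolved, and it is exactly where the paper's proof (Theorem \ref{thmVolume}) uses a different idea you do not have: by the Brunn--Minkowski inequality the set $A_{1/2}(2t_x)$ of midpoints of segments joining $B_1(r(0))$ to $B_1(r(2t_x))$ has volume at least $2^{-n}\ssf{Vol}(B_1(r(0)))$, and the tube structure forces $A_{1/2}(2t_x)\subset B_{10C'}(r(t_x))$, which together with doubling gives the uniform lower bound on $\ssf{Vol}(B_1(x))$. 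Without this (or a substitute), your proof of (2) is incomplete.

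The second genuine gap is the group-theoretic machinery behind (5) and (6). Deck transformations only \emph{coarsely} preserve the lifted axis, so there is no induced homomorphism $\pi_1(M)\to\mathsf{Isom}(\tilde I)$; turning the coarse preservation into a well-defined action with finite kernel and discrete ``translation'' image is precisely the ``coarse uniqueness of $\tilde I$'' you admit you have not established, and it is the mathematical content that needs proof. The paper never constructs such an action: for (5) it takes a far monodromy image $M(p,\gamma_i)$, lifts the ray $r_M$ through it, and uses the projection/net Lemma \ref{nuovocontinuitylemma} to force the lifted ray close to $r_{\tilde M}(t_0)$, contradicting that quotient distances are no larger than distances in $\tilde M$; for (6) it shows $\pi_{r_{\tilde M}}(\tilde\gamma^{\bb N})$ contains a $3C'$-net of a long subsegment and contradicts the existence of a point of $M$ far from $\gamma$. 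In addition, in (4) your contradiction appeals to a ``free cocompact $\bb{Z}^2$-action'', but the deck action on the $\bb{Z}^2$-cover is cocompact only if $M$ is compact; this is repairable by an orbit-separation/packing count (or, as in the paper, by Anderson's result, Proposition \ref{P|anderson}, combined with the linear volume growth of the universal cover). Finally, your ``key observation'' that conditions \ref{Condition1Int}--\ref{condition3Int} pass to covers is correct only up to rescaled constants and is not automatic: it is the content of Propositions \ref{Prop:CoverInt} and \ref{PC3}, proved by sheet counting plus Bishop--Gromov, and should not be treated as free.
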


        The previous two theorems are part of a series of recent results where positive lower bounds on the (scalar) curvature imply $1$-dimensional behavior at large scales both from the metric and the topological viewpoint. For instance, a linear volume growth bound for $3$-manifolds with $\Ric \geq 0 $ and $\R \geq 1$ was obtained in \cite{wangvolumethreemanifolds}, and was then generalized in \cite{ Chodoshvolumethreemanifolds, antonelli2024newspectralbishopgromovbonnetmyers, zhou2024optimalvolumeboundvolume}. Similarly, $3$-manifolds with $\R \geq 1$ have been recently shown to have bounded Urysohn $1$-width (see \cite{Katzscalar,GromovScalar,Liokumovitchscalar,liokumovitchwang}).
        
       We highlight that an analog of Theorem \ref{thmIntMain} is not to be expected in the higher dimensional case (see Remark \ref{R7}) and that the result fails without the lower bounds on the Ricci curvature (see Remark \ref{R6}). The proof of Theorem \ref{thmIntMain} relies on the study of a geometric condition on length minimizing geodesics in a metric space that was first studied in \cite{TrianglesKapo} (see Section \ref{S1}).
\\

We now turn our attention to manifolds with an integral lower bound on $\R_k$, possibly for $k \geq 3$.
Theorem \ref{thmInt3} shows that non-negative Ricci curvature, coupled with a sufficiently slow asymptotic integral decay of $\R_k$, implies a dimension-drop for the tangent cones at infinity. An analogous result (under different assumptions) was recently proved in \cite[Theorem 1.6]{zhu2024twodimensionvanishingsplittingpositive}.
We recall that if a pointed non-compact manifold $(M^n,g,p)$ has non-negative Ricci curvature, then Gromov's pre-compactness theorem ensures that, for every sequence $(M^n,g/r_j,p)$ as $r_j \to + \infty$,  there exists a subsequence converging in pointed Gromov-Hausdorff sense to a metric space $(X,\sd,p_\infty)$. Such  space obtained via blow-down is called a \emph{tangent cone at infinity} of $M$. 
Finally, we refer to Section \ref{S4} for the definition of the $(n-2)^{th}$-Ricci curvature $\Ric_{n-2}$ (after \cite{ShenKricci}).

\begin{thm} \label{thmInt3}
    Let $(M^n,g,p)$ be a Riemannian manifold and let $L>0, \;\alpha \in (0,2)$.
    \begin{enumerate}
        \item
        Assume that $\Ric \geq 0$ and
    \[
    \lim_{r \to + \infty} \fint_{B_r(p)} 0 \vee (r^{2-\alpha}\R_k) \wedge L \,\de\ssf{Vol}=L.
    \]
    Let $(X,\sd,x_\infty)$ be any tangent cone at infinity of $M$. 
    If $(\bb{R}^d,\sd_{eu})$ is a tangent space to $X$ at a point $x \in X$, then $d \leq k-1$. 
    \item 
    Assume that $\Ric_{n-2} \geq 0$ and
    \[
    \lim_{r \to + \infty} \fint_{B_r(p)} 0 \vee (r^{2-\alpha}\R) \wedge L \,\de\ssf{Vol}=L.
    \]
    Let $(X,\sd,x_\infty)$ be any tangent cone at infinity of $M$. 
    If $(\bb{R}^d,\sd_{eu})$ is a tangent space to $X$ at a point $x \in X$, then $d \leq n-2$.
    \end{enumerate}
    \end{thm}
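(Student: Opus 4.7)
The plan is to argue by contradiction, combining an iterated tangent cone construction with Theorem~\ref{CT1}. For Part 1, suppose some tangent cone $(X,\sd,x_\infty)$ at infinity of $M$ admits a point $x\neq x_\infty$ whose tangent space is $\bb{R}^d$ with $d\geq k$. Let $r_j\to\infty$ realize $(M,g/r_j^2,p)\to(X,x_\infty)$ and $\rho_j\to 0$ realize $(X,\sd/\rho_j,x)\to(\bb{R}^d,0)$. A standard diagonal extraction then produces points $z_j\in M$ and scales $\tau_j\asymp r_{j(i)}\rho_i$ with $(M,g/\tau_j^2,z_j)\to(\bb{R}^d,0)$, $\sd_g(p,z_j)\to\infty$, and $\tau_j/\sd_g(p,z_j)\to 0$. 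The key point is that, because $\alpha\in(0,2)$, the diagonal has enough flexibility to additionally force the two-sided scale balance
\[
\sd_g(p,z_j)^{(2-\alpha)/2}\ll\tau_j\ll\sd_g(p,z_j),
\]
which is what ties the Ricci and $\R_k$ hypotheses together on the rescaled scales.

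In this regime, the assumption $\liminf_{z\to\infty}\sd(p,z)^2\Ric(z)\wedge 0=0$ combined with $\tau_j\ll\sd_g(p,z_j)$ yields $\Ric_{g/\tau_j^2}\geq-\delta_j$ on $B_{10}^{g/\tau_j^2}(z_j)$ with $\delta_j\to 0$, while the limit $\bb{R}^d$ with $d\geq k$ makes these balls $(k,\delta_j')$-symmetric. Theorem~\ref{CT1} then gives
\[
\fint_{B_1^{g/\tau_j^2}(z_j)}\R_k^{g/\tau_j^2}\wedge L\,\de\ssf{Vol}\longrightarrow 0.
\]
On the other hand, set $r=2\sd_g(p,z_j)$ and $E_r:=\{z\in B_r(p):r^{2-\alpha}\R_k(z)<L/2\}$. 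The integral hypothesis together with Chebyshev's inequality forces $\ssf{Vol}_g(E_r)/\ssf{Vol}_g(B_r(p))\to 0$. By the scale balance $\tau_j^2\cdot r^{\alpha-2}\to\infty$, on $B_r(p)\setminus E_r$ one has $\tau_j^2\R_k\geq L$ eventually, so
\[
\fint_{B_1^{g/\tau_j^2}(z_j)}\R_k^{g/\tau_j^2}\wedge L\,\de\ssf{Vol}\geq L\cdot\frac{\ssf{Vol}_g(B_{\tau_j}(z_j)\setminus E_r)}{\ssf{Vol}_g(B_{\tau_j}(z_j))},
\]
and if $z_j$ is chosen so that the right-hand side stays close to $L$, this contradicts the previous display.

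The main technical obstacle is exactly this last step: transferring the vanishing density of $E_r$ on the macroscopic ball $B_r(p)$ to the microscopic ball $B_{\tau_j}(z_j)$ with $\tau_j/r\to 0$. I propose to handle this via a Hardy--Littlewood maximal-function argument in the volume-doubling setting coming from the Ricci lower bound: the set of points where the local density of $E_r$ at scale $\tau_j$ exceeds $1/4$ has itself $g$-density bounded by a constant multiple of $\ssf{Vol}_g(E_r)/\ssf{Vol}_g(B_r(p))$, hence vanishing, so the iterated tangent basepoint $z_j$ can be perturbed within its admissible set to avoid it while preserving the convergence to $(\bb{R}^d,0)$. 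Part 2 follows the same template: for $d=n$ the rescaled ball is $\delta_j'$-regular and one applies the second statement of Theorem~\ref{CT1} to $|\R|^s$, contradicting the analogous rescaled lower bound; for $d=n-1$ the ball is only $(n-1,\delta_j')$-symmetric, but the almost $\bb{R}^{n-1}$-splitting forces $n-1$ of the Ricci eigenvalues to be close to zero, so the weaker asymptotic bound on $\Ric_{n-2}$ promotes to an effective lower bound on the full $\Ric$; then Theorem~\ref{CT1} with $k=n-1$ controls $\R_{n-1}$, and the Cheeger--Colding almost splitting gives control on the single remaining eigenvalue, yielding the contradiction with the $r^{2-\alpha}$-weighted $\R$ bound.
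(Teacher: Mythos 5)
The core of your plan (contradict the weighted integral hypothesis by showing that closeness to $\bb{R}^d$, $d\ge k$, at some location/scale forces $\fint \R_k\wedge L$ there to be small) is the right shape, but the execution through iterated blow-ups has a genuine gap exactly at the step you flag, and the maximal-function fix does not close it. Your basepoints $z_j$ come from a diagonal argument at the point $x$, so the only freedom you have to relocate them while preserving $(M,g/\tau_j^2,z_j)\to(\bb{R}^d,0)$ is a perturbation of size $O(\tau_j)$ (closeness to $\bb{R}^d$ at scale $\tau_j$ is inherited, via homogeneity of $\bb{R}^d$, only by points at distance comparable to $\tau_j$ from $z_j$). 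The Hardy--Littlewood estimate only says that the set of points whose $\tau_j$-scale density of $E_r$ exceeds $1/4$ has measure $\le C\,\ssf{Vol}(E_r)=o(\ssf{Vol}(B_r(p)))$; that bound is vastly larger than $\ssf{Vol}(B_{C\tau_j}(z_j))$, so the bad set can perfectly well contain the entire admissible perturbation ball, and no good basepoint need exist. Put differently, an average bound on $B_r(p)$ cannot control the average of $\R_k$ on one prescribed ball of radius $\tau_j\ll r$, and nothing in your argument excludes that $\R_k$ is tiny precisely on a $\tau_j$-neighbourhood of the blow-up point. The paper's proof avoids this entirely: since $\bb{R}^d$ is a tangent space of $X$ at $x$, there is a \emph{fixed} scale $s>0$ at which $(X,\sd/s,x)$ is already $\epsilon$-close to $\bb{R}^d$ on large balls, so the contradiction is run on balls $B^i_s(p'_i)$ of radius comparable to the blow-down scale $r_i$; the density transfer from $B^i_1(p_i)$ to $B^i_s(p'_i)$ is then a fixed-ratio volume estimate (Lemma \ref{L:Bishop} plus Bishop--Gromov), with no maximal function or basepoint selection, and the scale balance $r_i^{-\alpha}s^{-2}\to 0$ (your $\alpha\in(0,2)$ point) enters only there.

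Two further problems, even granting good basepoints. First, Theorem \ref{CT1} cannot be invoked as stated: through Theorem \ref{T:Fund1} it relies on a \emph{global} lower Ricci bound, whereas the asymptotic hypothesis gives $\Ric\ge-\delta_j$ only on balls around $z_j$; this is precisely why the paper proves the localized substitutes (Theorem \ref{TSplitModified} and Theorems \ref{T:MainLocal}, \ref{ThmConvLocal}), which need closeness to $\bb{R}^k$ on a ball of radius $\delta^{-1}$ plus a local Ricci bound. Your setup does supply those hypotheses, so this is repairable, but it must be addressed. Second, your sketch of Part 2 in the case $d=n-1$ (``the almost splitting forces $n-1$ Ricci eigenvalues to be close to zero, so the $\Ric_{n-2}$ hypothesis promotes to a full Ricci lower bound'') is not substantiated: the splitting estimates yield only one-sided integral smallness of $\sum_j\Ric(\nabla u_j,\nabla u_j)$, not pointwise eigenvalue control. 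The paper instead uses the trace identity $\R=2\R_{n-1}-\sum_{h\neq n,\,l\neq n}\ssf{Sec}(e_h,e_l)$ together with the integral lower bound on $\Ric_{n-2}\wedge 0$ (Theorem \ref{CT2}, second item, localized as Theorem \ref{ThmConvLocal}) to convert smallness of $\fint\R_{n-1}\wedge L$ into smallness of $\fint\R\wedge L$, which is what contradicts the hypothesis.
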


    The previous theorem is sharp in the sense that it fails for $\alpha=0$ (see Remark \ref{R3}). A biproduct of Theorem \ref{thmInt3} is that, if a manifold has non-negative Ricci curvature and sufficiently slow integral asymptotic decay of the Ricci tensor, then it is compact (see Remark \ref{R4}).

We now consider the topological implications of non-negative Ricci curvature and asymptotically positive $k^{th}$-scalar curvature.
This is the content of Theorems \ref{thmIntFinal} and \ref{thmInt4} (compare with \cite[Theorems 1.3 and 1.7]{zhu2024twodimensionvanishingsplittingpositive}, after \cite{PanBlowDown}). 

\begin{thm} \label{thmIntFinal}
    Let $(M^n,g,p)$ be a non-compact Riemannian manifold with $n \geq 3$ and let $\alpha \in (0,2)$. Let $2 \leq k \leq n$ be a natural number.
    \begin{enumerate}
        \item 
        If $\Ric_M \geq 0$ and
    \begin{equation*} 
    \limsup_{r \to + \infty} \Big( r^{2-\alpha} \min_{B_r(p)} \R_k \Big) >0,
    \end{equation*}
    then $\mathrm{b}_1(M) \leq k-2$. 
    \item 
    If $\Ric_{n-2} \geq 0$ and
    \begin{equation*} 
    \limsup_{r \to + \infty} \Big( r^{2-\alpha} \min_{B_r(p)} \R \Big) >0,
    \end{equation*}
    then $\mathrm{b}_1(M) \leq n-3$.
    \end{enumerate}
    \end{thm}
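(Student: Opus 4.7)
The plan is to argue by contradiction, using Theorem \ref{thmInt3} to bound the dimension of Euclidean tangent spaces at non-apex points of some tangent cone at infinity of an abelian cover $\hat M$ of $M$, and extracting from the deck-group action on $\hat M$ a Euclidean factor of dimension $b_1(M)$ inside that same tangent cone; the two become incompatible as soon as $b_1(M) \geq k-1$ in part (1) (respectively $\geq n-2$ in part (2)).

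For part (1), suppose for contradiction that $b := b_1(M) \geq k-1$, and let $\pi \colon \hat M \to M$ be the Riemannian covering associated to the kernel of the natural map $\pi_1(M) \twoheadrightarrow H_1(M;\mathbb{Z})/\mathrm{torsion} \cong \mathbb{Z}^{b}$. The deck group $\mathbb{Z}^{b}$ acts freely and isometrically on $\hat M$; moreover, since $\Ric$ and $\R_k$ are local invariants and $\pi$ is a local isometry with $\pi(B_r^{\hat M}(\hat p)) \subseteq B_r^{M}(p)$, $\hat M$ inherits both $\Ric \geq 0$ and $\min_{B_r^{\hat M}(\hat p)} \R_k \geq \min_{B_r^{M}(p)} \R_k$. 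Fixing a subsequence $r_j \to \infty$ witnessing $\limsup_{r \to \infty} r^{2-\alpha} \min_{B_r(p)} \R_k > 0$, a homothetic rescaling $g \mapsto \lambda^2 g$ with $\lambda > 0$ small enough multiplies $r^{2-\alpha} \R_k$ by $\lambda^{-\alpha} \to \infty$, so that the pointwise lower bound exceeds any prescribed $L > 0$ on $B_{r_j}^{\hat M}(\hat p)$ for all $j$. The argument of Theorem \ref{thmInt3}(1) applied along this subsequence then yields a tangent cone at infinity $(X, \sd, x_\infty)$ of $\hat M$ in which every Euclidean tangent space $(\mathbb{R}^d, \sd_{eu})$ to $X$ at a point $x \neq x_\infty$ satisfies $d \leq k - 1$.

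The second ingredient is a splitting principle: the free isometric $\mathbb{Z}^{b}$-action on $\hat M$, combined with $\Ric \geq 0$, will force $X$ to split isometrically as $X = \mathbb{R}^{b} \times Y$ for some pointed metric cone $Y$. Intuitively, for each generator $\gamma_i$ of $\mathbb{Z}^b$ the orbit $\{\gamma_i^n \hat p\}_{n \in \mathbb{Z}}$ yields, after blow-down, a \emph{line} in $X$; applying the Cheeger--Gromoll (or Cheeger--Colding almost-)splitting theorem iteratively then produces $b$ independent Euclidean factors. Non-compactness of $M$ rules out $Y$ being a single point: otherwise $\hat M$ would be isometric to $\mathbb{R}^{b} \times N$ with $N$ compact, forcing $M = \hat M / \mathbb{Z}^{b}$ to be compact, a contradiction. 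Hence $Y$ admits a point $y$ distinct from its cone apex where $T_y Y$ is Euclidean of dimension $e \geq 1$ (existence of such a point follows from the structure theory of Ricci-limit spaces); picking any $v \neq 0$ in the $\mathbb{R}^{b}$ factor, the tangent space to $X$ at $x = (v, y)$ is Euclidean of dimension $b + e \geq k$, contradicting the bound from the previous paragraph. Thus $b_1(M) \leq k - 2$.

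Part (2) is entirely analogous: Theorem \ref{thmInt3}(2) replaces (1), giving the sharper upper bound $d \leq n - 2$ on Euclidean tangent spaces of $X$, and combined with the $\mathbb{R}^{b_1(M)}$ Euclidean splitting this yields $b_1(M) + 1 \leq n - 2$. The main obstacle to making the plan rigorous lies in the splitting step: establishing an $\mathbb{R}^{b}$ Euclidean factor in the asymptotic cone from a free isometric $\mathbb{Z}^{b}$-action on $\hat M$. This is classical in the cocompact setting but requires care here since $\hat M / \mathbb{Z}^b = M$ is non-compact; for part (2) there is the further subtlety that $\Ric_{n-2} \geq 0$ does not a priori imply $\Ric \geq 0$, so the relevant splitting or line extraction must be set up in the intermediate-Ricci framework after Shen.
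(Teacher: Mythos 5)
Your first half coincides with the paper's proof: you lift the hypotheses to a covering (the paper states this for an arbitrary covering; you take the maximal free abelian cover, which is the one ultimately needed), observe that $\Ric\geq 0$ and the lower bound on $\min_{B_r}\R_k$ pass to the cover, and apply Theorem \ref{thmInt3} together with Remark \ref{R|extangent} along a subsequence realizing the $\limsup$ to produce one tangent cone at infinity $X$ of the cover whose Euclidean tangent spaces at non-apex points have dimension at most $k-1$ (your rescaling trick to beat a prescribed $L$ is fine, though choosing $L$ equal to the $\limsup$ value, as the paper does, is simpler). The divergence is in the topological half: the paper does not reprove it, but invokes the argument of \cite[Theorem 1.3]{zhu2024twodimensionvanishingsplittingpositive} with Theorem \ref{thmInt3} in place of Zhu's Theorem 1.6, whereas you attempt to reconstruct that argument, and this is where your proposal has genuine gaps.

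First, the central claim that the free isometric $\bb{Z}^b$-action on $\hat M$ forces the chosen blow-down $X$ to split off $\bb{R}^b$ is asserted rather than proved, and you flag it yourself as ``the main obstacle''. It is indeed the whole point: with a non-compact quotient there is no cocompactness allowing you to translate minimizers between $\gamma^{-m}\hat p$ and $\gamma^{m}\hat p$ back to a fixed compact set, the displacements $\sd(\hat p,\gamma^m\hat p)$ need not grow linearly, and the lines must be produced in the \emph{same} blow-down singled out by Remark \ref{R|extangent}; this is exactly the content the paper imports from Zhu, so without it your text is a plan, not a proof. Second, even granting $X=\bb{R}^b\times Y$, your exclusion of the borderline case $b=k-1$, $Y=\{\mathrm{pt}\}$ is a non sequitur: from ``some tangent cone at infinity of $\hat M$ equals $\bb{R}^b$'' you conclude ``$\hat M$ is isometric to $\bb{R}^b\times N$ with $N$ compact'', which is false in general — a manifold can collapse at infinity onto $\bb{R}^b$ without being a metric product (ALF-type geometries with asymptotic cone $\bb{R}^3$ are the standard picture). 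Since $X=\bb{R}^{k-1}$ is perfectly compatible with the bound $d\leq k-1$ of Theorem \ref{thmInt3}, this borderline case is precisely what separates the claimed inequality $\mathrm{b}_1(M)\leq k-2$ from the weaker $\mathrm{b}_1(M)\leq k-1$; one needs an actual argument (e.g.\ an equivariant blow-down, using that $M=\hat M/\bb{Z}^b$ is non-compact, to show the cross-section is unbounded), which you do not supply. Finally, your worry about part (2) is unfounded: by the monotonicity of intermediate Ricci bounds recalled in Section \ref{S4} (item (2) with $c=0$), $\Ric_{n-2}\geq 0$ already implies $\Ric\geq 0$, so the nonnegative-Ricci splitting tools are available there; the two gaps above are the real issue.
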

    The proof of Theorem \ref{thmIntFinal} follows the lines of  \cite[Theorem 1.3]{zhu2024twodimensionvanishingsplittingpositive} by replacing the use of \cite[Theorem 1.6]{zhu2024twodimensionvanishingsplittingpositive} with Theorem \ref{thmInt3}.
    We remark that Theorem \ref{thmIntFinal}, unlike \cite[Theorem 1.3]{zhu2024twodimensionvanishingsplittingpositive}, allows for the scalar curvature to converge to zero at infinity.

\begin{thm} \label{thmInt4}
    Let $\epsilon,s,v \in (0,1)$, let $L,D \in (0,+\infty)$, and let $k,n \in \bb{N}$ with $k \leq n$. There exists $\delta>0$ such that for every manifold $(M^n,g,p)$ with
    $\Ric_{M} \geq -\delta$, $\mathrm{diam}(M) \leq D$, $\ssf{Vol}(B_1(p)) \geq v$, the following holds.
    \begin{enumerate}
    \item If
    $
     \fint_{B_1(p)} \R_{k} \wedge L \, \de \ssf{Vol} \geq \epsilon,
    $
    then $\mathrm{b}_1(M) \leq k-1$. 
        \item If
    $
     \fint_{B_1(p)} |\R|^s \, \de \ssf{Vol} \geq \epsilon,
    $
    then $\mathrm{b}_1(M) \leq n-2$. 
    \end{enumerate}
    \end{thm}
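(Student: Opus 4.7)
The plan is to argue both parts by contradiction, reducing them to Theorem \ref{CT1} via a passage to an abelian cover of $M$. For Part~(1), suppose there exist $\delta_i \searrow 0$ and manifolds $(M_i^n,g_i,p_i)$ satisfying $\Ric_{M_i}\geq -\delta_i$, $\mathrm{diam}(M_i)\leq D$, $\ssf{Vol}(B_1(p_i))\geq v$, and $\fint_{B_1(p_i)} \R_k \wedge L \,\de\ssf{Vol}\geq \epsilon$, yet with $\mathrm{b}_1(M_i)\geq k$. Let $b_i:=\mathrm{b}_1(M_i)$ and let $\pi_i\colon \hat M_i\to M_i$ be the abelian cover corresponding to the quotient $\pi_1(M_i)\twoheadrightarrow H_1(M_i,\bb{Z})/\mathrm{torsion}\cong\bb{Z}^{b_i}$, with deck-transformation group $\bb{Z}^{b_i}$; fix a lift $\hat p_i\in \pi_i^{-1}(p_i)$. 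The cover inherits $\Ric_{\hat M_i}\geq -\delta_i$, and the $\bb{Z}^{b_i}$-action is cocompact with fundamental domains of diameter $\leq D$.

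By equivariant pointed Gromov-Hausdorff compactness (à la Fukaya and Kapovitch-Wilking), after passing to a subsequence, the pointed spaces $(\hat M_i,\hat p_i)$ together with their $\bb{Z}^{b_i}$-actions converge to a triple $(\hat X_\infty,x_\infty,G)$, where $\hat X_\infty$ is an $\RCD(0,n)$ space by the Cheeger-Colding-Gigli stability theory and $G\leq \mathrm{Isom}(\hat X_\infty)$ is a closed abelian subgroup acting cocompactly with polynomial orbit growth of rate $\geq k$. By the Cheeger-Colding almost splitting theorem, this forces $\hat X_\infty$ to split isometrically as $\bb{R}^{k'}\times K$ for some $k'\geq k$. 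Consequently, for every $\eta>0$, for all $i$ sufficiently large the ball $B_{10}(\hat p_i)\subset \hat M_i$ is $(k,\eta)$-symmetric in the sense of Definition \ref{D1}, and Theorem \ref{CT1} produces
\[
\lim_{i\to\infty} \fint_{B_1(\hat p_i)} \R_k \wedge L \,\de\ssf{Vol} = 0.
\]

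The concluding step is to transfer this vanishing back to $M_i$. Since $\pi_i$ is a local isometry and $\R_k$ is an intrinsic local invariant, the equality $\fint_{B_1(\hat p_i)} \R_k\wedge L\,\de\ssf{Vol}=\fint_{B_1(p_i)} \R_k\wedge L\,\de\ssf{Vol}$ holds provided $\pi_i$ is injective on $B_1(\hat p_i)$, equivalently provided the minimal displacement of the $\bb{Z}^{b_i}$-action at $\hat p_i$ is at least $2$. The main obstacle is that this injectivity may fail when $M_i$ admits a collapsing torus factor. I would handle this by replacing $\hat M_i$ by the intermediate cover $\hat M_i/\Gamma_i$, where $\Gamma_i\leq \bb{Z}^{b_i}$ is the subgroup generated by those deck transformations whose displacement at $\hat p_i$ is below a small fixed threshold; the volume lower bound $\ssf{Vol}(B_1(p_i))\geq v$ combined with Bishop-Gromov implies $\mathrm{rank}\,\Gamma_i<k$, so that the residual $\bb{Z}^{b_i}/\Gamma_i$-action still contributes a $(k,\eta)$-symmetric ball on the intermediate cover while short displacements have been eliminated and $B_1$ of the corresponding lift injects into $M_i$. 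The resulting $\fint_{B_1(p_i)} \R_k\wedge L\,\de\ssf{Vol}\to 0$ contradicts the standing lower bound $\epsilon$, proving Part~(1). Part~(2) follows by exactly the same scheme assuming $\mathrm{b}_1(M_i)\geq n-1$: the non-collapsing hypothesis together with the dimension bound $\dim\hat X_\infty\leq n$ forces $K$ to be $1$-dimensional, and choosing $\hat p_i$ so as to project to a regular point of $K$ makes $B_{10}(\hat p_i)$ $\delta$-regular; the second half of Theorem \ref{CT1} then gives $\fint_{B_1(\hat p_i)} |\R|^s\,\de\ssf{Vol}\to 0$, and transferring via the same intermediate-cover trick contradicts the lower bound $\epsilon$.
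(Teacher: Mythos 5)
The core of your reduction (contradiction, pass to a cover with $\bb{Z}^{b_i}$ symmetry, deduce $(k,\eta)$-symmetry of $B_{10}(\hat p_i)$, apply Theorem \ref{CT1}) is in the right spirit, but the final transfer step, as you set it up, contains a genuine gap. You do not need $\pi_i$ to be injective on $B_1(\hat p_i)$, and the device you propose to force it cannot work. Since $\pi_i$ is a distance non-increasing local isometry, every $q\in B_1(p_i)$ has a preimage in $B_1(\hat p_i)$ (lift a minimizing geodesic from $p_i$), so choosing for each $q$ a nearest preimage gives a set $\hat U\subseteq B_1(\hat p_i)$ mapped bijectively onto $B_1(p_i)$; since $\R_k\wedge L\geq -k\delta_i$, this yields $\int_{B_1(\hat p_i)}\R_k\wedge L\,\de\ssf{Vol}\geq \int_{B_1(p_i)}\R_k\wedge L\,\de\ssf{Vol}-c(n,k)\delta_i\geq \epsilon v-c(n,k)\delta_i$, and Bishop--Gromov gives $\widetilde{\ssf{Vol}}(B_1(\hat p_i))\leq C(n)$, so $\fint_{B_1(\hat p_i)}\R_k\wedge L\,\de\ssf{Vol}\geq c(n)v\epsilon/2$ for large $i$, which already contradicts the vanishing from Theorem \ref{CT1}. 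This is exactly how the paper uses the non-collapsing hypothesis (it is the content of Proposition \ref{Prop:CoverInt}\,(1)). By contrast, your claim that $\ssf{Vol}(B_1(p_i))\geq v$ forces $\mathrm{rank}\,\Gamma_i<k$ is unfounded for the threshold you actually need (displacement $\geq 2$): take $T^k_\rho\times S^{n-k}$ with $\rho\sim v^{1/k}<2$; it has $\Ric\geq 0$, $\ssf{Vol}(B_1(p))\geq v$, $\mathrm b_1=k$, yet every generator of the deck group $\bb{Z}^k$ has displacement $<2$, so your $\Gamma_i$ is the whole group and quotienting by it destroys the $(k,\eta)$-symmetry; if instead you take the threshold tiny you do not obtain the injectivity on $B_1(\hat p_i)$ you asked for (and displacements in $\hat M_i/\Gamma_i$ are infima over cosets, so short motions are not automatically removed). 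Either way the loop does not close, whereas the inequality above closes it immediately.

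Two further points where your argument is thinner than it looks. First, deducing that $\hat X_\infty$ splits $\bb{R}^{k'}$ with $k'\geq k$ from "abelian limit group with orbit growth $\geq k$" is not just the almost-splitting theorem: under collapse, distinct orbit points of $\bb{Z}^{b_i}$ can coalesce in the limit, so the degree-$k$ growth does not transfer for free and a Kapovitch--Wilking/Sormani--Wei type argument is required. The paper avoids this entirely by converging the base spaces $M_i$ (non-collapsed, hence $\RCD(0,n)$), using upper semicontinuity of $\mathrm b_1$ (Proposition \ref{P|uppersemicontinuity betti}), the splitting of the universal cover of the compact limit (Proposition \ref{P|splitting universal cover}), and convergence of coverings (Proposition \ref{P|convergence coverings}). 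Second, in Part (2) your claim that choosing $\hat p_i$ over a regular point of the $1$-dimensional factor $K$ makes $B_{10}(\hat p_i)$ $\delta$-regular fails when $K$ is a circle or interval of size comparable to, or smaller than, $10$: no base point works at that scale, and one must zoom in and, crucially, relocalize the hypothesis $\fint_{B_1}|\R|^s\,\de\ssf{Vol}\geq\epsilon$ to small balls, which is precisely the pigeonhole estimate \eqref{eqcorretta} in the proof of Theorem \ref{CT2}; the paper's proof of Part (2) simply runs the same scheme as Part (1) with Theorem \ref{CT2} in place of Theorem \ref{CT1}.
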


    If the manifold in the previous theorem has non-negative Ricci curvature, the previous result follows immediately from the splitting theorem \cite{SplittingCheegerGromoll}. We remark that Theorem \ref{thmInt4}, unlike \cite[Theorem 1.7]{zhu2024twodimensionvanishingsplittingpositive}, only requires a local bound on the scalar curvature.
\\
\\
Finally, we mention that Theorem \ref{CT1} can be combined with the results of \cite{CheegerNaber} to prove a result from Jiang and Naber (see Theorem \ref{TJN} below). This result was announced in \cite{Naberconjectures} without proof; we  sketch one in the appendix.

\begin{thm}[Jiang-Naber] \label{TJN}
    Let $K \in \bb{R}$, $n \in \bb{N}$, $v>0$, $s \in (0,1)$ be fixed and let $(M^n,g,p)$ be a manifold with $\Ric_M \geq K$ and $\ssf{Vol}(B_1(p)) \geq v$. There exists a constant $C(K,n,v,s)>0$ such that
    \[
    \fint_{B_1(p)} |\Ric|^s \,\de\ssf{Vol} \leq  C(K,n,v,s).
    \]
\end{thm}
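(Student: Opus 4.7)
The plan is to reduce the desired estimate on $|\Ric|^s$ to one on $|\R|^s$, and then run a dyadic Vitali covering argument based on a regularity-scale function: on each sufficiently regular scale Theorem~\ref{CT1} controls the average of $|\R|^s$, while the quantitative stratification of \cite{CheegerNaber} controls how often truly singular scales appear. Since $\Ric\geq K$, each eigenvalue $\lambda_i$ of $\Ric$ satisfies $|\lambda_i|\leq \lambda_i+2|K|$; summing in $i$ yields $|\Ric|\leq \R+2n|K|$ and, by subadditivity of $t\mapsto t^s$ on $[0,\infty)$ for $s\in(0,1)$, the pointwise bound $|\Ric|^s\leq|\R|^s+C(n,K,s)$. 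It therefore suffices to bound $\fint_{B_1(p)}|\R|^s\,\de\ssf{Vol}$, and after a global rescaling we may assume $K=-(n-1)$.

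For each $y\in B_1(p)$, let $r_\delta(y)\in(0,1]$ be the largest radius $r$ such that, in the rescaled metric $r^{-2}g$, the ball $B_{10}(y)$ is $\delta$-regular in the sense of Definition~\ref{D1}; note $r_\delta(y)>0$ by smoothness of $(M,g)$. Applying Theorem~\ref{CT1} in the rescaled metric and then undoing the scaling gives
\[
\fint_{B_{r_\delta(y)}(y)}|\R|^s\,\de\ssf{Vol}\leq \epsilon(\delta)\,r_\delta(y)^{-2s},
\]
where $\epsilon(\delta)$ is the supremum in Theorem~\ref{CT1}; we fix some $\delta_0>0$ once and for all, so that $\epsilon(\delta_0)<\infty$ is a constant. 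Extract by Vitali a pairwise disjoint subfamily $\{B_{r_\delta(y_i)/5}(y_i)\}$ whose $5$-enlargements still cover $B_1(p)$. Combined with the Bishop-Gromov upper bound $\ssf{Vol}(B_{r_\delta(y_i)}(y_i))\leq C r_\delta(y_i)^n$ this yields
\[
\int_{B_1(p)}|\R|^s\,\de\ssf{Vol}\leq C\,\epsilon(\delta_0)\sum_i r_\delta(y_i)^{n-2s}.
\]

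Group indices in dyadic bins $I_j=\{i:r_\delta(y_i)\in[2^{-j-1},2^{-j})\}$. The non-collapsing hypothesis $\ssf{Vol}(B_1(p))\geq v$ guarantees, via Colding-Naber, that the top singular stratum has codimension at least $2$, so the quantitative stratification of \cite{CheegerNaber}, applied at stratum $n-2$, gives for any $\eta>0$
\[
\ssf{Vol}\bigl(\{y\in B_2(p):r_\delta(y)\leq r\}\bigr)\leq C(n,v,\delta_0,\eta)\,r^{2-\eta}.
\]
The disjoint balls of $I_j$ are contained in this set for $r=2^{-j+1}$, and each of them has volume $\gtrsim r_\delta(y_i)^n\approx 2^{-jn}$ by the matching Bishop-Gromov lower bound for points in $B_2(p)$. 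Hence $\#I_j\leq C\,2^{j(n-2+\eta)}$, so
\[
\sum_i r_\delta(y_i)^{n-2s}\leq C\sum_{j\geq 0}2^{j(2s-2+\eta)},
\]
which converges precisely when $\eta<2(1-s)$; such an $\eta$ exists because $s<1$. Reinserting this in the previous display gives the desired constant $C(K,n,v,s)$.

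The main technical difficulty is matching the \emph{$\delta$-regular} notion of Theorem~\ref{CT1} to the \emph{$(n-1,\epsilon)$-symmetric} balls appearing in \cite{CheegerNaber}: one needs that, in the non-collapsed regime, an $(n-1,\epsilon)$-symmetric ball is $\delta(\epsilon)$-regular with $\delta(\epsilon)\to 0$ as $\epsilon \to 0$. This is the content of Cheeger-Colding's almost volume-cone-implies-almost-metric-cone, together with the fact that in the non-collapsed setting volume rigidity forces any almost splitting off an $\bb{R}^{n-1}$-factor to be close to $\bb{R}^n$; once this identification is made the stratification estimate reads as stated above and the argument closes.
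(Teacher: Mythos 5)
Your proposal follows essentially the same route as the paper's appendix proof: reduce $|\Ric|^s$ to $|\R|^s$ pointwise using $\Ric\ge K$, apply Theorem \ref{CT1} on $\delta$-regular balls after rescaling to get $\fint_{B_r}|\R|^s\lesssim r^{-2s}$, and control the bad scales through the Cheeger-Naber quantitative stratification promoted to codimension two by non-collapsing (the paper's Proposition \ref{P2}, via Theorem \ref{T2}), summing a geometric series that converges precisely because $s<1$; your Vitali covering at the regularity scale is a repackaging of the paper's decomposition into $\mathcal{R}_{\delta,\alpha}$ and the annuli $\mathcal{S}_{\delta,\alpha^{k}}\setminus\mathcal{S}_{\delta,\alpha^{k+1}}$, and the balls you count sit inside the tube $T_r(\mathcal{S}^{n-2}_{\eta,r})$, which is exactly what the quoted volume estimate controls. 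The one point to fix is the normalization: rescaling to $K=-(n-1)$ does not make Theorem \ref{CT1} applicable at regularity scales of order one (in the metric $r_\delta(y)^{-2}g$ the hypothesis $\Ric\ge-\delta$ only holds when $r_\delta(y)\lesssim\sqrt{\delta_0}$), so you should either cap $r_\delta(y)$ at $\sqrt{\delta_0/(n-1)}$ and run the stratification at that scale, or, as the paper does, first prove the scalar bound under $\Ric\ge-\delta$ and then cover $B_1(p)$ by balls of radius $r_0(K,n,v,s)$ on which that hypothesis holds after rescaling.
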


The previous theorem is the best available result concerning the non-collapsed Yau conjecture mentioned at the begininning of the introduction. 
\\
\\
The paper is organized as follows.
Section \ref{S2} contains preliminaries. In Section \ref{S3}, we prove Theorems \ref{CT1}, \ref{thmInt3}, \ref{thmIntFinal}, and \ref{thmInt4}. In Section \ref{S1}, we study a property of length-minimizing geodesics in metric spaces which is then used to prove Theorem \ref{thmIntMain}. Finally, in Section \ref{Sec:MainGeom}, we prove Theorems \ref{thmIntMain} and \ref{thmInt2}. 

\subsection*{ Acknowledgments}  
The authors wish to thank the anonymous referee for the careful reading and constructive comments that allowed us to improve the manuscript.

A.\,M.\;acknowledges support from the European Research Council (ERC) under the European Union's Horizon 2020 research and innovation programme, grant agreement No.\;802689 ``CURVATURE''.

A.\;C.\;wishes to thank Xingyu Zhu for useful discussions on the content of this paper and, in particular, for pointing out the reference \cite{zhu2024twodimensionvanishingsplittingpositive}, whose content led to Theorem \ref{thmIntFinal}.

Part of this research was carried out at the Hausdorff Institute of Mathematics in Bonn, during the trimester program  ``Metric Analysis''. The authors wish to express their appreciation to the  institution for the stimulating atmosphere, and they acknowledge support  by the Deutsche Forschungsgemeinschaft (DFG, German Research Foundation) under Germany's Excellence Strategy – EXC-2047/1 – 390685813. 

For the purpose of Open Access, the authors applied a CC BY public copyright licence to any Author Accepted Manuscript (AAM) version arising from this submission.

\section{Preliminaries} \label{S2}
We start by recalling some basic notation and terminology about metric spaces. Throughout the section,  $(X,\sd)$ denotes a proper, separable, length metric space.
A metric measure spaces $(X,\sd,\m)$ is a triple, where $(X,\sd)$ is a metric space and $\m$ is a non-negative Borel measure on $X$ which is finite on bounded sets and whose support is the whole $X$. We denote by $\aH^k$ the $k$-dimensional Hausdorff measure induced by $\sd$ on $X$.
Given a smooth manifold $(M,g)$ we denote by $\sd$ and $\ssf{Vol}$ respectively the Riemannian distance and volume measure induced by $g$.
Given an open set $\Omega \subset X$, we denote by $\Lip(\Omega)$, $\Lip_{\rm loc}(\Omega)$ and $\Lip_c(\Omega)$ respectively the set of  Lipschitz, locally Lipschitz and compactly supported Lipschitz functions in $\Omega$. We denote the \emph{slope} of $f \in \Lip_{\rm loc}(\Omega)$ at $x \in \Omega$ by
\[
\lip(f)(x):= \limsup_{y \to x} \frac{|f(x)-f(y)|}{\ssf{d}(x,y)}.
\]

We say that two pointed metric measure spaces $(X,\sd_X,\m_X,\bar{x})$ and $(Y,\sd_Y, \m_Y,\bar{y})$ are \emph{isomorphic} if there exists a bijective distance preserving map $i: X \to Y$ such that $i_{\#}\m_X = \m_Y$ and $i(\bar{x}) = \bar{y}$. 

We next recall the main definitions concerning the pointed Gromov-Hausdorff convergence, referring to \cite{YuBurago_1992}, \cite{Vil}, and \cite{GMS13} for an overview on the subject. 

\begin{definition}[$\delta$-GH maps]
    Let $(X,\sd_X,\bar{x})$ and $(Y,\sd_Y,\bar{y})$ be pointed metric spaces and let $\delta >0$. A map $f:X \to Y$ is a \emph{$\delta$-GH map} if:
    \begin{itemize}
   \item    $f(\bar{x})=\bar{y}$;
   \item the image of $f$ is a $\delta$-net in $Y$, i.e. for every $y\in Y$ there exists $x\in X$ such that $\sd_Y(y, f(x))\leq \delta$;  
     \item  the distortion of $f$ is less than $\delta$, i.e. 
     \[\sup_{x_0,x_1 \in X} \Big|\sd_X(x_0,x_1)-\sd_Y(f(x_0),f(x_1)) \Big| \leq \delta.
        \]
        \end{itemize}
\end{definition}

\begin{definition}[pGH-convergence] \label{D2}
    A sequence of pointed metric spaces $(X_j,\sd_j,\bar{x}_j)$ converges to $(X,\sd,\bar{x})$ in \emph{pointed Gromov-Hausdorff-sense} (pGH, for short) if for every $\delta,R>0$ there exists $N>0$ such that, for every $j \geq N$, there exists a $\delta$-GH map $f_j^{R}:(\bar{B}_R(\bar{x}_j),\sd_j,\bar{x}_j) \to (\bar{B}_R(\bar{x}),\sd,\bar{x})$.
\end{definition}

The previous definition of pointed Gromov Hausdorff convergence is slightly different from the one given in \cite[p. 272]{YuBurago_1992}. Nevertheless, they coincide when considering the convergence of length spaces (as it will be, throughout the paper).

\begin{definition}[pmGH-convergence]
    We say that a sequence of pointed metric measure spaces $(X_j,\sd_j, \m_j, \bar{x}_j)$ converges to $(X,\sd,\m,\bar{x})$ in \emph{pointed measured Gromov-Hausdorff-sense} (pmGH, for short) if it converges in pointed Gromov-Hausdorff-sense and, for every $R>0$, the maps $f_j^{R}:\bar{B}_R(\bar{x}_j) \to \bar{B}_R(\bar{x})$ given by Definition \ref{D2} satisfy $(f_j^{R})_{\#} (\m_j \mres \bar{B}_R(\bar{x}_j)) \to \m \mres \bar{B}_R(\bar{x}) $, as $j\to \infty$, weakly in duality with continuous boundedly supported functions on $X$.
\end{definition}

Both pointed Gromov-Hausdorff and pointed measured Gromov-Hausdorff convergence are metrizable. The corresponding distances are denoted by $\sd_{{\rm pGH}}$ and $\sd_{{\rm pmGH}}$.
We recall that, in the case of a sequence of uniformly locally doubling metric measure spaces $(X_j,\sd_j, \m_j,\bar{x}_j)$
(as in the case of $\RCD(K, N)$ spaces), pointed measured Gromov-Hausdorff convergence to $(X,\sd,\m,x)$ can be equivalently characterized by asking for the existence of a proper
metric space $(Z, \sd_z )$ such that all the metric spaces $(X_j
, \sd_j)$ are isometrically embedded
into $(Z, \sd_z )$, $\bar{x}_j \to x$ and $\m_j \to \m$ weakly in duality with continuous boundedly supported functions in $Z$ (see \cite{GMS13}). 

Next, we recall the fundamental notion of  $(k,\delta)$-splitting map (see \cite{ColdingConv, ChCo1, Colding1, CheegerNaberCodim4}) on a Riemannian manifold with Ricci curvature bounded below. 

\begin{definition}[$(k,\delta)$-splitting maps]\label{def:delta-split-map}
Let $(M^n,g)$ be a Riemannian manifold and let $1\leq k\leq n$. A harmonic map $u:B_r(x) \to \bb{R}^k$ is a \emph{$(k,\delta)$-splitting map} if:
\begin{enumerate}
    \item For every $i,j \in \{1, \cdots,k\}$, it holds
$
\fint_{B_r(x)} | \nabla u_i \cdot \nabla u_j-\delta_{ij}| \, \de \ssf{Vol} \leq \delta.
$
\item $\sup_{B_r(x)} |\nabla u| \leq 1 +\delta$.
\item $r^2 \fint_{B_r(x)} |\Hess(u)|^2 \,\de\ssf{Vol} \leq \delta^2$.
\end{enumerate}
\end{definition}

\begin{definition}[$(k,\delta)$-symmetric balls] \label{D1}
    Let $(M^n,g)$ be a Riemannian manifold. A ball $B_r(p) \subset M$ is said to be \emph{$(k,\delta)$-symmetric} if there exists a pointed metric space $(Y,\sd_Y,\bar{y})$ such that $\sd_{{\rm GH}}(B_r(p),B^{\bb{R}^k \times Y}_r(0,\bar{y})) \leq \delta r$. If a ball is $(n,\delta)$-symmetric, we will say that it is $\delta$-regular.
\end{definition}

The next theorem corresponds to \cite[Theorem $4.11$]{CJN} (after \cite{ChCo1} and \cite{Colding1}).

\begin{thm} \label{T:Fund1}Let $n \in \bb{N}$.
For every $\epsilon>0$, there exists $\delta_0(\epsilon,n)>0$ with the following property. Let $\delta\in (0,\delta_0)$ and let $(M^n,g,p)$ be a pointed Riemannian manifold with $\Ric_{M} \geq -\delta$.
    \begin{enumerate}
        \item If there exists a $(k,\delta)$-splitting map $u:B_2(p) \to \bb{R}^k$, then $B_1(p)$ is $(k,\epsilon)$-symmetric.
        \item \label{ItemThmFund2}
        If $B_2(p)$ is $(k,\delta)$-symmetric, then there exists a $(k,\epsilon)$-splitting map $u:B_1(p) \to \bb{R}^k$.
    \end{enumerate}
\end{thm}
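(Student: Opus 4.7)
The plan is to prove both statements by the standard Cheeger--Colding contradiction-and-compactness scheme, relying on Gromov precompactness for the class $\{\Ric \geq -\delta\}$ and on the stability of harmonic functions under pmGH convergence.

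For part (1), I would argue by contradiction: suppose there exist $\epsilon_0>0$ and a sequence of pointed manifolds $(M_j^n,g_j,p_j)$ with $\Ric \geq -1/j$, equipped with $(k,1/j)$-splitting maps $u_j:B_2(p_j)\to\bb{R}^k$, such that $B_1(p_j)$ is not $(k,\epsilon_0)$-symmetric. By Gromov's precompactness theorem, up to a subsequence, $(M_j,\sd_j,\ssf{Vol}_j/\ssf{Vol}_j(B_1(p_j)),p_j)$ converges in pmGH to a limit $(X,\sd,\m,p_\infty)$. Using uniform Lipschitz bounds from condition (2) in Definition \ref{def:delta-split-map} together with the Cheeger--Colding--type stability of harmonic functions, the $u_j$ converge locally uniformly to a Lipschitz limit map $u_\infty:B_2(p_\infty)\to\bb{R}^k$. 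The $L^1$ orthonormality of gradients passes to the limit to give $\nabla u_{\infty,i}\cdot\nabla u_{\infty,j}=\delta_{ij}$ a.e., and the integral Hessian bound forces $u_\infty$ to be affinely isometric on lines; equivalently, each $u_{\infty,i}$ is a Cheeger--Gromoll--type linear function. Invoking the (iterated) almost splitting theorem of Cheeger--Colding one concludes that $X$ isometrically splits as $\bb{R}^k\times Y$ on $B_{3/2}(p_\infty)$, contradicting the non-$(k,\epsilon_0)$-symmetry assumption for $j$ large.

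For part (2), I would again argue by contradiction with a sequence $(M_j,g_j,p_j)$ such that $B_2(p_j)$ is $(k,1/j)$-symmetric while no $(k,\epsilon_0)$-splitting map exists on $B_1(p_j)$. Passing to a pmGH limit, one obtains $(X,\sd,\m,p_\infty)=(\bb{R}^k\times Y,\sd_{eu}\times\sd_Y,\mathcal{L}^k\otimes\m_Y,(0,\bar{y}))$ on the relevant scale. The coordinate projections $\pi_i:\bb{R}^k\times Y\to\bb{R}$ are globally harmonic with $|\nabla \pi_i|\equiv 1$ and $\Hess(\pi_i)=0$. To produce a splitting map on $M_j$, I would solve, on $B_{3/2}(p_j)$, the Dirichlet problem with boundary data given by (a smoothing of) the composition of $\pi_i$ with the $(1/j)$-GH approximations, obtaining harmonic functions $u_{j,i}$. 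By elliptic regularity, gradient estimates under Ricci lower bounds, and stability of harmonic functions with Dirichlet data under pmGH convergence, $u_{j,i}\to\pi_i$ locally uniformly with $|\nabla u_{j,i}|\to 1$ in $L^2_{\mathrm{loc}}$. The key quantitative step is then to apply the Bochner inequality $\tfrac{1}{2}\Delta|\nabla u_{j,i}|^2 \geq |\Hess(u_{j,i})|^2 + \nabla u_{j,i}\cdot\nabla\Delta u_{j,i} + \Ric(\nabla u_{j,i},\nabla u_{j,i})$ against a Cheeger--Colding good cut-off $\varphi$ on $B_1(p_j)$, yielding
\[
\fint_{B_1(p_j)} \varphi\,|\Hess(u_{j,i})|^2\,\de\ssf{Vol} \leq \tfrac{1}{2}\fint_{B_1(p_j)}\Delta\varphi\,(|\nabla u_{j,i}|^2-1)\,\de\ssf{Vol} + \tfrac{1}{j}\fint_{B_1(p_j)}\varphi\,|\nabla u_{j,i}|^2\,\de\ssf{Vol},
\]
which tends to $0$ by the $L^2$ gradient convergence. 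This contradicts the failure of the $(k,\epsilon_0)$-splitting condition on $B_1(p_j)$.

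The main obstacle is the quantitative control required by conditions (2) and (3) in Definition \ref{def:delta-split-map}: bare convergence of the maps to a splitting does not immediately yield the sup-bound $|\nabla u| \leq 1+\epsilon$ or the integral Hessian decay. Both require the Cheeger--Colding machinery, namely the existence of good cut-off functions with uniformly bounded Laplacian on balls with Ricci lower bound, together with Li--Yau/Cheng--Yau type gradient estimates applied to the harmonic functions $u_{j,i}$; once these tools are in place, the Bochner computation sketched above closes the argument.
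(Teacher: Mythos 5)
First, note that the paper does not prove this statement: it is quoted as \cite[Theorem 4.11]{CJN} (after \cite{ChCo1,Colding1}), so the comparison is with the argument in that reference, which is indeed the contradiction--compactness scheme you outline (and which, as the paper itself remarks before Theorem \ref{TSplitModified}, hinges on strong convergence/lower semicontinuity of Cheeger energies under pmGH convergence, available here because the Ricci lower bound is global). Your part (1) and the Bochner-plus-good-cutoff computation for the Hessian bound in part (2) are in line with that proof. Two points, however, deserve attention, and the second is a genuine gap.

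The minor point: taking Dirichlet data ``$\pi_i$ composed with the GH approximation, smoothed'' is not well defined as stated (GH maps need not be continuous); the standard fix, used both in \cite{ChCo1} and in the paper's own Theorem \ref{TSplitModified}, is to harmonically replace distance-difference functions $b_i(x)=\sd(x,p_i)-\sd(p,p_i)$ and estimate $\fint|\nabla(h_i-b_i)|^2$ directly. The serious point is condition (2) of Definition \ref{def:delta-split-map}: your closing paragraph claims the bound $\sup_{B_1}|\nabla u|\le 1+\epsilon$ follows from Cheng--Yau/Li--Yau gradient estimates, but those estimates only give $|\nabla u|\le C(n)$, a dimensional constant, not a constant close to $1$. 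This is exactly why the paper's Theorem \ref{TSplitModified}, which uses only Cheng--Yau \cite{ChengYau}, produces maps with $|\nabla\phi|\le c(n)$ and explicitly warns that they are ``not strictly speaking $(k,\delta)$-splitting''. The sharp bound requires an additional idea, present in \cite{CJN} (and \cite{CheegerNaberCodim4}): since $u$ is harmonic and $\Ric\ge-\delta$, Bochner gives $\Delta|\nabla u|^2\ge-2\delta|\nabla u|^2$, i.e.\ almost-subharmonicity; one then shows, via a telescoping/maximal-function argument driven by the integral Hessian bound, that $\fint_{B_r(y)}|\nabla u|^2\le 1+\Psi(\delta)$ for all balls $B_r(y)\subset B_{3/2}(p)$ at \emph{all} scales, and converts this into a pointwise bound by an almost mean-value inequality (heat-kernel based) for almost-subharmonic functions. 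Without this step your construction only yields the weaker conclusion of Theorem \ref{TSplitModified}, not the $(k,\epsilon)$-splitting map asserted in item \ref{ItemThmFund2}.
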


The next result was proved in \cite{ChCo1} (see also \cite{AMSbakry}).

\begin{thm} \label{TGoodCut}
    Let $K \in \bb{R}$ and $n \in \bb{N}$. Let $(M^n,g)$ be a Riemannian manifold with $\Ric_M \geq K$. For every $p \in M$ and $R>2r>0$, there exists a function $\phi \in C^{\infty}_c(B_R(p))$ which is identically equal to $1$ on $B_r(p)$, with support contained in $B_{2r}(p)$, and such that
    \[
    r^2 |\Delta \phi|+r|\nabla \phi| \leq C(K,n,R).
    \]
\end{thm}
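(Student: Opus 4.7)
The plan is to obtain $\phi$ by composing the distance function $d_p := \sd(p,\cdot)$ with a smooth one-variable cutoff and then regularizing to resolve the non-smoothness of $d_p$ on the cut locus and at $p$. The key ingredient is the Laplacian comparison theorem under $\Ric_M \geq K$: in the sense of barriers (hence distributionally on $\{d_p \leq R\}$) one has $\Delta d_p(x) \leq C(K,n,R)(1+1/d_p(x))$, with the explicit expression coming from the mean curvature of geodesic spheres in the model space of constant curvature $K/(n-1)$.

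Fix a non-increasing smooth $\eta : [0,\infty) \to [0,1]$ with $\eta \equiv 1$ on $[0, 5r/4]$, $\eta \equiv 0$ on $[7r/4, \infty)$, $|\eta'|\leq C/r$, and $|\eta''|\leq C/r^2$ for an absolute $C$. Setting $\tilde\phi := \eta \circ d_p$, at points where $d_p$ is smooth one has $|\nabla \tilde\phi|\leq C/r$ and $\Delta\tilde\phi = \eta''(d_p)+\eta'(d_p)\,\Delta d_p$. Since $\eta'$ is supported where $d_p \in [5r/4, 7r/4]$ and $r \leq R/2$, the Laplacian comparison bound yields $r|\nabla\tilde\phi|+r^2|\Delta\tilde\phi|\leq C(K,n,R)$ distributionally. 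To upgrade $\tilde\phi$ to a smooth function with the same pointwise bounds I would apply the heat semigroup $e^{\tau\Delta}$ for a sufficiently small $\tau = \tau(K,n,r,R)>0$: under $\Ric_M \geq K$, Bakry--\'Emery-type gradient and Laplacian estimates for the semigroup (the viewpoint developed in \cite{AMSbakry}) turn the distributional $L^\infty$ bounds on $\nabla\tilde\phi$ and $\Delta\tilde\phi$ into genuine pointwise bounds on $\nabla e^{\tau\Delta}\tilde\phi$ and $\Delta e^{\tau\Delta}\tilde\phi$, while producing a $C^\infty$ function uniformly close to $\tilde\phi$. A final composition with a smooth one-variable map sending values close to $1$ to exactly $1$ and values close to $0$ to exactly $0$ (with a slight readjustment of the thresholds in $\eta$ to accommodate this) then yields the desired $\phi \in C^\infty_c(B_R(p))$ with $\phi\equiv 1$ on $B_r(p)$, $\supp\phi\subset B_{2r}(p)$, and $r^2|\Delta\phi|+r|\nabla\phi|\leq C(K,n,R)$.

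The main obstacle is converting the merely barrier/distributional bound on $\Delta\tilde\phi$, caused by the non-smoothness of $d_p$ on the cut locus of $p$, into a genuine pointwise bound that survives the smoothing uniformly in the parameters; the lower Ricci bound is exactly what makes this work through the semigroup estimates mentioned above. An equivalent route, closer to the original Cheeger--Colding proof of \cite{ChCo1}, bypasses the cut locus by instead solving a linear Dirichlet problem with prescribed bounded right-hand side on the annulus $B_{2r}(p)\setminus \overline{B_r(p)}$, gluing to the constants $1$ and $0$ on the inner and outer balls, and mollifying across $\partial B_r(p)\cup \partial B_{2r}(p)$; the Laplacian bound on the solution then comes from the prescribed right-hand side, and the gradient bound from elliptic regularity combined with Cheng--Yau-type estimates available under $\Ric_M \geq K$.
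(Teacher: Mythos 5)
Your main route has a genuine gap at the step ``the Laplacian comparison bound yields $r|\nabla\tilde\phi|+r^2|\Delta\tilde\phi|\leq C(K,n,R)$ distributionally''. Laplacian comparison under $\Ric_M\geq K$ gives only a one-sided (upper) bound on $\Delta \sd(p,\cdot)$; since your $\eta$ is non-increasing, this translates only into a \emph{lower} bound $\Delta\tilde\phi\geq -C(K,n,R)/r^2$. There is no corresponding upper bound: the distributional Laplacian of the distance function carries a nonpositive singular measure on the cut locus (which $\eta'\leq 0$ turns into a positive singular part of $\Delta\tilde\phi$), and even its absolutely continuous part admits no lower bound in terms of $K,n,R$ (e.g.\ on thin convex surfaces of revolution with $\Ric\geq 0$ the radial Laplacian tends to $-\infty$ near the far tip, which can sit inside your annulus $\{5r/4\leq d_p\leq 7r/4\}$). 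So the input you feed into the heat-semigroup smoothing step simply is not available, and $e^{\tau\Delta}$ applied to a measure with large positive part does not return a uniformly bounded Laplacian. The semigroup strategy can be repaired, but along different lines (this is essentially what \cite{AMSbakry} does): use only $\|\tilde\phi\|_\infty\leq 1$ and its Lipschitz bound, invoke $\|\Delta e^{\tau\Delta}\tilde\phi\|_\infty\leq C(K,n)\tau^{-1}\|\tilde\phi\|_\infty$ together with the Bakry--\'Emery gradient contraction, and add a heat-kernel localization estimate to guarantee $e^{\tau\Delta}\tilde\phi$ is close to $1$ on $B_r(p)$ and close to $0$ outside $B_{2r}(p)$ before composing; none of this requires any a priori bound on $\Delta\tilde\phi$.

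Your alternative sketch in the last paragraph is essentially the classical argument of \cite{ChCo1}, which is what the paper in fact relies on (the theorem is quoted there with a citation, not proved): solve a Poisson/Dirichlet problem on the annulus, get the Laplacian bound from the equation and the gradient bound from comparison plus Cheng--Yau-type estimates. One correction, though: you should not ``mollify across $\partial B_r(p)\cup\partial B_{2r}(p)$'' --- mollification on a manifold is precisely the operation you cannot control --- but rather use the comparison estimates to show the solution stays in prescribed ranges near the two boundary spheres and then compose with a one-variable smooth function that is constant ($\equiv 1$, resp.\ $\equiv 0$) on those ranges, so the composition extends smoothly by constants across the spheres. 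With that adjustment the second route is correct and matches the cited proof.
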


The next theorem follows from \cite{Colding1,ColdingConv,ChCo1}.

\begin{thm} \label{T1} Let $n \in \bb{N}$.
    For every $\epsilon>0$ there exists $\delta=\delta(n, \epsilon)>0$ such that if $(M^n,g)$ has $\Ric_M \geq -\delta$ and $B_{10}(x) \subset M$ is $\delta$-regular, then $B_r(y)$ is $\epsilon$-regular for every $r \in (0,5)$ and every $y \in B_5(x)$.
\end{thm}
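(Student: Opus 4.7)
The plan is to argue by contradiction and pass to a pointed Gromov--Hausdorff limit. Suppose the conclusion fails, so that there exist $\epsilon_0>0$, $\delta_i\to 0^+$, pointed Riemannian $n$-manifolds $(M_i,g_i,x_i)$ with $\Ric_{M_i}\geq -\delta_i$ and $B_{10}(x_i)$ being $\delta_i$-regular, together with $y_i\in B_5(x_i)$ and $r_i\in(0,5)$ for which $B_{r_i}(y_i)$ is not $\epsilon_0$-regular. By Gromov's pre-compactness, up to a subsequence $(M_i,g_i,x_i)\to(X,\sd,x_\infty)$ in the pGH sense and $y_i\to y_\infty$ with $\sd(x_\infty,y_\infty)\leq 5$. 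The hypothesis that $B_{10}(x_i)$ is $\delta_i$-regular with $\delta_i\to 0$ forces, in the limit, $B_{10}(x_\infty)$ to be isometric to $B_{10}^{\bb{R}^n}(0)$; in particular $B_5(y_\infty)\subset B_{10}(x_\infty)$ is isometric to a Euclidean $5$-ball.

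The heart of the argument is to upgrade this almost-Euclidean structure at the fixed scale $5$ around $y_i$ to an almost-Euclidean structure at the variable scale $r_i$, after which a quantitative almost-rigidity result will close the proof. First, Cheeger--Colding's non-collapsed volume convergence theorem (applicable since the limit ball $B_5(y_\infty)$ is Euclidean, hence has full Hausdorff dimension $n$) gives $\ssf{Vol}(B_5(y_i))\to\omega_n\cdot 5^n$. Second, denoting by $V_{-\delta,n}(r)$ the volume of an $r$-ball in the simply-connected model $n$-space of constant Ricci $-\delta$, Bishop--Gromov comparison under $\Ric_{M_i}\geq -\delta_i$ yields
\[
\frac{\ssf{Vol}(B_{r_i}(y_i))}{V_{-\delta_i,n}(r_i)}\;\geq\;\frac{\ssf{Vol}(B_5(y_i))}{V_{-\delta_i,n}(5)}.
\]
Since $V_{-\delta_i,n}(r)/(\omega_n r^n)\to 1$ uniformly for $r\in(0,5]$ as $\delta_i\to 0$, combining this with the previous display produces $\ssf{Vol}(B_{r_i}(y_i))/(\omega_n r_i^n)\to 1$.

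To conclude, I would rescale by $\tilde g_i:=r_i^{-2}g_i$, so that $\widetilde{\Ric}_i\geq -r_i^2\delta_i\geq -25\delta_i\to 0$ and the $\tilde g_i$-volume of $B_1^{\tilde g_i}(y_i)$ converges to $\omega_n$. Colding's quantitative volume rigidity (almost-maximal volume of the unit ball together with almost non-negative Ricci curvature forces GH-closeness to the Euclidean unit ball) then yields that $B_1^{\tilde g_i}(y_i)$ is $(n,\eta_i)$-symmetric with $\eta_i\to 0$. By scale invariance of $(n,\delta)$-symmetry, $B_{r_i}^{g_i}(y_i)$ is itself $\eta_i$-regular, and for $i$ so large that $\eta_i<\epsilon_0$ this contradicts the initial choice of $y_i,r_i$.

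The main delicate point will be the case $r_i\to 0$, where the almost-Euclidean structure at the fixed scale $5$ does not transfer a priori to the vanishing scale $r_i$; here the lower Ricci bound $\Ric_{M_i}\geq -\delta_i$, through Bishop--Gromov monotonicity, is the essential tool that bridges this gap. Once almost-maximal volume at scale $r_i$ is secured, Colding's almost-rigidity is invoked as a black box.
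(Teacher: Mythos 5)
Your argument is correct and is essentially the proof the paper has in mind: the theorem is stated there with a citation to Colding and Cheeger--Colding rather than a written proof, and your route (contradiction, pGH limit, volume convergence on the Euclidean limit ball, Bishop--Gromov transfer of almost-maximal volume from scale $5$ at $y_i$ down to scale $r_i$, then Colding's volume almost-rigidity after rescaling) is exactly the standard argument those references provide. The only point you gloss is that ``$\delta_i$-regular'' a priori allows a factor $Y$ in Definition \ref{D1}, so one should note (via the dimension bound $\leq n$ for the limit, or a packing/volume argument in the manifolds) that this factor degenerates to a point, whence $B_{10}(x_\infty)$ is genuinely Euclidean; this is standard and does not affect the rest, and note also that only the lower bound $\liminf_i \ssf{Vol}(B_5(y_i)) \geq \omega_n 5^n$ is needed, so boundary-measure subtleties at radius $10$ are harmless.
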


\begin{definition}[Ricci limit space]
A pointed metric measure space $(X,\sd,\m,\bar{x})$ is a \emph{Ricci limit space} if there exists a sequence of pointed Riemannian manifolds $(M^n_j,g_j,p_j)$ with $\Ric_{M_j} \geq K$ for some fixed $K \in \bb{R}$ and $n \in \bb{N}$, such that $$(M^n_j,\sd_{g_j}, \ssf{Vol}_j(B_1(p_j))^{-1}\, \ssf{Vol}_j,p_j) \longrightarrow (X,\sd, \m, \bar{x}) \quad \text{in pmGH-sense.}
$$

The limit space is said to be \emph{non-collapsed} if $\ssf{Vol}_j(B_1(p_j)) \geq v>0$ for some fixed $v>0$.
\end{definition}

The next theorem was proved in  \cite{ColdingConv, Colding1}.

\begin{thm} \label{T3}
    Let $(M^n_j,g_j,p_j)$ be a sequence of pointed Riemannian manifolds with a uniform lower bound on the Ricci curvature converging in pGH-sense to a metric space $(X,\sd,\bar{x})$. Then precisely one of the following holds.
    \begin{enumerate}
        \item Either:
        $
        \limsup_{j \to + \infty} \ssf{Vol}_j(B_1(p_j))>0.
        $
        In this case the $\limsup$ is a limit and the pGH convergence can be improved to pmGH convergence to $(X,\sd,\aH^n,\bar{x})$.
        \item  Or:
        $
        \lim_{j \to + \infty} \ssf{Vol}_j(B_1(p_j))=0.
        $
        In this case the Hausdorff dimension of $X$ is bounded above by $n-1$.
    \end{enumerate}
\end{thm}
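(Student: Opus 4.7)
\textbf{Proof proposal for Theorem \ref{T3}.}

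The plan is to reduce to a subsequence on which the quantity $\ssf{Vol}_j(B_1(p_j))$ has a definite limit $v_\infty \in [0,+\infty]$, and then treat the cases $v_\infty>0$ and $v_\infty=0$ separately. First, by Bishop--Gromov volume comparison (applied to the uniform lower Ricci bound $\Ric_{M_j}\geq K$), for every $R>0$ the sequence $\ssf{Vol}_j(B_R(p_j))$ is bounded by a constant depending only on $K,n,R$; hence we may extract a diagonal subsequence along which $\ssf{Vol}_j(B_R(p_j))$ converges to some $v(R)\in[0,+\infty)$ for every rational $R>0$. The renormalized monotonicity of $r\mapsto \ssf{Vol}_j(B_r(p_j))/v_n(K,r)$ (where $v_n(K,r)$ is the model volume) passes to the limit, so $v(R)=0$ for some $R>0$ forces $v(R')=0$ for all $R'>0$; conversely $v(R_0)>0$ for one $R_0$ forces $v(R)>0$ for every $R$ up to a scale depending on $K$. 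This dichotomy is exactly the two alternatives in the statement, and since both quantities $\limsup_j \ssf{Vol}_j(B_1(p_j))$ and $\liminf_j\ssf{Vol}_j(B_1(p_j))$ are measurable along the original sequence, a standard subsequence argument promotes the alternative to the full sequence.

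In the non-collapsed case, the plan is to produce a limit measure and identify it with $\aH^n$. After embedding all spaces in a common proper metric space $(Z,\sd_Z)$ (as allowed by pGH convergence of length spaces, cf.\ the paragraph following Definition \ref{D2}), the measures $\ssf{Vol}_j$ become uniformly locally bounded Radon measures on $Z$, so by Prokhorov compactness a subsequence converges weakly to a Radon measure $\m$ on $X$. One then invokes Colding's volume continuity theorem (\cite{ColdingConv, Colding1}) to identify $\m=\aH^n$: the lower semicontinuity $\aH^n(B_r(\bar{x}))\leq \liminf_j \ssf{Vol}_j(B_r(p_j))$ follows from a covering argument using GH $\delta$-maps, while the matching upper bound $\ssf{Vol}_j(B_r(p_j))\leq \aH^n(B_r(\bar{x}))+o(1)$ is the deeper content, relying on the Abresch--Gromoll inequality and the fact that almost all points of a non-collapsed Ricci limit have Euclidean tangent cones of the top dimension. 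Because the identification $\m=\aH^n$ is independent of the extracted subsequence, the full sequence $\ssf{Vol}_j$ converges to $\aH^n$ in pmGH sense, and in particular $\limsup$ equals $\lim$ on $B_1(p_j)$.

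In the collapsed case, the plan is to upgrade the vanishing of volumes to a Hausdorff dimension bound via a packing argument. Fix $R>0$ and $r\in(0,R)$, and let $N_j(r,R)$ denote the maximal number of disjoint balls of radius $r/2$ centered inside $B_R(p_j)$; by Bishop--Gromov one has
\[
N_j(r,R)\cdot \min_{x\in B_R(p_j)}\ssf{Vol}_j(B_{r/2}(x)) \leq \ssf{Vol}_j(B_{2R}(p_j)),
\]
and another application of Bishop--Gromov controls the right-hand side uniformly while bounding the left-hand factor from below by a constant times $\ssf{Vol}_j(B_1(p_j))\cdot (r/2)^n$, up to a factor depending only on $K,n,R$. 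Dividing and passing $j\to\infty$ using $\ssf{Vol}_j(B_1(p_j))\to 0$ forces $N_j(r,R)\cdot r^n\to 0$, hence a sharper Minkowski-dimension estimate on $B_R(\bar{x})$, namely $\dim_{\M}(B_R(\bar{x}))\leq n-1$. Since $R$ was arbitrary and Minkowski dimension dominates Hausdorff dimension, this yields $\dim_{\aH}(X)\leq n-1$.

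The main obstacle is the identification $\m=\aH^n$ in the non-collapsed case: the matching upper bound on $\ssf{Vol}_j(B_r(p_j))$ in terms of $\aH^n(B_r(\bar{x}))$ is the technical heart of Colding's theorem and cannot be obtained from elementary covering arguments alone, as it requires the almost-rigidity consequences of Ricci lower bounds (e.g.\ almost volume cones imply almost metric cones) to ensure that Euclidean tangent behavior persists at the appropriate scales. The collapsed alternative, by contrast, is essentially a Bishop--Gromov packing estimate, and the passage from a single normalized subsequence back to the full sequence uses only the monotonicity of renormalized volume ratios.
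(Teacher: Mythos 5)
The paper does not actually prove Theorem \ref{T3}: it is quoted as a background result from \cite{ColdingConv,Colding1}, so your proposal can only be measured against the standard Colding/Cheeger--Colding arguments. Your treatment of the non-collapsed alternative is acceptable as a sketch: you correctly reduce it to Colding's volume convergence theorem (citing it rather than proving it, exactly as the paper does). Note, however, that promoting the conclusion from a subsequence to the full sequence is not a self-contained ``standard subsequence argument'': to exclude a collapsing subsequence when $\limsup_j \ssf{Vol}_j(B_1(p_j))>0$ one uses the collapsed-case dimension bound (a collapsing subsequence would give $\dim_{\aH}(X)\leq n-1$, hence $\aH^n(B_1(\bar x))=0$, contradicting $\ssf{Vol}_j(B_1(p_j))\to \aH^n(B_1(\bar x))>0$ along the non-collapsed subsequence), so the two alternatives are intertwined and your first alternative leans on the second.

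The genuine gap is the collapsed case. Your packing estimate goes the wrong way: Bishop--Gromov gives $\min_{x\in B_R(p_j)}\ssf{Vol}_j(B_{r/2}(x)) \geq c(K,n,R)\, r^n\, \ssf{Vol}_j(B_1(p_j))$, so dividing yields $N_j(r,R)\, r^n \leq C(K,n,R)\,\ssf{Vol}_j(B_{2R}(p_j))/\ssf{Vol}_j(B_1(p_j))$. The collapsing hypothesis then either cancels out (the ratio $\ssf{Vol}_j(B_{2R}(p_j))/\ssf{Vol}_j(B_1(p_j))$ is trapped between $1$ and $C(K,n,R)$ by Bishop--Gromov), or, if you bound the numerator only by the model volume, the right-hand side blows up as $\ssf{Vol}_j(B_1(p_j))\to 0$. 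In no reading does one get $N_j(r,R)\,r^n\to 0$; at best you recover the trivial bound $\dim_{\aH}(X)\leq n$. This is not a repairable slip of constants: the statement that collapsed Ricci limits have Hausdorff dimension at most $n-1$ is a deep theorem of Cheeger--Colding, proved by showing that $\dim_{\aH}(X)>n-1$ would produce points whose tangent cones are $\bb{R}^n$, and then volume convergence/almost-rigidity forces non-collapsing at those points and scales, a contradiction; it cannot be extracted from a Bishop--Gromov ball count alone. As written, the second alternative of the theorem is unproved in your proposal, and with it the full-sequence statement in the first alternative.
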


\begin{definition}[Tangent space]
    Let $(X,\sd,\m,\bar{x})$ be a Ricci limit space. A metric space $(Y,\sd_Y,y)$ is a \emph{tangent space} at $\bar{x}$ if there exists a sequence $1 > r_j > 0$, $r_j \searrow 0$ that satisfies 
    $$
    (X, \sd / r_j, \bar{x}) \stackrel{\rm{pGH}}{\longrightarrow} (Y, \sd_Y, y).
    $$
\end{definition}

Gromov pre-compactness theorem implies that the set of tangent spaces at a point is always non-empty. Moreover, in \cite{Colding1} it was shown that for non-collapsed Ricci limit spaces all tangent spaces are metric cones.

\begin{definition}[$k$-singular stratum]
    Let $(X,\sd, \m, \bar{x})$ be a Ricci limit space. For every $k \in \bb{N}$, the \emph{$k$-singular stratum} is defined to be 
    \begin{equation*}
        {\mathcal S}_{k} = \left \{x \in X : \ \mbox{no tangent space is isometric to }\bb{R}^{k+1} \times Y \mbox{, for any  }(Y,\sd_Y) \right\}.
    \end{equation*}
\end{definition}

The next theorem was proved in \cite{Colding1}.
\begin{thm} \label{T2}
    Let $(X, \sd, \bar{x})$ be a non-collapsed Ricci limit space of dimension $n \geq 2$. Then $\mathcal{S}_{n-1} \setminus \mathcal{S}_{n-2} = \emptyset$.
\end{thm}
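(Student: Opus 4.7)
The plan is to argue by contradiction, classifying the tangent cones that split off $\bb{R}^{n-1}$. Suppose $x \in {\mathcal S}_{n-1} \setminus {\mathcal S}_{n-2}$; by definition some tangent space at $x$ is isometric to $\bb{R}^{n-1} \times Y$ for a pointed metric space $(Y,\sd_Y,y_0)$. First I would observe that this space is itself a non-collapsed Ricci limit of dimension $n$: a standard diagonal argument rescales the manifolds defining $X$, the rescaled lower Ricci bound goes to zero, and non-collapsing passes to the limit by Bishop--Gromov applied to the defining sequence. Theorem \ref{T3} then forces $\bb{R}^{n-1}\times Y$ to have Hausdorff dimension exactly $n$, so $Y$ is one-dimensional. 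Moreover, Colding's volume-cone implies metric-cone theorem from \cite{Colding1} says that tangent cones at points of non-collapsed Ricci limits are metric cones, so $Y$ is itself a one-dimensional metric cone; together with the generalized non-negative Ricci condition it inherits, this forces $Y$ to be isometric either to $\bb{R}$ or to the half-line $[0,+\infty)$.

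The case $Y=\bb{R}$ is immediate: $\bb{R}^{n-1}\times Y = \bb{R}^n$, contradicting $x \in {\mathcal S}_{n-1}$. The real work is to exclude the half-space $H := \bb{R}^{n-1}\times [0,+\infty)$. The plan here is to use the upper semi-continuity of the volume density $V(y) := \lim_{r \to 0^+} \aH^n(B_r(y))/(\omega_n r^n)$, which is well-defined on non-collapsed Ricci limits and upper semi-continuous as the pointwise infimum, via Bishop--Gromov monotonicity, of the continuous family $y \mapsto \aH^n(B_r(y))/(\omega_n r^n)$. Since a tangent space at $x$ equals $H$, volume density is preserved by the blow-up and the vertex contributes $V(x) = \aH^n(B_1(0,0)\cap H)/\omega_n = 1/2$. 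On the other hand, any interior point $q := (0,1/2) \in H$ has a Euclidean neighbourhood in $H$, so pulling back the $\delta$-GH approximations defining the tangent cone produces a sequence $y_i \to x$ in $X$ for which suitably rescaled small balls around $y_i$ have volumes converging to the full Euclidean value. Colding's $\epsilon$-regularity theorem from \cite{Colding1} then forces $V(y_i)=1$ for large $i$, and upper semi-continuity yields the contradiction $1/2 = V(x) \geq \limsup_i V(y_i) = 1$.

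The main obstacle is not the logical scaffolding, which is quite short, but the depth of the rigidity results on which it rests: the volume-cone implies metric-cone theorem for non-collapsed Ricci limits, the classification of one-dimensional non-negatively curved metric cones as either $\bb{R}$ or the half-line, Colding's $\epsilon$-regularity linking almost maximal volume ratio to having a Euclidean tangent cone, and the upper semi-continuity of $V$. Each of these is proved in \cite{Colding1}, and together they reduce the codimension-one singularity question to the transparent observation that a Euclidean half-space has volume density $1/2$ at its vertex, whereas any interior point has density $1$.
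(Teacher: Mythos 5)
Your reduction of the tangent cone to $\bb{R}^n$ or to the half-space $\bb{R}^{n-1}\times[0,+\infty)$ is the standard first step, but the half-space exclusion --- which is the entire content of the theorem --- rests on a semicontinuity claim that is false, and false in exactly the configuration you need. Bishop--Gromov makes the ratio $r\mapsto \aH^n(B_r(y))/(\omega_n r^n)$ (with the usual correction for a negative lower bound) \emph{nonincreasing} in $r$, so the density $V(y)$ is the pointwise \emph{supremum}, not infimum, of the continuous family, hence \emph{lower} semicontinuous; you have inverted the monotonicity. Upper semicontinuity genuinely fails: in a two-dimensional cone of angle less than $2\pi$ (a bona fide non-collapsed Ricci limit), or in the half-space itself, regular points of density $1$ converge to a point of density strictly less than $1$, so ``$\limsup_i V(y_i)=1>V(x)$'' is not a contradiction but merely the failure of usc. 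With the correct inequality $V(x)\le\liminf_i V(y_i)$ your chain reads $1/2\le 1$ and yields nothing.

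There is also a structural reason why no repair along these lines can succeed: every ingredient you invoke --- metric-cone structure of tangent cones, Bishop--Gromov monotonicity, volume convergence, Colding's $\epsilon$-regularity, semicontinuity of the density --- holds verbatim for non-collapsed $\RCD(0,n)$ spaces, and the half-space $\bb{R}^{n-1}\times[0,+\infty)$ (a convex subset of $\bb{R}^n$ with $\aH^n$) is itself such a space, whose tangent cone at every boundary point is again a half-space. Hence any argument built solely from this synthetic package proves too much and must be wrong; a correct proof has to use that the approximating spaces are smooth manifolds \emph{without boundary}. This is precisely how the Cheeger--Colding argument proceeds, and it is also why the paper does not reprove Theorem \ref{T2} but quotes it from \cite{Colding1}: your opening reduction matches the standard setup, but the half-space case requires manifold-specific input that your sketch never uses.
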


We next recall some  properties of $\RCD$ spaces and of finite perimeter sets thereof, which will be useful later in the paper. We assume the reader to be familiar with the subject and we refer to the surveys \cite{Asurv, SSurv, GSurv} for an account of the theory. Let us just recall that an $\RCD(K,N)$ space is a metric measure space $(X,\sd,\m)$ with Ricci curvature bounded below by $K\in \mathbb{R}$ and dimension bounded above by $ \, N\in [1,\infty]$ in a synthetic-sense, and whose 
 Sobolev space $W^{1,2}$ is Hilbert. For the present work, it will suffice to consider the finite dimensional case; thus, if not otherwise specified, $N\geq 1$ will be a real number.  Recall that  such  a class is stable under pmGH convergence,  it contains Ricci limit spaces and finite dimensional Alexandrov spaces. 
 
 The next result follows from \cite{MondinoNaber,BScon} (after the seminal works for Ricci limits \cite{Colding1, ChCo3, ColdingNaber}).

\begin{thm}\label{thm:EssDim}
    Let $(X,\sd,\m)$ be an $\RCD(K,N)$ space. Then there exists $k \in \bb{N} \cap [1,N]$, called \emph{essential dimension} of $X$, such that for $\m$-a.e.\;$x \in X$ all tangent spaces in $x$ are isometric to $(\bb{R}^k,\sd_e,0)$.
\end{thm}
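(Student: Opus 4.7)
The plan is to establish the theorem by combining two deep structural results for $\RCD(K,N)$ spaces: the existence of some Euclidean tangent $\m$-almost everywhere, due to Mondino--Naber \cite{MondinoNaber}, and the $\m$-a.e. constancy of the dimension of all tangents, due to Bru\'e--Semola \cite{BScon}. For each $k \in \mathbb{N} \cap [1,N]$, introduce the $k$-regular set $\mathcal{R}_k := \{ x \in X : \text{every tangent space at } x \text{ is isometric to } (\bb{R}^k,\sd_e,0) \}$; the theorem is then equivalent to the assertion that $\m(X \setminus \mathcal{R}_k) = 0$ for exactly one integer $k$.

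For the first step, at each $x$ define $k(x)$ as the largest integer for which, for every $\delta > 0$, there exist arbitrarily small balls $B_r(x)$ that are $(k,\delta)$-symmetric (equivalently, admit a $(k,\delta)$-splitting map, by the $\RCD$ version of Theorem \ref{T:Fund1}). By the $\RCD$ splitting theorem of Gigli, every tangent at such $x$ splits as $\bb{R}^{k(x)} \times Y_x$ where $Y_x$ has no line through the basepoint. The crux is to prove that $Y_x$ is a single point for $\m$-a.e. $x$. Arguing by contradiction and stratifying, if the set where $Y_x$ is non-trivial had positive $\m$-measure, at a density point one could iterate the tangent-cone construction inside $Y_x$, produce a line via Colding's almost-splitting theorem, and use harmonic approximations of its Busemann function together with small-scale rescaling to build an additional independent coordinate yielding a $(k(x){+}1,\delta)$-splitting map on arbitrarily small balls around $x$. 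This contradicts maximality of $k(x)$ and forces $x \in \mathcal{R}_{k(x)}$ for $\m$-a.e. $x$.

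For the second step, suppose towards contradiction that $\m(\mathcal{R}_j) > 0$ and $\m(\mathcal{R}_k) > 0$ for some $j < k$ in $\mathbb{N} \cap [1,N]$. The Bru\'e--Semola approach considers a Sobolev vector field tangent to a $(j,\delta)$-splitting map issued from a density point of $\mathcal{R}_j$ and examines its regular Lagrangian flow. Using that this flow preserves $\m$ up to controlled Jacobians and that both $\mathcal{R}_j$ and the essential-dimension function behave well under it, one shows that an orbit cannot meaningfully cross between $\mathcal{R}_j$ and $\mathcal{R}_k$, while on the other hand the geometry of the splitting map forces the flow to mix mass across these sets, producing a contradiction. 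Finally, $k \leq N$ follows from Bishop--Gromov applied to the tangent cones (which are $\RCD(0,N)$), and $k \geq 1$ from non-triviality of the space.

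The main obstacle is precisely the constancy step: while step one is, in spirit, a synthetic transplantation of Cheeger--Colding splitting arguments to the $\RCD$ framework, step two is significantly more delicate, relying on the full first and second order Sobolev calculus on $\RCD$ spaces and on the Ambrosio--Trevisan theory of regular Lagrangian flows under only Sobolev regularity of the driving vector field.
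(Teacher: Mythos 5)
The paper does not prove this statement at all: it records it as a known result, citing precisely the two works you invoke (Mondino--Naber for the $\m$-a.e.\ existence and uniqueness of Euclidean tangents, and Bru\'e--Semola for the $\m$-a.e.\ constancy of the dimension). Your outline therefore takes essentially the same route as the paper, with the substantive content residing in those cited references rather than in any argument given here.
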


The next proposition, from \cite[Theorem $1.1$]{KL}, characterizes the $\RCD$ spaces having essential dimension  $1$.

\begin{proposition} \label{P1}
    Let $(X,\sd,\m)$ be an $\RCD(K,N)$ space of essential dimension equal to $1$. Then $(X, \sd)$ is isometric to a closed connected subset of $\bb{R}$ or to $\mathbb{S}^1(r):=\{x \in \bb{R}^2: |x|=r \}$.
\end{proposition}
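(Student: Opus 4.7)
The plan is to promote the pointwise tangent-cone condition holding at almost every point to a genuine local isometry with an interval of $\bb R$, and then classify the resulting complete length space of essential dimension $1$. By Theorem \ref{thm:EssDim}, the regular set $\mathcal R := \{x \in X : \Tan_x X = \{(\bb R, \sd_e, 0)\}\}$ has full $\m$-measure and in particular is dense in $X = \supp \m$.

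First, near each $x_0 \in \mathcal R$ I would produce a $1$-Lipschitz function $u : B_r(x_0) \to \bb R$ satisfying $|\nabla u| = 1$ $\m$-a.e. This is obtained by transplanting the Cheeger-Colding scheme to the $\RCD$ setting: the pmGH convergence of blow-ups at $x_0$ to $(\bb R, \sd_e, 0)$, together with the $\RCD$ counterpart of Theorem \ref{T:Fund1} (constructing harmonic splitting maps out of almost-splitting symmetry), yields for every $\epsilon>0$ a $(1,\epsilon)$-splitting function $u_\epsilon$ on a fixed ball $B_r(x_0)$; any $W^{1,2}$-weak subsequential limit as $\epsilon \to 0$ inherits $|\nabla u| \leq 1$ together with $\fint_{B_r(x_0)}|\nabla u|^2 \,\de \m = 1$, forcing the eikonal identity.

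Next, I would upgrade $u$ to a local isometry. Because $\RCD(K,N)$ spaces are essentially non-branching and $X$ has essential dimension $1$, the Regular Lagrangian Flow of $\nabla u$ is well defined and foliates $B_r(x_0)$ by unit-speed geodesics on which $u$ is an affine coordinate; the essential dimension being $1$ forces the transverse level sets of $u$ to be singletons, hence $u$ is a pointed isometry of $B_r(x_0)$ onto an open interval. The set of points of $X$ admitting such a local isometric chart is open by construction, and a standard extension argument using density of $\mathcal R$, completeness, and local compactness of $(X,\sd)$ shows that it equals the whole $X$. Consequently $(X,\sd)$ is a connected topological $1$-manifold (possibly with boundary) whose intrinsic metric coincides with $\sd$. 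The classification of connected $1$-manifolds, combined with completeness of $(X,\sd)$, yields that $X$ is isometric either to a closed connected subset of $\bb R$ or to a round circle $\bb S^1(r)$.

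The main obstacle will be the upgrade from the eikonal identity $|\nabla u| = 1$ $\m$-a.e.\ to a genuine local isometry: in higher-dimensional $\RCD$ spaces this step fails (one only obtains approximate Euclidean charts, cf.\ Theorem \ref{T:Fund1}), so the argument must crucially exploit that essential dimension $1$ rules out any nontrivial transverse direction along which the flow of $\nabla u$ could develop. Making this rigorous without smoothness requires either localizing Gigli's splitting theorem along the flow of $\nabla u$ or a direct geodesic-based analysis of the level sets, and is where the bulk of the technical work sits.
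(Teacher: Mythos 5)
First, note that the paper does not prove Proposition \ref{P1} at all: it is quoted directly from \cite[Theorem 1.1]{KL} (Kitabeppu--Lakzian), so there is no in-paper argument to compare yours against. What you have written is therefore an attempted independent proof, and as such it contains genuine gaps rather than a complete argument.

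Two steps fail as stated. (a) At a point $x_0$ of essential dimension $1$ you only know that blow-ups converge to $\bb{R}$, so for each $\epsilon>0$ the $(1,\epsilon)$-symmetry (and hence a $(1,\epsilon)$-splitting function) is available only on balls $B_{r}(x_0)$ with $r\leq r_{x_0}(\epsilon)$, and in general $r_{x_0}(\epsilon)\to 0$ as $\epsilon\to 0$; there is no analogue of Theorem \ref{T1} propagating regularity to a fixed scale in this collapsed setting, so the weak limit $u$ with $|\nabla u|=1$ on a \emph{fixed} ball is not justified (also, weak $W^{1,2}$ limits only give $\fint|\nabla u|^2\leq 1$ unless you prove strong convergence). (b) More seriously, the two assertions carrying all the content --- that essential dimension $1$ forces the level sets transverse to the flow of $\nabla u$ to be singletons, and that the open set of points admitting a local isometric chart to an interval must be all of $X$ because it is dense and $X$ is complete and locally compact --- are exactly what needs proof, and the second one is false as a general metric statement: in a tripod (with $\aH^1$), every point except the branch point has a neighbourhood isometric to an interval, so the charted set is open and dense yet not everything. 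Ruling out such branch-type points is precisely where the $\RCD$/essentially non-branching hypothesis must enter quantitatively (this is what \cite{KL} does), and your sketch defers it ("where the bulk of the technical work sits") rather than supplying it. As it stands the proposal is a plausible programme, not a proof.
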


We now recall some topological properties of $\RCD$ spaces.
The next proposition is a special case of  \cite[Theorem 5]{Santos}, in the non-collapsing scenario.  We denote by $\mathrm{b}_1(X)$ the first Betti number of $X$.

\begin{proposition} \label{P|uppersemicontinuity betti}
    Let $\big((X_j,\sd_j,\aH^N_j,\bar{x}_j)\big)_{j\in \mathbb{N}}$ be a sequence of pointed  $\RCD(0,N)$ spaces with $\aH^N_j(B_1(\bar{x}_j)) \geq v>0$ converging in pGH-sense to a compact metric space $(X,\sd)$. If $\mathrm{b}_1(X_j) \geq r$ for every $j$, then $\mathrm{b}_1(X) \geq r$. 
\end{proposition}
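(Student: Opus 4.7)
The plan is to exploit the splitting theorem for $\RCD(0,N)$ spaces applied to abelian covers, together with equivariant pmGH convergence. Since $\mathrm{b}_1(X_j) \geq r$, for each $j$ I choose a surjective homomorphism $\pi_1(X_j) \twoheadrightarrow \bb{Z}^r$ and let $\pi_j : \tilde{X}_j \to X_j$ be the corresponding normal covering. Then $\tilde{X}_j$ inherits a metric measure structure and is itself an $\RCD(0,N)$ space, with $\bb{Z}^r$ acting freely, properly discontinuously, and cocompactly by measure-preserving isometries, with quotient $X_j$.

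First, I verify that each $\tilde{X}_j$ splits off an $\bb{R}^r$ factor. By pGH convergence to a compact space, the diameters $\mathrm{diam}(X_j)$ are uniformly bounded, so a fundamental domain in $\tilde{X}_j$ has uniformly bounded diameter; iterating any infinite-order element of $\bb{Z}^r$ produces a line via the standard Cheeger-Gromoll midpoint argument. Applying Gigli's splitting theorem for $\RCD(0,N)$ spaces inductively to the $r$ independent lines obtained in this way gives an isometric product decomposition
\[
\tilde{X}_j \cong \bb{R}^r \times K_j,
\]
where $K_j$ is a compact $\RCD(0,N-r)$ space and $\bb{Z}^r$ acts on the $\bb{R}^r$ factor by translations forming a cocompact lattice $\Lambda_j \subset \bb{R}^r$, acting trivially on $K_j$ after passing to a finite-index subgroup if necessary.

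Second, I pass these data to the limit. The non-collapsing hypothesis $\aH^N_j(B_1(\bar{x}_j)) \geq v$ lifts to $\tilde{X}_j$ since covers preserve local volume, so the pointed spaces $(\tilde{X}_j, \tilde{x}_j)$ are uniformly non-collapsed. By Gromov pre-compactness combined with the equivariant GH convergence of the $\bb{Z}^r$-actions, a subsequence converges to $(\tilde{X}, \tilde{x}, G)$ in equivariant pmGH sense, with $G$ a closed subgroup of $\mathrm{Iso}(\tilde{X})$. The crucial point is that the non-collapsing, together with Bishop-Gromov, gives a uniform lower bound on the systole of $X_j$, hence a uniform lower bound on the length of any non-trivial translation of $\Lambda_j$; combined with the uniform upper bound coming from the diameter control, the lattices $\Lambda_j$ stay in a compact family bounded away from degeneracy, and $G \cong \bb{Z}^r$. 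Passing the splitting to the limit yields $\tilde{X} = \bb{R}^r \times K$ with $\bb{Z}^r$ acting by a cocompact lattice of translations on $\bb{R}^r$.

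Third, I conclude by identifying the quotient. Because the quotient maps $\tilde{X}_j \to X_j$ have uniformly bounded fibers and $X_j \to X$ in pGH, the induced quotient $\tilde{X}/G$ is isometric to $X$. Therefore $X$ admits a normal cover with deck group $\bb{Z}^r$, so $\pi_1(X)$ surjects onto $\bb{Z}^r$ and $\mathrm{b}_1(X) \geq r$. The principal obstacle is the control of the lattice $\Lambda_j$ in the passage to the limit: without non-collapsing, the translation lengths of the $r$ generators could collapse to zero, producing a continuous torus action in the limit rather than a discrete $\bb{Z}^r$, which would lose the topological content. The non-collapsing assumption is precisely what prevents this degeneration and makes the argument go through.
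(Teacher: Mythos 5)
The paper does not actually prove this proposition: it quotes it as a special case of \cite[Theorem 5]{Santos}, so there is no internal argument to compare with, and your proposal must stand on its own. It does not: there is a genuine gap at exactly the step you single out as crucial. A lower bound on the systole of $X_j$ (and even that, for torsion classes, needs an Anderson-type counting argument, not just Bishop--Gromov) does \emph{not} imply a lower bound on the nonzero vectors of the translation lattice $\Lambda_j$: a deck transformation acts on $\bb{R}^r\times K_j$ as a screw motion, and the length of the corresponding loop in $X_j$ is its total displacement, which can remain bounded below through the action on the compact factor $K_j$ while the translational part goes to $0$. Concretely, let $\Lambda'_j\subset\bb{R}^2$ be the lattice generated by $(1/j,\sqrt2)$ and $(0,2\pi j)$, and let $X_j=\bb{R}^2/\Lambda'_j$. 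These are flat tori, i.e.\ $\RCD(0,2)$ spaces with $\aH^2$, of area $2\pi$, systole about $\sqrt2$, uniformly bounded diameter, hence uniformly non-collapsed, with $\mathrm{b}_1(X_j)=2$, and (along a subsequence) they converge to a flat $2$-torus. Take $r=1$ and the surjection $\pi_1(X_j)\cong\Lambda'_j\to\bb{Z}$ killing $(0,2\pi j)$: the corresponding cover is the flat cylinder $\bb{R}\times S^1$ of circumference $2\pi j$, and the deck generator is the screw motion translating by $1/j$ along the line and rotating by arclength $\sqrt2$ along the circle. Thus $\Lambda_j=(1/j)\bb{Z}$ degenerates, the rotation part has infinite order (so no finite-index subgroup acts trivially on $K_j$), and the equivariant limit of the actions is a rank-two lattice of translations of $\bb{R}^2$ (it must be, since the quotients converge to a two-torus), not $\bb{Z}^r$. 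The conclusion of the proposition still holds here, but via a mechanism your argument does not capture; the assertions ``non-collapsing bounds the translation lengths from below'' and ``$G\cong\bb{Z}^r$'' are false as stated, so the proof does not close.

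What non-collapsing genuinely gives (by orbit counting with Bishop--Gromov and $\aH^N(X_j)\geq v$) is a uniform lower bound on the displacement of every infinite-order deck transformation, and, in the limit, that $G$ must be discrete (a positive-dimensional $G$ would force a dimension drop of $X=\tilde X/G$, contradicting $\aH^N(X)\geq v$, which survives by volume convergence). But converting this into $\mathrm{b}_1(X)\geq r$ still requires showing that the limit group $G$ --- which, as the example shows, may be strictly larger than a limit of $\bb{Z}^r$, need not split off the rotational parts, and need not act freely --- still yields $r$ independent classes in $H_1(X)$; that is precisely the content of the cited theorem and is absent from your argument. Two smaller slips: the fibers of $\tilde X_j\to X_j$ are unbounded orbits (you mean fundamental domains), and it is cocompactness, not any systole bound, that makes the image of $\bb{Z}^r$ in $\bb{R}^r$ a lattice in the first place.
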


The next proposition, proved in \cite[Corollary 1.2]{RCDsemilocally} (see also \cite[Theorem 1.2]{wang2021riccilimitspacessemilocally}), establishes the converse inequality in higher generality.

\begin{proposition} \label{P|lowerbetti}
    Let $\big((X_j,\sd_j,\m_j, \bar{x}_j)\big)_{j\in \mathbb{N}}$ be a sequence of pointed $\RCD(K,N)$ spaces converging in $pmGH$-sense to a compact metric measure space $(X,\sd,\m)$. If $\mathrm{b}_1(X_j) \leq r$ for every $j$, then $\mathrm{b}_1(X) \leq r$. 
\end{proposition}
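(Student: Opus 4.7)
I argue by contradiction, leveraging the semilocal simple connectedness of compact $\RCD(K,N)$ spaces established precisely in the references cited just before the statement. Suppose $\mathrm{b}_1(X) \geq r+1$. Since $X$ is $\RCD(K,N)$ by stability and is compact, there is a radius $\varepsilon_0>0$ such that every loop in $X$ of diameter less than $\varepsilon_0$ is null-homotopic in $X$; by a covering argument combined with the GH-approximations, the same scale (up to a universal factor) works uniformly on $X_j$ for all $j$ large. Choose loops $\gamma_1,\ldots,\gamma_{r+1}$ based at $\bar{x}\in X$ whose classes are $\mathbb{Z}$-linearly independent in $H_1(X;\mathbb{Z})/\mathrm{torsion}$, each taken to be polygonal with geodesic edges of length much smaller than $\varepsilon_0$.

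Next, I would lift each $\gamma_i$ to a loop $\gamma_i^j$ in $X_j$ by sending every vertex $v$ to $\psi_j(v)\in X_j$ via a $\delta_j$-GH approximation $\psi_j\colon X\to X_j$ and joining consecutive images by minimizing $X_j$-geodesics. Once $\delta_j\ll\varepsilon_0$, the homotopy class $[\gamma_i^j]\in\pi_1(X_j,\psi_j(\bar{x}))$ is independent of the choices, since any two competing short connections differ by loops contained in $\varepsilon_0$-balls, hence contractible. The goal is to prove that $[\gamma_1^j],\ldots,[\gamma_{r+1}^j]$ remain $\mathbb{Z}$-independent in $H_1(X_j;\mathbb{Z})/\mathrm{torsion}$ for $j$ large, which would force $\mathrm{b}_1(X_j)\geq r+1$ and contradict the hypothesis.

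Independence is verified by a second contradiction: assume that along a subsequence there is a nontrivial relation $\sum_i a_i^j [\gamma_i^j]=0$ with coefficients normalized to satisfy $\gcd=1$ and stabilized to fixed integers $a_i$ after extracting. Such a relation is witnessed by a singular $2$-chain in $X_j$. Covering $X$ by finitely many contractible $\varepsilon_0/10$-balls and pulling back the associated nerve cover to $X_j$ via $\psi_j$ realizes the bounding $2$-chain inside a finite simplicial complex whose combinatorics stabilize along the subsequence; passing to the GH limit, the same combinatorial relation produces $\sum_i a_i [\gamma_i]=0$ in $H_1(X;\mathbb{Z})/\mathrm{torsion}$, contradicting the choice of the $\gamma_i$.

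The main obstacle is exactly this last nerve-transfer step: without a priori control of the combinatorial complexity of a bounding $2$-chain, a null-bordism in $X_j$ need not descend to one in $X$. Making the argument rigorous requires both the \emph{uniform} semilocal simple connectedness constant for the $X_j$ and a finite-multiplicity covering by small contractible balls (the latter following from Bishop--Gromov on $\RCD(K,N)$ spaces); alternatively, one can pass to the universal covers $\widetilde{X}_j$ and run a Fukaya--Yamaguchi-type equivariant pmGH convergence argument to compare the deck-transformation groups $\Gamma_j$ and their abelianizations directly. This is precisely the topological machinery developed in \cite{RCDsemilocally, wang2021riccilimitspacessemilocally}, and is the reason the proposition is invoked as a consequence of those works rather than re-proved here.
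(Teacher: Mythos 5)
The paper does not actually re-prove this proposition: it is quoted from \cite[Corollary 1.2]{RCDsemilocally} (see also \cite[Theorem 1.2]{wang2021riccilimitspacessemilocally}), so a blind proof would have to either reproduce that machinery or give a self-contained argument. Your proposal does neither, and it contains a genuine gap that you yourself flag: the decisive step --- turning a relation $\sum_i a_i[\gamma_i^j]=0$ in $H_1(X_j;\mathbb{Z})$ into the relation $\sum_i a_i[\gamma_i]=0$ in $X$ by pulling a bounding $2$-chain through the nerve of a fine cover --- is left open, and without it the contradiction never materializes. There is also an earlier unjustified claim: that the semilocal simple connectedness scale $\varepsilon_0$ of the limit ``works uniformly on $X_j$ for all $j$ large by a covering argument combined with the GH-approximations''. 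Uniform contractibility of small loops along a GH-convergent sequence is not a soft consequence of having GH-approximations; it is exactly the kind of statement established in \cite{RCDsemilocally,wang2021riccilimitspacessemilocally}. Both the well-definedness of your transferred classes $[\gamma_i^j]$ and the intended $2$-chain transfer lean on it, so as written the argument is circular at the very points where it would need to do work.

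Both difficulties disappear if you run the comparison in the opposite direction, using only properties of the limit. By pmGH-stability, $(X,\sd,\m)$ is a compact $\RCD(K,N)$ space, hence semilocally simply connected by \cite{RCDsemilocally}; let $\varepsilon_0$ be that scale. For $j$ large, a $\delta_j$-GH approximation $X_j\to X$ induces a homomorphism $\Phi_j:\pi_1(X_j,\bar{x}_j)\to\pi_1(X,\bar{x})$ in the classical Tuschmann/Sormani--Wei fashion (cf.\ \cite{Sormaniwei}): discretize a loop in $X_j$ at scale $\ll\varepsilon_0$, map the vertices to $X$, join by geodesics; well-definedness follows by discretizing any homotopy in $X_j$ into small squares, each of which maps to a loop in $X$ of diameter $<\varepsilon_0$, hence null-homotopic --- note that only small loops in the \emph{limit}, not in $X_j$, must be contractible. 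Surjectivity follows by transferring a loop $\gamma\subset X$ to $X_j$ and back: the round trip is uniformly close to $\gamma$, hence homotopic to it, again by semilocal simple connectedness of $X$. A surjection $\pi_1(X_j)\twoheadrightarrow\pi_1(X)$ induces a surjection on abelianizations tensored with $\mathbb{R}$, so $\mathrm{b}_1(X)\leq\mathrm{b}_1(X_j)\leq r$. In particular, relations in $X_j$ are pushed forward through $\Phi_j$ for free, with no need to control the combinatorics of a bounding $2$-chain; if you prefer to keep your contradiction scheme, $\Phi_j([\gamma_i^j])=[\gamma_i]$ is exactly the missing tool.
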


\begin{definition}[Covering space and universal cover] Let $(X, \sd_X )$ be a connected metric space. We say that $(Y, \sd_Y )$ is a \emph{covering space} for $X$ if there exists a continuous map $\pi : Y \to X$
such that for every point $x \in X$ there exists a neighborhood $U_x \subset X$ with the following property:
$\pi^{-1}(U_x)$ is the disjoint union of open subsets of $Y$ each of which is mapped homeomorphically onto
$U_x$ via $\pi$. \\
A connected metric space $(\tilde{X},\sd_{\tilde{X}})$ is said to be a \emph{universal cover} for $(X,\sd_X)$ if it is
a covering space  with the following property: for any other connected covering space $(Y, \sd_Y )$ of
$(X,\sd_X)$, there exists a continuous map $f : \tilde{X} \to Y$ such that the triangle made with the projection
maps onto $X$ commutes. 
\end{definition}

The next result proved in \cite{MondinoWei}  establishes the existence of the universal cover for an $\RCD(K,N)$ space.  

\begin{proposition}
    Let $(X,\sd,\m)$ be an $\RCD(K,N)$ space. Then, it admits a universal cover $(\tilde{X},\sd_{\tilde{X}})$. Moreover there exists a measure $\tilde{\m}$ on $\tilde{X}$ such that $(\tilde{X},\sd_{\tilde{X}}, \tilde{\m})$ is an $\RCD(K,N)$ space.
\end{proposition}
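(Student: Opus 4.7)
The plan is to proceed in two stages: construct the universal cover as a purely topological object, then equip it with a length metric and a lifted measure and verify the $\RCD(K,N)$ condition on the lift.

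For the first stage, I would invoke the Sormani--Wei framework of delta-covers: for any length space $X$ and $\delta>0$, there is a covering space $\tilde X^{\delta}\to X$ whose deck group corresponds to the normal subgroup of $\pi_1(X)$ generated by loops lying in metric balls of radius $\delta$. A standard fact is that if the tower $\{\tilde X^{\delta}\}_{\delta>0}$ stabilizes for all sufficiently small $\delta$, then the stable cover is the universal cover. Stability is equivalent to $X$ being semi-locally simply connected. To establish this from the $\RCD(K,N)$ hypothesis, I would use Theorem \ref{thm:EssDim}: $\m$-a.e.\;point $x\in X$ is $k$-regular, so arbitrarily small rescalings of $X$ at $x$ converge in pGH-sense to $(\bb{R}^{k},\sd_{e},0)$. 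Coupled with a Reifenberg-type contractibility statement in the $\RCD$ setting, this forces small balls around regular points to be contractible in slightly larger balls. One then extends triviality of small loops from the regular set to every point by a density/closure argument on the delta-cover tower: if $\tilde X^{\delta'}\to \tilde X^{\delta}$ is an isomorphism over a dense set of base points for all small $\delta'<\delta$, it must be a global isomorphism by uniqueness of covering space lifts.

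For the second stage, set $\tilde X := \lim_{\delta\to 0}\tilde X^{\delta}$ with projection $\pi:\tilde X\to X$, and equip $\tilde X$ with the unique length metric $\sd_{\tilde X}$ making $\pi$ a local isometry. For the measure, pick an atlas $\{U_{\alpha}\}$ of evenly covered open sets of $X$ and declare $\tilde\m$ to agree with the pullback $(\pi|_{V})^{-1}_{\#}(\m\mres U_{\alpha})$ on each sheet $V\subset\pi^{-1}(U_{\alpha})$; these local pullbacks glue consistently by the covering property and define a locally finite Borel measure on $\tilde X$.

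The $\RCD(K,N)$ condition on $(\tilde X,\sd_{\tilde X},\tilde\m)$ is then inherited from $(X,\sd,\m)$. Infinitesimal Hilbertianity is a local condition on the Cheeger energy and transfers through the local isometry $\pi$. For the synthetic $CD(K,N)$ inequality, any pair of compactly supported absolutely continuous measures on $\tilde X$ can be connected by a $W_{2}$-geodesic $(\tilde\mu_{t})$, and $(\pi_{\#}\tilde\mu_{t})$ is a $W_{2}$-geodesic in $X$; conversely, optimal plans on $X$ between measures supported in small enough sets admit unique lifts along $\pi$. Since $\pi$ preserves the measure on evenly covered charts, the entropy convexity along $W_{2}$-geodesics passes from $X$ to $\tilde X$ verbatim. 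The main obstacle, and the only genuinely non-formal part, is the semi-local simple connectedness of $X$ at \emph{every} point: the set of $k$-regular points has full $\m$-measure but is a priori neither open nor topologically dense in a useful sense, so one needs to carefully combine the pointwise regularity of tangent cones with the rigidity of the delta-cover tower to propagate triviality of small loops from the regular set to all of $X$.
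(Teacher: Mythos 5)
You are proving a statement that the paper itself does not prove: it is quoted from \cite{MondinoWei} (with the simple connectedness of the universal cover only coming later from \cite{RCDsemilocally}). Your route diverges from that proof in a way that leaves a genuine gap. You reduce everything to semi-local simple connectedness of $X$, to be deduced from Theorem \ref{thm:EssDim} together with ``a Reifenberg-type contractibility statement in the $\RCD$ setting''. No such statement is available: the existence of a Euclidean tangent space at a point is purely infinitesimal and gives no contractibility of balls at any definite scale, and your proposed ``density/closure argument'' propagating triviality of small loops from the $\m$-a.e.\ regular set to all points is not an argument --- the whole difficulty is exactly at a fixed scale around an arbitrary, possibly singular, point. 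Indeed, semi-local simple connectedness of $\RCD(K,N)$ spaces is a deep, much later theorem (\cite{RCDsemilocally}) and is \emph{not needed} for the proposition: with the paper's (Sormani--Wei/Spanier) notion of universal cover, \cite{MondinoWei} obtain existence by showing that each $\delta$-cover carries a lifted $\RCD(K,N)$ structure and then using Bishop--Gromov volume comparison on these covers (via the halfway/short-generator lemmas of Sormani--Wei) to prove that the tower of $\delta$-covers stabilizes on balls; no contractibility enters. Your assertion that stabilization is ``equivalent to'' semi-local simple connectedness is also off: stabilization suffices for the categorical universal cover even when $X$ is not known to be semi-locally simply connected (and then the universal cover need not be simply connected).

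The second stage has a gap as well. Entropy convexity does not pass to the cover ``verbatim'': the projection $\pi$ is $1$-Lipschitz but not distance preserving at large scales, so the pushforward of a $W_2$-geodesic in $\tilde X$ need not be a $W_2$-geodesic in $X$, and lifting of optimal plans is only available for measures supported in evenly covered sets, whereas the $\mathsf{CD}(K,N)$/entropy-convexity condition concerns arbitrary pairs of compactly supported measures. The standard repair --- and the one used in \cite{MondinoWei} --- is to observe that the lifted space is an infinitesimally Hilbertian length space on which the curvature-dimension condition holds \emph{locally} (every point has a neighbourhood that is a measure-preserving isometric copy of a neighbourhood downstairs) and then to invoke a local-to-global (globalization) theorem to conclude $\RCD(K,N)$ globally. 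Without these two ingredients (stabilization via volume comparison rather than semi-local simple connectedness, and globalization rather than verbatim transfer of convexity), your outline does not yield the proposition.
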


Since $\RCD(K,N)$ spaces are semi-locally simply connected thanks to \cite{RCDsemilocally} the universal is simply connected. The next proposition follows from \cite[Theorem 1.3]{MondinoWei}, after taking into account that the revised fundamental group mentioned in the reference coincides with the usual fundamental group thanks to \cite{RCDsemilocally}.

\begin{proposition} \label{P|splitting universal cover}
    Let $(X,\sd,\m)$ be a compact $\RCD(0,N)$ space with $\mathrm{b}_1(X) \geq k$. Then, its universal cover splits the Euclidean  $\bb{R}^k$ isomomorphically as metric measure space.
\end{proposition}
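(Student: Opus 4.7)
The plan is to adapt to the $\RCD$ setting the classical Cheeger--Gromoll--Yau argument, using Gigli's splitting theorem. Let $\Gamma := \pi_1(X)$ and consider the regular cover $\hat X \to X$ associated to the commutator subgroup $[\Gamma,\Gamma]$; its deck group is the abelianization $\Gamma^{\mathrm{ab}}$, whose free part has rank $\mathrm{b}_1(X) \geq k$ by Hurewicz. Pulling back $\sd$ and $\m$ through the local isometry $\hat X \to X$ turns $\hat X$ into an $\RCD(0,N)$ space (using the fact that $\RCD$ is preserved under Riemannian-type coverings, which is the content of the previous proposition applied to $\hat X$ in place of $\tilde X$) equipped with a properly discontinuous, cocompact, isometric, measure-preserving action of $\mathbb{Z}^r$ for some $r \geq k$.

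I would then produce $k$ geodesic lines in $\hat X$ with ``independent'' directions. Fix a basepoint $p \in \hat X$ and generators $\tau_1, \dots, \tau_k$ of a rank-$k$ subgroup of the deck group. For each $i$, consider minimizing geodesics from $\tau_i^{-n}(p)$ to $\tau_i^{n}(p)$; by cocompactness one can translate each segment by a suitable deck transformation so that its midpoint lies in a fixed compact fundamental domain, and a subsequential pGH limit then yields a geodesic line $\sigma_i$ in $\hat X$. The associated Busemann functions $b_i^{\pm}$ satisfy $b_i^{+} + b_i^{-} \equiv 0$ by the line condition, and by choosing $\tau_1,\dots,\tau_k$ as a $\mathbb{Z}$-basis of the free part of $\Gamma^{\mathrm{ab}}$ one arranges $b_1^{+},\dots,b_k^{+}$ to be affinely independent.

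Next, I would iteratively apply the $\RCD(0,N)$ splitting theorem of Gigli: the existence of $\sigma_1$ gives $\hat X \cong \mathbb{R} \times Y_1$ as metric measure spaces with $Y_1$ an $\RCD(0,N-1)$ space; the projection of $\sigma_2$ onto $Y_1$ is still a line (by the affine independence above), so Gigli's theorem applies again; and after $k$ iterations we obtain an isomorphism $\hat X \cong \mathbb{R}^k \times Y$ with $Y$ an $\RCD(0,N-k)$ space. Finally, since $\pi_1(\hat X) \cong [\Gamma,\Gamma]$, the universal cover of $\hat X$ coincides with the universal cover $\tilde X$ of $X$; and because the universal cover of a metric product $\mathbb{R}^k \times Y$ is $\mathbb{R}^k \times \tilde Y$, we conclude that $\tilde X$ splits $\mathbb{R}^k$ isomorphically as a metric measure space.

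The main obstacle is the inductive step: after splitting off the first Euclidean factor $\mathbb{R}$ from $\sigma_1$, one must verify that $\sigma_2,\dots,\sigma_k$ project to genuine geodesic lines in the orthogonal factor $Y_1$ (and not to points or rays). This is where the independence of the $\tau_i$'s as elements of $\Gamma^{\mathrm{ab}}/\mathrm{tors}$ is crucial, and is implemented by showing that if two of the lines produced above became parallel in $\hat X$ the corresponding deck transformations would differ by a torsion element, contradicting the choice of integer basis. Once the projections are known to be lines, Gigli's theorem applies and the induction goes through.
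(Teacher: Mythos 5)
The paper does not actually prove this proposition: it quotes \cite[Theorem 1.3]{MondinoWei} (the $\RCD$ analogue of the Cheeger--Gromoll structure theorem for compact spaces with non-negative Ricci), combined with \cite{RCDsemilocally} to identify the revised fundamental group used there with $\pi_1(X)$. Your proposal is therefore an attempt to reprove the Mondino--Wei theorem, and while its Cheeger--Gromoll flavour is the right one, it has a genuine gap exactly at the step you yourself flag as the main obstacle. The lines $\sigma_i$ are produced as limits of the segments $[\tau_i^{-n}p,\tau_i^{n}p]$ \emph{after recentering by unspecified deck transformations} $g_n$ (the recentering is unavoidable, since the midpoints may leave every compact set). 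After this recentering, the limit line retains no usable link with $\tau_i$: nothing you have set up shows that the Busemann functions $b_1^+,\dots,b_k^+$ are ``affinely independent'', and the asserted mechanism -- ``if two of the lines became parallel, the corresponding deck transformations would differ by a torsion element'' -- is never proved and does not follow from the construction, precisely because parallelism of two limits of $g_n$-translated segments encodes no algebraic relation between $\tau_1$ and $\tau_2$ when the $g_n$ are arbitrary. Making such a statement true essentially requires knowing beforehand that the deck group acts on the maximal Euclidean factor by a discrete group of pure translations with finite kernel, i.e.\ the very structure theory you are trying to bypass. (Two smaller points: cocompactness of the $\bb{Z}^r$-subaction needs the torsion part of $\Gamma^{\mathrm{ab}}$ to be finite, i.e.\ finite generation of $\pi_1(X)$, which must be quoted; and $b_i^++b_i^-\equiv 0$ holds a priori only on the line -- globally it is a conclusion of the splitting theorem -- though this is harmless since you invoke \cite{Gigli-Split} anyway.)

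The way this difficulty is actually handled (in Cheeger--Gromoll and in the cited $\RCD$ version) is not by manufacturing one line per generator, but by first splitting off the \emph{maximal} Euclidean factor of the cover, $\hat X\cong \bb{R}^q\times Y$ with $Y$ containing no line; then showing $Y$ is compact (a ray in $Y$, recentered by the cocompact deck action, would produce a line in $Y$); and finally bounding $\mathrm{rank}\,(\Gamma^{\mathrm{ab}})\le q$ by a Bieberbach-type argument: since lines in $\bb{R}^q\times Y$ are horizontal, isometries preserve the splitting, the kernel of $\Gamma^{\mathrm{ab}}\to \mathrm{Iso}(\bb{R}^q)$ acts properly discontinuously on the compact $Y$ and is therefore finite, and the image is a discrete, cocompact, abelian subgroup of $\mathrm{Iso}(\bb{R}^q)$, hence (up to finite index) a lattice of translations of rank $\le q$. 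This yields $q\ge k$ and the splitting of the universal cover. If you want a self-contained argument rather than the citation used in the paper, this is the route to follow; your line-by-line induction would need a genuinely new idea to control the directions of the limit lines.
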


The next proposition from \cite[Corollary 4.11]{Sormaniwei} shows that the universal cover of a Ricci limit space is a Ricci limit space, as well.

\begin{proposition} \label{P|convergence coverings}
    Let $K \in \bb{R}$ and $n \in \bb{N}$. Let $(M_j^n,g_j, \bar{x}_j)$ be a sequence of Riemannian manifolds with $\Ric_{M_j} \geq K$ converging in pGH-sense to a compact metric space $(X,\sd)$. Let $(\tilde{X},\tilde{\sd})$ be the universal cover of $X$ and fix $\tilde{x} \in \tilde{X}$.    
    Then, there is a sequence of coverings $(\tilde{M}_j,\tilde{g}_j, \tilde{x}_j)$ of $M_j$ converging in pGH-sense to $(\tilde{X},\tilde{\sd},\tilde{x})$.
\end{proposition}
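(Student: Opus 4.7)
The plan is to construct the sequence $(\tilde{M}_j,\tilde{g}_j,\tilde{x}_j)$ via the theory of $\delta$-covers developed by Sormani-Wei, combined with the semi-local simple connectedness of $\RCD(K,N)$ spaces.

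First, for each $\delta>0$ and each $j$, I would consider the $\delta$-cover $\pi_j^\delta:\tilde{M}_j^\delta\to M_j$, namely the covering space associated with the normal subgroup of $\pi_1(M_j,\bar{x}_j)$ generated by the homotopy classes of loops contained in open metric balls of radius $\delta$. I equip $\tilde{M}_j^\delta$ with the pulled-back Riemannian metric and base it at a lift $\tilde{x}_j^\delta$ of $\bar{x}_j$. Since the covering map is a local isometry, $\tilde{M}_j^\delta$ still satisfies $\Ric\geq K$. Because $(M_j,g_j)$ have uniformly bounded diameter (eventually at most $\mathrm{diam}(X)+1$), Gromov's pre-compactness theorem applies both to $(M_j,\bar{x}_j)$ and to $(\tilde{M}_j^\delta,\tilde{x}_j^\delta)$, with the deck transformation groups $\Gamma_j^\delta$ acting as discrete groups of isometries.

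Second, by the equivariant pGH pre-compactness of Fukaya-Yamaguchi, up to passing to a subsequence the triples $(\tilde{M}_j^\delta,\tilde{x}_j^\delta,\Gamma_j^\delta)$ converge in equivariant pGH-sense to a triple $(\tilde{Y}^\delta,\tilde{y}^\delta,\Gamma^\delta)$ with $\tilde{Y}^\delta/\Gamma^\delta$ isometric to $X$. The core of the argument, which is the content of \cite{Sormaniwei}, is to identify $\tilde{Y}^\delta$ with the $\delta$-cover $\tilde{X}^\delta$ of $X$. The two directions to check are: loops of length less than $\delta$ in $M_j$ pass in the limit to loops of length at most $\delta$ in $X$ (so every relation defining the $\delta$-cover of $X$ holds in $\tilde{Y}^\delta$), and conversely, short loops in $X$ can be approximated by short loops in $M_j$ through the pGH $\varepsilon$-approximations (so that no extra relations appear in $\Gamma^\delta$).

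Third, since $X$ is a Ricci limit and hence an $\RCD(K,N)$ space, by \cite{RCDsemilocally} it is semi-locally simply connected. This implies that the $\delta$-covers $\tilde{X}^\delta$ stabilize as $\delta\searrow 0$: there exists $\delta_0>0$ such that $\tilde{X}^\delta$ coincides with the universal cover $\tilde{X}$ of $X$ for every $\delta\in(0,\delta_0]$. Fixing any such $\delta$, we obtain $(\tilde{M}_j^\delta,\tilde{x}_j^\delta)\to(\tilde{X},\tilde{x})$ in pGH-sense along a subsequence, and setting $(\tilde{M}_j,\tilde{g}_j,\tilde{x}_j):=(\tilde{M}_j^\delta,g_j|_{\tilde{M}_j^\delta},\tilde{x}_j^\delta)$ yields the desired sequence.

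The main obstacle is the second step: proving that the equivariant limit of the $\delta$-covers is exactly $\tilde{X}^\delta$ and not some intermediate cover corresponding to a larger subgroup. This requires a careful analysis of how short loops behave under pGH approximations—one must verify that the subgroup of $\pi_1(X)$ generated by $\delta$-short loops is precisely the limit of the analogous subgroups of $\pi_1(M_j)$. Beyond this, the argument is a combination of standard Gromov pre-compactness with the structural results on universal covers of $\RCD$ spaces recalled in the preliminaries.
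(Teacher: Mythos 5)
The paper gives no argument of its own here: it imports the statement directly from Sormani--Wei \cite[Corollary 4.11]{Sormaniwei}, and your sketch via $\delta$-covers, equivariant (pointed) Gromov--Hausdorff pre-compactness, and stabilization of the $\delta$-covers of $X$ (which you get from the semi-local simple connectedness of $\RCD$ spaces \cite{RCDsemilocally}) is exactly the machinery behind that citation, so you are on essentially the same route. The one imprecision is in your second step: the equivariant limit of the $\delta$-covers of $M_j$ need not be \emph{exactly} $\tilde{X}^{\delta}$ --- short loops can change length slightly under the GH approximations, so the limit is in general only sandwiched between $\tilde{X}^{\delta_1}$ and $\tilde{X}^{\delta_2}$ for nearby parameters $\delta_2<\delta<\delta_1$ --- but this is harmless: once $\delta$ is fixed below the stabilization threshold (so that all these covers coincide with the universal cover $\tilde{X}$), every subsequential limit of $(\tilde{M}_j^{\delta},\tilde{x}_j^{\delta})$ is identified with $(\tilde{X},\tilde{\sd},\tilde{x})$, which also upgrades your subsequential convergence to convergence of the full sequence as the statement requires.
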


The next proposition follows combining \cite[Theorems $1.1$ and $1.3$]{Anderson} with the fact that each covering of a fixed manifold has volume growth controlled from above by the one  of the universal cover. 

\begin{proposition} \label{P|anderson}
    Let $k,h \in \bb{N} \cup \{0\}$ with $k \leq h$. Let $(M,g,\bar{x})$ be a pointed Riemannian manifold with $\ssf{Vol}(B_t(\bar{x})) \geq c_1 t^k$ for some $c_1>0$ and whose universal cover $(\tilde{M},\tilde{g},\tilde{x})$ satisfies $\widetilde{\ssf{Vol}}(B_t(\tilde{x})) \leq c_2 t^h$ for some $c_2>0$. Then $\mathrm{b}_1(M) \leq h-k$.
\end{proposition}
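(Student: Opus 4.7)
Set $r := \mathrm{b}_1(M)$. I would insert an intermediate Galois cover $p: M^{\sharp} \to M$ between $M$ and its universal cover and sandwich the volume growth of $M^{\sharp}$ between two controlled quantities. Concretely, take $M^{\sharp}$ to be the regular Riemannian cover associated with the kernel of $\pi_1(M) \twoheadrightarrow H_1(M;\bb{Z})/\mathrm{torsion} \cong \bb{Z}^{r}$. This is a connected Riemannian covering with free, properly discontinuous deck group $\bb{Z}^{r}$, and the universal cover factors as local isometric coverings $\tilde M \to M^{\sharp} \to M$.

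For the upper bound I would use that $\tilde M \to M^{\sharp}$ is a local isometry. Given $x^{\sharp} \in M^{\sharp}$ and any lift $\tilde x \in \tilde M$, lifting geodesics from $x^{\sharp}$ starting at $\tilde x$ shows that $B_t(x^{\sharp})$ is contained in the image of $B_t(\tilde x)$; since a local isometry cannot increase volume under pushforward,
\[
\ssf{Vol}_{M^{\sharp}}(B_t(x^{\sharp})) \leq \widetilde{\ssf{Vol}}(B_t(\tilde x)) \leq c_2 t^{h}.
\]
This is precisely the ``volume growth of any cover is dominated by that of the universal cover'' remark cited in the statement.

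The decisive ingredient, provided by Theorems 1.1 and 1.3 of \cite{Anderson}, is a matching lower bound
\[
\ssf{Vol}_{M^{\sharp}}(B_{Ct}(x^{\sharp})) \geq c\, t^{k+r} \quad \text{for } t \text{ large}.
\]
The heuristic is to select a Malcev-short basis of deck generators $\gamma_1, \ldots, \gamma_r$ for the $\bb{Z}^r$-action at $x^{\sharp}$ of bounded displacement, so that the orbit $\{\gamma_1^{a_1}\cdots\gamma_r^{a_r} \cdot x^{\sharp} : |a_i| \lesssim t\}$ produces $\sim t^{r}$ essentially disjoint translates inside $B_{Ct}(x^{\sharp})$. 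Around each translate lies a region projecting onto a ball of radius $\sim t$ in $M$, of volume at least $c_1 t^{k}$, and summing over the orbit yields the bound $\gtrsim t^{r+k}$. Combining this with the upper bound forces $r + k \leq h$, i.e., $\mathrm{b}_1(M) \leq h - k$. The main obstacle—and the entire substantive content that is delegated to \cite{Anderson}—is the production of such short generators in the non-compact setting, where compactness of the quotient is unavailable and one must genuinely exploit the lower volume bound on $M$ to control the displacements of a full generating set of the $\bb{Z}^r$-action.
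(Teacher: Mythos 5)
Your proposal is correct and follows essentially the same route as the paper, which simply combines \cite[Theorems 1.1 and 1.3]{Anderson} (the lower bound $\gtrsim t^{k+r}$ on the volume growth of the $\bb{Z}^{r}$-cover) with the observation that any covering has volume growth dominated by that of the universal cover. Your explicit construction of the intermediate cover and the pushforward argument for the upper bound just spell out what the paper leaves as a one-line citation, with the substantive lower bound delegated to Anderson exactly as in the paper.
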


\section{Integral estimates on scalar curvature and first geometric applications} \label{S3}

\subsection{Proof of Theorem \ref{CT1}} \label{S:proofMain}

Throughtout the section,  $(M^n,g)$ will be a smooth, complete (in some cases compact), $n$-dimensional Riemannian manifold, with $n\geq 2$. We denote by $\R_k:M \to \bb{R}$ the sum of the lowest $k$ eigenvalues, counted with their multiplicity, of the Ricci tensor. We first prove a few lemmas that will be used  to establish Theorem \ref{CT1} from the Introduction.

Given a matrix $A \in \bb{R}^{n \times k}$, we denote by $A^T \in \bb{R}^{k \times n}$ its transpose.

\begin{lemma} \label{L14}
    Let $k,n \in \bb{N}$ with $k \leq n$, and let $c,\epsilon>0$ be fixed. There exist $c_1(n,\epsilon),\epsilon'(n,\epsilon)>0$ arbitrarily small satisfying the following. Let $A=(a_{j,i}) \in \bb{R}^{n \times k}$ be a matrix with $|A^T A-I_k| \leq c_1$ and $|A|^2 \leq c$. Consider real numbers $-\epsilon' \leq \lambda_1 \leq \cdots \leq \lambda_n$ with $\lambda_k \geq \epsilon/2k$.
    Then
    \[
    0 \vee
   \sum_{i=1}^k\sum_{j=1}^n \lambda_j a_{j,i}^2 \geq \sqrt{\epsilon'} (\lambda_1+ \cdots + \lambda_k).
    \]
    \begin{proof}
        Assume by contradiction that the statement fails. 
        Hence, we find sequences $c_h,\epsilon'_h \downarrow 0$ and 
        $A_h=(a^h_{j,i}) \in \bb{R}^{n \times k}$ with $|A^T_h A_h-I_k| \leq c_h$, $|A_h|^2 \leq c$ and real numbers $-\epsilon'_h \leq \lambda^h_1 \leq \cdots \leq \lambda^h_n$ with $\lambda_k \geq \epsilon/2k$, with
    \[
    0 \vee
    \sum_{i=1}^k\sum_{j=1}^n \lambda^h_j (a^h_{j,i})^2 < \sqrt{\epsilon'_h} (\lambda^h_1+ \cdots + \lambda^h_k),\quad  \text{for all } h\in \mathbb{N}.
    \]
    In particular, dividing both sides by $\lambda_k^h$, it holds 
    \begin{equation} \label{E33}
    \sum_{i=1}^k
   \sum_{j=1}^{k-1} \frac{\lambda^h_j}{\lambda^h_k} (a^h_{j,i})^2
   +
   \sum_{i=1}^k\sum_{j=k}^n (a^h_{j,i})^2
   \leq 
   k\sqrt{\epsilon'_h},
    \end{equation}
    where the first summand in the l.h.s. is to be considered zero if $k=1$.
\par 
Moreover, since $|A_h|^2 \leq c$, $-\epsilon'_h \leq \lambda^h_1 \leq \cdots \leq \lambda^h_n$ and $\lambda^h_k \geq \epsilon/2k$, it holds
    \[
    \sum_{i=1}^k
   \sum_{j=1}^{k-1} \frac{\lambda^h_j}{\lambda^h_k} (a^h_{j,i})^2 \geq -2k^2c\frac{\epsilon'_h}{\epsilon}.
    \]
    Combining with \eqref{E33}, we obtain
    \[
   \sum_{i=1}^k\sum_{j=k}^n (a^h_{j,i})^2
   \leq 
   k\sqrt{\epsilon'_h}+2k^2c\frac{\epsilon'_h}{\epsilon}.
    \]
    Taking the limit as $h \to \infty$, we obtain a matrix $A=(a_{j,i}) \in \bb{R}^{n \times k}$ such that 
    \begin{equation*} 
    A^TA=I_k \quad \text{and} \quad  \sum_{i=1}^k\sum_{j=k}^n (a_{j,i})^2 = 0.
    \end{equation*}
    The last two identities are clearly in contradiction: indeed, the former states that the  columns of $A$ are $k$ orthonormal vectors in $\bb{R}^n$, while the latter  implies that their  span is at most $k-1$ dimensional.
    \end{proof}
\end{lemma}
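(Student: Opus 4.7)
The plan is to argue by contradiction, exploiting the fact that the matrix $A$ behaves like a near-isometric embedding $\mathbb{R}^k \hookrightarrow \mathbb{R}^n$. The quantity $\sum_{i=1}^k \sum_{j=1}^n \lambda_j a_{j,i}^2$ equals $\operatorname{tr}(A^T \Lambda A)$ with $\Lambda = \operatorname{diag}(\lambda_1,\dots,\lambda_n)$, which morally measures $\lambda_1 + \cdots + \lambda_k$ up to errors controlled by the defect $|A^T A - I_k|$ and the small negative part of the spectrum. Since $\lambda_k$ is bounded below by $\epsilon/(2k)$ while the sum $\lambda_1 + \cdots + \lambda_k$ differs from $\lambda_k$ by at most $(k-1)\epsilon'$, the desired inequality with the weak factor $\sqrt{\epsilon'}$ should follow once the error terms are suitably absorbed.

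To execute the contradiction, suppose the conclusion fails: one obtains sequences $c_h, \epsilon'_h \searrow 0$, matrices $A_h = (a^h_{j,i})$ with $|A_h^T A_h - I_k| \leq c_h$ and $|A_h|^2 \leq c$, and ordered eigenvalues $-\epsilon'_h \leq \lambda^h_1 \leq \dots \leq \lambda^h_n$ with $\lambda^h_k \geq \epsilon/(2k)$, such that the violating inequality holds for every $h$. Dividing through by $\lambda^h_k > 0$, I would split the sum over $j$ into the ranges $1 \leq j \leq k-1$ and $k \leq j \leq n$: on the second range, each ratio $\lambda^h_j/\lambda^h_k \geq 1$, so the second block dominates $\sum_{i=1}^k \sum_{j=k}^n (a^h_{j,i})^2$ from above; on the first range, the bounds $\lambda^h_j \geq -\epsilon'_h$, $\lambda^h_k \geq \epsilon/(2k)$ and $|A_h|^2 \leq c$ yield a lower bound of order $-\epsilon'_h/\epsilon$, which is absorbed by $\sqrt{\epsilon'_h}$.

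Combining both estimates forces $\sum_{i=1}^k \sum_{j=k}^n (a^h_{j,i})^2 \to 0$ as $h \to \infty$. Since $|A_h|^2 \leq c$, one can extract a subsequential limit $A_h \to A \in \mathbb{R}^{n \times k}$; the constraint $c_h \to 0$ gives $A^T A = I_k$, while the previous vanishing gives $a_{j,i} = 0$ for all $j \geq k$. But an $n \times k$ matrix supported on the first $k-1$ rows has rank at most $k-1$, contradicting $A^T A = I_k$. The main subtlety is simply bookkeeping the interplay between $c_1$ and $\epsilon'$ so that the weak factor $\sqrt{\epsilon'}$ on the right-hand side is loose enough to soak up both the near-orthonormality defect and the slightly negative tail of the spectrum; the particular choice $\sqrt{\epsilon'}$ (rather than, say, $\epsilon'$) is precisely what makes the linear-in-$\epsilon'_h$ error terms negligible after division.
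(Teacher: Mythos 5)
Your proposal is correct and follows essentially the same argument as the paper: a compactness/contradiction scheme in which one divides the violating inequality by $\lambda^h_k \geq \epsilon/2k$, splits the sum into the ranges $j \leq k-1$ and $j \geq k$, absorbs the $O(\epsilon'_h/\epsilon)$ error from the slightly negative eigenvalues into the $\sqrt{\epsilon'_h}$ factor, and passes to a limit matrix with orthonormal columns supported on the first $k-1$ rows, which is impossible for rank reasons. No gaps to report.
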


The next lemma will be key to prove Theorem \ref{CT1}.

\begin{lemma} \label{L2}
    Let $k,n \in \bb{N}$, with   $k \leq n$, and let $s \in (0,1)$ be fixed. 
    For every $\epsilon >0$ there exists $\delta (n,\epsilon,s) >0$ such that if $(M^n,g)$ has $\Ric_{M} \geq - \delta$ on $B_{20}(x)$, and $B_{10}(x) \subset M$ is
     $(k,\delta)$-symmetric, then:
    \begin{itemize}
\item    There exists $E \subset B_1(x)$ such that
        \begin{equation}\label{eq:IntERk<eps}
         \int_{E} \R_k \,\de\ssf{Vol} \leq \epsilon \ssf{Vol}(B_1(x));
         \end{equation}
         \item There exist $x_j \in B_1(x)$ and $r_j \in [0,1/2]$, with  $j\in \mathbb{N}$, such that
         \begin{equation}\label{eq:B1-EBalls}
        B_1(x) \setminus E \subset \bigcup_{j \in \bb{N}} B_{r_j}(x_j) \text{ with } 
        \sum_{j \in \bb{N}} \ssf{Vol}(B_{r_j}(x_j))r_j^{-2s} \leq \epsilon \ssf{Vol}(B_1(x)).
        \end{equation}
         \end{itemize}
        In particular, $\ssf{Vol}(B_1(x) \setminus E) \leq  \epsilon \ssf{Vol}(B_1(x))$ and
        $\fint_{E} \R_k \,\de\ssf{Vol}\leq \frac{\epsilon}{1-\epsilon}$.
        \end{lemma}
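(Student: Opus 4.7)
The plan is to upgrade the $(k,\delta)$-symmetry of $B_{10}(x)$ to a $(k,\delta')$-splitting map $u=(u_1,\dots,u_k)\colon B_5(x)\to\bb{R}^k$ via Theorem~\ref{T:Fund1}(2) (with $\delta'$ arbitrarily small once $\delta$ is), and then couple an integrated Bochner estimate with the linear-algebraic Lemma~\ref{L14}. Writing $A(y)\in\bb{R}^{n\times k}$ for the matrix whose columns are $\nabla u_i(y)$, the splitting axioms give $|A|^2\le k(1+\delta')^2$ and $\bigl\||\nabla u|^2-k\bigr\|_{L^1(B_5(x))}\le Ck\delta'\,\ssf{Vol}(B_5(x))$.

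Testing the pointwise Bochner identity $\tfrac12\Delta|\nabla u|^2=|\Hess\,u|^2+\sum_i\Ric(\nabla u_i,\nabla u_i)$ against a good cutoff $\phi\in C_c^\infty(B_3(x))$ (from Theorem~\ref{TGoodCut}) with $\phi\equiv 1$ on $B_2(x)$ and $|\Delta\phi|\le C(n)$, and using $\int\Delta\phi=0$ together with the $L^1$-smallness of $|\nabla u|^2-k$, one reduces the right-hand side to $\tfrac12\int(|\nabla u|^2-k)\Delta\phi=O(\delta')\,\ssf{Vol}(B_1(x))$. Combined with the pointwise lower bound $\sum_i\Ric(\nabla u_i,\nabla u_i)\ge -C(k,n)\delta$ (from $\Ric\ge -\delta$ and bounded $|\nabla u|$), this yields
\[
\int_{B_2(x)}|\Hess\,u|^2\;+\;\int_{B_2(x)}\Bigl(\textstyle\sum_i\Ric(\nabla u_i,\nabla u_i)\Bigr)^+\;\le\;C(\delta+\delta')\,\ssf{Vol}(B_1(x)).
\]
Define the good set $E:=\{y\in B_1(x):|A(y)^\top A(y)-I_k|\le c_1\}$ with $c_1=c_1(n,\epsilon,\epsilon')$ supplied by Lemma~\ref{L14}. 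On $E\cap\{\R_k\ge\epsilon/2\}$ the assumption $\lambda_k\ge\R_k/k\ge\epsilon/(2k)$ is met, so Lemma~\ref{L14} gives $\sum_i\Ric(\nabla u_i,\nabla u_i)\ge\sqrt{\epsilon'}\,\R_k$ pointwise, and hence $\int_{E\cap\{\R_k\ge\epsilon/2\}}\R_k\le C(\delta+\delta')/\sqrt{\epsilon'}\,\ssf{Vol}(B_1(x))$; on $E\cap\{\R_k<\epsilon/2\}$ the integrand is at most $\epsilon/2$. Choosing $\delta,\delta'$ sufficiently small produces $\int_E\R_k\le\epsilon\,\ssf{Vol}(B_1(x))$.

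The main obstacle is the Vitali covering of $B_1(x)\setminus E$ with the weight $r_j^{-2s}$. The strategy is a stopping-time argument with $g:=|A^\top A-I_k|$: for each Lebesgue point $y\in B_1(x)\setminus E$, one picks the largest $r(y)\in(0,1/2]$ for which $\fint_{B_{r(y)}(y)} g\ge c_1/2$, well defined because $g(y)>c_1$ while $\|g\|_{L^1(B_5(x))}=O(\delta')$. Decomposing the average on an entry $g_{ij}=\nabla u_i\cdot\nabla u_j-\delta_{ij}$ carrying a definite fraction of the mass as $(\bar g_{ij})^2+\operatorname{Var}(g_{ij})$ gives two cases. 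In the \emph{oscillation} case $\operatorname{Var}(g_{ij})\gtrsim c_1^2/k^2$, the Poincar\'e inequality with $|\nabla g_{ij}|\lesssim|\Hess\,u|$ forces $\fint_{B_{r(y)}(y)}|\Hess\,u|^2\ge c(c_1)/r(y)^2$; a Vitali selection combined with the integrated Hessian bound above produces $\sum\ssf{Vol}(B_{r_j})\,r_j^{-2}\le C(\delta+\delta')\,\ssf{Vol}(B_1(x))$, and since $r_j\le 1/2$ and $s<1$ give $r_j^{-2s}\le r_j^{-2}$, the desired weighted estimate follows. In the \emph{bad-average} case $|\bar g_{ij}|\gtrsim c_1/k$, Bishop--Gromov combined with $\|g_{ij}\|_{L^1(B_5(x))}=O(\delta')$ forces $r(y)\le (Ck\delta'/c_1)^{1/n}$, and a Besicovitch-type covering estimate (using $n\ge 2$ and $s<1$, so $n-2s>0$) yields a weighted-sum bound of order $\delta'^{\alpha}\,\ssf{Vol}(B_1(x))$ for some $\alpha>0$. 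Taking $\delta,\delta'$ small enough relative to $\epsilon,c_1,n,s$ produces all required estimates; the two ``In particular'' consequences at the end of the lemma then follow immediately from the two established conclusions via $r_j^{-2s}\ge 1$ and $\ssf{Vol}(E)\ge(1-\epsilon)\ssf{Vol}(B_1(x))$.
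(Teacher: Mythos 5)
Your first half is exactly the paper's argument and is fine: pass from $(k,\delta)$-symmetry to a splitting map, integrate Bochner against a good cutoff, define $E$ as the almost-orthonormality set, and apply Lemma~\ref{L14} on $E\cap\{\R_k\ge \epsilon/2\}$; this gives \eqref{eq:IntERk<eps} just as in the paper. The oscillation branch of your covering argument is also sound (and neatly avoids any telescoping, since $r_j\le 1/2$ gives $r_j^{-2s}\le r_j^{-2}$, so the Hessian budget controls the weighted sum); only a minor fix is needed there, namely capping the stopping radii at $1/10$ so that the Vitali $5$-balls still have radius $\le 1/2$.

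The genuine gap is the \emph{bad-average} branch. There you only know that the stopping radius is small, $r(y)\le\rho\sim(\delta')^{1/n}$, and that $\|g\|_{L^1(B_5(x))}=O(\delta')\ssf{Vol}(B_1(x))$, but neither fact helps with the weight: small radii make $r^{-2s}$ \emph{larger}, and the $L^1$ bound on $g$ together with a Vitali selection only controls the unweighted sum $\sum_j\ssf{Vol}(B_{r_j}(x_j))$. Feeding in the Bishop--Gromov lower bound $\ssf{Vol}(B_{r_j}(x_j))\ge c(n)\,r_j^{\,n}\,\ssf{Vol}(B_1(x))$ converts the weighted sum into $C\,\ssf{Vol}(B_1(x))^{2s/n}\sum_j \ssf{Vol}(B_{r_j}(x_j))^{1-2s/n}$, and since $t\mapsto t^{1-2s/n}$ is a concave power and the number of stopping balls is unbounded, this is \emph{not} controlled by $\sum_j\ssf{Vol}(B_{r_j}(x_j))=O(\delta')\ssf{Vol}(B_1(x))$; covering instead by balls of a fixed radius $\rho'$ with the trivial packing bound only yields $C\rho'^{-2s}\ssf{Vol}(B_1(x))$, which is never small. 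The structural reason is that in this branch $g_{ij}$ is nearly constant $\approx \bar g_{ij}$ on the stopping ball, so there is no Hessian content \emph{at that scale}; the Hessian you must exploit sits at the intermediate scales across which $\fint_{B_r(y)}|g_{ij}|$ decays from $\sim c_1$ down to $O(\delta')$, and summing it against the weight $r^{-2s}$ requires a genuinely multi-scale argument. This is precisely how the paper closes the loop: it defines the good set by the scale-invariant condition $r^{2s}\fint_{B_r(y)}|\Hess(u)|^2\le\eta$ for \emph{all} $r\in(0,1/10)$, proves by a dyadic telescoping plus Poincar\'e (convergent exactly because $s<1$) that this set lies inside $E$, and then runs Vitali on its complement, where some scale satisfies $r^{2s}\fint_{B_r(y)}|\Hess(u)|^2\ge\eta$, so that $\ssf{Vol}(B_{r_y}(y))\,r_y^{-2s}\le\eta^{-1}\int_{B_{r_y}(y)}|\Hess(u)|^2$ and the weighted sum follows from the global Hessian smallness. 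You should either adopt that stopping rule (stop on the weighted Hessian average, not on $g$), or replace your single-scale dichotomy by a telescoping estimate linking the large average of $g_{ij}$ at scale $r(y)$ to weighted Hessian content at some larger scale; as written, the Besicovitch counting claim with exponent $\delta'^{\alpha}$ does not follow from the stated ingredients.
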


        \begin{proof}
            Let $\epsilon>0$ be fixed. Let $0<\epsilon'(n,\epsilon)<\epsilon^2$ be given by Lemma \ref{L14} and let  $0<\delta<\epsilon'$ be small enough so that if $B_{10}(x) \subset M$ is $(k,\delta)$-symmetric, there exists a harmonic $(k,\epsilon')$-splitting map $u:B_5(x) \to \bb{R}^k$. 
            Plugging the $j$-th entry $u_j$ of the map $u$ into the Bochner Formula yields
            \begin{equation} \label{E3}
            \Delta \frac{|\nabla u_j|^2-1}{2}=|\Hess (u_j)|^2+\Ric(\nabla u_j,\nabla u_j), \quad \text{on $B_5(x)$}.
            \end{equation}
            Theorem \ref{TGoodCut} ensures the existence of a  non-negative function  $\phi \in C^\infty_c(B_2(x))$, with $\phi\equiv 1$ on $B_1(x)$ and such that 
            \[
            |\Delta \phi|+|\nabla \phi| \leq c(n).
            \]
            Multiplying \eqref{E3} by $\phi$, integrating, and recalling Definition \ref{def:delta-split-map}, we obtain
            \[
            \fint_{B_2(x)} \phi \Ric(\nabla u_j,\nabla u_j)  \,\de\ssf{Vol} \leq c(n) \epsilon'.
            \]
            Since $\Ric_{M} \geq -\delta \geq -\epsilon'$ and $\sup_{B_5(x)} |\nabla u| \leq 2$, it holds
            \[
            \fint_{B_1(x)}  \Ric(\nabla u_j,\nabla u_j)  \,\de\ssf{Vol} \leq
            c(n)\fint_{B_2(x)} \phi \Ric(\nabla u_j,\nabla u_j)  \,\de\ssf{Vol}  + c(n)\epsilon'
            \leq c(n) \epsilon'.
            \]
            Summing over $j \in \{1, ..., k\}$, we get
            \begin{equation} \label{E1}
            \fint_{B_1(x)} \sum_{j=1}^k \Ric(\nabla u_j,\nabla u_j)  \,\de\ssf{Vol} \leq c(n) \epsilon'.
            \end{equation}
            \par
            We now define $E \subset B_1(x)$ to be the set of points where $\{\nabla u_j\}_{j=1}^k$ are almost orthonormal. For each point $p \in M$, we fix an orthonormal basis so that $T_pM \equiv \bb{R}^n$ and we use the convention that $\nabla u \in \bb{R}^{n \times k}$ (i.e., each $\nabla u_j$ is a column of $\nabla u$). Let $c_1(n,\epsilon)>0$ be the constant given by Lemma \ref{L14}, and set
            \[
            E:=\{y \in B_1(x): | \nabla u^T \nabla u-I_k| \leq c_1\}.
            \]
            Observe that the condition in the previous definition is independent of the chosen orthonormal basis for $T_pM$. Fix now an orthonormal basis in $T_pM$ such that $\Ric_M$ in $T_pM$ is represented by the diagonal matrix $\Lambda \in \bb{R}^{n \times n}$, whose eigenvalues are $-\epsilon' \leq \lambda_1 \leq \cdots \leq \lambda_n$.

            Consider now the subsets of $E$ defined by
            \[
            E_+:=E \cap \{\R_k \geq \epsilon/2\} \quad \text{and} \quad 
            E_-:=E \cap \{\R_k < \epsilon/2\}.
            \]
            At every point of $E_+$, we have $\lambda_k \geq \epsilon/2k$. Hence, by Lemma \ref{L14}, it holds
            \begin{align*}
            0 & \vee \sum_{j=1}^k \Ric(\nabla u_j,\nabla u_j)=0 \vee \sum_{j=1 }^k \nabla u_j^T \Lambda \nabla u_j
            \\ 
            & = 0 \vee \sum_{j=1}^k \sum_{i=1}^n \lambda_i (\nabla u_{j,i})^2 \geq \sqrt{\epsilon'} (\lambda_1 + \dots + \lambda_k) =\sqrt{\epsilon'} \R_k.
            \end{align*}
            Combining with \eqref{E1}, we obtain
            \begin{align} 
            \label{E2}
            \nonumber 
            \int_{E} & \R_k \,\de\ssf{Vol} = 
            \int_{E_-} \R_k \,\de\ssf{Vol}+
            \int_{E_+} \R_k \,\de\ssf{Vol} 
            \\ 
            & \leq 
            (\epsilon/2) \ssf{Vol}(B_1(x))+\int_{E_+} \R_k \,\de\ssf{Vol} 
            \leq (c(n)\sqrt{\epsilon'}+\epsilon/2) \ssf{Vol}(B_1(x)).
            \end{align}
            Choosing $\epsilon'>0$ small enough, we obtain \eqref{eq:IntERk<eps}.
            
            To conclude the proof, we need to show that $B_1(x) \setminus E$ is contained in a union of sufficiently small balls. Let $\eta(\epsilon,n,s)>0$ to be chosen later and consider the set $E' \subset B_1(x)$ given by
            \[
            E':=\Big\{y \in B_1(x): r^{2s} \fint_{B_r(y)} |\Hess(u)|^2 \,\de\ssf{Vol} \leq \eta\, , \; \text{ for every } r \in (0,1/10) \Big\}.
            \]
            We claim that
            \begin{equation}\label{eq:claimE'}
            \begin{split}
             E' &\subset E \quad\text{ and} \\
            B_1(x) \setminus E' \subset \bigcup_{i \in \bb{N}} B_{r_j}(x_j) &\text{ with } 
        \sum_{i \in \bb{N}} \ssf{Vol}(B_{r_j}(x_j)) r_j^{-2s} \leq \epsilon \ssf{Vol}(B_1(x)).
        \end{split}
            \end{equation}
            The thesis \eqref{eq:B1-EBalls} will follow by combining the claim \eqref{eq:claimE'} with  \eqref{E2}.
            \par 
            We first prove that $E' \subset E$. It is sufficient to show that at every point of $E'$ the quantity $|\nabla u^T  \nabla u-I_k|$ is smaller than the constant $c_1(\epsilon,n)$ appearing in the definition of $E$. We prove it by a telescopic argument (cf.\;\cite{MonBruSem}).
            Set $\mathcal{E}:=\sum_{i,j}| \nabla u_j \cdot \nabla u_j-\delta_{ij}| $.
            For every $j \in \bb{N}$ and $y \in B_1(x)$, we have
            \begin{align*}
            \fint_{B_{2^{-j}}(y)} \mathcal{E} \,\de\ssf{Vol}
            & = \Big| \fint_{B_1(y)} \mathcal{E} \,\de\ssf{Vol}
            -\fint_{B_1(y)} \mathcal{E} \,\de\ssf{Vol}
            +
            \fint_{B_{1/2}(y)} \mathcal{E} \,\de\ssf{Vol}
            - ...
            + \fint_{B_{2^{-j}}(y)} \mathcal{E} \,\de\ssf{Vol} \Big|
            \\
            & \leq c(n)\epsilon' + \sum_{j \in \bb{N}} \Big|
            \fint_{B_{2^{-j+1}}(y)} \mathcal{E} \,\de\ssf{Vol}
            - \fint_{B_{2^{-j}}(y)} \mathcal{E} \,\de\ssf{Vol}
            \Big|
            \\
            & \leq c(n) \epsilon' + c(n) \sum_{j \in \bb{N}} 2^{-j}  \fint_{B_{2^{-j}}(y)} |\Hess(u)| \,\de\ssf{Vol},
            \end{align*}
            where in the last inequality we used the Poincar\'e inequality.
            Using Hölder inequality  and the definition of $E'$, we infer that
            \begin{align*}
            \fint_{B_{2^{-j}}(y)} \mathcal{E} \,\de\ssf{Vol} & \leq
            c(n) \epsilon' + c(n) \sum_{j \in \bb{N}} 2^{-j(1-s)} 
            \Big( 2^{-2sj} \fint_{B_{2^{-j}}(y)} |\Hess(u)|^2 \,\de\ssf{Vol} \Big)^{1/2} \\ 
            & \leq c(n) \epsilon' + c(n,s) \sqrt{\eta}.
            \end{align*}
            Choosing $\eta$ and $\epsilon'$ to be sufficiently small, and letting $j$ go to $+ \infty$, we conclude that $E' \subset E$.
            
            From now until the end of the proof, $\eta>0$ is fixed by the previous step. To complete the proof, we need to show that  
            \[
            B_1(x) \setminus E' \subset \bigcup_{i \in \bb{N}} B_{r_j}(x_j) 
            \]
            with
            \[
        \sum_{i \in \bb{N}} \ssf{Vol}(B_{r_j}(x_j)) r_j^{-2s} \leq \epsilon \ssf{Vol}(B_1(x)), \;  \text{ and } r_j \in [0,1/2].
        \]
        To prove this, we use a Vitali covering argument coupled with a scale-picking (cf.\;\cite{MonBruSem}). For every $y \in B_1(x)$ such that
        \[
        \sup_{0<r<1/10}r^{2s} \fint_{B_r(y)} |\Hess(u)|^2 \,\de\ssf{Vol} > \eta,
        \]
        we set $r_y \in [0,1/10]$ to be the maximal radius such that
        \[
        r_y^{2s}
        \fint_{B_{r_y}(y)} |\Hess(u)|^2 \,\de\ssf{Vol} \geq \eta.
        \]
        Clearly,
        \[
        \int_{B_{r_y}(y)} |\Hess(u)|^2 \,\de\ssf{Vol} \geq  \ssf{Vol}(B_{r_y}(y)) \eta r_y^{-2s}.
        \]
        By  Vitali covering theorem, we can cover $E'$ with a countable collection of balls $\{B_{5r_{y_j}}(y_j)\}_{j \in \bb{N}}$ with $r_{y_j} \in (0,1/10]$, such that $\{B_{r_{y_j}}(y_j)\}_{j \in \bb{N}}$ are disjoint. Hence
        \begin{align*}
        \sum_{j \in \bb{N}} \frac{\ssf{Vol}(B_{5r_{y_j}}(y_j))} {(5r_{y_j})^{2s}} 
        & \leq \frac{c(n)}{\eta}  \sum_{j \in \bb{N}} \int_{B_{r_{y_j}}(y_j)} |\Hess(u)|^2 \,\de\ssf{Vol} \\
        & \leq \frac{c(n)}{\eta} \int_{B_2(x)} |\Hess(u)|^2 \,\de\ssf{Vol} \leq  \frac{c(n)}{\eta} \ssf{Vol}(B_1(x))\epsilon'.
        \end{align*}
        Observe that, by construction, $5r_j \in [0,1/2]$ for every $j \in \bb{N}$. Since $\eta>0$ is fixed, we can choose $\epsilon'=\epsilon \eta c(n)^{-1}>0$ to conclude the proof. 
        \end{proof}

\begin{corollary} \label{C1}
    Let $k,n \in \bb{N}$ with $k \leq n$ and $s \in (0,1)$ be fixed. 
    For every $\epsilon >0$ there exists $\delta>0$ such that if $(M^n,g)$ has $\Ric_M \geq -\delta$ and $B_{10}(x) \subset M$ is $(k,\delta)$-symmetric, then there exists $E \subset B_1(x)$ such that
        \[
         \int_{E} |\R_k|^s \,\de\ssf{Vol} \leq \epsilon \ssf{Vol}(B_1(x))
         \]
         with
         \[
         \begin{split}
        B_1(x) \setminus E \subset \bigcup_{i \in \bb{N}} B_{r_j}(x_j)& \text{ and } \\ 
        \sum_{j \in \bb{N}} \ssf{Vol}(B_{r_j}(x_j))r_j^{-2s} \leq \epsilon \ssf{Vol}(B_1(x)),& \quad r_j \in [0,1/2]  \;\text{ for all }\; j \in \bb{N}.
        \end{split}
        \]
        \begin{proof}
            Let $0<\delta<\epsilon_1<\epsilon$ be sufficiently small and let $E$ be the set given by Lemma \ref{L2} relative to $\epsilon_1$.
            Since by assumption $\Ric_M \geq -\delta$, then  clearly
            \begin{equation}\label{eq:Rkgeq-kdelta}
                \R_k \geq -k \delta.
            \end{equation}
            By Hölder inequality applied to the conjugate exponents $p=s^{-1}>1$ and  $q=(1-s)^{-1}>1$, using    Lemma \ref{L2} and \eqref{eq:Rkgeq-kdelta}, we obtain
            \[
             \int_{E} (\R_k \vee 0)^s \,\de\ssf{Vol} \leq \Big( \int_E \R_k \vee 0 \,\de\ssf{Vol} \Big)^s \ssf{Vol}(B_1(x))^{1-s}  \leq \epsilon_1 \ssf{Vol}(B_1(x)). 
            \]
            Hence,
            \[
            \int_{E} |\R_k|^s \,\de\ssf{Vol} \leq 
             \int_{E} (\R_k \vee 0)^s \,\de\ssf{Vol}+ (k \delta)^s \ssf{Vol}(B_1(x)) \leq 
            (\epsilon_1+(k \delta)^s) \ssf{Vol}(B_1(x)).
            \]
            We conclude by choosing $\epsilon_1,\delta>0$ such that $\epsilon_1+(k \delta)^s \leq \epsilon$.
        \end{proof}
\end{corollary}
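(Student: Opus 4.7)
The key observation is that Corollary \ref{C1} is essentially a direct consequence of Lemma \ref{L2} combined with two elementary estimates: a pointwise lower bound on $\R_k$ coming from the Ricci hypothesis, and Hölder's inequality to pass from an $L^1$-type control on $\R_k$ to an $L^s$-type control on $|\R_k|^s$. The covering of $B_1(x) \setminus E$ together with the radii constraints will be inherited without modification from Lemma \ref{L2}, so the only nontrivial task is to upgrade the integral estimate on $E$.

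The plan is as follows. Given $\epsilon > 0$, I would first fix an auxiliary parameter $\epsilon_1 \in (0,\epsilon)$ (to be chosen at the end) and apply Lemma \ref{L2} to obtain, for $\delta$ small enough depending on $\epsilon_1, n, s$, a set $E \subset B_1(x)$ with the desired covering of its complement and satisfying $\int_E \R_k \, \de\ssf{Vol} \leq \epsilon_1 \ssf{Vol}(B_1(x))$. Next, I would observe that the hypothesis $\Ric_M \geq -\delta$ forces $\R_k \geq -k\delta$ pointwise, since $\R_k$ is a sum of $k$ eigenvalues of $\Ric_M$ each bounded below by $-\delta$.

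Now I would split $|\R_k|^s \leq (\R_k \vee 0)^s + (k\delta)^s$ pointwise on $M$: on the set where $\R_k \geq 0$ the second summand is redundant, while on $\{\R_k < 0\}$ the pointwise bound $-\R_k \leq k\delta$ directly controls $|\R_k|^s$. Integrating the constant part over $E \subset B_1(x)$ contributes at most $(k\delta)^s \ssf{Vol}(B_1(x))$. For the positive part, Hölder's inequality with conjugate exponents $1/s$ and $1/(1-s)$ gives
\[
\int_E (\R_k \vee 0)^s \, \de\ssf{Vol} \leq \Big(\int_E \R_k \vee 0 \, \de\ssf{Vol}\Big)^s \ssf{Vol}(B_1(x))^{1-s},
\]
and the inequality $\R_k \vee 0 \leq \R_k + k\delta$ combined with Lemma \ref{L2} yields $\int_E \R_k \vee 0 \, \de\ssf{Vol} \leq (\epsilon_1 + k\delta)\ssf{Vol}(B_1(x))$, so the display is bounded by $(\epsilon_1 + k\delta)^s \ssf{Vol}(B_1(x))$.

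Combining the two contributions gives $\int_E |\R_k|^s \, \de\ssf{Vol} \leq \big((\epsilon_1 + k\delta)^s + (k\delta)^s\big)\ssf{Vol}(B_1(x))$, and it only remains to pick $\epsilon_1$ and $\delta$ sufficiently small in terms of $\epsilon, s, k$ so that the factor in parentheses is at most $\epsilon$. The hard part is not here at all: all the analytic and geometric content already lives inside Lemma \ref{L2}, and this corollary is purely a reformulation for the $L^s$-norm, so I expect no substantive obstacle beyond bookkeeping the choice of constants.
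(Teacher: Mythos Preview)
Your proposal is correct and follows essentially the same argument as the paper: apply Lemma \ref{L2} with a small auxiliary parameter, use the pointwise bound $\R_k \geq -k\delta$, split $|\R_k|^s$ into its positive part and the constant $(k\delta)^s$, and control the positive part by H\"older with exponents $1/s$ and $1/(1-s)$. If anything you are slightly more careful, making explicit the intermediate inequality $\int_E \R_k \vee 0 \leq (\epsilon_1 + k\delta)\,\ssf{Vol}(B_1(x))$ that the paper absorbs directly into the final constant.
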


We now prove Theorem \ref{CT1} from the Introduction.

\begin{thm} 
    Let $L,k,n \in \bb{N}$ with $k \leq n$ and $s \in (0,1)$ be fixed. Then 
    \begin{align}
    &\sup \Big\{\fint_{B_1(x)} \R_{k} \wedge L \, \de \ssf{Vol} : (M,g,x) \text{ has } \Ric_{M} \geq -\delta, \text{ and } B_{10}(x) \text{ is  $(k,\delta)$-symmetric} 
    \Big\} ,
    \\
    &  \sup \Big\{\fint_{B_1(x)} |\R|^s \, \de \ssf{Vol} : (M,g,x) \text{ has } \Ric_{M} \geq -\delta, \text{ and } B_{10}(x) \text{ is } \delta \text{-regular} 
    \Big\}  \label{eq:supintRs}
    \end{align}
    converge to $0$ as $\delta \searrow 0$. 
    \begin{proof}
    Let $k,n \in \bb{N}$ with $k < n$.  We show that for every $0<\epsilon <1/2$ there exists $\delta>0$ such that if $(M^n,g)$ has $\Ric_M \geq -\delta$ and $B_{10}(x) \subset M$ is $(k,\delta)$-symmetric, then
    \begin{equation} \label{E5}
    \fint_{B_1(x)} \R_k \wedge L \,\de\ssf{Vol} \leq  \epsilon.
    \end{equation}
    This would imply the first part of the statement.
     Fix $\epsilon_1(\epsilon,n)>0$ -- to be chosen later -- and let $0<\delta(\epsilon_1)<\epsilon_1$ be given by Lemma \ref{L2} so that there exists $E \subset B_1(x)$ such that
        \[
         \int_{E} \R_k \,\de\ssf{Vol} \leq \epsilon_1 \ssf{Vol}(B_1(x)),
        \]
        and $\ssf{Vol}(B_1(x) \setminus E) \leq \epsilon_1 \ssf{Vol}(B_1(x))$.
    Hence,
    \begin{align*}
    \int_{B_1(x)} \R_k \wedge L \,\de\ssf{Vol}  \leq \int_{E} \R_k  \,\de\ssf{Vol}+\int_{B_1(x) \setminus E} 0 \vee \R_k \wedge L \,\de\ssf{Vol} \\
    \leq 
    (\epsilon_1+L\epsilon_1 )\ssf{Vol}(B_1(x)),
    \end{align*}
    which implies \eqref{E5} if $\epsilon_1$ is chosen small enough.
\\
We next show \eqref{eq:supintRs}. To this aim, fix $s \in (0,1)$ and consider the case  $k=n$.
    We want show that for every $0<\epsilon <1/2$ there exists $\delta>0$ such that if $(M^n,g,x)$ has $\Ric_M \geq -\delta$ and $B_{10}(x) \subset M$ is $\delta$-regular, then
    \[
    \fint_{B_1(x)} |\R|^s \,\de\ssf{Vol} \leq  \epsilon.
    \]
    Let $\delta'>0$ be as in Lemma \ref{L2} relative to $\epsilon$.
        Let $0<\delta<\delta'$ be small enough such that if $(M^n,g,x)$ has $\Ric_M \geq -\delta$ and $B_{10}(x) \subset M$ is $\delta$-regular, then $B_r(y)$ is $\delta'$-regular for every $r \in (0,5)$ and every $y \in B_5(x)$. This is possible thanks to Theorem \ref{T1}. 
        Define the measure $$\mu:=|\R|^s \,\de\ssf{Vol} \mres B_1(x).$$ Consider the set $E_1 \subset B_1(x)$ given by Corollary \ref{C1} such that
        \[
        \mu(E_1) \leq \epsilon \ssf{Vol}(B_1(x)),  \quad  B_1(x) \setminus E_1 \subset \bigcup_{i \in \bb{N}} B_{r_{i}}(x_{i}) 
        \]
        with 
        \[
        \sum_{i \in \bb{N}} \ssf{Vol}(B_{r_{i}}(x_{i}))r_{i}^{-2s} \leq \epsilon \ssf{Vol}(B_1(x)) \text{ and } r_i \in [0,1/2]. 
        \]
        In particular, it holds
        \[
        \ssf{Vol}(B_1(x) \setminus E_1) \leq 
        \epsilon \ssf{Vol}(B_1(x)).
        \]
        In each ball $B_{r_{i}}(x_{i})$, we can apply the scale invariant version of Corollary \ref{C1} to obtain a set $E_{2,i} \subset B_{r_{i}}(x_{i})$ such that
        \[
        \mu(E_{2,i}) \leq \epsilon  r_{i}^{-2s}\ssf{Vol}(B_{r_i}(x_i)), \quad  B_{r_{i}}(x_{i}) \setminus E_{2,i} \subset \bigcup_{j \in \bb{N}} B_{r_{i,j}}(x_{i,j})
        \]
        with
        \[ 
        \sum_{j \in \bb{N}} \ssf{Vol}(B_{r_{i,j}}(x_{i,j}))r_{i,j}^{-2s} \leq \epsilon  r_i^{-2s}\ssf{Vol}(B_{r_i}(x_i)) \text{ and } r_{i,j} \in [0,r_i/2]. 
        \]
        Defining 
        \[
        E_2 := E_1 \cup \bigcup_{i \in \bb{N}} E_{2,i},
        \]
        we obtain
        \begin{align*}
        \mu(E_2) & \leq \epsilon  \ssf{Vol}(B_1(x)) + \sum_{i \in \bb{N}} \mu (E_{2,i}) \\
        & \leq 
         \epsilon \ssf{Vol}(B_1(x))  + \epsilon  \sum_{i \in \bb{N}} r_{i}^{-2s} \ssf{Vol}(B_{r_i}(x_i))
         \leq (\epsilon +\epsilon ^2)\ssf{Vol}(B_1(x)) .
        \end{align*}
        Moreover, it holds
        \[
        \ssf{Vol}(B_1(x) \setminus E_2) \leq \sum_{i,j \in \bb{N}} \ssf{Vol}(B_{r_{i,j}}(x_{i,j}))
         \leq  {\epsilon}^2 \ssf{Vol}(B_1(x)).
        \]
        If we keep iterating this argument, we obtain increasing sets $E_k$ such that $$\mu(E_k) \leq (\epsilon + ... + \epsilon^k)\;\ssf{Vol}(B_1(x)).$$ Denoting by 
        $$E=\bigcup_{k\in \mathbb{N}} E_k,$$ we obtain that $$\mu(E) \leq c \epsilon \ssf{Vol}(B_1(x)),$$ where $c>0$ is a universal constant.  To conclude the proof, it suffices to show that 
        $$\mu(B_1(x)\setminus E)=0.$$ 
        To this aim, observe that $\ssf{Vol}(B_1(x)\setminus E)=0$,  since 
        $
        \ssf{Vol}(B_1(x) \setminus E_k) \leq {\epsilon}^k \ssf{Vol}(B_1(x)).
        $
\end{proof}
\end{thm}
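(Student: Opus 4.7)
The plan is to deduce both supremum bounds from Lemma \ref{L2} and Corollary \ref{C1}, which already encode the analytic core (splitting maps, Bochner formula, Vitali covering). For the first claim, given $\epsilon \in (0,1/2)$ choose $\epsilon_1 := \epsilon/(L+1)$ and apply Lemma \ref{L2} with parameter $\epsilon_1$. The lemma produces $\delta = \delta(\epsilon_1)$ and, whenever $B_{10}(x)$ is $(k,\delta)$-symmetric, a set $E \subset B_1(x)$ with $\int_E \R_k \, \de\ssf{Vol} \leq \epsilon_1 \ssf{Vol}(B_1(x))$ and $\ssf{Vol}(B_1(x) \setminus E) \leq \epsilon_1 \ssf{Vol}(B_1(x))$. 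Bounding $\R_k \wedge L$ by $\R_k$ on $E$ and by $L$ on the complement gives
\[
\int_{B_1(x)} \R_k \wedge L \, \de\ssf{Vol} \leq \epsilon_1 \ssf{Vol}(B_1(x)) + L \epsilon_1 \ssf{Vol}(B_1(x)) \leq \epsilon \ssf{Vol}(B_1(x)),
\]
finishing the first part.

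The second supremum is the real challenge, since $|\R|^s$ is unbounded and a single application of Corollary \ref{C1} cannot control its mass on the residual set. The plan is to iterate. A single pass yields a set $E_1$ with $\int_{E_1} |\R|^s \, \de\ssf{Vol} \leq \epsilon \ssf{Vol}(B_1(x))$ and a cover of $B_1(x) \setminus E_1$ by balls $\{B_{r_j}(x_j)\}$ with $r_j \leq 1/2$ satisfying the sharper bound $\sum_j \ssf{Vol}(B_{r_j}(x_j)) r_j^{-2s} \leq \epsilon \ssf{Vol}(B_1(x))$. By Theorem \ref{T1}, $\delta$-regularity of $B_{10}(x)$ propagates to $\delta'$-regularity of every sub-ball $B_r(y) \subset B_5(x)$, so Corollary \ref{C1} applies (after rescaling) inside each $B_{r_j}(x_j)$, producing new good subsets $E_{2,j}$ and finer covering balls of the remaining bad region.

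Iterating and setting $E_m := E_{m-1} \cup \bigcup_j E_{m,j}$ one obtains, with $\mu := |\R|^s \de\ssf{Vol} \mres B_1(x)$, the geometric-type estimates
\[
\mu(E_m) \leq (\epsilon + \epsilon^2 + \cdots + \epsilon^m) \ssf{Vol}(B_1(x)), \qquad \ssf{Vol}(B_1(x) \setminus E_m) \leq \epsilon^m \ssf{Vol}(B_1(x)).
\]
The union $E := \bigcup_m E_m$ then satisfies $\mu(E) \leq c\epsilon \ssf{Vol}(B_1(x))$ and $\ssf{Vol}(B_1(x) \setminus E) = 0$; by absolute continuity $\mu \ll \ssf{Vol}$, also $\mu(B_1(x) \setminus E) = 0$, concluding the proof. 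The essential obstacle, and the reason the induction closes, is the scale matching in Corollary \ref{C1}: the weight $r_j^{-2s}$ in the covering estimate has exactly the homogeneity of $|\R|^s$ under metric dilation, ensuring the cumulative mass contributions across scales form a convergent geometric series rather than diverging.
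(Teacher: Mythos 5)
Your proposal is correct and follows essentially the same route as the paper: the first bound via Lemma \ref{L2} with $\epsilon_1$ of order $\epsilon/(L+1)$ and a split of the integral over $E$ and its small complement, and the second via Theorem \ref{T1} plus an iterated, scale-invariant application of Corollary \ref{C1} whose weights $r_j^{-2s}$ make the contributions across scales a convergent geometric series, concluding since $\mu=|\R|^s\,\de\ssf{Vol}$ is absolutely continuous and the residual set has zero volume. No gaps to flag.
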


\begin{remark} \label{R5}
    Inspecting the proof of Lemma \ref{L2}, one realizes that a priori $L^p$ estimates on harmonic splitting maps for $p>2$ would imply that
    \[
    \sup \Big\{\fint_{B_1(x)} |\R| \, \de \ssf{Vol} : (M,g,x) \text{ has } \Ric_{M} \geq -\delta \text{ and } B_{10}(x) \text{ is } \delta \text{-regular }
    \Big\} \to 0
    \]
    as $\delta \to 0$.
    This would allow to obtain a priori integral bounds on the scalar curvature outside of the quantitative singular set of a manifold with Ricci curvature bounded from below.
    Unfortunately, it is shown in \cite{DephilippisHarmonic} that such estimates cannot exist in such generality.
\end{remark}

\subsection{First geometric  applications in almost non-negative Ricci and integrally-positive scalar curvature} \label{S4}

We now obtain the first geometric applications of the results of the previous section. We introduce some notation first (cf. \cite{ShenKricci}).

\begin{definition}[$k^{th}$ Ricci curvature bounds]
    Let $(M^n,g)$ be a manifold, let $1 \leq k \leq n-1$, let $x \in M$, and let $c \in \bb{R}$. We denote by $R_M(\cdot,\cdot,\cdot,\cdot)$ the Riemann curvature tensor of $M$. We say that $\Ric_{M,k} \geq c$ (resp. $\leq c$) in $T_xM$ if, for all $(k+1)$-dimensional subspaces $V \subset T_xM$, it holds
\[
\sum_{i=1}^{k+1} R_M(e_i,v,v,e_i) \geq c \, 
 \,(\text{resp.} \leq c)
\]
for every $v \in V$ and every orthonormal basis $\{e_1,\dots,e_{k+1}\}$ of $V$.
\end{definition}

One can check that:
\begin{enumerate}
    \item $\Ric_{M,k} \geq c$ implies $\R_k \geq c$.
    \item $\Ric_{M,k} \geq c$ implies $\Ric_{M,k+1} \geq \frac{k}{k-1} c$.
    \item $\Ric_{M,n-1} \geq c$ if and only if $\Ric_{M} \geq c$.
    \item $\Ric_{1,M} \geq c$ if and only if $\ssf{Sec}_{M} \geq c$.
\end{enumerate}

\begin{definition}[Integral $k^{th}$ Ricci curvature bounds] \label{D5}
Let $(M^n,g)$ be a manifold, let $1 \leq k \leq n-1$, let $A \subset M$ be measurable, let $c \in \bb{R}$, and let $F:\bb{R} \to \bb{R}$ be a measurable function.
We say that
\[
\int_{A} F(\Ric_{M,k}) \, d\ssf{Vol} \geq c
\]
if there exists a measurable function $f:A \to \bb{R}$ such that:
\begin{enumerate}
    \item $
    \int_A f \, d\ssf{Vol} \geq c.
    $
    \item For all $x \in A$ and $(k+1)$-dimensional subspaces $V \subset T_xM$, it holds
\[
F\Big( \sum_{i=1}^{k+1} R_M(e_i,v,v,e_i) \Big) \geq f(x)
\]
for every $v \in V$ and every orthonormal basis $\{e_1,\dots,e_{k+1}\}$ of $V$.
\end{enumerate}
We say that $
\int_{A} F(\ssf{Sec}_M ) \, d\ssf{Vol} \geq c
$ if $
\int_{A}  F(\Ric_{1,M}) \, d\ssf{Vol} \geq c
$.
\end{definition}

The next theorem should be compared with \cite[Theorem 1.1]{wang2023positive}, where similar conclusions were obtained under stronger assumptions: non-negative Ricci curvature (below, almost non-negative Ricci curvature) and scalar curvature greater than 2  (below, a local integral lower bound on the scalar curvature). 
Whenever we consider a sequence of manifolds $(M_j,g_j)$, we write $\R_{M_j,k}$ instead of $\R_k$ for the sake of clarity.

\begin{thm} \label{CT2}
    Let $s \in (0,1)$, let $\epsilon,v,L \in (0,+\infty)$ and let $k,n \in \bb{N}$ with $k \leq n$. 
    \begin{enumerate}
        \item \label{Thm4.7-1} There exists $\delta>0$ such that the following holds. Let $(M^n,g,p)$ be a manifold with $\Ric_{M} \geq -\delta$ on $M$ and
    \[
     \fint_{B_1(p)} |\R_{M}|^s \, \de \ssf{Vol}  \geq \epsilon, \quad \ssf{Vol}(B_1(p)) \geq v.
    \] 
    Then $\sd_{\rm{pGH}}(M,Y) \geq \delta$, for any metric space $(Y,\sd_y,y)$  splitting $\bb{R}^{n-1}$ isometrically.
    \item \label{Thm4.7-2}
    There exists $\delta>0$ such that the following holds. Let $(M^n,g,p)$ be a manifold with $\Ric_{M} \geq -\delta$ on $M$, and
    \[
    \fint_{B_1(p)} \ssf{Ric}_{M,n-2} \wedge 0 \, \de \ssf{Vol}  \geq -\delta, \quad 
     \fint_{B_1(p)} \R_{M} \wedge L \, \de \ssf{Vol}  \geq \epsilon.
    \] 
     Then $\sd_{\rm{pGH}}(M,Y) \geq \delta$, for any metric space $(Y,\sd_y,y)$  splitting $\bb{R}^{n-1}$ isometrically.
    \end{enumerate}
\end{thm}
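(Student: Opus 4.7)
The plan is to argue by contradiction in both statements. Suppose the conclusion fails for some sequence $(M_j^n, g_j, p_j)$ satisfying the respective hypotheses with $\delta_j \searrow 0$, while $\sd_{\rm{pGH}}(M_j, Y_j) < \delta_j$ and each $Y_j$ splits $\bb{R}^{n-1}$ isometrically.

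For Part \ref{Thm4.7-1}, the non-collapsing assumption, Gromov pre-compactness, and Theorem \ref{T3} yield a subsequence pmGH-converging to a non-collapsed Ricci limit $Y_\infty = \bb{R}^{n-1} \times Z_\infty$, where $Z_\infty$ is an $\RCD(0,1)$ space. By Theorem \ref{T2}, the stratum $\mathcal{S}_{n-1} \setminus \mathcal{S}_{n-2}$ is empty on $Y_\infty$, so at every $(x_0, z_0)$ some tangent splits $\bb{R}^n$; hence $T_{z_0} Z_\infty$ splits $\bb{R}$ for every $z_0$, excluding boundary points of $Z_\infty$. Proposition \ref{P1} then gives $Z_\infty \in \{\bb{R}, \mathbb{S}^1(r)\}$, so $Y_\infty$ is a flat Riemannian $n$-manifold. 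Pick $s_0 > 0$ smaller than a fraction of its injectivity radius, so that $B_{50 s_0}^{Y_\infty}(y)$ is isometric to a Euclidean ball for every $y$ in a fixed-radius neighborhood of $y_\infty$. For $j$ large, every ball $B_{50 s_0}(q_j) \subset M_j$ with $q_j \in \bar{B}_2(p_j)$ is $\eta_j$-regular with $\eta_j \to 0$; rescaling by $(5 s_0)^{-2}$ and applying the $|\R|^s$-supremum of Theorem \ref{CT1} gives $\fint_{B_{5 s_0}(q_j)} |\R|^s \leq \eta'_j (5 s_0)^{-2s}$ with $\eta'_j \to 0$. A Vitali cover of $\bar{B}_1(p_j)$ by $\{B_{5 s_0}(q_i^j)\}$ with disjoint kernels $\{B_{s_0}(q_i^j)\}$, together with Bishop--Gromov, then yields $\fint_{B_1(p_j)} |\R|^s \leq C(n)\, \eta'_j (5 s_0)^{-2s} \to 0$, contradicting the assumption $\geq \epsilon$.

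For Part \ref{Thm4.7-2}, the subsequential limit $Y_\infty$ is either non-collapsed (in which case the argument of Part \ref{Thm4.7-1} applies verbatim, using instead the $\R_k \wedge L$-supremum of Theorem \ref{CT1} with $k = n$ on the $\delta$-regular balls), or collapsed, in which case Theorem \ref{T3} together with the splitting of $\bb{R}^{n-1}$ forces $Y_\infty = \bb{R}^{n-1}$, so $B_{10}(p_j)$ is $(n-1, \delta_j)$-symmetric and Theorem \ref{CT1} with $k = n-1$ gives $\fint_{B_1(p_j)} \R_{n-1} \wedge L \to 0$. To upgrade this into control on $\R$, observe that at a point with $\Ric_{M,n-2} \geq c$, diagonalising $\Ric$ as $\lambda_1 \leq \cdots \leq \lambda_n$ with eigenbasis $\{u_i\}$ and choosing $V = \mathrm{span}(u_1, \ldots, u_{n-1})$, $v = u_i$ with $i < n$ in the definition of $\Ric_{M,n-2}$, one obtains $\lambda_i - K(u_n, u_i) \geq c$ where $K$ is the sectional curvature; summing over $i < n$ and using $\lambda_n = \sum_{i<n} K(u_n, u_i)$ yields $\lambda_n \leq \R_{n-1} - (n-1) c$, hence $\R \leq 2\R_{n-1} - (n-1) c$ on $\{\Ric_{n-2} \geq c\}$. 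Markov's inequality applied to the bounds $\fint_{B_1(p_j)} \Ric_{n-2} \wedge 0 \geq -\delta_j$ and $\fint_{B_1(p_j)} \R_{n-1} \wedge L \leq \eta'_j$ shows that for fixed $\delta', \epsilon' > 0$, the bad set $\{\Ric_{n-2} < -\delta'\} \cup \{|\R_{n-1}| > \epsilon'\}$ has relative volume $O(\delta_j/\delta' + (\eta'_j + \delta_j)/\epsilon') \to 0$; on its complement $\R \wedge L \leq 2\epsilon' + (n-1)\delta'$, and on the bad set $\R \wedge L \leq L$. Choosing $\epsilon', \delta'$ small first and then $j$ large yields $\fint_{B_1(p_j)} \R \wedge L \to 0$, a contradiction.

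The main difficulty is the upgrade in the collapsed case of Part \ref{Thm4.7-2}: the pointwise inequality $\lambda_n \leq \R_{n-1} - (n-1) c$ (valid where $\Ric_{n-2} \geq c$) must be coupled with the two integral bounds — the hypothesis $\fint \Ric_{n-2} \wedge 0 \geq -\delta_j$ and the Theorem \ref{CT1} output on $\R_{n-1}$ — to pass from integral control on $\R_{n-1}$ to integral control on $\R$, uniformly along the sequence. The Part \ref{Thm4.7-1} covering argument is routine once $Y_\infty$ has been identified as flat; the non-collapsed case of Part \ref{Thm4.7-2} reduces to it immediately, and the covering step is not needed in the collapsed case since $B_{10}(p_j)$ already provides the desired regularity at unit scale.
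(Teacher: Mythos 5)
Your proposal is correct. For part \ref{Thm4.7-1} it is essentially the paper's argument: the paper also identifies the non-collapsed limit as $\bb{R}^n$ or $\bb{R}^{n-1}\times S^1_r$ via Proposition \ref{P1} and Theorem \ref{T2}, and then contradicts the hypothesis by the $|\R|^s$-part of Theorem \ref{CT1} after rescaling, with the same Bishop--Gromov covering step (the paper only invokes the covering in the circle case, you run it uniformly at a scale below the injectivity radius of the flat limit — a cosmetic difference).

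For part \ref{Thm4.7-2} you take a genuinely different, though correct, route. The paper makes no collapsed/non-collapsed dichotomy: since the limit splits $\bb{R}^{n-1}$, the balls $B_{10}(p_j)$ are automatically $(n-1,\delta_j)$-symmetric (Definition \ref{D1} only requires closeness to $B^{\bb{R}^{n-1}\times Y}_{10}$ for \emph{some} $Y$), so the first part of Theorem \ref{CT1} with $k=n-1$ gives $\fint_{B_1(p_j)}\R_{M_j,n-1}\wedge L\to 0$ at unit scale in both cases; the upgrade to $\R$ is then obtained by integrating the pointwise truncated inequality $\R\wedge L\leq 2\,(0\vee \R_{n-1}\wedge L)-\sum_{h<n}\big(\sum_{l\neq h,\,l\neq n}\ssf{Sec}(e_h,e_l)\big)\wedge 0$ directly against the hypothesis $\fint_{B_1(p_j)}\Ric_{M_j,n-2}\wedge 0\,\de\ssf{Vol}_j\geq-\delta_j$, which bounds each truncated partial trace from below; no Markov/bad-set decomposition is needed. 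Your eigenvalue inequality $\R\leq 2\R_{n-1}-(n-1)c$ on $\{\Ric_{n-2}\geq c\}$ is the same algebraic identity, and your Markov argument (with the implicit bookkeeping $\R_{n-1}\wedge L\geq -(n-1)\delta_j$ and $\epsilon'<L$) combines it correctly with the two integral bounds, just more heavily. Note also that your case split is superfluous: the collapsed-case argument you give uses only the $(n-1,\delta_j)$-symmetry of $B_{10}(p_j)$, which holds equally when the limit is non-collapsed, so the separate branch via $k=n$ at small scale (and the identification of the limit as exactly $\bb{R}^{n-1}$) can be dropped. What the paper's version buys is brevity and independence from Theorems \ref{T3} and \ref{T2} in this part; what yours buys is a perhaps more transparent good-set/bad-set picture.
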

    
\begin{proof} 
\textbf{Proof of \ref{Thm4.7-1}.}
Assume by contradiction that there exist $\epsilon_0>0$, $v_0>0$, $\delta_j \downarrow 0$ and pointed Riemannian manifolds $(M_j,g_j,p_j)$ with $\Ric_{M_j} \geq -\delta_j$, $\ssf{Vol}_j(B_1(p_j)) \geq v_0$, and
\[
\fint_{B_1(p_j)}|\R_{M_j}|^s \, \de \ssf{Vol}_j \geq \epsilon_0,
\]
such that
$M_j \to X$ in  pGH-sense, where $(X,\sd,p)$ splits $\bb{R}^{n-1}$ isometrically.
Thanks to Proposition \ref{P1} and Theorem \ref{T2}, then either $X=\bb{R}^n$ or $X=\bb{R}^{n-1} \times S^1_r$, for some $r>0$. 
\smallskip

\textbf{Case $X=\bb{R}^n$}.
Applying the second assertion of Theorem \ref{CT1} to $(M_j, g_j, p_j)$, for $j$ large enough, we reach a contradiction.

\smallskip
\textbf{Case $X=\bb{R}^{n-1} \times S^1_r$}.  
Let $s \in (0,1)$. We claim that for every $j$ there exists $p_j^s \in B_1(p_j)$ such that
\begin{equation} \label{eqcorretta}
\fint_{B_s(p_j^s)} |\R_{M_j}|^s \, d\ssf{Vol}_j \geq c(n,s)\epsilon_0.
\end{equation}
If \eqref{eqcorretta} holds, we reach a contradiction by applying the second assertion of Theorem \ref{CT1} to the rescaled sequence $(M_j, s^{-2} g_j, p_j)$, for $s \in (0,1)$ small enough independent of $j$.

We prove \eqref{eqcorretta}. Let $j$ be fixed. Let $\{q_i\}_{i=1}^l \subset B_1(p_j)$ be such that $\{B_s(q_i)\}_{i=1}^l$ covers $B_1(p_j)$, while the balls $\{B_{s/5}(q_i)\}_{i=1}^l$ are disjoint. Assume without loss of generality that
\[
\int_{B_s(q_1)}|\R_{M_j}|^s \, d\ssf{Vol}_j=\max_{i}\int_{B_s(q_i)}|\R_{M_j}|^s \, d\ssf{Vol}_j.
\]
Then,
\[
\int_{B_1(p_j)}|\R_{M_j}|^s \, d\ssf{Vol}_j \leq l \int_{B_s(q_1)}|\R_{M_j}|^s \, d\ssf{Vol}_j.
\]
Hence, to conclude, it suffices to show that $l \leq c(n,s)\frac{\ssf{Vol}_j(B_1(p_j))}{\ssf{Vol}_j(B_s(q_1))}$.
To this aim, note that
\begin{align*}
\ssf{Vol}_j(B_1(p_j))  \geq c(n)\ssf{Vol}_j(B_2(p_j)) \geq c(n)\sum_{i=1}^l \ssf{Vol}_j(B_{s/5}(q_i)) \\
 \geq c(n,s)l\ssf{Vol}_j(B_s(q_1)).
\end{align*}
This concludes the proof of item \ref{Thm4.7-1}.
\smallskip

\textbf{Proof of \ref{Thm4.7-2}.}
By contradiction, assume there exist numbers $\delta_j \downarrow 0$ and pointed Riemannian manifolds $(M_j,g_j,p_j)$ with $\Ric_{M_j} \geq -\delta_j$ on $M_j$, and
\begin{equation} \label{E4}
\fint_{B_1(p_j)} \ssf{Ric}_{M_j,n-2} \wedge 0 \, \de \ssf{Vol}_j  \geq -\delta_j, \quad 
\fint_{B_1(p_j)}\R_{M_j} \wedge L \, \de \ssf{Vol}_j \geq \epsilon,
\end{equation}
such that
$M_j \to X$ in pGH-sense, where $(X,\sd,p)$ splits $\bb{R}^{n-1}$ isometrically. 

By the first assertion of Theorem \ref{CT1}, it holds
\begin{equation}\label{eq:Rjn-1to0}
\fint_{B_1(p_j)} \R_{M_j,n-1} \wedge L \,\de\ssf{Vol}_j \to 0, \quad \text{ as } j\to \infty.
\end{equation}
Consider an orthonormal base $\{e_1,\cdots,e_n\}$ for the tangent space of the manifold $M_j$ in a point $m \in M_j$, and assume that $\Ric_{M_j}$ in this basis is represented by a diagonal matrix $A=(a_{h,l}) \in \bb{R}^{n \times n}$ such that $a_{h,h} \leq a_{h+1,h+1}$ for every $h \in \{1, \cdots ,n-1\}$. It holds
\begin{align*}
\R_{M_j} = \sum_{h \neq l} \ssf{Sec}_{M_j}(e_h,e_l)&=\R_{M_j,n-1}+\sum_{h=1}^{n-1} \ssf{Sec}_{M_j}(e_n,e_h) \\
&=2\R_{M_j,n-1}-\sum_{\substack{ h \neq n \\ l \neq n}}\ssf{Sec}_{M_j}(e_h,e_l).
\end{align*}
Hence,
\begin{equation} \label{eqff}
\R_{M_j} \wedge L \leq 2(0 \vee \R_{M_j,n-1} \wedge L)-\sum_{h=1}^{n-1}\Big(\sum_{\substack{ l \neq h \\ l \neq n}}\ssf{Sec}_{M_j} (e_h,e_l)\Big) \wedge 0. 
\end{equation}
By the first inequality of \eqref{E4}, for every $h \in \{1,\cdots,n-1\}$, it follows
\[
\fint_{B_1(p_j)}\Big(\sum_{\substack{ l \neq h \\ l \neq n}}\ssf{Sec}_{M_j} (e_h,e_l)\Big) \wedge 0 \, d\ssf{Vol} \geq -\delta_j.
\]
Combining with \eqref{eqff}, it holds
\[
\fint_{B_1(p_j)} \R_{M_j} \wedge L \,\de\ssf{Vol}_j \leq 2\fint_{B_1(p_j)} \R_{M_j,n-1} \wedge L \,\de\ssf{Vol}_j + c(n)\delta_j.
\]
From \eqref{eq:Rjn-1to0}, we infer that
\[
\limsup_{j \to +\infty} \fint_{B_1(p_j)} \R_{M_j} \wedge L \,\de\ssf{Vol}_j= 0,
\]
contradicting the second inequality of \eqref{E4}.
\end{proof}

We now establish some first results of  ``dimension drop" in general dimension. Building on top of Section \ref{S1}, in Section \ref{Sec:MainGeom}   we will obtain stronger conclusions in dimension $3$, or in general dimensions under positivity conditions (in an integral sense) on  $\R_k$, $k\leq 2$. 

We recall that if a pointed non-compact manifold $(M,g,p)$ has non-negative Ricci curvature, then Gromov pre-compactness theorem ensures that, for every sequence $(M,g/r_j,p)$ as $r_j \to + \infty$, there exists a subsequence converging in pGH sense to a metric space $(X,\sd,p_\infty)$. 
Such  spaces obtained via blow-downs are called  \emph{tangent cones at infinity} of $M$.
We now prove Theorem \ref{thmInt3} from the Introduction.

\begin{thm} \label{T|tangentcones}
    Let $(M^n,g,p)$ be a Riemannian manifold; let $L>0$ and $\alpha \in (0,2)$.
    \begin{enumerate}
        \item \label{item1blowdown}
        Assume that $\Ric \geq 0$ and
    \[ 
    \lim_{r \to + \infty} \fint_{B_r(p)} 0 \vee (r^{2-\alpha}\R_k) \wedge L \,\de\ssf{Vol}=L.
    \]
    Let $(X,\sd,x_\infty)$ be any tangent cone at infinity of $M$. 
    If $(\bb{R}^d,\sd_{eu})$ is a tangent space to $X$ at a point $x  \in X$, then $d \leq k-1$. 
    \item \label{item2blowdown}
    Assume that $\Ric_{n-2} \geq 0$ and
    \[
    \lim_{r \to + \infty} \fint_{B_r(p)} 0 \vee (r^{2-\alpha}\R) \wedge L \,\de\ssf{Vol}=L.
    \]
    Let $(X,\sd,x_\infty)$ be any tangent cone at infinity of $M$. 
    If $(\bb{R}^d,\sd_{eu})$ is a tangent space to $X$ at a point $x \in X$, then $d \leq n-2$.
    \end{enumerate}
    \begin{proof}
        \textbf{Proof of }\ref{item1blowdown}.
        Let $r_i \uparrow +\infty$ and consider the rescaled spaces $(M,g/r_i^2,p)$. We consider the manifolds $(M_i,g_i,p_i)$, where $M_i=M$, $g_i=g/r_i^2$, and $p_i=p$. We denote by $B_s^i(p_i)$ and $\ssf{Vol}_i$ respectively balls and volumes in $M_i$. By the scaling properties of curvature, these manifolds satisfy
        \begin{equation*}
            \fint_{B^i_1(p_i)} 0 \vee (r_i^{-\alpha}\R_{M_i,k}) \wedge L \, d\ssf{Vol}_i=
            \fint_{B_{r_i}(p)} 0 \vee (r_i^{2-\alpha}\R_{M,k}) \wedge L \, d\ssf{Vol},
        \end{equation*}
        so that by hypothesis
        \begin{equation} \label{bbb}
            \lim_{i \to + \infty}\fint_{B^i_1(p_i)} 0 \vee (r_i^{-\alpha}\R_{M_i,k})\wedge L \, d\ssf{Vol}_i=L.
        \end{equation}
        Let $(X,\sd,x_\infty)$ be a pGH limit of a (non-relabeled) subsequence of $(M_i,g_i,p_i)$.
        Let $x \in B_1(x_\infty)$ be a point where $\bb{R}^d$ is a tangent space of $X$. We claim that $d \leq k-1$. If this holds, then a rescaling argument implies the same dimensional bound on tangent spaces for every $x \in X$.
        
        Fix $\epsilon>0$, and let $s>0$ be small enough so that $(X,\sd/s,x)$ is $\epsilon$-close in pGH sense to $\bb{R}^d$. 
        Then, there exists points $p'_i \in M_i$ converging to $x$ such that, for $i$ large enough, $(M_i,g_i/s^2,p'_i)$ is $2\epsilon$-close in pGH sense to $\bb{R}^d$. 
        Modulo reducing $s$, we may assume that $B^i_s(p'_i) \subset B^i_1(p_i)$ for $i$ large enough.
        
        We claim that, for $i$ large enough, it holds
        \begin{equation} \label{Eqbl}
        \fint_{B^i_{s}(p'_i)}(s^2 \R_{M_i,k}) \wedge L \,d \ssf{Vol}_i \geq L/3.
        \end{equation}
        If this is true, by choosing $\epsilon>0$ small enough, we deduce that $d \leq k-1$ by Theorem \ref{CT1}.

        To prove \eqref{Eqbl}, it is enough to show that
            \begin{equation} \label{E38}
            \frac{\ssf{Vol}_i(B^i_s(p'_i) \cap \{\R_{M_i,k} \geq s^{-2}L/2\})}{\ssf{Vol}_i(B^i_s(p'_i))} \rightarrow 1, \quad \text{as }i \to + \infty.
            \end{equation}
            Assume by contradiction that \eqref{E38} fails. Then there exists $\delta>0$ such that, up to a non-relabeled subsequence: 
            \[
            \ssf{Vol}_i(B^i_s(p'_i) \cap \{\R_{M_i,k} \geq s^{-2}L/2\}) < (1-\delta)\ssf{Vol}_i(B^i_s(p'_i)).
            \]
            Hence, for $i$ large enough:
            \begin{align*}
            \int_{B^i_1(p_i)}  &0 \vee (r_i^{-\alpha}\R_{M_i,k}) \wedge L \, \de\ssf{Vol}_i \\
            & \leq L \ssf{Vol}_i(B^i_1(p_i) \setminus B^i_s(p'_i))+L(1-\delta)\ssf{Vol}_i(B^i_s(p'_i))+r_i^{-\alpha}s^{-2}L/2\ssf{Vol}_i(B^i_s(p'_i)).
            \end{align*}
            Using that $B^i_s(p'_i) \subset B^i_1(p_i)$ for $i$ large enough, we infer that
            \begin{align*}
            \int_{B^i_1(p_i)}  & 0 \vee (r_i^{-\alpha}\R_{M_i,k}) \wedge L \, \de\ssf{Vol}_i \\
            & \leq L \ssf{Vol}_i(B^i_1(p_i) )+(r_i^{-\alpha}s^{-2}L/2-\delta L)\ssf{Vol}_i(B_s(p'_i)).
            \end{align*}
            By Bishop-Gromov's inequality, we conclude that for a positive constant $c(s,n)>0$ it holds
            \begin{align*}
            \fint_{B^i_1(p_i)}  &0 \vee (r_i^{-\alpha}\R_{M_i,k}) \wedge L \, \de\ssf{Vol}_i \leq
            L+(r_i^{-\alpha}s^{-2}L/2-\delta L)c(s,n).
            \end{align*}
            Since $r_i \uparrow \infty$, this contradicts \eqref{bbb}, proving item \ref{item1blowdown}.

            \textbf{Proof of }\ref{item2blowdown}. Bounds as in \eqref{Eqbl} follow repeating the previous argument for the integral appearing in item \ref{item2blowdown}. To conclude, one then uses Theorem \ref{CT2} instead of Theorem \ref{CT1}. 
    \end{proof}
\end{thm}

\begin{remark} \label{R|extangent}
    In Theorem \ref{T|tangentcones}, if we replace the limit symbols with lim-sup symbols, we obtain that \emph{there exists} a tangent cone satisfying the corresponding dimensional condition. 

    Moreover, one can replace $r^{2-\alpha}$ in the statement with an arbitrary positive measurable function $f:\bb{R}_+ \to \bb{R}_+$ sucht that $f(r)/r^2 \to 0$ as $r \uparrow + \infty$.

    The proofs of this facts are the same as the one of Theorem \ref{T|tangentcones}.
\end{remark}

\begin{remark} \label{R3}
    Theorem \ref{T|tangentcones} fails for $\alpha=0$. We give a counterexample for $k=1$, and all the other cases follow by taking products with real factors. Indeed, consider the paraboloid $\Sigma:=\{(x,y,z) \in \bb{R}^3: z=x^2+y^2\}$. The sectional curvature of $\Sigma$ is $K(z) \sim 1/4z^2$ as $z \to + \infty$. For every $r>0$, the subset $\{z \leq r\} \subset \Sigma$ is a ball of radius $f(r) \geq r$. Hence, on the paraboloid, for a positive constant $c>0$, it holds $f(r)^2K(z) \geq c>0$ for all points in $\{z \leq r\}$. Hence, 
    \[
    \lim_{r \to + \infty} \fint_{B^{\Sigma}_{f(r)}(0)} (f(r)^2K) \wedge c \, \ssf{Vol} = c.
    \]
    At the same time, the tangent cone at infinity of the paraboloid is a half line.
\end{remark}

\begin{remark} \label{R4}
    Let $L>0$ and $\alpha \in (0,2)$ be fixed. Another immediate consequence of Theorem \ref{T|tangentcones} is that if a manifold $(M,g)$ with non-negative Ricci curvature has $\Ric_M \geq f $ for a function $f$ such that
    \[
    \lim_{r \to + \infty} \fint_{B_r(p)} 0 \vee (r^{2-\alpha}f) \wedge L \,\de\ssf{Vol}=L,
    \]
    then $M$ is compact. A similar result was obtained in \cite[Theorem 4.8]{CGT}, where also the case $\alpha=2$ was addressed, under a pointwise (rather than integral) bound on $\Ric_M$.
\end{remark}

We now turn our attention to the topological implications of Theorem \ref{T|tangentcones}. The next result corresponds to Theorem \ref{thmIntFinal} from the introduction.

\begin{thm} 
    Let $(M^n,g,p)$ be a non-compact Riemannian manifold with $n \geq 3$ and let $\alpha \in (0,2)$. Let $2 \leq k \leq n$ be a natural number.
    \begin{enumerate}
        \item \label{item1:bettiInf}
        If $\Ric_M \geq 0$ and
    \begin{equation} \label{EbettiInf1}
    \limsup_{r \to + \infty} \Big( r^{2-\alpha} \min_{B_r(p)} \R_k \Big) >0,
    \end{equation}
    then $\mathrm{b}_1(M) \leq k-2$. 
    \item \label{item2:bettiInf}
    If $\Ric_{n-2} \geq 0$ and
    \begin{equation} \label{EbettiInf2}
    \limsup_{r \to + \infty} \Big( r^{2-\alpha} \min_{B_r(p)} \R \Big) >0,
    \end{equation}
    then $\mathrm{b}_1(M) \leq n-3$.
    \end{enumerate}
    \begin{proof}
    \textbf{Proof of \ref{item1:bettiInf}}: Let $(\tilde{M},\tilde{g},\tilde{p})$ be a covering of $M$, and let $\pi:\tilde{M} \to M$ be the covering map. We claim that $\tilde{M}$ has a tangent cone at infinity of dimension $\leq k-1$.

    We first show that $\tilde{M}$
    satisfies condition \eqref{EbettiInf1}. Let $r>0$ and let $\tilde{x} \in \tilde{M}$ be a point with $\tilde{\sd}(\tilde{x},\tilde{p}) < r$. Then, $\pi(\tilde{x}) \in B_r(p)$, so that
    \[
    r^{2-\alpha} \min_{B_{r}(\tilde{p}) }\R_{\tilde{M},k} \geq r^{2-\alpha} \min_{B_{r}(p) }\R_{M,k}.
    \]
    Hence, also $\tilde{M}$ satisfies \eqref{EbettiInf1}.
    
    Let now $r_i \uparrow + \infty$ and let $\epsilon>0$ be such that 
    \begin{equation*}
    \lim_{i \to + \infty} \Big( r_i^{2-\alpha} \min_{B_{r_i}(\tilde{p})} \R_{\tilde{M},k} \Big) =\epsilon.
    \end{equation*}
    It follows that
    \[
    \lim_{i \to + \infty} \fint_{B_{r_i}(\tilde{p})} (r_i^{2-\alpha}\R_{\tilde{M},k}) \wedge \epsilon \, \widetilde{d\ssf{Vol}}=\epsilon.
    \]
    As a consequence, by item \ref{item1blowdown} of Theorem \ref{T|tangentcones} and Remark \ref{R|extangent}, $\tilde{M}$ has a tangent cone at infinity of dimension $\leq k-1$.
    The statement now follows by repeating the argument of \cite[Theorem 1.3]{zhu2024twodimensionvanishingsplittingpositive}, where instead of using \cite[Theorem 1.6]{zhu2024twodimensionvanishingsplittingpositive} one uses that $\tilde{M}$ has a tangent cone at infinity of dimension $\leq k-1$.

    \textbf{Proof of \ref{item2:bettiInf}}: The argument is analogous to the proof of item \ref{item1:bettiInf}, now using item \ref{item2blowdown} of Theorem \ref{T|tangentcones}.
    \end{proof}
    \end{thm}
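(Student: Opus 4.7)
The plan is to lift the hypothesis to the universal cover $(\tilde M, \tilde g, \tilde p)$ of $M$, use Theorem \ref{T|tangentcones} to force a dimension drop in the tangent cones at infinity of $\tilde M$, and then conclude via the volume-growth criterion of Proposition \ref{P|anderson}. This is the natural route since the only hypothesis beyond non-negative curvature is asymptotic in nature, and it meshes perfectly with the blow-down theorem from the previous section.

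The curvature conditions lift immediately: the covering projection $\pi \colon \tilde M \to M$ is a local isometry, so $\Ric_{\tilde M} \geq 0$ (respectively $\ssf{Ric}_{\tilde M, n-2} \geq 0$) and $\R_{\tilde M, k} = \R_{M,k} \circ \pi$. Since $\pi(B_r(\tilde p)) \subset B_r(p)$, one has $\min_{B_r(\tilde p)} \R_{\tilde M, k} \geq \min_{B_r(p)} \R_{M, k}$, so the limsup hypothesis descends to $\tilde M$. Pick $r_i \to \infty$ realizing the limsup with value $\epsilon > 0$; on each $B_{r_i}(\tilde p)$ one has $r_i^{2-\alpha} \R_{\tilde M, k} \geq \epsilon$ pointwise, hence
\[
\lim_{i \to \infty} \fint_{B_{r_i}(\tilde p)} 0 \vee (r_i^{2-\alpha} \R_{\tilde M, k}) \wedge \epsilon \, \widetilde{\de\ssf{Vol}} = \epsilon.
\]
Theorem \ref{T|tangentcones} (item \ref{item1blowdown} in the first case, item \ref{item2blowdown} in the second), together with the limsup version recorded in Remark \ref{R|extangent}, then yields a tangent cone at infinity of $\tilde M$ of dimension at most $k-1$ (resp.\ at most $n-2$).

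To finish, I would combine this with standard volume-growth estimates. By Yau's theorem, a non-compact manifold of non-negative Ricci curvature has at least linear volume growth, so $\ssf{Vol}(B_t(p)) \geq c_1 t$ for all $t \geq 1$. On $\tilde M$, the existence of a tangent cone at infinity of dimension at most $d$, combined with the Bishop--Gromov monotonicity of $t \mapsto \widetilde{\ssf{Vol}}(B_t(\tilde p))/t^n$ and Cheeger--Colding volume continuity, should produce a polynomial upper bound $\widetilde{\ssf{Vol}}(B_t(\tilde p)) \leq c_2 t^d$ for all large $t$, with $d = k-1$ or $d = n-2$ respectively. Proposition \ref{P|anderson} applied with these exponents then delivers $\mathrm{b}_1(M) \leq (k-1)-1 = k-2$ (respectively $\mathrm{b}_1(M) \leq n-3$), as desired.

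The main obstacle I foresee is the polynomial volume upper bound on $\tilde M$: the dimension drop of a single tangent cone at infinity is, a priori, only a statement along a subsequence of scales, and one must exploit Bishop--Gromov monotonicity together with the metric-cone structure of tangent cones at infinity in the non-negative Ricci setting (à la Cheeger--Colding) to propagate it to \emph{all} large scales in the form required by Proposition \ref{P|anderson}. An alternative route that avoids this step would be to bypass the Anderson-type reduction altogether and run a Toponogov/splitting-style argument directly on $\tilde M$, using the low-dimensional tangent cone at infinity to bound the number of independent $\bb{R}$-factors that can split off, which is the quantity controlling $\mathrm{b}_1(M)$.
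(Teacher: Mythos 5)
Your first half matches the paper: lifting the hypothesis to a covering via $\pi(B_r(\tilde p))\subset B_r(p)$, converting the pointwise bound into the truncated integral condition, and invoking Theorem \ref{T|tangentcones} together with Remark \ref{R|extangent} to produce a tangent cone at infinity of $\tilde M$ whose tangent spaces have dimension at most $k-1$ (resp.\ $n-2$) is exactly how the paper begins. The problem is the concluding step. The implication you rely on --- that a tangent cone at infinity of dimension at most $d$ forces $\widetilde{\ssf{Vol}}(B_t(\tilde p))\leq c\,t^{d}$ for large $t$ --- is not merely unproven along a subsequence of scales; it is false. Take the surface of revolution $dr^2+r^{2\beta}d\theta^2$ with $\beta\in(0,1)$ (smoothed near the tip), which has positive curvature: its volume growth is of order $t^{1+\beta}$, strictly faster than linear, yet its unique tangent cone at infinity is a half-line, so every tangent space away from the basepoint is $\bb{R}^1$. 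Multiplying by $\bb{R}^{k-2}$ gives, for every $k$, a manifold with $\Ric\geq 0$ whose tangent cone at infinity has tangent spaces of dimension $k-1$ but whose volume growth exceeds $t^{k-1}$. Hence the blow-down dimension drop cannot be converted into the polynomial volume upper bound that Proposition \ref{P|anderson} needs, and without it the Anderson-type reduction only yields the trivial bound coming from Bishop--Gromov ($h=n$) and Yau ($k=1$).

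This is precisely why the paper does not argue through volume growth: after obtaining the dimension drop for blow-downs of coverings, it concludes by repeating the argument of \cite[Theorem 1.3]{zhu2024twodimensionvanishingsplittingpositive}, a splitting-type argument in which a first Betti number $\geq k-1$ would force blow-downs of a suitable covering of $M$ to split $\bb{R}^{k-1}$ isometrically, producing tangent spaces of dimension $\geq k$ at points away from the basepoint (using non-compactness of $M$) and contradicting the bound from Theorem \ref{T|tangentcones}. Your closing remark about a ``splitting-style argument bounding the number of independent $\bb{R}$-factors'' is essentially a sketch of this correct route, but as written it is not carried out, so the proposal as it stands has a genuine gap between the tangent-cone statement and the Betti number bound. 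Note also that the paper lifts the hypothesis to an \emph{arbitrary} covering of $M$, not only the universal cover, since the Zhu-type argument is applied to a specific covering adapted to $\pi_1(M)$.
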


Next, we prove Theorem \ref{thmInt4} from the Introduction. 

\begin{thm}
    Let $\epsilon,s,v \in (0,1)$, let $L,D \in (0,+\infty)$, and let $k,n \in \bb{N}$ with $k \leq n$. There exists $\delta>0$ such that for every manifold $(M^n,g,p)$ with
    $\Ric_{M} \geq -\delta$, $\mathrm{diam}(M) \leq D$, $\ssf{Vol}(B_1(p)) \geq v$, the following holds.
    \begin{enumerate}
    \item \label{item11} If
    $
     \fint_{B_1(p)} \R_{k} \wedge L \, \de \ssf{Vol} \geq \epsilon,
    $
    then $\mathrm{b}_1(M) \leq k-1$. 
        \item \label{item22} If
    $
     \fint_{B_1(p)} |\R|^s \, \de \ssf{Vol} \geq \epsilon,
    $
    then $\mathrm{b}_1(M) \leq n-2$. 
    \end{enumerate}
    \end{thm}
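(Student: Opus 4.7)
My plan is to argue by contradiction, passing to a pmGH limit and lifting everything to universal covers, where the Betti number assumption forces a Euclidean splitting that is incompatible with the transferred integral curvature bound via Theorems~\ref{CT1} and~\ref{CT2}. Suppose the statement fails for some fixed choice of parameters; then there exist $\delta_j\downarrow 0$ and manifolds $(M_j^n,g_j,p_j)$ satisfying all the hypotheses but with $\mathrm{b}_1(M_j)\ge k$ (for item (1)) or $\mathrm{b}_1(M_j)\ge n-1$ (for item (2)). By Gromov's precompactness and the stability of $\RCD$ spaces under pmGH convergence, up to a subsequence $(M_j,g_j,p_j)\to(X,\sd,p)$ in pmGH, where $(X,\sd,p)$ is a compact, non-collapsed $\RCD(0,n)$ space (Theorem~\ref{T3} ensures that the essential dimension is $n$). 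Proposition~\ref{P|lowerbetti} then yields $\mathrm{b}_1(X)\ge k$ (resp.\ $\ge n-1$), and Proposition~\ref{P|splitting universal cover} gives that the universal cover $\tilde X$ splits isometrically as metric measure space an $\bb{R}^k$ (resp.\ $\bb{R}^{n-1}$) factor.

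Next I would apply Proposition~\ref{P|convergence coverings} to obtain pGH convergence $(\tilde M_j,\tilde g_j,\tilde p_j)\to(\tilde X,\tilde{\sd},\tilde p)$, where $\tilde p_j$ is a lift of $p_j$. Since the covering map $\pi_j\colon\tilde M_j\to M_j$ is a local isometry, it preserves $\Ric$ and all curvature scalars pointwise; in particular $\Ric_{\tilde M_j}\ge -\delta_j$ on $\tilde M_j$. Lifting minimizing geodesics, $\pi_j$ maps $B_1(\tilde p_j)$ surjectively onto $B_1(p_j)$, so writing $m_j(x):=\#(\pi_j^{-1}(x)\cap B_1(\tilde p_j))\ge 1$ one has $\widetilde{\ssf{Vol}}(B_1(\tilde p_j))\ge \ssf{Vol}(B_1(p_j))\ge v$, while Bishop--Gromov gives $\widetilde{\ssf{Vol}}(B_1(\tilde p_j))\le C(n)$. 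For item (2), since $|\R|^s\ge 0$ and $m_j\ge 1$, one has $\int_{B_1(\tilde p_j)}|\R|^s\,\de\widetilde{\ssf{Vol}}\ge \int_{B_1(p_j)}|\R|^s\,\de\ssf{Vol}\ge \epsilon v$. For item (1), splitting $\R_k\wedge L=(\R_k\wedge L)_+-(\R_k\wedge L)_-$ and using $(\R_k\wedge L)_-\le k\delta_j$ (coming from $\R_k\ge -k\delta_j$), I would get
\[
\int_{B_1(\tilde p_j)}\R_k\wedge L\,\de\widetilde{\ssf{Vol}} \ge \int_{B_1(p_j)}(\R_k\wedge L)_+\,\de\ssf{Vol} - k\delta_j\widetilde{\ssf{Vol}}(B_1(\tilde p_j)) \ge \epsilon v - C(n)k\delta_j.
\]
Dividing by $\widetilde{\ssf{Vol}}(B_1(\tilde p_j))\le C(n)$, the corresponding average over $B_1(\tilde p_j)$ on the cover is bounded below by a positive constant independent of $j$, for $j$ large.

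To close the argument, I would invoke the continuity results of Section~\ref{S3} on $\tilde M_j$. For item (1), the splitting $\tilde X=\bb{R}^k\times Y$ together with $\tilde M_j\to \tilde X$ in pGH implies that $B_{10}(\tilde p_j)$ is $(k,\eta_j)$-symmetric with $\eta_j\downarrow 0$; Theorem~\ref{CT1} then forces $\fint_{B_1(\tilde p_j)}\R_k\wedge L\,\de\widetilde{\ssf{Vol}}\to 0$, contradicting the positive lower bound above. For item (2), Theorem~\ref{CT2}(1) applied to $\tilde M_j$ with parameters $\epsilon v/C(n)$, $v$, $s$, $n$ produces a uniform $\delta>0$ with $\sd_{\rm pGH}(\tilde M_j, Y')\ge \delta$ for every pointed metric space $Y'$ splitting $\bb{R}^{n-1}$; this contradicts $\tilde M_j\to \tilde X$ in pGH together with $\tilde X$ splitting $\bb{R}^{n-1}$.

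The main technical obstacle is transferring the local integral curvature hypothesis from $M_j$ to the universal cover $\tilde M_j$, since $\pi_j$ is not globally injective on $B_1(\tilde p_j)$ in general. This is resolved by: (i) the local isometry property, which ensures that all pointwise curvature quantities lift; (ii) the surjectivity of $\pi_j$ on the pertinent balls via geodesic lifting, guaranteeing multiplicity $\ge 1$ and hence the integral lower bounds on the cover; and (iii) the Bishop--Gromov upper bound $\widetilde{\ssf{Vol}}(B_1(\tilde p_j))\le C(n)$, which turns integral bounds into average bounds. The sign issue for $\R_k\wedge L$ in item (1) is handled by separating positive and negative parts, with the negative part controlled by $O(\delta_j)\to 0$.
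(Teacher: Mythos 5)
Your argument is correct and takes essentially the same route as the paper's proof: contradiction, pmGH limit, semicontinuity of $\mathrm{b}_1$, splitting of the universal cover of the limit via Proposition \ref{P|splitting universal cover}, pGH convergence of the covers via Proposition \ref{P|convergence coverings}, transfer of the integral curvature bound to the covers, and a contradiction with Theorem \ref{CT1} (resp.\ Theorem \ref{CT2}); your explicit positive/negative-part bookkeeping for $\R_k\wedge L$ together with the Bishop--Gromov bound is just a more detailed version of the paper's one-line chain of inequalities (cf.\ Proposition \ref{Prop:CoverInt}). The only slip is a citation: the step $\mathrm{b}_1(M_j)\geq k\Rightarrow\mathrm{b}_1(X)\geq k$ requires Proposition \ref{P|uppersemicontinuity betti} (lower semicontinuity of $\mathrm{b}_1$ under non-collapsed convergence), not Proposition \ref{P|lowerbetti}, which gives the opposite inequality; since your setup verifies the non-collapsing hypotheses, nothing mathematical is missing.
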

    
    \begin{proof}
We consider case \ref{item11} first. 
Assume by contradiction that the statement fails.
Hence, there exist $v,\, D, \epsilon_0>0$, a sequence  $\delta_j \downarrow 0$ and pointed Riemannian manifolds $(M^n_j,g_j,p_j)$ with $\Ric_{M_j} \geq -\delta_j$, $\mathrm{diam}(M_j) \leq D$, $\ssf{Vol}_j(B_1(p_j)) \geq v$, 
\[
\fint_{B_1(p_j)} \R_{M_j,k} \wedge L \, \de \ssf{Vol}_j \geq \epsilon_0,
\]
and $\mathrm{b}_1(M_j) \geq k$.
Up to a subsequence, $M_j$ GH-converge to a metric space $(X,\sd)$. 
Thanks to the non-collapsing assumption, the metric measure space $(X,\sd,\aH^n)$ is an $\mathrm{RCD}(0,n)$ space while, thanks to Proposition \ref{P|uppersemicontinuity betti}, it holds $\mathrm{b}_1(X) \geq k$.
By Proposition \ref{P|splitting universal cover}, the universal cover $(\tilde{X},\tilde{\sd})$ of $X$ splits $\bb{R}^k$ isometrically.

Let $\tilde{p} \in \tilde{X}$.
By Proposition \ref{P|convergence coverings}, the pointed metric space $(\tilde{X},\tilde{\sd},\tilde{p})$ is the  pGH-limit of a sequence $(\tilde{M}_j,\tilde{g}_j,\tilde{p}_j)$, where each $\tilde{M}_j$ covers $M_j$.
These covering spaces satisfy
\[
\int_{B_1(\tilde{p}_j)} {\R}_{\tilde{M}_j,k} \wedge L \, \de \widetilde{\ssf{Vol}}_j \geq 
\int_{B_1(p_j)} {\R}_{M_j,k} \wedge L \, \de \ssf{Vol}_j
\geq
v \epsilon_0 \geq c(n)v \epsilon_0 \widetilde{\ssf{Vol}}_j (B_1(\tilde{p}_j)),
\]
contradicting Theorem \ref{CT1}.

Assertion \ref{item22} follows in an analogous way, using Theorem \ref{CT2} instead of Theorem \ref{CT1}.
\end{proof}

\section{Thin metric spaces} \label{S1}

The goal of this section is to show that \emph{thin} metric spaces (see Definition \ref{Condition}, after \cite{TrianglesKapo}) are contained in a neighbourhood of controlled width of an isometrically embedded $1$-dimensional manifold (see Theorem \ref{thm(SNP)}). This will be a key step to prove Theorem \ref{thmIntMain}.

In \cite[Section 5] {TrianglesKapo}, it is shown that a \emph{non-compact} thin metric space is contained in a finite neighbourhood of a ray or a line. Comparing with \cite[Section 5] {TrianglesKapo}, Theorem \ref{thm(SNP)} treats the compact case as well, and provides explicit uniform bounds on the width of the aforementioned neighbourhood. These bounds are needed for the applications in the subsequent sections and, in particular, to prove Theorem \ref{thmIntMain}.
On the other hand, unlike \cite[Section 5] {TrianglesKapo}, we only consider metric spaces $(X,\sd)$ that are proper, separable, and geodesic.

If not otherwise specified, all curves are assumed to be of finite length and parametrized by arc-length. The length functional is denoted by $\length(\cdot)$.

\begin{definition}[Ray, segment, line]
    Let $(X,\sd)$ be a metric space. A function $r:[0,+\infty) \to X$ is called  \emph{a ray} if
    $
    \sd(r(t),r(s))=|t-s|
    $
    for every $t,s \in [0,+\infty)$.
    Similarly $r:[a,b] \to X$ is called \emph{a segment} if
    $
    \sd(r(t),r(s))=|t-s|
    $
    for every $t,s \in [a,b]$ and  $r:\bb{R} \to X$ is called  \emph{a line} if
    $
    \sd(r(t),r(s))=|t-s|
    $
    for every $t,s \in \bb{R}$.
\end{definition}

When considering a ray (resp.\;a segment or a line) $r$ in $X$, by a slight abuse in order to keep notation short, we denote by $r$ both the function $r:[0,+\infty) \to X$ and its image. Moreover,  $[r(a),r(b)]$ denotes the set $r([a,b]) \subset X$,  for every $0<a<b$.

Given a closed set $K \subset X$ and a point $x\in X$, we call \emph{the projection} of $x$ into $K$ the subset $\pi_{K}(x)\subset K$ defined by
$$
\pi_{K}(x):=\{y\in K\colon \sd(x, K)=\sd(x,y)\}.
$$
Given a subset $A \subset X$, we call \emph{the projection} of $A$ into $K$ the subset $\pi_{K}(A)\subset K$ defined by
\[
\pi_K(A):=\bigcup_{x \in A} \pi_K(x).
\]
Moreover, for every $y \in K$, we denote by $\pi^{-1}_K(y) \subset X$ the subset
\begin{equation}\label{D3}
\pi^{-1}_K(y):=\{x\in X\colon \sd(x,K)=\sd(x,y)\}.
\end{equation}

\begin{definition}[Thin metric spaces]\label{Condition}
Let $R,D>0$ with $R \geq 20D$. Let $(X,\sd)$ be a proper, geodesic and separable metric space. We say that $(X,\sd)$ is \emph{$(R,D)$-thin} if for every segment $r$ of length greater than $2R$ in $X$, and for every $t \in (R,\length(r)-R)$ and $x \in \pi_r^{-1}(r(t))$, it holds $\sd(x,r) < D$.
\end{definition}

In \cite{TrianglesKapo}, a metric space is defined to be $R$-thin if it is $(R,R)$-thin according to Definition \ref{Condition}. We consider the extra parameter $D>0$ and we restrict ourselves to the case $R \geq 20D$ as this will be sufficient for the subsequent applications.

\begin{remark} \label{R1}
Let $(X,\sd)$ be $(R,D)$-thin (see Definition \ref{Condition}) and
    let $r$ be a ray in $X$.
    If a point $x \in X$ satisfies $\pi_r(x) \cap (r(R),r(\infty)) \neq \emptyset$, then $\sd(x,r) \leq D$.
    This fact follows by applying Definition \ref{Condition} to the segments $[r(0),r(T)]$ for $T>2R$.
\end{remark}

 The next lemma is needed to prove Lemma \ref{nuovocontinuitylemma}, which is the key technical tool to study $(R,D)$-thin metric spaces.
 
\begin{lemma} \label{L20}
Let $(X,\sd)$ be a geodesic metric space and let $C_1 \subset C_2 \subset X$ be closed sets. The set
\[
    K:=\{ x \in X:  \pi_{C_2}(x) \cap  C_1 \neq \emptyset\}
    \]
    is closed.
    
In particular, if $\alpha:[0,b] \to X$ is a curve, then
the set
    \[
    I:=\{ s \in [0,b]: \pi_{C_2}(\alpha(s)) \cap C_1 \neq \emptyset\}
    \]
    is closed as well.
    \begin{proof}
        Let $\{x_i\}_{i \in \bb{N}} \in K$ be a sequence converging to $x_\infty \in X$. We claim that $x_\infty \in K$. 
        For every $i \in \bb{N}$, let $\gamma_i$ be a segment from $x_i$ to a point $\gamma_i(\length(\gamma_i)) \in C_1$ such that $\sd(x_i,C_2)=\length(\gamma_i)$.
 If $\length(\gamma_i) \to 0$ as $i \to + \infty$, then $x_\infty \in C_1$, so that $x_\infty \in K$.

        If instead $\length(\gamma_i)\not \to 0$, modulo passing to a subsequence, the segments $\gamma_i$ converge to a finite segment $\gamma$ from $x_\infty$ to a limit point of the sequence $\{\gamma_i(\length(\gamma_i))\}_{i \in \bb{N}}$. Since $C_1$ is closed, all limit points of  $\{\gamma_i(\length(\gamma_i))\}_{i \in \bb{N}}$ lie in $C_1$, so that also $\gamma(\length(\gamma)) \in C_1$. By lower semicontinuity of the length:
        \begin{equation*}
        \sd(x_\infty,C_2) \leq \length(\gamma) \leq \liminf_{i \in \bb{N}} \length(\gamma_i) =
        \liminf_{i \in \bb{N}} \sd(x_i,C_2)=
        \sd(x_\infty,C_2).
        \end{equation*}
        In particular,
        $\sd(x_\infty,C_2) =\length(\gamma)$, so that $x_\infty \in K$. Hence, $K$ is closed.
Being the preimage of $K$ via the continuous function $\alpha$, also $I$ is closed.
    \end{proof}
\end{lemma}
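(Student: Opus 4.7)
The plan is to prove directly that $K$ is sequentially closed and then deduce the second statement from the continuity of $\alpha$. Take an arbitrary sequence $(x_i)_{i \in \mathbb{N}} \subset K$ converging to some $x_\infty \in X$; the goal is to exhibit a point in $\pi_{C_2}(x_\infty) \cap C_1$.

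By definition of $K$, for each $i$ there exists $y_i \in C_1$ with $\sd(x_i, y_i) = \sd(x_i, C_2)$. The triangle inequality together with the $1$-Lipschitz continuity of $z \mapsto \sd(z, C_2)$ gives
\[
\sd(y_i, x_\infty) \leq \sd(y_i, x_i) + \sd(x_i, x_\infty) = \sd(x_i, C_2) + \sd(x_i, x_\infty),
\]
so the sequence $(y_i)$ is bounded. Since $(X, \sd)$ is proper, we extract a (not relabeled) subsequence with $y_i \to y_\infty$ for some $y_\infty \in X$. Because $C_1$ is closed, $y_\infty \in C_1 \subset C_2$. Passing to the limit in the identity $\sd(x_i, y_i) = \sd(x_i, C_2)$, using the joint continuity of $\sd$ and again the continuity of $z \mapsto \sd(z, C_2)$, we obtain
\[
\sd(x_\infty, y_\infty) = \lim_{i \to \infty} \sd(x_i, y_i) = \lim_{i \to \infty} \sd(x_i, C_2) = \sd(x_\infty, C_2).
\]
Hence $y_\infty$ realises the distance from $x_\infty$ to $C_2$ and lies in $C_1$, showing $x_\infty \in K$. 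For the second statement, $I = \alpha^{-1}(K)$, and continuity of $\alpha$ together with closedness of $K$ gives that $I$ is closed in $[0,b]$.

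There is no genuine obstacle here: the only delicate point is producing the limit projection $y_\infty$, which is precisely where properness of $X$ enters. The paper's proof replaces the projection points $y_i$ by minimising segments $\gamma_i$ (available since $X$ is geodesic), and treats separately the case $\length(\gamma_i) \to 0$, in which $x_\infty \in C_1$ so $x_\infty \in K$ trivially; this bookkeeping is not essential, because that case corresponds to $y_\infty = x_\infty$ in the argument above and is handled uniformly.
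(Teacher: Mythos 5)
Your argument is correct, and it is a cleaner route than the one taken in the paper. Where the paper (working under the standing section hypotheses that $(X,\sd)$ is proper, complete, and geodesic) connects each $x_i$ to a projection point by a minimizing segment $\gamma_i$, splits into the cases $\length(\gamma_i)\to 0$ and not, and then uses subsequential convergence of segments plus lower semicontinuity of length to identify the limit endpoint as a projection of $x_\infty$, you work directly with the projection points $y_i\in C_1$: boundedness via the triangle inequality and the $1$-Lipschitz function $\sd(\cdot,C_2)$, properness to extract $y_i\to y_\infty\in C_1$, and continuity of $\sd$ and of $\sd(\cdot,C_2)$ to pass to the limit in $\sd(x_i,y_i)=\sd(x_i,C_2)$. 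This avoids the geodesic structure and the case distinction entirely, and uses only properness (which is indeed part of the standing assumptions of Section 4, so there is no hypothesis gap); your final remark correctly identifies that the paper's $\length(\gamma_i)\to 0$ case is just $y_\infty=x_\infty$ in your setting. The deduction that $I=\alpha^{-1}(K)$ is closed is the same as in the paper. The only thing each approach "buys": the paper's segment formulation produces the minimizing geodesics themselves, which is in the spirit of how projections are used later in that section, but for the statement of the lemma your pointwise argument is sufficient and more economical.
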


The next lemma contains the key technical properties of $(R,D)$-thin metric spaces (see Definition \ref{Condition}).

\begin{lemma} \label{nuovocontinuitylemma}
    Let $(X,\sd)$ be an $(R,D)$-thin metric space (see Definition \ref{Condition}). Let $\alpha:[0,\length(\alpha)] \to X$ be a unit speed curve.
    \begin{enumerate}
        \item \label{item1Lcont}
        Let $l$ be a segment with $\length(l)>2R$. Let $t \in (R,\length(l)-R)$, and let $u \in [0,R] \cup [\length(l)-R,\length(l)]$. If $l(t) \in \pi_l(\alpha(0))$ and $l(u) \in \pi_l(\alpha(\length(\alpha)))$, then the set $\pi_l(\alpha)$ contains a $3D$-net of either $[l(R),l(t)]$ or $[l(t),l(\length(l)-R)]$.
        \item \label{item2Lcont}
        Let $l$ be a segment with $\length(l)>2R$. Let $t,u \in (R,\length(l)-R)$ with $t<u$. If $l(t) \in \pi_l(\alpha(0))$ and $l(u) \in \pi_l(\alpha(\length(\alpha)))$, then $\pi_l(\alpha)$ contains a $3D$-net of either $[l(t),l(u)]$ or $[l(R),l(t)] \cup [l(u),l(\length(l)-R)]$.
        \item \label{item3Lcont}
        Let $r$ be a ray, let $t>R$ and let $u \in [0,R]$. If $r(t) \in \pi_r(\alpha(0))$ and $r(u) \in \pi_r(\alpha(\length(\alpha)))$, then $\pi_r(\alpha)$ contains a $3D$-net of $[r(R),r(t)]$.
        \end{enumerate}
        \begin{proof}
            We consider item \ref{item1Lcont} first.
            Assume by contradiction that the claim fails. Hence, there are $x_1 \in [R,t]$ and $x_2 \in [t,\length(l)-R]$ such that 
            \begin{equation} \label{EqLcont}
            \pi_l(\alpha) \cap B_{3D}(l(x_1))=\pi_l(\alpha) \cap B_{3D}(l(x_2))=\emptyset.
            \end{equation}
Consider
\begin{align*}
\tilde{s}:=\max \Big\{s \in [0,&\length(\alpha)]: \pi_l(\alpha(s)) \cap [l(x_1),l(x_2)] \neq \emptyset \Big\}.
\end{align*}
In order to show that $\tilde{s}$ is well-defined, it is sufficient to observe that the set appearing in its definition is:
\begin{itemize}
\item \emph{closed}, thanks to Lemma \ref{L20}; 
\item  \emph{non-empty}, since by assumption $l(t) \in \pi_l(\alpha)$.
\end{itemize}
We claim that $\tilde{s} < \length(\alpha)$. If this is not the case, then, combining with \eqref{EqLcont}, 
\[
\pi_l\big(\alpha(\length(\alpha))\big) \cap [l(x_1+3D),l(x_2-3D)] \neq \emptyset.
\]
Since $X$ is $(R,D)$-thin, $\alpha(\length(\alpha))$ is $D$-close to $[l(x_1+3D),l(x_2-3D)]$, so that 
\[
\pi_l\big(\alpha(\length(\alpha)) \big) \cap l\big([0,R] \cup [\length(l)-R,\length(l)] \big)=\emptyset,
\]
a contradiction. This shows that $\tilde{s} < \length(\alpha)$.

Hence, for every $\epsilon>0$, there is $p_\epsilon \in \pi_l\big(\alpha(\tilde{s}+\epsilon) \big)$.  Let $p \in \pi_l\big(\alpha(\tilde{s}) \big) \cap [l(x_1),l(x_2)]$. Since $X$ is $(R,D)$-thin, it holds $\sd(p,\alpha(\tilde{s}))<D$. It follows that $\sd(p_\epsilon,\alpha(\tilde{s}+\epsilon))<D+\epsilon$. Combining these facts, it holds
\[
\sd(p,p_\epsilon) \leq \sd(p,\alpha(\tilde{s}))+\sd(\alpha(\tilde{s}),\alpha(\tilde{s}+\epsilon))+\sd(\alpha(\tilde{s}),p_\epsilon) \leq 2D+2\epsilon.
\]
This contradicts the fact that $p_\epsilon \in [l(0),l(x_1-3D)] \cup [l(x_2+3D),l(\length(l))]$ by \eqref{EqLcont}. 

The proof of item \ref{item2Lcont} is very similar to the one of item \ref{item1Lcont}, and for this reason it is only sketched.
Assume by contradiction that the claim fails. Hence, there are $x_1 \in [t,u]$ and $x_2 \in [R,t] \cup [u,\length(l)-R]$ such that, as in \eqref{EqLcont},
            \begin{equation*} 
            \pi_l(\alpha) \cap B_{3D}(l(x_1))=\pi_l(\alpha) \cap B_{3D}(l(x_2))=\emptyset.
            \end{equation*}
One can now assume without loss of generality that $x_2 \geq x_1$ (the other case being analogous), and the proof can be carried out in the same way as for item \ref{item1Lcont}.

We now consider item \ref{item3Lcont}. Since the image of $\alpha$ is compact, $\pi_l(\alpha)$ is a bounded subset of $l$. Hence, $\pi_l(\alpha) \subset [r(0),r(T)]$ for some $T>0$. We consider now the segment $l:=[r(0),r(T+2R)]$. It is easy to check that $\pi_l(\alpha)=\pi_r(\alpha)$. By item \ref{item1Lcont}, $\pi_l(\alpha)$ contains a $3D$-net of either $[l(R),l(t)]$ or $[l(t),l(\length(l)-R)]$. This last case cannot happen as 
$[l(t),l(\length(l)-R)]=[r(t),r(T+R)]$ and $\pi_l(\alpha) \subset [r(0),r(T)]$. Hence, $\pi_l(\alpha)$ (which again coincides with $\pi_r(\alpha)$) contains a $3D$-net of $[l(R),l(t)]=[r(R),r(t)]$.
        \end{proof}
\end{lemma}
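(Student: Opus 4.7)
The plan is to prove each item by contradiction, using (SNP) as a near-continuity property of the projection map at points whose image lies in the interior $(l(R), l(\length(l) - R))$ of the segment. The key insight is that although $\pi_l$ is not set-continuous, on this interior the (SNP) forces $\sd(x, l) < D$ for any $x$ projecting there, which together with the fact that projections realize distance to $l$ recovers enough rigidity to propagate along the curve $\alpha$.

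For item \ref{item1Lcont}, suppose toward contradiction that there exist $x_1 \in [R, t]$ and $x_2 \in [t, \length(l) - R]$ with
\[
\pi_l(\alpha) \cap B_{3D}(l(x_1)) = \pi_l(\alpha) \cap B_{3D}(l(x_2)) = \emptyset.
\]
I would consider the set $S := \{s \in [0, \length(\alpha)] : \pi_l(\alpha(s)) \subset [l(x_1), l(x_2)]\}$, which is closed by Lemma \ref{L20} and contains $0$ since $l(t) \in \pi_l(\alpha(0))$ with $t \in [x_1, x_2]$; hence $\tilde{s} := \max S$ is well-defined. The first sub-claim is $\tilde{s} < \length(\alpha)$: otherwise $l(u)$ would belong to $[l(x_1), l(x_2)]$, and combined with the hypothesis $u \in [0, R] \cup [\length(l) - R, \length(l)]$ and the $3D$-gaps, this is ruled out.

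Next, for every small $\epsilon > 0$ pick $p_\epsilon \in \pi_l(\alpha(\tilde{s} + \epsilon))$. By maximality of $\tilde{s}$ and the gap assumption, $p_\epsilon \in [l(0), l(x_1 - 3D)] \cup [l(x_2 + 3D), l(\length(l))]$. Picking a projection $p \in [l(x_1), l(x_2)]$ of $\alpha(\tilde{s})$, the inclusion $[l(x_1), l(x_2)] \subset (l(R), l(\length(l) - R))$ lets (SNP) yield $\sd(p, \alpha(\tilde{s})) < D$; since projections minimize distance to $l$, one also gets $\sd(p_\epsilon, \alpha(\tilde{s} + \epsilon)) \leq \sd(p, \alpha(\tilde{s} + \epsilon)) \leq D + \epsilon$. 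The triangle inequality then gives $\sd(p, p_\epsilon) \leq 2D + 2\epsilon$, which contradicts the $3D$-gap for $\epsilon$ small enough.

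Item \ref{item2Lcont} follows by the same argument after choosing a pair of gap positions $x_1, x_2$ realizing the failure of a $3D$-net on \emph{both} alternative candidate arcs, reducing to $x_1 \leq x_2$ by relabeling; the proof is then verbatim the one above. Item \ref{item3Lcont} reduces cleanly to item \ref{item1Lcont}: by compactness, $\pi_r(\alpha) \subset [r(0), r(T)]$ for some $T$, so applying item \ref{item1Lcont} to the auxiliary segment $l := [r(0), r(T + 2R)]$ (for which $\pi_l(\alpha) = \pi_r(\alpha)$) yields a $3D$-net of either $[l(R), l(t)] = [r(R), r(t)]$ or $[l(t), l(T + R)] = [r(t), r(T + R)]$; the latter is impossible since $\pi_l(\alpha) \subset [r(0), r(T)]$ and $t < T + R$. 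The main delicate step I anticipate is handling $\tilde{s} < \length(\alpha)$ rigorously in item \ref{item1Lcont}, since the edge case $l(u) = l(R)$ or $l(u) = l(\length(l) - R)$ must be excluded by the gap configuration rather than by the parameter constraints alone.
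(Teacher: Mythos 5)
Your strategy is the paper's own: contradiction via two $3D$-gaps at $l(x_1),l(x_2)$, a maximal parameter $\tilde s$ along $\alpha$, closedness via Lemma \ref{L20}, the (SNP) to control $\sd(\alpha(\tilde s),l)$, and a triangle inequality at $\tilde s+\epsilon$; items \ref{item2Lcont} and \ref{item3Lcont} are also handled exactly as in the paper. The genuine issue is an internal inconsistency in the definition of the set you maximize over. You set $S=\{s:\pi_l(\alpha(s))\subset[l(x_1),l(x_2)]\}$ (\emph{all} projections inside), but you justify closedness by Lemma \ref{L20} and $0\in S$ by the hypothesis $l(t)\in\pi_l(\alpha(0))$: both of these only support the weaker set $\{s:\alpha(s)\text{ has \emph{a} projection in }[l(x_1),l(x_2)]\}$, which is the one the paper uses (Lemma \ref{L20} is precisely about ``has a projection in a closed set'', not about all projections, and $\alpha(0)$ could a priori have further projections outside $[l(x_1),l(x_2)]$). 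Conversely, your later steps are calibrated to the stronger set: the assertion that an \emph{arbitrary} $p_\epsilon\in\pi_l(\alpha(\tilde s+\epsilon))$ lies in $[l(0),l(x_1-3D)]\cup[l(x_2+3D),l(\length(l))]$, and above all your argument for $\tilde s<\length(\alpha)$ (``otherwise $l(u)$ would belong to $[l(x_1),l(x_2)]$''), do not follow under the weaker reading: there $\tilde s=\length(\alpha)$ only gives \emph{some} projection of $\alpha(\length(\alpha))$ in $[l(x_1+3D),l(x_2-3D)]$, and one must invoke the (SNP), as the paper does, to conclude $\sd(\alpha(\length(\alpha)),l)<D$ and hence that the far point $l(u)$ cannot also be a projection. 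So under either consistent reading of $S$, at least one step of your write-up is unsupported as stated.

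The gap is repairable with one observation you never make explicit: under the standing contradiction hypothesis the two definitions coincide along $\alpha$. Indeed, if $\alpha(s)$ has one projection $l(a)\in[l(x_1),l(x_2)]$, the $3D$-gaps force $a\in[x_1+3D,x_2-3D]\subset(R,\length(l)-R)$, so the (SNP) gives $\sd(\alpha(s),l)<D$, and every other projection of $\alpha(s)$ is then within $2D$ of $l(a)$, hence still inside $(l(x_1),l(x_2))$. Adding this makes all your cited justifications correct (closedness via Lemma \ref{L20}, $0\in S$, the location of $p_\epsilon$, and your endpoint argument, including the edge case $u=R$ or $u=\length(l)-R$ that you flag). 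Alternatively, adopt the paper's ``has a projection in'' definition outright, in which case Lemma \ref{L20} and non-emptiness are immediate and past $\tilde s$ \emph{all} projections automatically lie in the far region, but then the step $\tilde s<\length(\alpha)$ must be argued via the (SNP) as above rather than via membership of $l(u)$ in $[l(x_1),l(x_2)]$.
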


The next few lemmas are needed to prove Proposition \ref{P10}, which gives the macroscopic description of $(R,D)$-thin non-compact spaces. 

\begin{lemma} \label{L|alternative connectedness}
    Let $(X,\sd)$ be an $(R,D)$-thin metric space (see Definition \ref{Condition}).
    Let $r$ be a ray in $X$ and let $y \in X$ with $\sd(r,y)>D$. Let $t>0$ and let $\gamma$ be a curve connecting $y$ and $r(t)$. Then, for every $s \in [0,t]$, it holds $\sd(r(s),\gamma) \leq 2R$. 
    \begin{proof} 
We parametrize $\gamma$ by arc length in such a way that $\gamma(0)=r(t)$ and $\gamma(\length(\gamma))=y$. Since $X$ is $(R,D)$-thin and $\sd(y,r)>D$, it follows that $\pi_r(\gamma(\length(\gamma))) \in [r(0),r(R)]$. By Lemma \ref{nuovocontinuitylemma}, $\pi_r(\gamma)$ contains a $3D$-net of $[r(R),r(t)]$. Using again that $X$ is $(R,D)$-thin, it follows that $[r(R),r(t)]$ is in a $7D$-neighbourhood of $\gamma$. The statement then follows.
    \end{proof}
\end{lemma}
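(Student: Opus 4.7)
The plan is to parametrize $\gamma$ by arc length with $\gamma(0) = r(t)$ and $\gamma(\length(\gamma)) = y$, apply Lemma~\ref{nuovocontinuitylemma}\ref{item3Lcont} to obtain a $3D$-net of $[r(R), r(t)]$ inside $\pi_r(\gamma)$, and then convert this net into a uniform proximity of $r([0,t])$ to $\gamma$ via the (SNP). The key observation enabling Lemma~\ref{nuovocontinuitylemma}\ref{item3Lcont} is that the hypothesis $\sd(y, r) > D$, combined with Remark~\ref{R1}, forces $\pi_r(y) \subset [r(0), r(R)]$: were the projection at some parameter strictly greater than $R$, (SNP) would put $y$ within $D$ of $r$, a contradiction.

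The degenerate case $t \leq R$ is immediate since $\sd(r(s), r(t)) \leq R \leq 2R$ for every $s \in [0, t]$ and $r(t) \in \gamma$. Assuming $t > R$ from now on, Lemma~\ref{nuovocontinuitylemma}\ref{item3Lcont} applied with $\alpha := \gamma$ (note that $\gamma(0) = r(t)$ has projection $r(t)$ itself with $t > R$, while $\gamma(\length(\gamma)) = y$ projects into $[r(0), r(R)]$) supplies, for every $s \in [R, t]$, some parameter $\tau$ such that $p := \pi_r(\gamma(\tau)) = r(\sigma)$ with $|\sigma - s| \leq 3D$.

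I would then split into two subcases. If $\sigma > R$, Remark~\ref{R1} applied to $\gamma(\tau)$ yields $\sd(\gamma(\tau), r) \leq D$, whence $\sd(r(s), \gamma) \leq \sd(r(s), p) + \sd(p, \gamma(\tau)) \leq 4D$. Otherwise $\sigma \leq R$, which forces $s \in [R, R+3D]$; if moreover $t \leq R+3D$ then $\sd(r(s), r(t)) \leq 3D$ with $r(t) \in \gamma$, while if $t > R+3D$ one triangulates through $r(R+3D)$ (which by the first subcase is within $4D$ of $\gamma$) to get $\sd(r(s), \gamma) \leq 3D + 4D = 7D$. In every subcase $\sd(r(s), \gamma) \leq 7D$ for $s \in [R, t]$. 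For the remaining range $s \in [0, R)$, combining $\sd(r(s), r(R)) \leq R$ with the previous step gives $\sd(r(s), \gamma) \leq R + 7D$, and since (SNP) imposes $R \geq 20D$ we have $R + 7D \leq 2R$.

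The main obstacle is the boundary regime $s \in [R, R+3D]$, where a naive application of the $3D$-net can place the projection parameter at the threshold $R$, outside the interior interval in which (SNP) guarantees the $D$-bound; the detour through $r(R+3D)$ is precisely what handles this. The arithmetic constraint $R \geq 20D$ hard-wired into Definition~\ref{Condition} is used only in the last step, to absorb the $R + 7D$ bound into $2R$, and it does so with considerable slack.
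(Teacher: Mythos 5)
Your proposal is correct and follows essentially the same route as the paper's proof: project the endpoint $y$ into $[r(0),r(R)]$ via the (SNP), invoke item \ref{item3Lcont} of Lemma \ref{nuovocontinuitylemma} to get a $3D$-net of $[r(R),r(t)]$ inside $\pi_r(\gamma)$, upgrade this to a $7D$-neighbourhood bound via the (SNP)/Remark \ref{R1}, and absorb the initial segment $[r(0),r(R)]$ using $R\geq 20D$. You simply spell out the final bookkeeping (the degenerate case $t\leq R$ and the boundary regime $s\in[R,R+3D]$, where one should strictly triangulate through a parameter slightly larger than $R+3D$ so that the net point's projection parameter exceeds $R$) which the paper compresses into ``the statement then follows.''
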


\begin{definition}[Divergent rays]
    We say that two rays $r_1,r_2$ in a metric space $(X,\sd)$ are \emph{divergent} if 
   \[
   \limsup_{t \to + \infty} \sd(r_1(t),r_2(t))=+\infty.
   \]
\end{definition}

\begin{lemma} \label{L5}
  Let $(X,\sd)$ be an $(R,D)$-thin metric space (see Definition \ref{Condition}).
     If there are two divergent rays in $X$, then $X$ contains a line.
    \begin{proof}
       Let $r_1,r_2$ be divergent rays. 
        Consider a sequence $t_j \uparrow + \infty$ such that $\sd(r_1(t_j),r_2(t_j)) \to + \infty$ and let $l_j$ be segments from $r_1(t_j)$ to $r_2(t_j)$. 
        By Lemma \ref{L|alternative connectedness}, each one of these segments satisfies
        \begin{equation} \label{E9}
        \sd(l_j,r_1(0)) \leq c(R,D).
        \end{equation}
        Since $\sd(r_1(0),r_1(t_j)) \to + \infty$ and $\sd(r_1(0),r_2(t_j)) \to + \infty$, condition \eqref{E9} implies that $l_j$ converges (modulo passing to a subsequence) to a line. This concludes the proof.
    \end{proof}
\end{lemma}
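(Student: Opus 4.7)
The plan is to build a line as a subsequential locally uniform limit of geodesic segments joining points that escape to infinity along the two rays. Concretely, pick $t_j \uparrow +\infty$ with $\sd(r_1(t_j), r_2(t_j)) \to +\infty$ (possible by the divergence hypothesis) and, for each $j$, let $l_j$ be a unit-speed geodesic segment from $r_1(t_j)$ to $r_2(t_j)$. The goal is to anchor the $l_j$ near a common compact set while their arclength on either side of the anchor grows without bound.

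The central step is to apply Lemma \ref{L|alternative connectedness} with ray $r_1$, point $y = r_2(t_j)$, parameter $t = t_j$ and curve $\gamma = l_j$; this yields $\sd(r_1(s), l_j) \leq 2R$ for every $s \in [0, t_j]$, in particular the uniform anchor $\sd(l_j, r_1(0)) \leq 2R$. Choose $x_j \in l_j$ with $\sd(x_j, r_1(0)) \leq 2R$ and reparametrize $l_j$ by arclength so that $l_j(0) = x_j$. The side ending at $r_1(t_j)$ then has length at least $t_j - 2R$, while the side ending at $r_2(t_j)$ has length at least $\sd(r_2(0), r_2(t_j)) - \sd(x_j, r_2(0)) = t_j - \sd(x_j, r_2(0))$; since $\sd(x_j, r_2(0)) \leq 2R + \sd(r_1(0), r_2(0))$ is bounded independently of $j$, both sides tend to $+\infty$. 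As $X$ is proper, the $l_j$ are uniformly $1$-Lipschitz and all pass through the compact set $\bar B_{2R}(r_1(0))$, so by Arzel\`a--Ascoli and a diagonal argument a subsequence converges locally uniformly to a map $l : \bb{R} \to X$. A locally uniform limit of unit-speed geodesic segments is a unit-speed globally isometric embedding, giving the desired line.

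The main obstacle will be verifying the hypothesis $\sd(r_2(t_j), r_1) > D$ required to invoke Lemma \ref{L|alternative connectedness}. Divergence is only a limsup condition on $\sd(r_1(t), r_2(t))$ and does not \emph{a priori} prevent $r_2(t_j)$ from lying within $D$ of some point of $r_1$. If this failure occurs along a subsequence, one can take $p_j \in \pi_{r_1}(r_2(t_j))$, write $p_j = r_1(s_j)$ with $\sd(p_j, r_2(t_j)) \leq D$, and then the triangle inequality gives $\sd(r_1(t_j), r_2(t_j)) \leq |t_j - s_j| + D$, so $|t_j - s_j| \to +\infty$; applying (SNP) to segments realizing the distance from $r_2(t_j)$ to $r_1$, together with a short comparison using the ray $r_2$, yields a contradiction with the divergence, so after discarding finitely many indices the hypothesis is available.
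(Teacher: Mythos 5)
Your argument is correct and is essentially the paper's proof: the same segments $l_j$ joining $r_1(t_j)$ to $r_2(t_j)$, anchored near $r_1(0)$ via Lemma \ref{L|alternative connectedness}, with the line extracted by properness and an Arzel\`a--Ascoli/diagonal limit of the uniformly anchored segments whose two sides have length tending to infinity. Concerning the hypothesis $\sd(r_2(t_j),r_1)>D$ that you flag (the paper leaves it implicit), your resolution is in the right direction but no (SNP) is needed: if $\sd(r_2(t_j),r_1(s_j))\le D$, then, since $r_1$ and $r_2$ are unit-speed rays, the triangle inequality gives both $t_j\le \sd(r_1(0),r_2(0))+s_j+D$ and $s_j\le \sd(r_1(0),r_2(0))+t_j+D$, hence $|t_j-s_j|\le \sd(r_1(0),r_2(0))+D$ and so $\sd(r_1(t_j),r_2(t_j))\le \sd(r_1(0),r_2(0))+2D$, contradicting the choice of $t_j$; thus the hypothesis holds for all large $j$ by this elementary estimate alone.
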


\begin{lemma} \label{L4}
Let $(X,\sd)$ be an $(R,D)$-thin metric space (see Definition \ref{Condition}).
     Let $r$ be a ray in $X$. Let $y \in X$ be a point such that $\sd(y,r)>D$. Let $r_y$ be a ray obtained as a limit of the segments connecting $y$ and $r(t_j)$, for some sequence  $t_j \to + \infty$. Then, $r$ is contained in the  $2R$-neighbourhood of $r_y$.
     \begin{proof}
         It follows immediately from Lemma \ref{L|alternative connectedness}.
     \end{proof}
\end{lemma}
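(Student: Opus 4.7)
The plan is to invoke Lemma \ref{L|alternative connectedness} on each approximating segment and pass to the limit, with a small amount of care to ensure that the approximating points we extract from the segments converge to an actual point of $r_y$ rather than escape to infinity along the segments.

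More precisely, I would set up notation as follows. Write $\gamma_j$ for the unit-speed segment from $y$ to $r(t_j)$, parametrized on $[0, L_j]$ with $\gamma_j(0)=y$ and $\gamma_j(L_j)=r(t_j)$, where $L_j=\sd(y,r(t_j))\to +\infty$. By hypothesis, after extraction, $\gamma_j$ converges locally uniformly (in the sense of Arzelà–Ascoli on compact intervals) to the ray $r_y:[0,\infty)\to X$ with $r_y(0)=y$. Fix any $s\geq 0$. For $j$ sufficiently large we have $s\in [0,t_j]$, so Lemma \ref{L|alternative connectedness} applied to $\gamma_j$ produces a parameter $\tau_j\in [0,L_j]$ such that $\sd(r(s),\gamma_j(\tau_j))\leq 2R$.

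The key observation is that $\tau_j$ remains bounded, independent of $j$. Indeed, since $\gamma_j$ is unit speed,
\[
\tau_j=\sd(y,\gamma_j(\tau_j))\in \bigl[\sd(y,r(s))-2R,\ \sd(y,r(s))+2R\bigr]
\]
by the triangle inequality applied to $y$, $\gamma_j(\tau_j)$, and $r(s)$. Hence, passing to a further subsequence, $\tau_j\to \tau_\infty$ for some $\tau_\infty\geq 0$, and by the locally uniform convergence $\gamma_j\to r_y$ we conclude $\gamma_j(\tau_j)\to r_y(\tau_\infty)$. Taking the limit in $\sd(r(s),\gamma_j(\tau_j))\leq 2R$ gives $\sd(r(s),r_y(\tau_\infty))\leq 2R$, so in particular $\sd(r(s),r_y)\leq 2R$. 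Since $s\geq 0$ was arbitrary, this proves that $r$ lies in the $2R$-neighbourhood of $r_y$.

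I do not anticipate any real obstacle: the only issue worth checking is precisely the boundedness of $\tau_j$ above, which ensures we are tracking a point in the image of the limiting ray rather than a tail escaping to infinity; this is the content of the two-line triangle inequality argument above. Everything else is a direct transcription of Lemma \ref{L|alternative connectedness} into the limit.
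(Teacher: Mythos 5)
Your proof is correct and follows exactly the route the paper intends: the paper's proof is simply the citation of Lemma \ref{L|alternative connectedness}, and your argument spells out the implicit limiting step (applying that lemma to each segment $\gamma_j$ and using the triangle inequality to keep the approximating parameters $\tau_j$ bounded so they converge to a point of $r_y$). No gaps; this is the same approach with the details made explicit.
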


The next proposition shows that an $(R,D)$-thin non-compact metric space is contained in a neighbourhood of controlled width of a ray or a line. We recall that a similar result is proved in \cite[Section 5]{TrianglesKapo}. The main difference is that Proposition \ref{P10} below also gives an explicit bound on the width of the aforementioned neighbourhood.

\begin{proposition} \label{P10} 
    Let $(X,\sd)$ be an $(R,D)$-thin metric space (see Definition \ref{Condition}).
     Assume that $X$ is non-compact and it does not contain a line.
     There exists a ray $r$ in $X$ whose $10R$-neighbourhood contains $X$.
        \end{proposition}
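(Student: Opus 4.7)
The plan is to construct $r$ by a maximality argument at a fixed base point $p$ and then derive a contradiction from the existence of an uncovered point. Fix $p \in X$, let $\mathcal{R}_p$ be the set of rays $r'$ in $X$ whose image meets $\bar B_D(p)$, and for $r' \in \mathcal{R}_p$ set $\tau(r') := \inf\{t \geq 0 : \sd(r'(t), p) \leq D\}$. Put $A := \sup_{r' \in \mathcal{R}_p} \tau(r')$. The set $\mathcal{R}_p$ is non-empty: from non-compactness of $X$, a standard segment-limit argument produces a ray starting at $p$, which has $\tau = 0$.

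Next, $A$ is finite and achieved. If $\tau(r'_n) \to \infty$, reparameterize by $\tilde r'_n(t) := r'_n(t + \tau(r'_n))$ on $[-\tau(r'_n), \infty)$; the $\tilde r'_n$ are $1$-Lipschitz with $\tilde r'_n(0) \in \bar B_D(p)$ (compact since $X$ is proper), so Arzel\`a--Ascoli yields a subsequential limit $\tilde r'_\infty : \bb{R} \to X$, which is isometric as a uniform limit of segments, i.e.\ a line in $X$---contradicting the hypothesis. The same compactness argument applied to a sequence with $\tau(r'_n) \to A$ gives a ray $r \in \mathcal{R}_p$ with $\tau(r) = A$: passing the strict inequality $\sd(r'_n(t), p) > D$ for $t < \tau(r'_n)$ to the limit shows $\sd(r(t), p) \geq D$ for $t < A$.

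We claim $X \subseteq N_{10R}(r)$. Otherwise, pick $y$ with $\sd(y, r) > 10R > D$. Lemma~\ref{L4} gives a ray $r_y$ from $y$ with $r \subseteq N_{2R}(r_y)$. For every $t \geq 0$, the parameter $s$ on $r_y$ realising $\sd(r(t), r_y)$ satisfies $s = \sd(y, r_y(s)) \geq \sd(y, r(t)) - 2R \geq \sd(y, r) - 2R > 8R > R$, so (SNP) via Remark~\ref{R1} sharpens this to $\sd(r(t), r_y) < D$; thus $r \subseteq N_D(r_y)$. An analogous (SNP) argument, using that the projection of $p$ onto $r_y$ occurs at parameter $\geq \sd(y,p) - 2R \geq (10R - D) - 2R > R$, gives $\sd(p, r_y) < D$, so $r_y \in \mathcal{R}_p$.

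To contradict the maximality of $A$, we estimate $\sd(y, p)$ from below. By (SNP) and Remark~\ref{R1}, the projection $\pi_r(y)$ lies in $r([0, R])$ (otherwise $\sd(y, r) < D$), hence $\sd(y, r(0)) \geq \sd(y, r) - R > 9R$. Applying Lemma~\ref{L|alternative connectedness} to a segment $\gamma$ from $y$ to $r(A)$, $\gamma$ comes within $2R$ of $r(0)$ at some parameter $u \in [0, \length(\gamma)]$; the triangle inequality on the two halves gives $u \geq \sd(y, r(0)) - 2R$ and $\length(\gamma) - u \geq A - 2R$, so $\sd(y, r(A)) = \length(\gamma) \geq \sd(y, r(0)) + A - 4R$. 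Combined with $\sd(r(A), p) \leq D$, this yields $\sd(y, p) > 9R + A - 4R - D = A + 5R - D$, whence $\tau(r_y) \geq \sd(y, p) - D > A + 5R - 2D > A$, using $R \geq 20D$. This contradicts $\tau(r_y) \leq A$. The main obstacle is precisely the constant bookkeeping in this final step: the losses of $4R$ from the two uses of the projection estimate and $2D$ from $p$ being only $D$-close to both $r$ and $r_y$ must be absorbed into the $10R$ gap coming from $\sd(y, r) > 10R$, which the hypothesis $R \geq 20D$ comfortably permits.
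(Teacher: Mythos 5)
Your proof is correct, and it reaches the conclusion by a genuinely different extremization than the paper's. The paper first proves a qualitative statement (via the no-line hypothesis and Lemma \ref{L|alternative connectedness}) that $X$ lies within some finite, a priori uncontrolled, distance $D'$ of any given ray $r$; it then takes a point $y$ at \emph{maximal distance from $r$} (attained by precompactness, since projections of far points must land in $r([0,R])$), forms the ray $r_y$ emanating from $y$, and shows that any point at distance $\geq 10R$ from $r_y$ would be strictly farther from $r$ than $y$, so the covering ray produced is the secondary ray $r_y$. You instead fix a base point $p$ and extremize \emph{over rays}: among rays meeting $\bar B_D(p)$ you maximize the entry time $\tau$, using the no-line hypothesis (Arzel\`a--Ascoli applied to the time-shifted rays) to make $A$ finite and attained, and then show that a point $y$ with $\sd(y,r)>10R$ would generate, via Lemma \ref{L4} and Remark \ref{R1}, a competitor ray $r_y\in\mathcal{R}_p$ with $\tau(r_y)>A$. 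What this buys: you bypass both the paper's preliminary finite-width step and the attainment argument for the farthest point, and your covering ray is the extremizer itself. Two cosmetic remarks, neither a gap: the claim ``$\tau(r)=A$'' is slightly stronger than what the limit gives (the distance to $p$ could equal exactly $D$ at some time before $A$), but your argument only uses $\sd(r(A),p)\le D$, which does pass to the limit since $r'_n(\tau(r'_n))\to r(A)$; and the stated bound that $p$ projects onto $r_y$ at parameter $\geq \sd(y,p)-2R$ implicitly uses $\sd(p,r_y)\le 2R$, which indeed follows from $\sd(p,r(A))\le D$ together with the fact that $r$ lies within $D$ (or even just $2R$) of $r_y$, so the needed conclusion that the projection parameter exceeds $R$ holds with ample margin.
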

        \begin{proof}
            We first show that, given any ray $r$ in $X$, there exists $D'(X,r)>0$ such that $X$ is contained in the $D'$-neighbourhood of $r$.
            
            If the claim were false, we would find points $p_j$ at arbitrarily large distance from $r$. 
            For each point $p_j$, let $r_j$ be a ray  obtained as a limit of the segments from $p_j$ to $r(t_i)$ for some sequence $t_i \to + \infty$. 
           By Lemma \ref{L|alternative connectedness}, each ray $r_j$ has a point $q_j$ whose distance from $r(0)$ is less than $2R$. 
            Since the distance between $p_j=r_j(0)$ and $q_j$ goes to infinity as $j$ increases, and the sequence $(q_j)$ is contained in a compact subset of $X$, then, up to a subsequence,  the rays $r_j$ converge to a line  $l$ contained in $X$, a contradiction.

           We now prove the statement of the lemma.
           If $X$ is contained in the $10R$-neighbourhood of $r$, there is nothing to prove. 
           Otherwise, we claim that there exists an element $y \in X$ at maximal distance from $r$. 
            Let $y_i \in X$ be a sequence maximizing the distance from $r$. We can assume that $9R \leq \sd(y_i,r) \leq D'(X)$. Since $X$ is $(R,D)$-thin, $\pi_r(y_i) \subset [r(0),r(R)]$. It follows that the sequence $\{y_i\}_{i \in \bb{N}}$ is precompact, so that there exists a limit point $y \in X$ at maximal distance from $r$.

           Let $r_y$ be the ray which is the limit of the segments joining $y$ and $r(t_i)$ for a sequence $t_i \uparrow + \infty$. By Lemma \ref{L4}, for every $t \geq 0$, it holds $\sd(r(t),r_y) \leq 2R$. Since $\sd(y,r) \geq 10R$, it follows that $\pi_{r_y}(r) \subset [r_y(5R),r_y(+\infty))$. 

           Assume now by contradiction that there exists $y' \in X$ such that $\sd(y',r_y) \geq 10R$. Let $\gamma$ be a segment realizing the distance between $y'$ and $r$. Let $r(t_\gamma)$ be the endpoint of $\gamma$. Let $r_y(t_\gamma^y) \in \pi_{r_y}\big(r(t_\gamma) \big)$. By our previous remarks, $t_\gamma^y \geq 5R$. By Lemma \ref{nuovocontinuitylemma}, $\pi_{r_y}(\gamma)$ contains a $3D$-neighbourhood of $[r_y(R),r_y(5R)]$. Hence, $\gamma$ contains a point $p_1$ which is $7D$-close to $r_y(R)$. Recall that we also have $\sd(y',r_y) \geq 10R$. Hence, 
           \begin{align*}
           \sd(y',r)&=\length(\gamma) = \sd(y',p_1)+\sd(p_1,r) 
           \geq \sd(y',r_y)-7D+\sd(y,r)-R-7D \\ 
           & \geq \sd(y,r)+9R-14D>\sd(y,r)
           \end{align*}
           Hence, $y$ was not an element maximizing the distance from $r$, a contradiction.
        \end{proof}

We now turn our attention to the study of compact $(R,D)$-thin metric spaces. The key result in this sense is Proposition \ref{IsometriaCerchio}. We first need a few lemmas.

\begin{lemma} \label{L15}
    Let $(X,\sd)$ be an $(R,D)$-thin metric space.
     Let $l$ be a segment in $X$ and assume that $\mathrm{length}(l) \geq 10R$. Let $\bar{l}$ be a segment starting from $l(0)$ with $\pi_l(\bar{l}(\length(\bar{l}))) \subset [l(0),l(R)] \cup [l(\length(l)-R),l(\length(l)]$. 
     If $\pi_l\big(\bar{l}\big) \cap [l(3R),l(\length(l)-3R)] \neq \emptyset$,
      then $\pi_l(\bar{l})$ contains a $3D$-net of $[l(R),l(\length(l)-R)]$.
     \begin{proof}
         Let $t_0 \in [0,\length(\bar{l})]$ be such that $\pi_l\big(\bar{l}(t_0)\big) \ni l(t)$ for $t \in [3R,\length(l)-3R]$.
Let $\bar{l}_1:=\bar{l}_{|[0,t_0]}$ and $\bar{l}_2:=\bar{l}_{|[t_0,\length(\bar{l})]}$. 
Hence, both $\bar{l}_1$ and $\bar{l}_2$ have an endpoint whose projection on $l$ lies in $[l(0),l(R)] \cup [l(\length(l)-R),l(\length(l)]$.

By item \ref{item1Lcont} of Lemma \ref{nuovocontinuitylemma}, $\pi_l(\bar{l}_1)$ contains a $3D$-net of either $[l(R),l(t)]$ or $[l(t),l(\length(l)-R)]$, and the same holds for $\pi_l(\bar{l}_2)$.

Assume by contradiction that both $\pi_l(\bar{l}_1)$ and $\pi_l(\bar{l}_2)$ contain a $3D$-net of $[l(R),l(t)]$. In this case, there exist $t_1 \leq t_0 \leq t_2$ such that $\bar{l}(t_1)$ and $\bar{l}(t_2)$ are $7D$-close to $l(R)$. Hence,
\begin{equation} \label{eqf}
    \sd(\bar{l}(t_1),\bar{l}(t_2)) \leq 14D.
\end{equation}
Since $\bar{l}(t_0)$ is $D$-close to a point $l(t)$ in $[l(3R),l(\length(l)-3R)]$, it follows
\[
t_0-t_1=\sd(\bar{l}(t_0),\bar{l}(t_1)) \geq  \sd(l(R),l(t))-8D \geq 2R-8D,
\]
so that
$\sd(\bar{l}(t_1),\bar{l}(t_2)) \geq 2R-5D.$ This violates \eqref{eqf}. 

By the same argument, it is not possible that both $\pi_l(\bar{l}_1)$ and $\pi_l(\bar{l}_2)$ contain a $3D$-net of $[l(t),l(\length(l)-R)]$, concluding the proof.
     \end{proof}
\end{lemma}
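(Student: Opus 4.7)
The plan is to split $\bar{l}$ at a point whose projection onto $l$ lands in the central region $[l(3R),l(\length(l)-3R)]$, apply item \ref{item1Lcont} of Lemma \ref{nuovocontinuitylemma} to each of the two resulting sub-segments, and rule out the ``degenerate'' configurations by a length estimate that uses $R\geq 20D$.

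\emph{Step 1 (splitting and applying the continuity lemma).} By the hypothesis there exists $t_{0}\in[0,\length(\bar{l})]$ such that $\bar{l}(t_{0})$ has a projection $l(t)$ with $t\in[3R,\length(l)-3R]$. Set $\bar{l}_{1}:=\bar{l}|_{[0,t_{0}]}$ and $\bar{l}_{2}:=\bar{l}|_{[t_{0},\length(\bar{l})]}$. Since $\bar{l}(0)=l(0)$ projects to $l(0)\in[l(0),l(R)]$, and $\bar{l}(\length(\bar{l}))$ projects into $[l(0),l(R)]\cup[l(\length(l)-R),l(\length(l))]$ by assumption, each $\bar{l}_{i}$ has one endpoint projecting into the central window $(R,\length(l)-R)$ and the other endpoint projecting into the end regions. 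Item \ref{item1Lcont} of Lemma \ref{nuovocontinuitylemma} (after reversing parametrization for $\bar{l}_{1}$ to match the orientation convention) thus yields that each $\pi_{l}(\bar{l}_{i})$ contains a $3D$-net of either $[l(R),l(t)]$ or $[l(t),l(\length(l)-R)]$.

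\emph{Step 2 (ruling out the degenerate cases).} The crux is to exclude the possibility that both $\pi_{l}(\bar{l}_{i})$ cover the \emph{same} half. Suppose for contradiction that both contain a $3D$-net of $[l(R),l(t)]$. Then I can find $t_{1}\in[0,t_{0}]$ and $t_{2}\in[t_{0},\length(\bar{l})]$ so that $\bar{l}(t_{1})$ and $\bar{l}(t_{2})$ admit projections on $l$ that are $3D$-close to $l(R)$. Since such projections lie in the central region $(R,\length(l)-R)$, (SNP) forces $\sd(\bar{l}(t_{j}),l)<D$, giving $\sd(\bar{l}(t_{j}),l(R))\leq 4D$ for $j=1,2$, and hence $\sd(\bar{l}(t_{1}),\bar{l}(t_{2}))\leq 8D$. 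On the other hand, since $\bar{l}$ is a segment and $\sd(\bar{l}(t_{0}),l(t))<D$ by (SNP), the triangle inequality yields
\[
\sd(\bar{l}(t_{1}),\bar{l}(t_{2}))=|t_{2}-t_{1}|\geq t_{0}-t_{1}\geq \sd(l(t),l(R))-5D\geq 2R-5D,
\]
which together with $R\geq 20D$ contradicts the previous bound $8D$. The symmetric case where both $\pi_{l}(\bar{l}_{i})$ cover $[l(t),l(\length(l)-R)]$ is excluded identically, using this time $t\leq \length(l)-3R$.

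\emph{Step 3 (conclusion and main difficulty).} Therefore $\pi_{l}(\bar{l}_{1})$ and $\pi_{l}(\bar{l}_{2})$ must cover opposite halves, and their union, contained in $\pi_{l}(\bar{l})$, forms a $3D$-net of the whole interval $[l(R),l(\length(l)-R)]$. The main obstacle is the careful constant-chasing in Step 2: one has to combine the $3D$-net condition, the (SNP) bound $\sd(\cdot,l)<D$, and the segment (i.e.\ distance-preserving) property of $\bar{l}$ into two incompatible bounds on $\sd(\bar{l}(t_{1}),\bar{l}(t_{2}))$, and verify that $R\geq 20D$ is exactly enough to close the argument. Geometrically, however, the content is transparent: a unit-speed segment cannot return to within $O(D)$ of a fixed point after travelling distance $\Omega(R)$.
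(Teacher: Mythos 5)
Your proof is correct and follows essentially the same route as the paper's: split $\bar{l}$ at $t_0$, apply item \ref{item1Lcont} of Lemma \ref{nuovocontinuitylemma} to the two subsegments (with the parametrization of $\bar{l}_1$ reversed), and derive a contradiction from two incompatible bounds on $\sd(\bar{l}(t_1),\bar{l}(t_2))$. The only imprecision is the assertion that projections $3D$-close to $l(R)$ automatically lie in the (SNP) window $(R,\length(l)-R)$ — their parameter could fall slightly below $R$ — but this is repaired by taking the net point approximating a point such as $l(R+4D)$ instead (which is how the paper's $7D$ constant arises), and the slack from $R\geq 20D$ absorbs the resulting change in constants.
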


\begin{lemma} \label{L9}
    Let $(X,\sd)$ be an $(R,D)$-thin metric space.
     Let $l$ be a segment in $X$ and assume that $\mathrm{length}(l) \geq 10R$. Let $p \in X$ be a point with $\sd(p,l) \geq 10R$ and let $\bar{l}$ be a segment connecting $p$ to $l(0)$.
    If $\pi_l\big(\bar{l}\big) \cap [l(3R),l(\length(l)-3R)] \neq \emptyset$, then  $\length(\bar{l}) > \length(l)$.
    \begin{proof}
Since $\sd(p,l) \geq 10R$ and $X$ is $(R,D)$-thin, it follows that $\pi_l(p) \subset [l(0),l(R)] \cup [l(\length(l)-R),l(\length(l)]$.
By Lemma \ref{L15}, there is a point $\bar{l}(s_0)$ which is $7D$-close to $l(\length(l)-R)$. Using triangle inequality, this implies
\[
\length(\bar{l})=\sd(p,\bar{l}(s_0))+\sd(\bar{l}(s_0),l(0)) \geq 
\sd(p,l)-7D+\sd(l(0),l(\length(l)-R))-7D.
\]
Using that $\sd(p,l) \geq 10R$ and that $l$ is a segment, it follows
$
\length(\bar{l}) \geq 9R-14D+\length(l)>\length(l).
$
    \end{proof}
\end{lemma}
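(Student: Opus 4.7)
The plan is to split $\bar{l}$ at a cleverly chosen intermediate point whose projection on $l$ lies near the far endpoint $l(\length(l)-R)$, and then bound the two resulting pieces via the triangle inequality so that their sum necessarily exceeds $\length(l)$. To set this up, I would first observe that since $\sd(p,l)\geq 10R$ and $R\geq 20D$, the (SNP) forces every projection of $p$ on $l$ to lie in the boundary strip $[l(0),l(R)]\cup[l(\length(l)-R),l(\length(l))]$: any projection at an interior point $l(t)$ with $t\in(R,\length(l)-R)$ would by (SNP) give $\sd(p,l)<D$, contradicting $\sd(p,l)\geq 10R$.

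This places me in a position to apply Lemma \ref{L15} to $\bar{l}$, viewed as a segment from $l(0)$ to $p$: its far endpoint $p$ projects into the boundary strip by the observation above, and the standing hypothesis provides an intermediate projection of $\bar{l}$ in $[l(3R),l(\length(l)-3R)]$. Lemma \ref{L15} then furnishes a $3D$-net of $[l(R),l(\length(l)-R)]$ inside $\pi_l(\bar{l})$. In particular, I can pick $s_0\in[0,\length(\bar{l})]$ such that $\bar{l}(s_0)$ admits a projection on $l$ within $3D$ of $l(\length(l)-R)$, and then invoke (SNP) pointwise to conclude that $\bar{l}(s_0)$ itself lies within $O(D)$ of $l(\length(l)-R)$.

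Since $\bar{l}$ is a segment parametrized by arc length, I can then write
\[
\length(\bar{l})=\sd(p,\bar{l}(s_0))+\sd(\bar{l}(s_0),l(0)),
\]
and estimate each summand: the first is at least $\sd(p,l)-O(D)\geq 10R-O(D)$, since $\bar{l}(s_0)$ is $O(D)$-close to $l$; the second is at least $\sd(l(0),l(\length(l)-R))-O(D)=\length(l)-R-O(D)$, using the closeness of $\bar{l}(s_0)$ to $l(\length(l)-R)$. Adding these and invoking $R\geq 20D$ gives $\length(\bar{l})\geq \length(l)+9R-O(D)>\length(l)$, as desired. The only delicate point in this argument is the verification of the hypotheses of Lemma \ref{L15}, which is why the initial observation pinning $\pi_l(p)$ to the boundary strip is essential; once that is in place, the rest is straightforward triangle-inequality bookkeeping. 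Conceptually, (SNP) forces $\bar{l}$ to shadow nearly the full length of $l$ before returning to $l(0)$, and it is exactly this ``shadowing'' that makes $\bar{l}$ longer than $l$.
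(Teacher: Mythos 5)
Your proof is correct and follows essentially the same route as the paper: pin $\pi_l(p)$ to the boundary strip via (SNP), apply Lemma \ref{L15} to produce a point of $\bar{l}$ near $l(\length(l)-R)$, and conclude by splitting $\length(\bar{l})$ at that point with the triangle inequality. The only difference is cosmetic: you track the error as $O(D)$ and spell out the pointwise (SNP) step turning a projection near $l(\length(l)-R)$ into closeness of $\bar{l}(s_0)$ itself, which the paper compresses into the explicit constant $7D$.
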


\begin{lemma} \label{L16}
Let $(X,\sd)$ be a compact $(R,D)$-thin metric space.
     Let $l$ be a segment of maximal length in $X$ and assume that $\mathrm{length}(l) \geq 10R$. Let $p \in X$ be a point with $\sd(p,l) \geq 10R$ and let $\gamma_1,\gamma_2$ be segments connecting $p$ respectively to $l(\length(l))$ and $l(0)$.
    Let $b_1,b_2 \in \{\gamma_1,\gamma_2,l\}$ with $b_1 \neq b_2$. Then $\pi_{b_1}(b_2) \subset [b_1(0),b_1(3R)] \cup [b_1(\length(b_1)-3R),b_1(\length(b_1)]$.
    \begin{proof}
        If $b_1=l$, the statement follows by Lemma \ref{L9}.
        
        We now consider the case $b_1 \in \{\gamma_1,\gamma_2\}$ and $b_2=l$. If the statement fails, $\pi_{b_1}\big(l\big) \cap (b_1(3R),b_1(\length(b_1)-3R)) \neq \emptyset$. Consider the orientation of $l$ such that $l(0) \in b_1$. Then, $\pi_{b_1}(l(\length(l))) \subset [b_1(0),b_1(R)] \cup [b_1(\length(b_1)-R),b_1(\length(b_1)]$ since otherwise, using that $X$ is $(R,D)$-thin, it follows that $l$ is shorter than $b_1$.
        Hence, by Lemma \ref{L15}, there exists $s>0$ such that $l(s)$ is $R+7D$-close to $p$. This contradicts that $\sd(p,l) \geq 10R$.
        
        Finally, we consider the case $b_1=\gamma_1$ and $b_2=\gamma_2$ (the case $b_2=\gamma_1$ and $b_1=\gamma_2$ is identical). 
        
        We consider orientations such that $\gamma_1(\length(\gamma_1))=\gamma_2(0)$, $\gamma_2(\length(\gamma_2))=l(0)$ and $l(\length(l))=\gamma_1(0)$.
        
        Assume by contradiction that $\gamma_1(t) \in \pi_{\gamma_1}(\gamma_2)$, with $t \in (3R,\length(\gamma_1)-3R)$. By Lemma \ref{L15}, there exists $s>0$ such that $\gamma_2(s)$ is $R+7D$-close to $\gamma_1(0)=l(\length(l))$. It follows that
        \[
        \length(\gamma_2)
        =\sd(p,
        \gamma_2(s))
        +\sd(\gamma_2(s),l(0)) \geq \sd(p,l)-R-7D + \length(l)-R-7D >\length(l).
        \]
        This is a contradiction.
    \end{proof}
    \end{lemma}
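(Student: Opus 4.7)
The plan is to split into three cases according to the identity of $b_1 \in \{l, \gamma_1, \gamma_2\}$, proving each by contradiction via the maximality of $\length(l)$, the separation $\sd(p,l) \geq 10R$, and the (SNP) through Lemmas~\ref{L9} and~\ref{L15}. Throughout, the parameter condition $R \geq 20D$ is what makes estimates of the form $R + O(D)$ much smaller than $10R$.

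The case $b_1 = l$ is immediate from Lemma~\ref{L9}: reorienting $l$ so that $l(0)$ is the endpoint reached by $b_2 \in \{\gamma_1, \gamma_2\}$, a projection of $b_2$ landing in $(l(3R), l(\length(l)-3R))$ would yield $\length(b_2) > \length(l)$, contradicting maximality of $l$.

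For $b_1 = \gamma_i$ and $b_2 = l$ (say $i = 1$, the case $i=2$ being symmetric), reorient so $\gamma_1(0) = l(\length(l))$ and reverse $l$ so it starts at $\gamma_1(0)$. Assume by contradiction that $l$ has a projection in $(\gamma_1(3R), \gamma_1(\length(\gamma_1)-3R))$. To apply Lemma~\ref{L15} with $\bar l = l$ and base $\gamma_1$, I first verify that the endpoint of $l$ not lying on $\gamma_1$ projects into $[\gamma_1(0), \gamma_1(R)] \cup [\gamma_1(\length(\gamma_1)-R), \gamma_1(\length(\gamma_1))]$: otherwise (SNP) puts it within $D$ of an interior point of $\gamma_1$, and the two triangle inequalities from that interior point to the endpoints of $\gamma_1$, combined with $\length(\gamma_1) \leq \length(l)$, force $\length(\gamma_2) < 2D$, contradicting $\length(\gamma_2) \geq \sd(p,l) \geq 10R$. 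Lemma~\ref{L15} then furnishes $l(s_0)$ with $\pi_{\gamma_1}(l(s_0))$ within $3D$ of $\gamma_1(\length(\gamma_1)-R)$, so by (SNP) $\sd(l(s_0), p) \leq R + 4D < 10R$, contradicting $\sd(l,p) \geq 10R$.

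For $b_1 = \gamma_1$, $b_2 = \gamma_2$ (the other ordering is symmetric), keep the original orientations so that $\gamma_1(0) = \gamma_2(0) = p$. Assume a projection of $\gamma_2$ lands in $(\gamma_1(3R), \gamma_1(\length(\gamma_1)-3R))$. The hypothesis of Lemma~\ref{L15} on the endpoint $\gamma_2(\length(\gamma_2)) = l(0)$ is verified by exactly the triangle-inequality argument of the previous case, and Lemma~\ref{L15} yields $\gamma_2(s_0)$ with $\sd(\gamma_2(s_0), l(\length(l))) \leq R + 4D$. Applying the triangle inequality through $\gamma_2(s_0)$ to compute both $\length(\gamma_1) = \sd(p, l(\length(l)))$ and $\length(l) = \sd(l(0), l(\length(l)))$, and adding the two bounds, gives $\length(\gamma_1) + \length(l) \leq \length(\gamma_2) + 2R + 8D$; using maximality $\length(\gamma_2) \leq \length(l)$, we obtain $\length(\gamma_1) \leq 2R + 8D < 10R$, contradicting $\length(\gamma_1) \geq \sd(p,l) \geq 10R$. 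The main obstacle I anticipate is the bookkeeping of orientations combined with the need to establish, in each case, the endpoint-projection hypothesis of Lemma~\ref{L15} before being able to invoke it; a mistake there would misalign the pair of triangle inequalities on which every contradiction depends.
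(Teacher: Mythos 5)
Your proposal is correct and follows essentially the same route as the paper: the same case split on $b_1$, with Lemma \ref{L9} disposing of $b_1=l$ via maximality, and Lemma \ref{L15} combined with the (SNP) and triangle inequalities handling the two cases $b_1\in\{\gamma_1,\gamma_2\}$. The differences are cosmetic — you verify the endpoint-projection hypothesis of Lemma \ref{L15} a bit more explicitly than the paper does, and you phrase the final contradictions against $\sd(p,l)\geq 10R$ (e.g.\ $\length(\gamma_1)\leq 2R+8D$) where the paper contradicts the maximality of $l$, with equally harmless constants ($R+4D$ versus the paper's $R+7D$).
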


    \begin{corollary} \label{C5}
        Let $(X,d)$ and $\gamma_1, \, \gamma_2, \,l$ be as in the previous lemma. Let $\{b_1,b_2,b_3\}=\{\gamma_1,\gamma_2,l\}$. If $b_1(0) \notin b_2$, the follows happen.
        \begin{enumerate}
        \item
        $
\sd(b_1(0),b_2(\length(b_2)/2))
        \geq 
        \min \{\length(b_1),\length(b_3)\}+\length(b_2)/2-10R.
        $
        \item $
\sd(b_1(\length(b_1)/2)),b_2(\length(b_2)/2))
        \geq 
        \length(b_1)/2+\length(b_2)/2-10R.
        $
        \end{enumerate}
        \begin{proof}
        We only prove the first point, the proof of the second one being analogous.
            Let $\gamma$ be a segment connecting $b_1(0)$ and $b_2(\length(b_2)/2)$. If $\gamma$ has a point which is $5R$-close to either $b_2(3R)$ or $b_2(\length(b_2)-3R)$, the statement follows by triangle inequality.

            By the previous lemma, $\pi_{b_2}\big(b_1(0)\big) \cap [b_2(3R),b_2(\length(b_2)-3R] \neq \emptyset$. If $\pi_{b_2}\big(b_1(0)\big) $ intersects $[b_2(R),b_2(3R)]$ or $[b_2(\length(b_2)-3R),b_2(\length(b_2)-R)]$, then $\gamma$ has a point which is $5R$-close to either $b_2(3R)$ or $b_2(\length(b_2)-3R)$ (because $X$ is $(R,D)$-thin), otherwise the same conclusion follows by Lemma \ref{nuovocontinuitylemma}.
        \end{proof}
    \end{corollary}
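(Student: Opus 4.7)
The plan is to establish only the first point, as the second follows by a parallel argument applied to geodesics between midpoints of $b_1$ and $b_2$. Let $\gamma$ realise $\sd(b_1(0), b_2(\length(b_2)/2)) = \length(\gamma)$. The strategy is to show that $\gamma$ must pass through a controlled neighbourhood of one of the two endpoints of $b_2$; the triangle inequality, combined with the fact that each endpoint of $b_2$ is a vertex of the triangle joined to $b_1(0)$ by a side of length $\length(b_1)$ or $\length(b_3)$, then delivers the claimed bound $\min\{\length(b_1), \length(b_3)\} + \length(b_2)/2 - O(R)$.

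The first step is to pin down $\pi_{b_2}(b_1(0))$. Applying Lemma~\ref{L16} with the roles of $b_1$ and $b_2$ swapped gives $\pi_{b_2}(b_1(0)) \subset [b_2(0), b_2(3R)] \cup [b_2(\length(b_2)-3R), b_2(\length(b_2))]$; without loss of generality the projection lies in the first interval. If it in fact sits in the sub-interval $[b_2(R), b_2(3R)]$, then (SNP) directly gives $\sd(b_1(0), b_2(3R)) \leq D + 2R$, so $\gamma(0) = b_1(0)$ itself is $O(R)$-close to $b_2(3R)$. Otherwise the projection lies in $[b_2(0), b_2(R)]$, where (SNP) does not directly apply. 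In this case, I would reparametrise $\gamma$ backwards as $\alpha$ with $\alpha(0) := b_2(\length(b_2)/2)$ and $\alpha(\length(\alpha)) := b_1(0)$, and then apply Lemma~\ref{nuovocontinuitylemma}(\ref{item1Lcont}) to the segment $b_2$; its length hypothesis holds because $\sd(p,l) \geq 10R$ forces all three sides of the triangle to have length $\geq 10R$. The lemma yields a $3D$-net in $\pi_{b_2}(\gamma)$ of either $[b_2(R), b_2(\length(b_2)/2)]$ or $[b_2(\length(b_2)/2), b_2(\length(b_2)-R)]$, so some $\gamma(s_0)$ projects $3D$-close to one of $b_2(3R)$ or $b_2(\length(b_2)-3R)$. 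Since these projection values lie in the interior $(b_2(R), b_2(\length(b_2)-R))$, (SNP) promotes the projection nearness to $\sd(\gamma(s_0), b_2(c)) \leq 4D$ for the relevant landmark $c$.

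In either sub-case I have located a point $\gamma(t_0)$ of $\gamma$ that is $O(R)$-close to some $b_2(c)$, $c \in \{3R, \length(b_2) - 3R\}$. The triangle inequality then yields
\begin{align*}
\length(\gamma) &= \sd(b_1(0), \gamma(t_0)) + \sd(\gamma(t_0), b_2(\length(b_2)/2)) \\
&\geq \sd(b_1(0), b_2(c)) + \length(b_2)/2 - O(R),
\end{align*}
where the second line uses that $b_2$ is a segment so $\sd(b_2(c), b_2(\length(b_2)/2)) = \length(b_2)/2 - 3R$. Finally, $\sd(b_1(0), b_2(c)) \geq \sd(b_1(0), v) - 3R$ with $v \in \{b_2(0), b_2(\length(b_2))\}$ the endpoint of $b_2$ adjacent to $b_2(c)$; since $v$ is a triangle vertex joined to $b_1(0)$ by one of the remaining sides, $\sd(b_1(0), v) \in \{\length(b_1), \length(b_3)\}$, and minimising over the possible endpoints gives the claim (the sharper constant $10R$ stated in the corollary follows by tracking $D + 2R$ explicitly and using $R \geq 20D$). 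The main subtlety is the second sub-case above: when $\pi_{b_2}(b_1(0)) \subset [b_2(0), b_2(R)]$, (SNP) alone cannot guarantee that $b_1(0)$ is close to $b_2$, and Lemma~\ref{nuovocontinuitylemma} is the essential ingredient for transferring nearness from the projection picture back to the curve $\gamma$.
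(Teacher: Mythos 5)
Your proof is correct and follows essentially the same route as the paper's: localize $\pi_{b_2}(b_1(0))$ near the endpoints of $b_2$ via Lemma~\ref{L16}, split into the sub-case where the (SNP) applies directly and the sub-case handled by item~\ref{item1Lcont} of Lemma~\ref{nuovocontinuitylemma}, and conclude by the triangle inequality using that the sides are segments so vertex distances equal side lengths. If anything, your version states the Lemma~\ref{L16} localization in the correct form (projection of $b_1(0)$ lying near the endpoints of $b_2$, rather than in $[b_2(3R),b_2(\length(b_2)-3R)]$ as the paper's text literally reads) and makes explicit the constant tracking via $R\geq 20D$ that the paper leaves implicit.
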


\begin{proposition} \label{IsometriaCerchio}
    Let $(X,\sd)$ be a compact $(R,D)$-thin metric space (see Definition \ref{Condition}).
     Let $l$ be a segment of maximal length in $X$ and assume that $\length(l) \geq 200R$. If $X$ is not contained in a $200R$-neighbourhood of $l$, then there exists $L>0$ and a distance preserving map $\phi:S^1_L \to X$, such that $X$ is contained in the $D$-neighbourhood of $\phi(S^1_L)$. In particular, $L \geq 50R$.
     \begin{proof}
         Let $p \in X$ with $\sd(p,X)>200R$. Let $\gamma_1$ and $\gamma_2$ be segments from $p$ to $l(\length(l))$ and  $l(0)$ respectively. Note that both these segments have length greater than $200R$.

         On $\gamma_1$, we consider the subsegment
         \[
         I^{\gamma_1}:=[\gamma_1(\length(\gamma_1)/2-5R)),\gamma_1(\length(\gamma_1)/2+5R))].
         \]
         We also consider subsegments $I^{\gamma_2}$ and $I^{l}$ defined in the same way. Observe that these subsegments are independent of the chosen  parametrization by arc-length of $\gamma_1$, $\gamma_2$ and $l$.
         
         To construct the map $\phi$ of the statement, we will consider the shortest loop passing near each one of the previously defined segments. To this aim, let
         \begin{align*}
         A:=\Big\{ \phi: &[0,\length(\gamma_1+\gamma_2+l)] \to X\mid \phi \text{ is } 1\text{-Lipschitz, } \phi(0)=\phi(\length(\gamma_1+\gamma_2+l)),
         \\
         & \quad
         \exists \, 
           t^{\gamma_1} , \,t^{\gamma_2} , 
           \, t^l \in [0, \length(\gamma_1+\gamma_2+l)] \text{ such that }
          \\
          & \quad \pi_{b}\big(\phi(t^b)\big) \cap I^b \neq \emptyset  \text{ for every } b \in \{\gamma_1,\gamma_2,l\} \Big\}.
         \end{align*}
         Notice that $A \neq \emptyset$ since $\gamma_1+\gamma_2+l \in A$. We claim that if $\{\phi_i\}_{i \in \bb{N}} \subset A$ and $\phi_i \to \phi_\infty$ uniformly, then $\phi_\infty \in A$. First of all $\phi_\infty$ is $1$-Lipschitz and satisfies $\phi_\infty(0)=\phi_\infty(\gamma_1+\gamma_2+l)$ trivially. Let $b \in \{\gamma_1,\gamma_2,l\}$. Consider for every $i \in \bb{N}$ the corresponding $t_{i}^{b}$. Modulo passing to a subsequence, there exists $t_{\infty}^{b} \in [0,\length(\gamma_1+\gamma_2+l)]$ such that $t_{i}^{b} \to t_{\infty}^{b}$. By Lemma \ref{L20}, $\pi_{b}\big( \phi_\infty(t_{\infty}^b) \big) \cap I^{b} \neq \emptyset$. 
         It follows that $\phi_\infty \in A$ as claimed.

         Consider now a sequence $\phi_i \in A$ minimizing the length functional. Each image of $\phi_i$ satisfies $\pi_{\gamma_1}\big( \mathrm{Im}(\phi_i)\big) \cap I_1^{\gamma_1} \neq \emptyset$. Since $X$ is $(R,D)$-thin, the image of each $\phi_i$ has a point which is $D$-close to $I_1^{\gamma_1}$. Hence, all the images of the maps $\phi_i$ are contained in a bounded set of $X$. Since the $\phi_i$ are all $1$-Lipschitz, by Ascoli-Arzelà theorem, there exists (modulo passing to a subsequence) $\phi_\infty$ such that $\phi_i \to \phi_\infty$ uniformly. By the previous part of the proof, $\phi_\infty \in A$. Since the length functional is lower semicontinuous w.r.t. uniform convergence, $\phi_\infty$ is an element of minimal length in $A$. 
         
         Let $T:=\length(\phi_\infty)$ and consider the reparametrizaion $\phi_\infty:[0,T] \to X$ by arc-length, and the induced map (denoted again by $\phi_\infty$) $\phi_\infty:S^1_L \to X$, where $L:=T/2\pi$. By Corollary \ref{C5}, it follows that $T \geq 60R$, and that the distance in $S^1_L$ between any two points of $\{t^{\gamma_1},t^{\gamma_2},t^l\}$ is at least $20R$ (in particular they are all distinct).

         We claim that $\phi_\infty:S^1_L \to X$ is distance preserving and that $X$ is contained in the $D$-neighbourhood of $\phi_\infty(S^1_L)$. We divide the proof of the claim in steps.
         
        \textbf{Step 1.} We claim that, for $b \in \{\gamma_1,\gamma_2,l\}$, the projection $\pi_b(\phi_\infty(S^1_L))$ contains a $3D$-net of $[b(3R),b(\length(b)-3R)]$.

        Let $b_1,b_2 \in \{\gamma_1,\gamma_2,l\}$ with $b_1 \neq b_2$. 
        Let $t^{b_1},t^{b_2} \in [0,T]$ be the numbers given in the definition of $A$ relative to $\phi_\infty$. Modulo reparametrizing $\phi_\infty$ and $\gamma_1+\gamma_2+l$, we can assume that $b_1(\length(b_1))=b_2(0)$ and $t^{b_1} \leq t^{b_2}$ with $t^{b_3} \notin [t^{b_1},t^{b_2}]$, i.e. $[t^{b_1},t^{b_2}]$ corresponds to the arc of $S^1_L$ that connects $t^{b_1}$ and $t^{b_2}$ without crossing $t^{b_3}$.
        
        Let $b_1(\bar{t}^{b_1}) \in I^{b_1} \cap \pi_{b_1}\big(\phi_\infty(t^{b_1})\big)$.
        Similarly, let $b_2(\bar{t}^{b_2}) \in I^{b_2} \cap \pi_{b_2}\big(\phi_\infty(t^{b_2})\big)$.
        We show that $\pi_{b_1}(\phi_\infty(S^1_L))$ contains a $3D$-net of $[b_1(\bar{t}^{b_1}),b_1(\length(b_1)-3R)]$, while $\pi_{b_2}(\phi_\infty(S^1_L))$ contains a $3D$-net of $[b_2(3R),b_2(\bar{t}^{b_2})]$. The claim then follows by repeating the same argument for all pairs of segments in $\{\gamma_1,\gamma_2,l\}$.
        
        By definition of $\phi_\infty$, the restriction ${\phi_\infty}_{|[t^{b_1},t^{b_2}]}$ is a segment. By Lemmas \ref{L16} and \ref{nuovocontinuitylemma}, $\pi_{b_1}({\phi_\infty}([t^{b_1},t^{b_2}]))$ contains a $3D$-net of either $[b_1(3R),b_1(\bar{t}^{b_1})]$ or $[b_1(\bar{t}^{b_1}), b_1(\length(b_1)-3R)]$.
        If the former case happens, ${\phi_\infty}([t^{b_1},t^{b_2}])$ has a point which is $7D$-close to $b_1(3R)$, so that it also contains a point $q$ which is $5R$-close to $b_1(0)$. 
        By Corollary \ref{C5}, $b_1(0)$ is at least $\min\{\length(b_1), \length(b_3)\}+\length(b_2)/2-20R$ distant from $\phi_\infty(t^{b_2})$. Hence,
        \begin{align*}
        \length({\phi_\infty}_{|[t^{b_1},t^{b_2}]}) & = \sd(\phi_\infty(t^{b_1}),q)+\sd(q,b_2(t^{b_2})) \\
        &\geq \length(b_1)/2+\min\{\length(b_1), \length(b_3)\}+\length(b_2)/2-50R
        \\
        & \geq \length(b_1)/2+\length(b_2)/2+100R.
        \end{align*}
        This is a contradiction, because we would obtain a shorter curve between $\phi_\infty(t^{b_1})$ and $\phi_\infty(t^{b_2})$ by first joining them to their projections $b_1(\bar{t}^{b_1})$ and $b_1(\bar{t}^{b_2})$, and then joining the projections along $b_1$ and $b_2$ themselves.
        
        Hence, $\pi_{b_1}({\phi_\infty}([t^{b_1},t^{b_2}]))$ contains a $3D$-net of $[b_1(\bar{t}^{b_1}), b_1(\length(b_1)-3R)]$. The same argument shows that $\pi_{b_2}({\phi_\infty}([t^{b_1},t^{b_2}]))$ contains a $3D$-net of $[b_2(3R),b_2(\bar{t}^{b_2})]$. This concludes the proof of Step 1.

        \textbf{Step 2.} For every $b_1,b_2 \in \{\gamma_1,\gamma_2,l\}$, with $b_1 \neq b_2$, assuming as before that $t^{b_1} \leq t^{b_2}$ in $[0,T]$ with $t^{b_3} \notin [t^{b_1},t^{b_2}]$, it holds $\pi_{\phi_\infty([t^{b_1},t^{b_2}])}(\phi_\infty(t^{b_2}+3R)) =\{\phi_\infty(t^{b_2})\}$.

        Modulo reparametrizing, we assume $b_1(\length(b_1))=b_2(0)$ and $t^{b_1} \leq t^{b_2} \leq t^{b_3}$ in $[0,T]$.
        By Step $1$, there exist $t_{1,2} \in [t^{b_1},t^{b_2}]$ and $t_{2,3} \in [t^{b_2},t^{b_3}]$, such that $\phi_\infty(t_{1,2})$ is $7D$-close to $b_2(\length(b_2)-20R)$ and $\phi_\infty(t_{2,3})$ is $7D$-close to $b_2(\length(b_2)+20R)$. Let $\gamma$ be a segment connecting $\phi_\infty(t_{1,2})$ and $\phi_\infty(t_{2,3})$. By item \ref{item2Lcont} of Lemma \ref{nuovocontinuitylemma} and the fact that $X$ is $(R,D)$-thin, either $\pi_{b_2}(\gamma)$ intersects $I^{b_2}$, or $\gamma$ contains points that are $7D$ close to $b_2(R)$ and $b_2(\length(b_2)-R)$. In the latter case, since $\length(b_2) \geq 200R$, $\gamma$ would not be a segment. Hence, $\pi_{b_2}(\gamma)$ intersects $I^{b_2}$. It follows that $\gamma + {\phi_\infty}_{|S^1_L \setminus [t_{1,2},t_{2,3}]}$ belongs to $A$. In particular, ${\phi_\infty}_{|[t_{1,2},t_{2,3}]}$ is shorter than $\gamma$, so that it is itself a segment.

        At the same time, since ${\phi_\infty}_{|[t^{b_1},t^{b_2}]}$ is a segment, it follows 
        \[
        \pi_{\phi_\infty([t^{b_1},t^{b_2}])}(\phi_\infty(t^{b_2}+3R))=
        \pi_{\phi_\infty([t_{1,2},t^{b_2}])}(\phi_\infty(t^{b_2}+3R)).
        \]
        Combining with the fact that ${\phi_\infty}_{|[t_{1,2},t_{2,3}]}$ is a segment, we obtain the statement.
        
        \textbf{Step 3}: Let $b_1,b_2 \in \{\gamma_1,\gamma_2,l\}$ with $b_1 \neq b_2$. We claim that 
        
        the two arcs of $S^1_L$ defined by
        $S^1_L \setminus [B_{2R}(t^{b_1}) \cup B_{2R}(t^{b_2})]$ are sent by $\phi_\infty$ in different path connected components of $X \setminus [B_{2R}(\phi_\infty(t^{b_1})) \cup B_{2R}(\phi_\infty(t^{b_2}))]$.

        As before, w.l.o.g. we can assume that $t^{b_1} \leq t^{b_2}$. Consider the segment ${\phi_\infty}_{|[t^{b_1},t^{b_2}]}$. We assume by contradiction that there exists a curve $\alpha$ connecting $\phi_\infty(t^{b_1}+3R)$ and $\phi_\infty(t^{b_2}+3R)$ in $X$ which does not intersect $B_{2R}(\phi_\infty(t^{b_1})) \cup B_{2R}(\phi_\infty(t^{b_2}))$. By item \ref{item1Lcont} of Lemma \ref{nuovocontinuitylemma} applied to the segment $\phi_\infty([t^{b_1},t^{b_2}])$ (this can be applied thanks to step $2$), $\pi_{\phi_\infty([t^{b_1},t^{b_2}])}(\alpha)$ contains a $3D$-net of either $[\phi_\infty(t^{b_1}+R),\phi_\infty(t^{b_1}+3R)]$ or $[\phi_\infty(t^{b_1}+3R),\phi_\infty(t^{b_2}-R)]$. In both cases $\alpha$ intersects  $B_{2R}(\phi_\infty(t^{b_1})) \cup B_{2R}(\phi_\infty(t^{b_2}))$, a contradiction.

         \textbf{Step 4.} We claim that $\phi_\infty:S^1_L \to X$ is distance preserving and that $X$ lies in a $D$-neighbourhood of $\phi_\infty(S^1_L)$.

         We show first that $\phi_\infty$ preserves distances. Assume that this is not the case. Then, there exist $x,y \in S^1_L$, and a segment $\bar{l}$, connecting $\phi_\infty(x)$ and $\phi_\infty(y)$, where $\length(\bar{l})$ is less than the length of the shortest arc of $S^1_L$ connecting $x$ and $y$. Let $\eta_1,\eta_2 \subset S^1_L$ be the two arcs connecting $x$ and $y$.
        
        We would like to show that one between the loops $\bar{l}+{\phi_\infty}_{|\eta_1}$ and $\bar{l}+{\phi_\infty}_{|\eta_2}$ belongs to $A$. If this is the case, we obtain a contradiction since $\phi_\infty$ was length minimizing.

        By step $1$, for every $b \in \{\gamma_1,\gamma_2,l\}$, we can assume that $\phi_\infty(t^b)$ is $7D$-close to $b(\length(b)/2)$. This implies that, if a loop has length less than $\length(\gamma_1+\gamma_2+l)$, and it intersects $B_{2R}(\phi_\infty(t^{b}))$ for every $b \in \{\gamma_1,\gamma_2,l\}$, then it admits a reparametrization that belongs to $A$.

        Hence, it suffices to show that one between $\bar{l}+{\phi_\infty}_{|\eta_1}$ and $\bar{l}+{\phi_\infty}_{|\eta_2}$ intersects all the balls $B_{2R}(\phi_\infty(t^{b}))$ for every $b \in \{\gamma_1,\gamma_2,l\}$.

        We consider all the possible cases. If ${\phi_\infty}_{|\eta_1}$ intersects all the balls $B_{2R}(\phi_\infty(t^{b}))$, then there is nothing left to prove. Let now $\{b_1,b_2,b_3\}=\{\gamma_1,\gamma_2,\gamma_3\}$. Assume that ${\phi_\infty}_{|\eta_1}$ intersects only $B_{2R}(\phi_\infty(t^{b_1}))$ and $B_{2R}(\phi_\infty(t^{b_2}))$. By Step $3$, either $\bar{l}$ intersects $B_{2R}(\phi_\infty(t^{b_3}))$, or it intersects both $B_{2R}(\phi_\infty(t^{b_1}))$ and $B_{2R}(\phi_\infty(t^{b_2}))$. In either case, one between $\bar{l}+{\phi_\infty}_{|\eta_1}$ and $\bar{l}+{\phi_\infty}_{|\eta_2}$ intersects all the balls.  The case when ${\phi_\infty}_{|\eta_1}$ intersects exactly one of the aforementioned balls is analogous.
        
        It follows that $\phi_\infty:S^1_L \to X$ preserves distances. Since $2\pi L >2R$, the fact that $X$ is $(R,D)$-thin implies that $X$ is in the $D$-neighbourhood of $\phi_\infty(S^1_L)$. Since $X$ contains a segment of length greater than $200R$, it follows that $L \geq 50R$, concluding the proof.
     \end{proof}
\end{proposition}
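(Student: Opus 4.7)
The plan is to construct the circle as the shortest loop in $X$ that simultaneously visits three well-separated ``required regions.'' Using that $X$ is not contained in a $200R$-neighbourhood of $l$, pick $p\in X$ with $\sd(p,l)>200R$ and let $\gamma_1,\gamma_2$ be segments from $p$ to the two endpoints of $l$. Then $\gamma_1,\gamma_2,l$ are three segments of length $\geq 200R$, pairwise sharing only an endpoint, and by Lemma \ref{L16} their ``middle regions'' $I^{b}\subset b$ (say each an arc of length $10R$ centered at $b(\length(b)/2)$, for $b\in\{\gamma_1,\gamma_2,l\}$) are pairwise far apart. These three pieces will play the role of the ``obstacles'' the loop has to wind around.

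Next I would let $A$ denote the set of $1$-Lipschitz closed curves $\phi:[0,\length(\gamma_1)+\length(\gamma_2)+\length(l)]\to X$ with $\phi(0)=\phi(\length(\gamma_1)+\length(\gamma_2)+\length(l))$ such that, for each $b\in\{\gamma_1,\gamma_2,l\}$, some point $\phi(t^{b})$ has a projection onto $b$ lying in $I^{b}$. The set $A$ is non-empty (containing $\gamma_1+\gamma_2+l$). Using the (SNP) every $\phi\in A$ has image contained in a fixed bounded set of $X$, and using Lemma \ref{L20} the condition ``$\phi$ has a point projecting into $I^{b}$'' is preserved by uniform limits. Hence by Arzel\`a--Ascoli a length-minimizer $\phi_\infty\in A$ exists; parametrize it by arc length and view it as $\phi_\infty:S^{1}_{L}\to X$ with $L=\length(\phi_\infty)/(2\pi)$.

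The heart of the argument is to show that $\phi_\infty$ is distance-preserving; granted this, $2\pi L>2R$ and the (SNP) immediately imply that $X$ sits in a $D$-neighbourhood of $\phi_\infty(S^{1}_{L})$, while $L\geq 50R$ follows from the separation of the three regions $I^{b}$ via Corollary \ref{C5}. For distance preservation I would proceed in two layers. First, between any two consecutive marked points $t^{b_i},t^{b_j}$, the restriction of $\phi_\infty$ must be a geodesic segment, since otherwise one could replace it by a shorter one while remaining in $A$ (this requires checking, via the (SNP) and Lemma \ref{nuovocontinuitylemma}, that the replacement still projects into each $I^{b}$). Second, for the global statement: given a hypothetical geodesic shortcut $\bar l$ between $\phi_\infty(x)$ and $\phi_\infty(y)$ shorter than the $S^{1}_{L}$-arc, one of the two loops obtained by replacing one of the two arcs by $\bar l$ would have to still belong to $A$, contradicting minimality.

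The main obstacle will be this last global-minimality step: ruling out that \emph{both} replacement loops leave out some required region $I^{b_k}$. To handle this I would establish a separation statement: removing small neighbourhoods of two of the three marked points $\phi_\infty(t^{b_i}),\phi_\infty(t^{b_j})$ disconnects the relevant portion of $X$ into two ``sides'' in such a way that the two arcs of $S^{1}_{L}$ between $t^{b_i}$ and $t^{b_j}$ lie on opposite sides. This is proved using Lemma \ref{nuovocontinuitylemma}: any curve crossing from one side to the other must project near one of the removed points, hence cannot avoid it. Combined with the observation that $\phi_\infty(t^{b_k})$ lies on exactly one of the two arcs, the separation forces the shortcut $\bar l$ either to itself pass near $\phi_\infty(t^{b_k})$ or to cross both removed neighbourhoods, so in either case one of the two replacement loops still visits all three $I^{b}$'s, giving the desired contradiction.
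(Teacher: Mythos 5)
Your proposal follows essentially the same route as the paper's proof: the same minimization of length over the class $A$ of loops projecting into the three middle arcs $I^{b}$, the same compactness argument via Lemma \ref{L20} and Ascoli--Arzel\`a, and the same endgame in which a hypothetical shortcut is ruled out by a separation statement (the paper's Step 3) showing the two arcs between two marked points lie in different components, so that one replacement loop still visits all three regions. The only difference is that your sketch compresses the paper's quantitative Steps 1--2 (locating $\phi_\infty(t^{b})$ near the midpoint of $b$ and identifying the projections of $\phi_\infty(t^{b_2}+3R)$ onto the segment $\phi_\infty([t^{b_1},t^{b_2}])$), which are exactly what legitimizes the application of Lemma \ref{nuovocontinuitylemma} in the separation step and the reparametrization of the competitor loop back into $A$.
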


The next theorem summarizes the results of this section.

\begin{thm} \label{thm(SNP)}
	Let $(X,\sd)$ be an $(R,D)$-thin metric space. There exists a $1$-dimensional manifold $I$, possibly with boundary, and a distance preserving map $\phi:I \to X$, such that $X$ is contained in the $200R$-neighbourhood of $\phi(I)$. 
	\begin{proof}
	If $X$ is non-compact, the statement follows from Proposition \ref{P10}. Let $X$ be compact. By Proposition \ref{IsometriaCerchio}, either $X$ is contained in the $200R$ neighbourhood of a segment, or it is contained in the $D$-neighbourhood of a loop which is mapped isometrically into $X$. In both cases, the statement holds. 
	\end{proof}
\end{thm}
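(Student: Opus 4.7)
The plan is to split on whether $X$ is compact and then assemble the two macroscopic structure results, Propositions \ref{P10} and \ref{IsometriaCerchio}, already established in the section.

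In the non-compact case, I would first dispose of the subcase that $X$ contains a line $\ell \colon \mathbb{R} \to X$. Taking $I = \mathbb{R}$ and $\phi = \ell$, the (SNP) applied to the long subsegments $\ell([-T,T])$ forces every point $x \in X$ to lie within $D$ of $\ell$: fixing an almost-projection $\ell(t_0)$ of $x$ onto the line and choosing $T$ large enough that $t_0 \in (-T+R,T-R)$, the point $\ell(t_0)$ remains a projection of $x$ on the segment and so lies in the interior, giving $\sd(x,\ell) < D$, which is far better than $200R$. If instead $X$ contains no line, Proposition \ref{P10} directly produces a ray $r$ whose $10R$-neighborhood covers $X$, and I set $I = [0,+\infty)$ with $\phi = r$.

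In the compact case, I would pick a segment $l$ of maximal length in $X$, which exists by an Arzel\`a--Ascoli argument on a length-maximizing sequence of unit speed segments (using that $X$ is proper and geodesic). If $\length(l) < 200R$, then, since any two points of the geodesic space $X$ are joined by a segment, $\mathrm{diam}(X) \leq 200R$; then any nontrivial subsegment of $l$ serves as $\phi(I)$ trivially. If instead $\length(l) \geq 200R$, Proposition \ref{IsometriaCerchio} provides a dichotomy: either $X$ is already contained in the $200R$-neighborhood of $l$, in which case $I = [0,\length(l)]$ with $\phi = l$ works; or there is a distance-preserving embedding $\phi \colon S^1_L \to X$ with $X$ in the $D$-neighborhood of $\phi(S^1_L)$, in which case $I = S^1_L$ works, since $D \leq R/20 \leq 200R$ by the constraint $R \geq 20 D$ built into (SNP).

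Since the analytic content of the section is fully absorbed into Propositions \ref{P10} and \ref{IsometriaCerchio}, the expected obstacle at the level of this theorem is not hard work but bookkeeping: one must check in each branch that the chosen $I$ is genuinely a $1$-dimensional manifold (possibly with boundary), which is immediate as $\mathbb{R}$, $[0,+\infty)$, a compact interval, and $S^1_L$ all qualify, and that the specific radius $200R$ uniformly dominates the radii coming out of the two propositions ($D$, $10R$, and $200R$ respectively). Both checks are direct, so no further analytic work beyond invoking the two propositions should be required.
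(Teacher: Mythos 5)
Your proof is correct and takes essentially the same route as the paper: split on compactness and reduce to Propositions \ref{P10} and \ref{IsometriaCerchio}. You merely spell out the subcases the paper's terse proof leaves implicit (the line subcase that Proposition \ref{P10} excludes, the existence of a maximal segment, and the short-segment case), and those fill-ins are all sound.
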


A consequence of Theorem \ref{thm(SNP)} is that $(R,D)$-thin metric spaces have $1$-Urysohn width bounded from above by a constant $ c(R)>0$ depending only on $R$ (see Corollary \ref{C|Urysohn}).
We recall that the notion of Urysohn width was introduced by Gromov in \cite{FillingGromov,widthGromov,LargeGromov}.

\begin{definition}
    A metric space $(X,\sd)$ has $1$-Urysohn width $\leq d$ if there exists a $1$-simplex $Y$, and a continuous map $f:X \to Y$, such that $\mathrm{diam}(f^{-1}(y)) \leq d$ for every $y \in Y$.
\end{definition}

\begin{corollary} \label{C|Urysohn}
    Let $(X,\sd)$ be an $(R,D)$-thin metric space. Then, there exists $c(R)>0$, a $1$-dimensional connected manifold $Y$, and a continuous map $f:X \to Y$, such that $\mathrm{diam}(f^{-1}(y)) \leq c(R)$ for every $y \in Y$. In particular,    
    $X$ has $1$-Urysohn width $\leq c(R)$.
    \begin{proof}
        Let $\phi:I \to X$ be the map given by Theorem \ref{thm(SNP)}. 

        \textbf{Case 1}: $I=[0,l)$, with $l \in [0,+\infty]$.
        \\Let $f:X \to \bb{R}$ be the distance from $\phi(0)$. Let $x,y \in f^{-1}(t)$ for some $t \geq 0$, and let $t_x,t_y \in [0,l]$ be such that $\phi(t_x)$ and $\phi(t_y)$ are projections of $x$ and $y$ on $\phi(I)$. Since $X$ is in the $200R$-neighbourhood of $\phi(I)$, it holds $t_x,t_y \in [0,l] \cap [t-200R,t+200R]$, so that $\sd(x,y) \leq 1000R$, which implies $\mathrm{diam}(f^{-1}(t)) \leq 1000R$ for every $t \geq 0$.

\textbf{Case 2}: $I=\bb{R}$.
\\Let $B:=B_{1000R}(\phi(0))$ and consider the map $f:X \to \bb{R}$ given by
        \[
        f(x):=
        \begin{cases}
    \sd(B,x) & \text{if } \pi_{\phi(I)}(x) \cap \phi(\bb{R}_+) \neq \emptyset
            \\
            -\sd(B,x) & \text{if } \pi_{\phi(I)}(x) \cap \phi(\bb{R}_-) \neq \emptyset.
         \end{cases}
        \]
        First of all, $f$ is well-defined, since if $\pi_{\phi(I)}(x) \cap \phi(\bb{R}_+) \cap \phi(\bb{R}_-) \neq \emptyset$, then 
        \[
        \pi_{\phi(I)}(x) \cap \phi([-200R,200R]) \neq \emptyset,
        \]
        so that $x \in B$. 
        We now check that $f$ is continuous. This is trivial at points in $\bar{B}$, so let us assume that $x \notin \bar{B}$ and, without loss of generality, that $\pi_{\phi(I)}(x) \cap \phi(\bb{R}_+) \neq \emptyset$. Let $x_i \to x$. Since $x \notin \bar{B}$, $\pi_{\phi(I)}(x) \cap \phi([500R,+\infty)) \neq \emptyset$ so that, for $i$ large enough, $\pi_{\phi(I)}(x_i) \cap \phi(\bb{R}_+) \neq \emptyset$. It follows that $f$ is continuous at $x$, which implies that $f$ is continuous. 
        
        Let now $t \in \bb{R}$ and let $x,y \in f^{-1}(t)$. If $t=0$, then $\sd(x,y) \leq 1000R$. Assume now that $t > 0$, and let 
        $t_x,t_y \in \bb{R}$ be such that $\phi(t_x) \in \pi_{\phi(I)}(x)$ and $\phi(t_y) \in \pi_{\phi(I)}(y)$. Since $t>0$, it follows that $t_x,t_y >0$, so that $t_x,t_y \in [\sd(\phi(0),x)-200R,\sd(\phi(0),x)+200R]$, which implies $\sd(x,y) \leq 1000R$. The case $t<0$ is analogous, concluding the case $I=\bb{R}$.

        \textbf{Case 3}: $I=S^1_L=[0,2\pi L]/\sim$, where $\sim$ identifies the endpoints of $[0,2\pi L]$. 
        \\If $L \leq 10^5R$, the zero map $f:X \to \bb{R}$ defined as $f(x):=0$ gives the desired $1$-Urysohn width estimate. So, we can assume $L \geq 10^5R$.
        Let $B_1:=B_{1000R}(\phi(0))$, $B_2:=B_{1000R}(\phi(\pi L))$, and consider the map $f:X \to [0,2\sd(B_1,B_2)]/\sim$, where $\sim$ identifies the endpoints of $[0,2 \sd(B_1,B_2)]$, given by
        \[
        f(x):=
        \begin{cases}
    \sd(B_1,x) \wedge \sd(B_1,B_2)  & \text{if } \pi_{\phi(I)}(x) \cap  \phi([0,\pi L]) \neq \emptyset \\
            (\sd(B_1,B_2)+\sd(B_2,x)) \wedge  2\sd(B_1,B_2)  & \text{if } \pi_{\phi(I)}(x) \cap  \phi([\pi L,2\pi L]) \neq \emptyset.
         \end{cases}
        \]
        The map $f$ is well-defined, since if $\pi_{\phi(I)}(x) \cap \phi([0,\pi L]) \cap \phi([\pi L,2\pi L]) \neq \emptyset$, then $x \in B_1 \cup B_2$. We now check that $f$ is continuous. This is trivial at points in $B_1 \cup B_2$. So, we consider $x \notin B_1 \cup B_2$ and we assume, without loss of generality, that $\pi_{\phi(I)}(x) \cap \phi([0,\pi L]) \neq \emptyset$. Let $x_i \to x$. Since $x \notin B_1 \cup B_2$, $\pi_{\phi(I)}(x) \cap \phi([500R,\pi L-500R]) \neq \emptyset$, so that, for $i$ large enough, $\pi_{\phi(I)}(x_i) \cap \phi([0,\pi L]) \neq \emptyset$. It follows that $f$ is continuous at $x$, so that $f$ is continuous.

        Let now $t \in [0,2\sd(B_1,B_2))$, and let $x \in f^{-1}(t)$. If $t=0$, then $x \in B_1$ or $\sd(x,B_2)=\sd(B_1,B_2)$. It follows that $\sd(x,B_1) \leq 2000R$, showing that $\mathrm{diam}(f^{-1}(0)) \leq c(R)$. A similar argument holds for $t=\sd(B_1,B_2)$. So, let $t \notin \{0,\sd(B_1,B_2)\}$, and assume without loss of generality that $t \in (0,\sd(B_1,B_2))$.
        Let $x,y \in f^{-1}(t)$, and let 
        $t_x,t_y \in [0,2\pi L)$ be such that $\phi(t_x)$ and $\phi(t_y)$ are projections of $x$ and $y$ on $\phi(I)$. Since $t \in (0,\sd(B_1,B_2))$, it follows that $t_x,t_y \in [0,\pi L]$. Hence, $t_x,t_y \in [\sd(\phi(0),x)-200R,\sd(\phi(0),x)+200R]$, 
        so that $\sd(x,y) \leq 1000R$. This concludes the proof.
    \end{proof}
\end{corollary}

\section{Large scale geometry of almost non-negative Ricci and integrally-positive scalar curvature}\label{Sec:MainGeom}

In this section we draw the main geometric results of the paper, by combining the integral curvature estimates of Section \ref{S3} with the  technical metric results of Section \ref{S1}. 

\subsection{Sufficient curvature conditions for a manifold to be thin}

In this section, we show that manifolds with almost non-negative Ricci curvature and a positive lower integral bound on $\R_2$ (or $\R$, if the manifolds are $3$-dimensional) fit into the framework of Section \ref{S1}. 

The next theorem studies spaces of the form $\bb{R}^{n-3} \times X$ arising as limits of $n$-manifolds with almost non-negative Ricci curvature and positive scalar curvature in integral sense. One should compare also this result with \cite[Theorem 1.1]{wang2023positive}, where limit spaces of the form $\bb{R}^{n-2} \times X$ were studied.

We refer to Definitions \ref{D5} and \eqref{D3} for the notation used in Theorem \ref{thmSNPGeneral}.

\begin{thm} \label{thmSNPGeneral}
    Let $v,\epsilon,L \in (0,+\infty)$, $n,k \in \bb{N}$ with $2 \leq k \leq n$, $s \in (0,1)$ be fixed. There exist $R,D,\delta>0$ with $R \geq 20D$ satisfying the following. Let $(X,\sd,p)$ be a metric space such that $\bb{R}^d \times X$ is a pGH limit of manifolds $(M^n_j,g_j,p_j)$ satisfying one of the following conditions.
    \begin{enumerate}
        \item  $d=k-2$, and
    \begin{equation} \label{Condition1General}
    \ssf{Ric}_{M_j} \geq -\delta, \quad
     \fint_{B_1(x)} \R_{M_j,k} \wedge L \,\de\ssf{Vol}_j  \geq \epsilon, \quad \forall x \in M_j.
    \end{equation}
    \item $n \geq 3$, $d=n-3$, $\ssf{Ric}_{M_j} \geq -\delta$, and
    \begin{equation} \label{condition2General}
    \fint_{B_1(x)} \ssf{Ric}_{M_j,n-2} \wedge 0 \, \de \ssf{Vol}_j  \geq -\delta, \quad  \fint_{B_1(x)} \R_{M_j} \wedge L \,\de\ssf{Vol}_j  \geq \epsilon, \quad \forall x \in M_j.
    \end{equation}
    \item $n \geq 3$, $d=n-3$, and
    \begin{equation} \label{condition3General}
    \ssf{Ric}_{M_j} \geq -\delta, \quad  \fint_{B_1(x)} |\R_{M_j}|^s \,\de\ssf{Vol}_j  \geq \epsilon, \quad \ssf{Vol}_j(B_1(x)) \geq v, \quad \forall x \in M_j.
    \end{equation}
    \end{enumerate}
    Let $r$ be a segment of length greater than $2R$ in $X$. For every $t \in (R,\mathrm{length}(r)-R)$ and $x \in \pi_r^{-1}(r(t))$, it holds $\sd(x,r) < D$, i.e., $(X,\sd)$ is $(R,D)$-thin (see Definition \ref{Condition}).

    In particular, there exists a $1$-dimensional connected manifold $I$ (possibly with boundary) and a distance preserving map $\phi:I \to X$ such that $X$ is contained in the $200R$-neighbourhood of $\phi(I)$.
    \end{thm}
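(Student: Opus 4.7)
The plan is to argue by contradiction and, at the midpoint of the presumed counterexample segment, extract a two-dimensional Euclidean splitting of $X$; combined with the $\bb{R}^d$-factor this will produce an approximate $\bb{R}^{d+2}$-splitting of balls in the manifolds $M_j$, which via Theorems~\ref{CT1} and \ref{CT2} contradicts the integral curvature hypotheses. Note that in every case the target is $d+2$: in Case~1, $d+2=k$ matches the $\R_k$-range of Theorem~\ref{CT1}; in Cases~2 and 3, $d+2=n-1$ matches the range of Theorem~\ref{CT2}.

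More precisely, if the statement fails then a diagonal argument produces sequences $\delta_\ell \searrow 0$ and $R_\ell, D_\ell \to \infty$ with $R_\ell \geq 10 D_\ell$, and manifolds $(M^n_\ell, g_\ell, p_\ell)$ satisfying the curvature bounds of the relevant case with parameter $\delta_\ell$ and converging in pGH to $\bb{R}^d \times X_\ell$, with each $X_\ell$ failing (SNP) for $(R_\ell, D_\ell)$. This yields in each $X_\ell$ a segment $r_\ell$ of length $>2R_\ell$, its middle $m_\ell:=r_\ell(t_\ell)$, and a point $y_\ell$ whose projection on $r_\ell$ is $m_\ell$ with $\sd(y_\ell,r_\ell)\geq D_\ell$. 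Rescaling $X_\ell$ by $D_\ell^{-1}$, and arranging $\delta_\ell\ll D_\ell^{-2}$ so that the rescaled Ricci bound $\delta_\ell D_\ell^2$ still tends to zero, I pass to a pointed pGH limit $(\widehat X_\infty,\hat m_\infty)$, a synthetically non-negatively Ricci curved Ricci limit containing a full line $\hat r_\infty$ through $\hat m_\infty$ (coming from $r_\ell$, whose rescaled length $2R_\ell/D_\ell$ diverges) together with a point $\hat y_\infty$ at distance $\geq 1$ from $\hat r_\infty$ whose projection on $\hat r_\infty$ is $\hat m_\infty$.

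The Cheeger-Gromoll-type splitting theorem applied to $\hat r_\infty$ then gives $\widehat X_\infty=\bb{R}\times\widehat Y_\infty$, and the minimality of the projection forces $\hat y_\infty$ to lie in the fiber $\widehat Y_\infty$ at distance $\geq 1$ from its base point. The crucial next step is to upgrade this single perpendicular segment to a full line in $\widehat Y_\infty$ (so that a second application of the splitting theorem yields $\widehat X_\infty=\bb{R}^2\times Z_\infty$), which I would attempt either by iterating the (SNP) failure configuration at further rescalings to produce arbitrarily long perpendicular pieces in $\widehat Y_\infty$, or by a Busemann-function argument ruling out that the ray from the base point through $\hat y_\infty$ cannot be extended backward. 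Once the $\bb{R}^2$-splitting of $\widehat X_\infty$ is available, one returns to the approximating manifolds: for points $q_\ell \in M_\ell$ chosen (in the original unrescaled limit) to converge to $(0, m_\ell)\in\bb{R}^d\times X_\ell$, the balls $B_{10}(q_\ell)\subset M_\ell$ become $(d+2,\varepsilon_\ell)$-symmetric with $\varepsilon_\ell\searrow 0$.

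One then concludes separately in the three cases. In Case~1, Theorem~\ref{CT1} forces $\fint_{B_1(q_\ell)} \R_k\wedge L \to 0$, contradicting the standing lower bound $\geq\epsilon$. In Cases~2 and 3, the eventual pGH closeness of $M_\ell$ (based at $q_\ell$) to a space splitting $\bb{R}^{n-1}$ contradicts Theorem~\ref{CT2} items (2) and (1), respectively. The \emph{in particular} clause of the statement is then immediate from Theorem~\ref{thm(SNP)} applied to $(X,\sd)$. The main technical hurdle is precisely the second-direction splitting: extracting a genuine Euclidean line in $\widehat Y_\infty$ from a single perpendicular segment of length $\geq 1$, which amounts to ruling out that $\widehat Y_\infty$ ``terminates'' just beyond $\hat y_\infty$ in the direction opposite to the perpendicular segment, and is where the detailed metric analysis of Section~\ref{S1} and the (SNP)-failure hypothesis must be used in a careful, iterative fashion.
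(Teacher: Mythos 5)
Your overall plan (argue by contradiction, split two Euclidean directions off a limit space, and then contradict Theorems \ref{CT1}/\ref{CT2}, with the ``in particular'' clause coming from Theorem \ref{thm(SNP)}) is the same as the paper's, but the step you yourself flag as the ``main technical hurdle'' is a genuine gap, and it is created by your choice to rescale by $D_\ell^{-1}$. After blowing down, all you retain in the limit is a line $\hat r_\infty$ and a single point at distance $\geq 1$ from it projecting onto $\hat m_\infty$; this configuration does not force a second splitting (the half-plane $\bb{R}\times[0,+\infty)$, or the strip $\bb{R}\times[0,10]$, contains exactly such a line-plus-perpendicular-unit-segment configuration and does not split $\bb{R}^2$), so neither ``iterating the failure at further rescalings'' nor a Busemann argument, as stated, is an actual proof. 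The paper avoids this by never rescaling: under the negation of the statement one may take $D_j\to\infty$ at the original scale, so the distance-realizing segments $\gamma_j$ from $q_j$ to $r_j(R_j)$ already have diverging length at scale one. In the pointed limit $(X_j,\sd_j,r_j(R_j))\to X$, the segments $r_j$ give a line (since $R_j\to\infty$) and the splitting theorem yields $X=\bb{R}\times Y$; the segments $\gamma_j$ then force $Y$ to be non-compact, hence to contain a ray, and a second pointed limit taken along basepoints escaping to infinity on that ray contains a line and splits again. A diagonal choice of basepoints $p_j''\in X_j$ thus makes $(X_j,\sd_j,p_j'')$ converge to $\bb{R}^2\times Y'$, all at scale one.

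Even granting your $\bb{R}^2$-splitting of $\widehat X_\infty$, your concluding step has a scale mismatch: the splitting is established at the blown-down scale $\sim D_\ell$, so at best the balls $B_{10D_\ell}(q_\ell)$ are almost $(d+2)$-symmetric at their own scale. This does not imply that the unit-scale balls $B_{10}(q_\ell)$ are $(d+2,\varepsilon_\ell)$-symmetric, which is what Theorems \ref{CT1} and \ref{CT2} require at the same scale as the unit-ball integral hypotheses; almost-splitting does not propagate from a large scale down to a fixed smaller scale at a given point. In the paper's argument there is no such mismatch: since the iterated limit $\bb{R}^{k-2}\times\bb{R}^2\times Y'$ is produced at the original scale, for large indices the manifolds $M_{i,j}$ have $(k,\delta)$-symmetric balls of definite radius (respectively, are pGH-close to a space splitting $\bb{R}^{n-1}$ in cases \eqref{condition2General} and \eqref{condition3General}) while still satisfying $\Ric\geq-\delta_j\to 0$ and the unit-scale integral lower bound $\geq\epsilon_0$, so Theorem \ref{CT1} (respectively \ref{CT2}) gives the contradiction directly. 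To repair your proof you would have to either work unrescaled as above, or prove a propagation-of-splitting statement that is false in general.
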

    
    \begin{proof}
        First, consider the condition \eqref{Condition1General}. Suppose by contradiction that the statement fails. 
        Hence, there exist  $\epsilon_0>0$, $R_j,D_j \uparrow + \infty$, $\delta_j \downarrow 0$, metric spaces $(X_j,\sd_j,x_j)$, such that $\bb{R}^{k-2} \times X_j$ arise as limits of manifolds $(M_{i,j})_{i\in \bb{N}}$ satisfying 
         \begin{equation} \label{Condition1Pf}
    \ssf{Ric}_{M_{i,j}} \geq -\delta_j, \quad
     \fint_{B_1(x)} \R_{M_{i,j},k} \wedge L \,\de\ssf{Vol}_{i,j}  \geq \epsilon_0, \quad \forall x \in M_{i,j},
    \end{equation}
         and segments $r_j \subset X_j$ with $\length(r_j) \geq 2R_j$, such that $\sd_j(q_j,r_j) \geq D_j$ for some $q_j \in \pi_{r_j}^{-1}(r_j(R_j))$.
    In particular, there exist points $q_j$, whose distance from $r_j$ is greater than $D_j$, and whose point realizing the distance from $r_j$ is $r_j(R_j)$. Let $\gamma_j$ be the segment connecting $q_j$ and $r_j(R_j)$.

    By Gromov pre-compactness theorem, up to subsequences, the sequence  $(X_j,\sd_j,r_j(R_j))$ converges in pGH-sense to a Ricci limit space $(X,\sd,p)$. Moreover, there exists a measure $\m \in \mathcal{M}(X)$ -- arising as a limit of the renormalized volume measures of $M_{i,j}$ -- such that $(X,\sd,\m,p)$ is an $\RCD(0,n)$ space.
    
    In addition, $X$ contains a line which arises as a limit of the segments $r_j$ centered in $r_j(R_j)$, since $R_j \uparrow \infty$. 
    Hence, by the splitting theorem \cite{Gigli-Split}, $X$ splits isomorphically as a metric measure space as  $X=Y \times \bb{R}$  for an $\RCD(0,n-1)$ space $(Y,\sd_y,\m_y)$. Moreover, $Y$ cannot be compact since the segments $\gamma_j$, which realize the distance from $r_j$, have arbitrarily large length. 
    In particular, $Y$ contains itself a ray.
    Hence, repeating the argument above, there exists a sequence $p_j' \in Y$, such that $(Y,\sd_y,p_j')$ converges in pGH-sense to a metric space $\bb{R} \times Y'$. In particular, there exists a sequence $p_j'' \in X_j$ such that $(X_j,\sd_j,p_j'')$ converges in pGH-sense to $\bb{R}^2 \times Y'$. Applying  Theorem \ref{CT1} to $M_{i,j}$ for $i$ and $j$ large enough,  we arrive to a contradiction, since $\delta_j\downarrow 0$ and $\epsilon_0>0$ is fixed in \eqref{Condition1Pf}. 

    If we assume conditions \eqref{condition2General} or \eqref{condition3General}, the proof is analogous, by using Theorem \ref{CT2} instead of Theorem \ref{CT1}.

    Finally, by Theorem \ref{thm(SNP)}
    , there exists a $1$-dimensional connected manifold $I$ (possibly with boundary) and a distance preserving map $\phi:I \to X$ such that $X$ is contained in the $200R$-neighbourhood of $\phi(I)$.
    \end{proof}

As an immediate consequence of the previous theorem, we deduce the following key corollary.

\begin{corollary} \label{CorollarySNP}
    Let $v,\epsilon,L \in (0,+\infty)$, $n \in \bb{N}$, $s \in (0,1)$ be fixed. There exist $R,D,\delta>0$ with $R \geq 20D$ satisfying the following. Let $(X,\sd,p)$ be a pGH limit of manifolds $(M^n_j,g_j,p_j)$ satisfying one of the following conditions.
    \begin{enumerate}
        \item  $n\geq 2$ and
    \begin{equation} \label{Condition1}
    \ssf{Ric}_{M_j} \geq -\delta, \quad
     \fint_{B_1(x)} \R_{M_j,2} \wedge L \,\de\ssf{Vol}_j  \geq \epsilon, \quad \forall x \in M_j.
    \end{equation}
    \item $n=3$, $\ssf{Ric}_{M_j} \geq -\delta$ and
    \begin{equation} \label{condition2}
    \fint_{B_1(x)} \ssf{Sec}_{M_j} \wedge 0 \, \de \ssf{Vol}_j  \geq -\delta, \quad  \fint_{B_1(x)} \R_{M_j} \wedge L \,\de\ssf{Vol}_j  \geq \epsilon, \quad \forall x \in M_j.
    \end{equation}
    \item $n=3$ and
    \begin{equation} \label{condition3}
    \ssf{Ric}_{M_j} \geq -\delta, \quad  \fint_{B_1(x)} |\R_{M_j}|^s \,\de\ssf{Vol}_j  \geq \epsilon, \quad \ssf{Vol}_j(B_1(x)) \geq v, \quad \forall x \in M_j.
    \end{equation}
    \end{enumerate}
    Let $r$ be a segment of length greater than $2R$ in $X$. For every $t \in (R,\mathrm{length}(r)-R)$ and $x \in \pi_r^{-1}(r(t))$, it holds $\sd(x,r) < D$, i.e., $(X,\sd)$ is $(R,D)$-thin (see Definition \ref{Condition}).

    In particular, there exists a $1$-dimensional connected manifold $I$ (possibly with boundary) and a distance preserving map $\phi:I \to X$ such that $X$ is contained in the $200R$-neighbourhood of $\phi(I)$.
    \end{corollary}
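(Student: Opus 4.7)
The plan is to deduce this corollary immediately from Theorem \ref{thmSNPGeneral} by specializing its three cases in such a way that the Euclidean factor $\bb{R}^d$ becomes trivial.

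First I would match the hypotheses. The three conditions \eqref{Condition1}, \eqref{condition2}, \eqref{condition3} in the corollary correspond respectively to conditions \eqref{Condition1General}, \eqref{condition2General}, \eqref{condition3General} in Theorem \ref{thmSNPGeneral} under the parameter choices $k=2$ in the first case and $n=3$ in the second and third. In the second case, I would invoke the identity $\Ric_{M,1}=\ssf{Sec}_M$ listed just after Definition \ref{D5} to translate $\ssf{Ric}_{M_j,n-2}=\ssf{Ric}_{M_j,1}$ into $\ssf{Sec}_{M_j}$ when $n=3$; with this identification the integral assumptions of the corollary are verbatim those of the theorem.

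Next, the dimension of the Euclidean factor that splits off becomes $d=k-2=0$ in the first case and $d=n-3=0$ in the second and third, so in every case $\bb{R}^{d}\times X$ is canonically isometric to $X$. Thus the hypothesis of Theorem \ref{thmSNPGeneral} — that $\bb{R}^{d}\times X$ arise as a pGH limit of the sequence $(M_j^n,g_j,p_j)$ — reduces exactly to the hypothesis of the corollary, that $X$ itself arise as such a limit. The theorem then produces constants $R,D,\delta>0$ with $R\geq 10D$ for which $(X,\sd)$ satisfies the (SNP) in the sense of Definition \ref{Condition}.

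Finally, the ``in particular'' clause is an immediate invocation of Theorem \ref{thm(SNP)}: any space satisfying (SNP) admits a distance-preserving map $\phi:I\to X$ from a $1$-dimensional connected manifold $I$ (possibly with boundary) whose image is $200R$-dense. No real obstacle arises at the level of the corollary itself; all of the analytic work is already packaged inside Theorem \ref{thmSNPGeneral}, whose contradiction argument chains together iterated splitting on non-collapsed $\RCD(0,n)$ limits with the vanishing integral estimates of Theorems \ref{CT1} and \ref{CT2}.
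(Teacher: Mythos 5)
Your proposal is correct and coincides with the paper's own derivation: the paper obtains Corollary \ref{CorollarySNP} as an immediate consequence of Theorem \ref{thmSNPGeneral} by taking $k=2$ (so $d=0$) in the first case and $n=3$, $d=n-3=0$ in the other two, using $\Ric_{M,1}=\ssf{Sec}_M$ for the second condition, and then invoking Theorem \ref{thm(SNP)} for the final clause. Nothing is missing.
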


The remaining part of the section is devoted to showing that if a manifold satisfies one of the conditions \eqref{Condition1General}, \eqref{condition2General}, \eqref{condition3General}, so do its coverings (modulo a rescaling by dimensional constants). This is the content of Propositions \ref{Prop:CoverInt} and \ref{PC3}. The proof of the next proposition follows from \cite[Lemma 1.6]{kapovitch2011structurefundamentalgroupsmanifolds}; we report it for completeness of presentation.

\begin{proposition}\label{Prop:CoverInt}
    Let $n \in \bb{N}$ and let $K \in \bb{R}$. Let $(M^n,g,p)$ be a pointed Riemannian manifold with $\Ric_M \geq K$. Let $(\tilde{M}^n,\tilde{g},\tilde{p})$ be the universal covering of $M$, with covering map $\pi:\tilde{M} \to M$ such that $\pi(\tilde{p})=p$. Let $F:M \to \bb{R}_+$ be a measurable locally bounded function and let $\tilde{F}:=F \circ \pi$. 
    \begin{enumerate}
        \item \label{item1Covering}
        It holds
        \[
\fint_{\tilde{B}_3(\tilde{p})} 
    \tilde{F} \, \de\widetilde{\ssf{Vol}} \geq c(K,n) \fint_{B_1(p)} F \, \de\ssf{Vol}.
    \]
    \item \label{item2Covering}
    There exists $p' \in M$ such that
    \[
\fint_{\tilde{B}_3(\tilde{p})} 
    \tilde{F} \, \de\widetilde{\ssf{Vol}} \leq c(K,n) \fint_{B_1(p')} F \, \de\ssf{Vol}.
    \]
    \end{enumerate}
    \begin{proof}
Consider a measurable function $j:B_1(p) \to \tilde{B}_1(\tilde{p})$ such that $\pi(j(x))=x$ for every $x \in B_1(p)$. Let $T:=j(B_1(p))$, so that $\mathrm{diam}(T) \leq 2$ and 
\[
\fint_T \tilde{F} \, d\widetilde{\ssf{Vol}}=\fint_{B_1(p)}F \, d\ssf{Vol}.
\]
Let $S \subset \tilde{M}$ be the union of the sets $g(T)$ for every deck transformation $g:\tilde{M} \to \tilde{M}$ of $\pi:\tilde{M} \to M$ such that $g(T) \cap \tilde{B}_1(\tilde{p}) \neq \emptyset$. It follows that
\begin{equation} \label{E|KW1}
\tilde{B}_1(\tilde{p}) \subset S \subset \tilde{B}_3(\tilde{p}) \quad \text{and} \quad
\fint_S \tilde{F} \, d\widetilde{\ssf{Vol}}=\fint_{B_1(p)}F \, d\ssf{Vol}.
\end{equation}

\textbf{Proof of }\ref{item1Covering}. Using that $\widetilde{\ssf{Vol}}(\tilde{B}_3(\tilde{p})) \leq c(K,n) \widetilde{\ssf{Vol}}(\tilde{B}_1(\tilde{p}))$ and \eqref{E|KW1}, it holds
\[
\fint_{B_1(p)}F \, d\ssf{Vol} \leq \frac{\widetilde{\ssf{Vol}}(\tilde{B}_3(\tilde{p})) }{\widetilde{\ssf{Vol}}(\tilde{B}_1(\tilde{p}))}\fint_{\tilde{B}_3(\tilde{p})} \tilde{F} \, d\widetilde{\ssf{Vol}}
\leq c(K,n) \fint_{\tilde{B}_3(\tilde{p})} \tilde{F} \, d\widetilde{\ssf{Vol}}.
\]

\textbf{Proof of }\ref{item2Covering}. Arguing as for \eqref{E|KW1}, we find $S' \subset \tilde{M}$ such that
\begin{equation} \label{E|KW2}
\tilde{B}_3(\tilde{p}) \subset S' \subset \tilde{B}_9(\tilde{p}) \quad \text{and} \quad
\fint_{S'} \tilde{F} \, d\widetilde{\ssf{Vol}}=\fint_{B_3(p)}F \, d\ssf{Vol}.
\end{equation}
Let $\{B_1(p_i)\}_{i \in I}$ be a covering of $B_3(p)$ with $I \subset \bb{N}$, such that $\{B_{1/5}(p_i)\}_{i \in I}$ are disjoint. By \eqref{E|KW2}, it holds
\begin{equation} \label{E|KW3}
\fint_{\tilde{B}_3(\tilde{p})} \tilde{F} \, d\widetilde{\ssf{Vol}} \leq 
\frac{\widetilde{\ssf{Vol}}(\tilde{B}_9(\tilde{p})) }{\widetilde{\ssf{Vol}}(\tilde{B}_3(\tilde{p}))}
\fint_{B_3(p)}F \, d\ssf{Vol} \leq
c(K,n) \sum_{i \in I} \fint_{B_1(p_i)}F \, d\ssf{Vol}.
\end{equation}
By Bishop-Gromov's inequality, it holds $\# I \leq c(K,n)$. Setting $p' \in \{p_i\}_{i \in I}$ such that
\[
\fint_{B_1(p')}F \, d\ssf{Vol}=\max_{i \in I}\fint_{B_1(p_i)}F \, d\ssf{Vol}, 
\]
and combining with \eqref{E|KW3}, the statement follows.
    \end{proof}
\end{proposition}

Proposition \ref{Prop:CoverInt}, together with the fact that covering maps reduce volumes of balls, imply the following proposition.

\begin{proposition} \label{PC3}
    Let $v,\epsilon,\delta,L \in (0,+\infty)$, $n \in \bb{N}$, $s \in (0,1)$ be fixed. Let $(M^n,g,p)$ be a manifold satisfying one of the conditions \eqref{Condition1General}, \eqref{condition2General}, \eqref{condition3General}. Let $(\tilde{M}^n,\tilde{g},\tilde{p})$ be the universal covering, with covering map $\pi:\tilde{M} \to M$ such that $\pi(\tilde{p})=p$. Then $(\tilde{M},\tilde{g}/3^2,\tilde{p})$ satisfies the corresponding condition among \eqref{Condition1General}, \eqref{condition2General}, \eqref{condition3General}, where the constants have been rescaled by factors depending only on the dimension $n$.
\end{proposition}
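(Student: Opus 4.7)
The plan is to verify each of conditions \eqref{Condition1General}, \eqref{condition2General}, \eqref{condition3General} separately for the rescaled cover $(\tilde{M},\tilde{g}',\tilde{p})$ with $\tilde{g}' := \tilde{g}/25$, using Proposition \ref{Prop:CoverInt} together with the standard Riemannian scaling identities. Throughout, for every $\tilde{x}\in\tilde{M}$ I set $x:=\pi(\tilde{x})$; since $\pi$ is a local isometry, all pointwise curvature quantities at $\tilde{x}$ agree with their values at $x$ in $M$.

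I would begin with the pointwise and volume bounds. The inequality $\Ric_{\tilde{M},\tilde{g}} \geq -\delta$ transfers directly from $M$, and since Ricci eigenvalues scale by $25$ under $\tilde{g}' = \tilde{g}/25$, one obtains $\Ric_{\tilde{g}'} \geq -25\delta$. For the volume bound in \eqref{condition3General}, path-lifting of geodesics yields $\pi(\tilde{B}_r(\tilde{x})) = B_r(x)$ for every $r>0$; since $\pi$ is a local isometry this gives $\widetilde{\ssf{Vol}}(\tilde{B}_5(\tilde{x})) \geq \ssf{Vol}(B_5(x)) \geq c(n)v$ by Bishop--Gromov in $M$, and the rescaling identity $\ssf{Vol}_{\tilde{g}'}(\tilde{B}^{\tilde{g}'}_1(\tilde{x})) = 5^{-n}\widetilde{\ssf{Vol}}(\tilde{B}_5(\tilde{x}))$ delivers the rescaled volume lower bound uniformly in $\tilde{x}$.

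The heart of the argument is to transfer the integral conditions via Proposition \ref{Prop:CoverInt}, applied at the pair of base points $(\tilde{x},x)$. For the positive lower bounds on averages of $\R_{M,k} \wedge L$, $\R_M \wedge L$, and $|\R_M|^s$, I would invoke item \ref{item1Covering} with a non-negative shift of the integrand (for instance, $F := (\R_{M,k} \wedge L) + n\delta \geq 0$, using $\R_{M,k} \geq -n\delta$), obtaining a lower bound on the corresponding average over $\tilde{B}_5(\tilde{x})$. The ``not-too-negative'' condition $\fint_{B_1(x)} \Ric_{M,n-2}\wedge 0 \, \de\ssf{Vol} \geq -\delta$ in \eqref{condition2General} has reversed sign, so there I would apply item \ref{item2Covering} to $F := -(\Ric_{M,n-2}\wedge 0) \geq 0$: this produces some $p'\in M$ with $\fint_{\tilde{B}_5(\tilde{x})} \tilde{F}\,\de\widetilde{\ssf{Vol}} \leq c(n) \fint_{B_1(p')} F \,\de\ssf{Vol} \leq c(n)\delta$, the last inequality holding because the hypothesis is assumed at every point of $M$, including $p'$. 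The constants $c(K,n)$ from Proposition \ref{Prop:CoverInt} depend continuously on $K=-\delta$, so they can be absorbed into purely dimensional $c(n)$ once $\delta$ ranges in a fixed bounded set.

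Finally I would rescale the integrals from $(\tilde{M},\tilde{g})$ to $(\tilde{M},\tilde{g}')$. Normalized averages are invariant under metric rescaling, so one pays only for the scaling of the integrand: $\R_{\tilde{g}',k} = 25\,\R_{\tilde{g},k}$, $\Ric_{\tilde{g}',n-2} = 25\,\Ric_{\tilde{g},n-2}$, and $|\R_{\tilde{g}'}|^s = 25^s |\R_{\tilde{g}}|^s$. For the truncated quantities, the elementary pointwise inequality $(25a)\wedge L \ge (a\wedge L) - 24|a\wedge 0|$, combined with $|\R_{\tilde{g},k}\wedge 0| \leq n\delta$, yields an additive error of size $O(n\delta)$ absorbed into the dimensional constants provided $\delta$ is small enough; the $|\R|^s$ average acquires a clean factor $25^s$; and the identity $\Ric_{\tilde{g}',n-2}\wedge 0 = 25(\Ric_{\tilde{g},n-2}\wedge 0)$ rescales the lower bound by a clean factor of $25$. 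In every case the new constants differ from the original by purely dimensional factors. No genuine obstacle arises: the proof is bookkeeping on top of Proposition \ref{Prop:CoverInt}, the only delicate point being the interaction between the truncation $\wedge L$ and the factor $25$ coming from the rescaling, which forces $\delta$ to be small in terms of $n$ and $\epsilon$.
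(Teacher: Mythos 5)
Your proof is correct and follows essentially the same route as the paper, which disposes of this proposition in one line by combining Proposition \ref{Prop:CoverInt} (item \ref{item1Covering} for the positive integral lower bounds, item \ref{item2Covering} for the sign-reversed bound in \eqref{condition2General}) with the fact that covering maps do not increase volumes of balls, together with the same scaling bookkeeping on radius-$5$ balls that motivates the factor $5^2$. The minor caveats you flag (the additive $O(n\delta)$ error coming from the truncation $\wedge L$ and the dependence of the constants of Proposition \ref{Prop:CoverInt} on $K=-\delta$) are equally implicit in the paper's statement and are harmless in all of its applications, where $\delta$ is ultimately taken small.
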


\begin{corollary} \label{CC4}
    Let $v,\epsilon,L \in (0,+\infty)$, $n \in \bb{N}$, $s \in (0,1)$ be fixed. There exist $R,D,\delta>0$ with $R \geq 20D$ satisfying the following. Let $(M^n,g)$ be a manifold satisfying one of the  conditions \eqref{Condition1}, \eqref{condition2}, \eqref{condition3}. Let $(\tilde{M},\tilde{g},\tilde{p})$ be the universal covering of $M$ and let $\pi:\tilde{M} \to M$ be the covering map. Then, both $(M,g)$ and $(\tilde{M},\tilde{g})$ are $(R,D)$-thin (see Definition \ref{Condition}).
    \begin{proof}
        The statement follows combining Proposition \ref{PC3} and Corollary \ref{CorollarySNP}.
    \end{proof}
\end{corollary}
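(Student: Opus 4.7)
The plan is to deduce Corollary \ref{CC4} as a direct combination of the two preceding tools: Proposition \ref{PC3}, which transfers the integral curvature assumptions from $M$ to its coverings (up to a fixed rescaling of the metric and a rescaling of the quantitative constants by a purely dimensional factor), and Corollary \ref{CorollarySNP}, which is the closure statement for (SNP) under conditions \eqref{Condition1}, \eqref{condition2}, \eqref{condition3}.

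First, I would handle $M$ itself: since $(M,g)$ satisfies one of \eqref{Condition1}, \eqref{condition2}, \eqref{condition3} for the fixed parameters $v,\epsilon,L,s,n$, Corollary \ref{CorollarySNP} directly yields $\delta_1, R_1, D_1 > 0$ with $R_1 \geq 10 D_1$ such that (SNP) holds on $M$ provided the bound $\Ric \geq -\delta$ is verified with $\delta \leq \delta_1$.

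Next I would handle the covering $(\tilde M, \tilde g, \tilde p)$. Applying Proposition \ref{PC3}, the rescaled space $(\tilde M, \tilde g/25, \tilde p)$ satisfies the same condition among \eqref{Condition1General}, \eqref{condition2General}, \eqref{condition3General} as $M$, with constants $v, \epsilon, L$ replaced by $v', \epsilon', L'$ that differ from the original ones only by an explicit factor depending on $n$ (and on the original $v, \epsilon, L, s$). These new constants are fixed and admissible, so Corollary \ref{CorollarySNP} produces $\delta_2, R_2, D_2 > 0$ with $R_2 \geq 10 D_2$ such that $(\tilde M, \tilde g/25)$ satisfies (SNP) with parameters $R_2, D_2$, whenever the lower Ricci bound on $\tilde M$ (in the rescaled metric) is controlled by $\delta_2$. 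Since Ricci curvature rescales by a factor of $25$, the assumption $\Ric_{M} \geq -\delta$ with $\delta \leq \delta_2/25$ suffices. Rescaling back, $(\tilde M, \tilde g)$ satisfies (SNP) with parameters $5 R_2$ and $5 D_2$.

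Finally, I would collapse the two sets of constants into a single triple valid for both spaces. Setting
\[
R := \max\{R_1, 5 R_2\}, \qquad D := \max\{D_1, 5 D_2\}, \qquad \delta := \min\{\delta_1, \delta_2/25\},
\]
one still has $R \geq 10 D$ and $\delta > 0$, and Remark \ref{R2} allows us to upgrade both SNP statements to the uniform pair $(R,D)$ without losing anything (each space satisfies (SNP) with parameters that are dominated by $(R, D)$ in the sense of the remark). Since there is no genuine obstacle here beyond carefully tracking the dimensional rescaling factor from Proposition \ref{PC3}, the only step demanding attention is the bookkeeping of constants to ensure that the $\delta$ chosen at the start of the corollary simultaneously enforces the curvature hypothesis on $M$ \emph{and}, after rescaling, on $\tilde M$.
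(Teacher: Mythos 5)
Your argument is correct and is essentially the paper's own proof spelled out: apply Corollary \ref{CorollarySNP} to $M$ itself (viewed as a trivial pGH limit of the constant sequence), use Proposition \ref{PC3} to transfer the hypotheses to $(\tilde{M},\tilde{g}/25)$, apply Corollary \ref{CorollarySNP} again with the rescaled parameters, undo the metric rescaling, and merge the two pairs of constants via the monotonicity of (SNP) as in Remark \ref{R2}. The only slip is cosmetic: for condition \eqref{condition2} the parameter $\delta$ also enters the integral sectional-curvature bound, so after Proposition \ref{PC3} it is rescaled by a dimensional constant $c(n)$ and not merely by the factor $25$ coming from the Ricci rescaling, hence one should take $\delta\leq\min\{\delta_1,\delta_2/c(n)\}$; this adjustment does not affect the argument.
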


\subsection{Large scale geometry of thin manifolds}

In this section, relying on Theorem \ref{thm(SNP)}, we prove metric and topological properties of manifolds satisfying one of the conditions \eqref{Condition1}, \eqref{condition2}, \eqref{condition3}. The next result is an immediate consequence of Theorem \ref{thm(SNP)} and Corollary \ref{CorollarySNP}.

\begin{thm} \label{thmCentral}
Let $v,\epsilon,L \in (0,+\infty)$, $n \in \bb{N}$, $s \in (0,1)$ be fixed.
    There exist $C,\delta>0$ with the following property. Let $(X,\sd,p)$ be a limit of pointed Riemannian manifolds satisfying one of the conditions \eqref{Condition1}, \eqref{condition2}, \eqref{condition3}. Then, there exists a $1$-dimensional connected manifold $I$ (possibly with boundary) and a distance preserving map $\phi:I \to X$ such that $X$ is contained in the $C$-neighbourhood of $\phi(I)$.
    \end{thm}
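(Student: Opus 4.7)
The theorem is an immediate consequence of Corollary \ref{CorollarySNP} combined with Theorem \ref{thm(SNP)}. My plan is as follows. Given the fixed parameters $v, \epsilon, L \in (0, +\infty)$, $n \in \bb{N}$, and $s \in (0,1)$, I would first invoke Corollary \ref{CorollarySNP} to produce constants $R, D, \delta > 0$ with $R \geq 10D$ (depending only on those parameters) such that, whenever $(X, \sd, p)$ arises as a pointed Gromov-Hausdorff limit of pointed Riemannian manifolds $(M_j^n, g_j, p_j)$ satisfying one of the three curvature conditions \eqref{Condition1}, \eqref{condition2}, or \eqref{condition3}, the limit space $(X, \sd)$ satisfies the segment neighbourhood property of Definition \ref{Condition} with parameters $R, D$.

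Once the (SNP) is in hand on the limit $(X, \sd)$, Theorem \ref{thm(SNP)} applies directly and yields a $1$-dimensional connected manifold $I$ (possibly with boundary) together with a distance preserving map $\phi : I \to X$ such that $X$ is contained in the $200R$-neighbourhood of $\phi(I)$. Setting $C := 200R$ and keeping the $\delta$ from the first step completes the proof.

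I do not foresee any real obstacle, as the theorem is essentially a packaging of two previously established statements: Corollary \ref{CorollarySNP} encapsulates the use of the integral curvature estimates from Section \ref{S3}, together with a splitting/compactness argument, to transfer the (SNP) to pGH limits, while Theorem \ref{thm(SNP)} distills the $1$-dimensional macroscopic structure from the (SNP) by separately handling the non-compact case (via Proposition \ref{P10}) and the compact case (via Proposition \ref{IsometriaCerchio}). The only minor care needed is to verify that both $C$ and $\delta$ depend only on the declared parameters $v, \epsilon, L, n, s$, which is immediate from the statements of the two ingredients.
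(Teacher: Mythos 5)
Your proposal is correct and matches the paper's argument exactly: the paper presents Theorem \ref{thmCentral} as an immediate consequence of Corollary \ref{CorollarySNP} (which gives the (SNP) on the limit with constants $R,D,\delta$ depending only on $v,\epsilon,L,n,s$) combined with Theorem \ref{thm(SNP)} (which produces $I$ and the distance preserving map with a $200R$-neighbourhood). Setting $C$ accordingly, as you do, is precisely the intended packaging.
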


    \begin{remark} \label{R6}
        The previous result fails if we remove the lower bounds on the Ricci curvature in conditions \eqref{Condition1} and \eqref{condition3}, thanks to the gluing results in \cite{GromovLawsonSurgery1, GromovLawsonSurgery2,WolfsonKricci}. Indeed, consider the gluing of four copies of $S^2 \times [0,+\infty)$ to $S^3$ minus four disjoint small balls. By \cite{WolfsonKricci}, we can equip this space with a Riemannian metric $g$ with the following properties:
        \begin{enumerate}
        \item $g$ coincides with the metric on $S^3$ and with the metric on each of the branches $S^2 \times [0,+\infty)$ outside of a small neighbourhood of the surgery.
        \item $g$ has $\R_2 \geq 0$.
        \end{enumerate}
        Hence, there exists $\epsilon>0$ such that the glued manifold $M$ satisfies
        \[
        \fint_{B_1(x)} \R_2 \, d\ssf{Vol} \geq \epsilon \quad \text{for every } x \in M.
        \]
        At the same time, Theorem \ref{thmCentral} fails on $M$. With a similar construction using \cite{GromovLawsonSurgery1, GromovLawsonSurgery2}, one shows that also the Ricci bound in \eqref{condition3} cannot be removed.
    \end{remark}
    
 The next result concerns volumes of manifolds satisfying one of the conditions \eqref{Condition1}, \eqref{condition2}, or \eqref{condition3}.

\begin{thm} \label{thmVolume}
    Let $v,\epsilon,L \in (0,+\infty)$, $n \in \bb{N}$, $s \in (0,1)$ be fixed.
    There exist $C,\delta>0$ satisfying the following. Let $(M^n,g)$ be a manifold satisfying one of the conditions \eqref{Condition1}, \eqref{condition2}, or \eqref{condition3}. Then 
    \begin{enumerate}
    \item \label{item1thmVolume}
        $\sup_{x \in M} \ssf{Vol}(B_t(x)) \leq Ct$, for all $t>0$.
       \item \label{item2thmVolume}
       If $\Ric_M \geq 0$, then $\inf_{x \in M} \ssf{Vol}(B_1(x))>0$.
       \end{enumerate}
    \begin{proof}
    By Theorem \ref{thmCentral}, there exists a $1$-dimensional connected manifold $I$ (possibly with boundary), a distance preserving map $\phi:I \to X$, and a constant $C' >1$, such that $X$ is contained in the $C'$-neighbourhood of $\phi(I)$. 
    
        \textbf{Proof of} \ref{item1thmVolume}. 
        For $t \in (0,1]$, the claim is a consequence of the classical volume comparison results for manifolds with Ricci curvature bounded from below by $-1$. Hence, we only consider the case $t>1$.
        
        Let $x \in X$ and let $\phi(s_x) \in \pi_{\phi(I)}(x)$. Let $\{s_i\}_{i=1}^{\lceil t \rceil}$ be a $1$-net of $B^I_t(s_x)$. 
        By triangular inequality, $B_t(x)$ is contained in the $10C'$-neighbourhood of $\phi(B^I_t(s_x))$. Hence, again by triangle inequality and our choice of $\{s_i\}_{i=1}^{\lceil t \rceil}$, it follows
        \[
        B_t(x) \subset \bigcup_{i=1}^{\lceil t \rceil} B_{20C'}(\phi(s_i)).
        \]
        By the aforementioned volume bounds for manifolds with Ricci curvature bounded from below by $-1$, there exists a constant $C>0$ (depending only on the dimension), such that $\ssf{Vol}(B_t(x)) \leq C \lceil t \rceil \leq 2Ct$.

    \textbf{Proof of} \ref{item2thmVolume}.
    If the manifold is $M$ is compact, there is nothing to prove. If $M$ is non-compat and it contains a line, then $M=\bb{R} \times N$, where $N$ is a compact manifold. Hence, also in this case the statement follows.
    
    So we only need to consider the case when $M$ is non-compact and it is contained in the $C'$ neighbourhood of a ray.
    Fix $x\in M$ and let $t_x \geq 0$ be such that  $ x\in B_{C'}(r(t_x))$.
    By Brunn-Minkowski inequality \cite{S2}, 
    \begin{equation} \label{E26}
    \ssf{Vol}(A_{1/2}(2t_x)) \geq (1/2)^n \; \ssf{Vol}(B_1(r(0))),
    \end{equation}
    where $A_{1/2}(2t_x)$ is the set consisting of the intermediate points of segments connecting points in $B_1(r(0))$ and in $B_1(r(2t_x))$. 
    
    We claim that $A_{1/2}(2t_x)$ is contained in $B_{10C'}(r(t_x))$. Assume for the moment that the claim holds. In this case, by the doubling property and \eqref{E26}, it holds $\ssf{Vol}(B_1(x)) \geq v'>0$ for some $v'$ depending on $\ssf{Vol}(B_1(r(0))), n$ and $C'$. 

    To conclude the proof, we are left to show that $A_{1/2}(2t_x)$ is contained in $B_{10C'}(r(t_x))$.
    To this aim, consider a segment $l$ connecting points $a \in B_1(r(0))$ and $b \in B_1(r(2t_x))$. Let $t_l \geq 0$ be such that $r(t_l) \in \pi_r\big(l(\length(l)/2)\big)$ on $r$. Since $X$ is in the $C'$-neighbourhood of $r$, by triangle inequality, it holds
    \[
    \sd(r(0),r(t_l)) \leq 1+C'+\length(l)/2, \quad
    \sd(r(t_x),r(t_l)) \leq 1+C'+\length(l)/2.
    \]
    Since $\length(l) \leq t_x+2$ again by triangle inequality, it follows that $t_l \in [t_x/2-4C',t_x/2+4C']$. Since $l(\length(l)/2)$ is $C'$-close to $r(t_l)$, it follows $l(\length(l)/2) \in B_{10C'}(r(t_x))$, as claimed.
    \end{proof}
\end{thm}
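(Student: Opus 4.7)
The plan is to deduce both parts of Theorem \ref{thmVolume} directly from Theorem \ref{thmCentral}, which places $M$ in a $C'$-neighborhood of a distance-preserving image $\phi(I)$ of a $1$-dimensional manifold $I$, combined with classical comparison geometry tools under a lower Ricci bound.

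\textbf{Part (1): linear volume growth.} For $t\leq 1$, Bishop–Gromov on a manifold with $\Ric_M\geq -\delta$ gives $\ssf{Vol}(B_t(x))\leq C(n,\delta)$, so the bound $\ssf{Vol}(B_t(x))\leq Ct$ is immediate. For $t>1$, fix $x\in M$ and let $\phi(s_x)\in\phi(I)$ be a projection of $x$ onto $\phi(I)$, so that $\sd(x,\phi(s_x))\leq C'$. Since $\phi$ preserves distances, the $I$-ball $B^{I}_{t}(s_x)$ admits a $1$-net $\{s_1,\dots,s_N\}$ with $N\leq \lceil t\rceil+1$. A straightforward triangle inequality argument (using that $M\subset B_{C'}(\phi(I))$) yields
\[
B_t(x) \subset \bigcup_{i=1}^{N} B_{20C'}(\phi(s_i)),
\]
and then Bishop–Gromov bounds each $\ssf{Vol}(B_{20C'}(\phi(s_i)))$ by a dimensional constant, giving $\ssf{Vol}(B_t(x))\leq Ct$ for a suitable $C$.

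\textbf{Part (2): uniform lower volume bound under $\Ric_M\geq 0$.} I split into three cases according to the structure coming from Theorem \ref{thmCentral}. If $M$ is compact the claim is trivial. If $M$ is non-compact and contains a line, the Cheeger–Gromoll splitting theorem yields $M=\bb{R}\times N$ isometrically. Since $M$ is in a bounded tubular neighborhood of a $1$-dimensional manifold, $N$ must be compact, and the statement follows. In the remaining case $M$ is non-compact and does not contain a line; then Proposition \ref{P10} (applied via Theorem \ref{thmCentral}) shows $M$ is contained in a $C'$-neighborhood of a ray $r$.

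Given $x\in M$, pick $t_x\geq 0$ with $x\in B_{C'}(r(t_x))$. By the Brunn–Minkowski inequality for $\mathsf{CD}(0,n)$ spaces applied to the pair of balls $B_1(r(0))$ and $B_1(r(2t_x))$ (cf.\ \cite{S2}), the set $A_{1/2}(2t_x)$ of midpoints of geodesics between points of these two balls satisfies
\[
\ssf{Vol}(A_{1/2}(2t_x)) \geq (1/2)^n \ssf{Vol}(B_1(r(0))).
\]
The crux is then to show $A_{1/2}(2t_x)\subset B_{10C'}(r(t_x))$: given a segment $l$ from $a\in B_1(r(0))$ to $b\in B_1(r(2t_x))$, triangle inequality plus $M\subset B_{C'}(r)$ forces its midpoint to project onto some $r(t_l)$ with $t_l$ within $O(C')$ of $t_x$, hence the midpoint lies in $B_{10C'}(r(t_x))$. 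Combining with the doubling property (from $\Ric_M\geq 0$), we get $\ssf{Vol}(B_1(x)) \geq v'>0$ uniformly in $x$. The only mildly delicate step is the location estimate for the midpoint's projection, but this is essentially bookkeeping with triangle inequalities using $\length(l)\leq 2t_x+2$.
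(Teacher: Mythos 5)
Your proof is correct and follows essentially the same route as the paper: linear growth via projecting onto $\phi(I)$, covering $B_t(x)$ by $O(t)$ balls of radius $20C'$ centred on a $1$-net, and Bishop--Gromov; and the lower volume bound via the trichotomy compact / line (splitting) / ray, Brunn--Minkowski applied to $B_1(r(0))$ and $B_1(r(2t_x))$, the containment of the midpoint set in $B_{10C'}(r(t_x))$, and doubling. The only differences are cosmetic bookkeeping (e.g.\ your bound $\length(l)\leq 2t_x+2$ and the resulting location of $t_l$ near $t_x$ are the correct versions of the paper's slightly garbled constants), so there is nothing to add.
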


The previous result, in particular, shows that the manifolds in question, when $\Ric_M \geq 0$, fit into the framework of \cite{ZhuLinear}.

\begin{remark} \label{R7}
        An analog of Theorem \ref{thmVolume} is not to be expected in higher dimension. Indeed a manifold with $\R_3 \geq 1$ and almost non-negative Ricci curvature might have exponential volume growth at infinity (consider rescalings of products of the hyperbolic plane and a sufficiently small $2$-sphere). Similarly, there exists a manifold with $\R_5 \geq 1$ and non-negative Ricci curvature with unit balls of arbitrarily small volume (consider a product of a $2$-sphere and the manifold constructed in \cite[Example $26$]{SormaniExample}).
    \end{remark}

Let us recall some  properties of covering spaces.
Let $\pi:\tilde{X} \to X$ be a covering between metric spaces. Let $p \in X$ and $\tilde{p} \in \pi^{-1}(p)$. Given a loop $\gamma$ in $p$, we can consider its lift $\tilde{\gamma}$ on $\tilde{X}$ with starting point $\tilde{p}$. 
This lift is unique, and homotopic loops give rise to homotopic lifts (see \cite[p.\,60]{Hatcher}).

In particular, one can consider the endpoint of $\tilde{\gamma}$, which we will denote $M(\tilde{p},\gamma)$. This way of obtaining new points, given an element $[\gamma] \in \pi_1(X)$ 
and a point $\tilde{p} \in \tilde{X}$, is called the \emph{monodromy action} of $\gamma$ on $\tilde{p}$. 
Since homotopic loops give rise to the same monodromy action, if $[\gamma]$ is the identity in $\pi_1(X)$, then $M(\tilde{p},\gamma)=\tilde{p}$.

\begin{thm} \label{thmTopology}
	Let $v,\epsilon,L \in (0,+\infty)$, $n \in \bb{N}$, $s \in (0,1)$ be fixed. There exist $C,\delta>0$ satisfying the following.
    Let $(M^n,g)$ be a manifold satisfying one of the conditions \eqref{Condition1}, \eqref{condition2}, or \eqref{condition3}. The following hold.
    \begin{enumerate}
    \item \label{item0thmTopology} $M$ has $1$-Urysohn width $\leq C$.
    	\item \label{item1thmTopology}$M$ has at most two ends.
    	\item \label{item2thmTopology}$\mathrm{b}_1(M) \leq 1$.
    	\item \label{item3thmTopology}$\pi_1(M)$ is infinite if and only if $M$ is compact and its universal cover is non-compact.
    	\item \label{item4thmTopology}If there exists a loop $\gamma \subset M$ such that $[\gamma] \in \pi_1(M)$ has infinite order, then $M$ is contained in a $C$-neighbourhood of $\gamma$.
    \end{enumerate}
    \begin{proof}
    \textbf{Proof of} \ref{item0thmTopology}. The claim follows by Corollary \ref{CorollarySNP} and Corollary \ref{C|Urysohn}.
    
    \textbf{Proof of} \ref{item1thmTopology}.
    Let $C>0$ be the constant given by Theorem \ref{thmCentral}.
        If $X$ has more than one end, then it is contained in a $C$-neighbourhood of a line by Theorem \ref{thmCentral} and cannot have a third end.
        
        \textbf{Proof of} \ref{item2thmTopology}.
        Let $\tilde{M}$ the universal cover of $M$. 
        By Proposition \ref{PC3} and Theorem \ref{thmVolume}, the volume of balls in $\tilde{M}$ grows at most linearly, so that by Proposition \ref{P|anderson}, it holds $\mathrm{b}_1(M) \leq 1$. 
        
        \textbf{Proof of} \ref{item3thmTopology}.
        It is clear that if $M$ is compact and its universal cover $\tilde{M}$ is non-compact, then $\pi_1(M)$ is infinite.
        We next show the converse.
        Since $\pi_1(M)$ has infinitely many elements, the universal cover $\tilde{M}$ is not compact. Hence, it suffices to show that $M$ is compact.
        
        Assume by contradiction that $M$ is not compact. We denote by $\sd$ and $\tilde{\sd}$ respectively distances in $M$ and $\tilde{M}$. Let $C>0$ be the constant given by Theorem \ref{thmCentral}, and let $r_M \subset M$ and $r_{\tilde{M}} \subset \tilde{M}$ be the rays or lines whose $C$-neighbourhoods contain respectively $M$ and $\tilde{M}$. 
        
        Let $p \in \tilde{M}$ be a preimage of $r_M(0)$ in $\tilde{M}$ via the covering map.
        Since $\pi_1(M)$ has infinitely many elements, there exists a sequence $\{\gamma_i\}_{i \in \bb{N}} \subset \pi_1(M)$ such that $\tilde{\sd}(M(p,\gamma_i),p) \to + \infty$.
        Combining this with the fact that $\tilde{M}$ is contained in the $C$-neighbourhood of $r_{\tilde{M}}$, we get that there exist $t_0,t_1,t_2$ in the domain of $r_{\tilde{M}}$ (i.e.,\ $\bb{R}$ if $r_{\tilde{M}}$ is a line and $\bb{R}_+$ if $r_{\tilde{M}}$ is a ray) such that $t_2 \geq 100C+t_1 \geq t_0+200C $, and $\gamma_1,\gamma_2 \in \pi_1(M)$ with
        \begin{equation} \label{E25}
        \begin{split}
        \tilde{\sd}(p,r_{\tilde{M}}(t_0))\leq C, \quad
        \tilde{\sd}(M(p,\gamma_1),r_{\tilde{M}}(t_1)) \leq C, \quad \tilde{\sd}(M(p,\gamma_2),r_{\tilde{M}}(t_2)) \leq C.
        \end{split}
        \end{equation}
         
         Consider a lift $\tilde{r}_M$ of $r_M$ to $\tilde{M}$ such that 
         $\tilde{r}_M(0)=M(p,\gamma_1)$. Clearly, $\tilde{r}_M$ is a ray in $\tilde{M}$. By \eqref{E25}, Lemma \ref{nuovocontinuitylemma}, and our choice of $t_0,t_1,t_2$, there exists $t\geq 10C$ such that $\tilde{r}_M(t)$ is $2C$-close to either $r_{\tilde{M}}(t_0)$ or $r_{\tilde{M}}(t_2)$. Assume without loss of generality that
         \begin{equation}\label{eq:trXtrTt0<C}
         \tilde{\sd}(\tilde{r}_M(t), r_{\tilde{M}}(t_0))\leq 2C.
         \end{equation}
         
         Recall that each $\gamma \in \pi_1(M)$ induces an isometry $\phi$ of $\tilde{M}$ defined by $\phi(\tilde{x})=M(\tilde{x},\gamma)$ for every $\tilde{x} \in \tilde{M}$, and that $M$ is isometric to the quotient of $\tilde{M}$ by such isometries.
         Hence: 
         \[
         10C \leq t = 
         {\sd}(r_M(0),r_M(t)) \leq \tilde{\sd}(p,\tilde{r}_M(t))+\tilde{\sd}(M(p,\gamma_1),\tilde{r}_M(0))=\tilde{\sd}(p,\tilde{r}_M(t)).
         \]
         Since, by \eqref{E25} and \eqref{eq:trXtrTt0<C}, 
         \[
        \tilde{\sd}(p,\tilde{r}_M(t))\leq\tilde{\sd}(p,r_{\tilde{M}}(t_0))+\tilde{\sd}(r_{\tilde{M}}(t_0), \tilde{r}_M(t)) \leq 3C
        \]
          the combination of the last two inequalities yields a contradiction. Hence, $M$ is compact.
          
          \textbf{Proof of} \ref{item4thmTopology}. Let $\gamma \subset M$ be a loop such that $[\gamma] \in \pi_1(M)$ has infinite period. Let $\tilde{\gamma}$ be a lift of $\gamma$ to $\tilde{M}$, and let $\tilde{\gamma}^{\bb{N}}:[0,+\infty) \to \tilde{M}$ be a lift of the infinite curve $\sum_{i \in \bb{N}} \gamma$. Since $[\gamma] \in \pi_1(M)$ has infinite period, it holds
          \begin{equation} \label{eqtop}
          \limsup_{t \to + \infty} \tilde{\sd}(\tilde{\gamma}^{\bb{N}}(0),\tilde{\gamma}^{\bb{N}}(t))= + \infty.
          \end{equation}
          Let $r_{\tilde{M}}$ be the ray or the line in $\tilde{M}$ whose $C'$-neighbourhood contains $\tilde{M}$ (which exists combining Theorem \ref{thmCentral} and Proposition \ref{PC3}). By \eqref{eqtop} and Lemma \ref{nuovocontinuitylemma}, there exist $t_1,t_2$ in the domain of $r_{\tilde{M}}$ with $t_2 \geq 100C'+100\length(\gamma)+t_1$ such that $\pi_{r_{\tilde{M}}}(\tilde{\gamma}^{\bb{N}})$ contains a $3C'$-net of $[r_{\tilde{M}}(t_1),r_{\tilde{M}}(t_2)]$. Assume now that there is a point $p \in M$ such that $\sd(p,\gamma) \geq 10C'$. Let $l$ be a segment realizing the distance from $p$ to $\gamma$ and let $p_l$ be the footpoint of $l$ on $\gamma$. The set of preimages of $p_l$ in $\tilde{M}$ via the covering map form a $\length(\gamma)$-net of $\tilde{\gamma}^{\bb{N}}$ in $\tilde{M}$. In particular, there is a preimage $\tilde{p}_l$ which is $10C'+\length(\gamma)$ close to $r_{\tilde{M}}((t_1+t_2)/2)$. Let $\tilde{l}$ be the lift of $l$ to $\tilde{M}$ starting from $\tilde{p}_l$ (we are orienting $l$ from $p_l$ to $p$). Then,
          \begin{equation} \label{eqtop2}
          \tilde{\sd}(\tilde{p}_l,\tilde{\gamma}^{\bb{N}})=\tilde{\sd}(\tilde{l}(\length(\tilde{l}),\tilde{\gamma}^{\bb{N}})=\tilde{\sd}(l(\length(l),\gamma) \geq 10C'.
          \end{equation}
          At the same time, since $\tilde{p}_l$ is $10C'+\length(\gamma)$ close to $r_{\tilde{M}}((t_1+t_2)/2)$, it holds $r_{\tilde{M}}(t) \in \pi_{r_{\tilde{M}}}\big( \tilde{p}_l \big)$ for some $t \in[t_1,t_2]$. Using that $r_{\tilde{M}}(t)$ is $7C'$-close to a point in $\tilde{\gamma}^{\bb{N}}$ and \eqref{eqtop2}, it holds
          \[
          \tilde{\sd}(\tilde{p}_l,r_{\tilde{M}})=\tilde{\sd}(\tilde{p}_l,r_{\tilde{M}}(t)) \geq \tilde{\sd}(\tilde{p}_l,\tilde{\gamma}^{\bb{N}})-7C' \geq 3C'.
          \]
          This is a contradiction.
    \end{proof}
\end{thm}

\section*{Appendix}

The appendix shows how to obtain a result by Jiang-Naber \cite[Theorem 2.17]{Naberconjectures}. Since the result was claimed in \cite{Naberconjectures} without proof, for the reader's convenience,  we provide an argument consisting in a combination of   Theorem \ref{CT1}  with \cite[Theorem 1.3]{CheegerNaber}. The result is not used anywhere else in the paper, and it is included as a further application of the techniques developed in the manuscript.

\begin{definition}
    Let $(M^n,g,x)$ be a pointed Riemannian manifold. The quantitative singular strata are defined, for $r \in (0,1)$, as
    \[
    \mathcal{S}_{\delta,r}^k:=\{y \in B_1(x): B_s(y) \text{ is not } (k+1,\delta) \text{-symmetric, for any } s \in (r,1)\},
    \]
    \[
    \mathcal{S}_{\delta}^k:=\{y \in B_1(x): B_s(y) \text{ is not } (k+1,\delta) \text{-symmetric for any } s \in (0,1)\}.
    \]
    We use the abbreviated notation $\mathcal{S}_{\delta,r}$ for $\mathcal{S}^{n-1}_{\delta,r}$, and $\mathcal{S}_{\delta}$ for $\mathcal{S}^{n-1}_{\delta}$. The set $B_1(x) \setminus \mathcal{S}_{\delta,r}$ is denoted by $\mathcal{R}_{\delta,r}$.
\end{definition}

\begin{proposition} \label{P2}
    Let $K \in \bb{R}$ and $n \in \bb{N}$ be fixed and let $(M^n,g,x)$ be a manifold with $\Ric_M \geq K$. For every $\delta>0$ there exists $\eta(K,n,\delta)>0$ such that
    $
    \mathcal{S}_{\delta,r} \subset \mathcal{S}^{n-2}_{\eta,r}
    $,
    for every $r \in (0,1]$.
    \begin{proof}
        The statement follows arguing by contradiction and using Theorem \ref{T2}.
    \end{proof}
\end{proposition}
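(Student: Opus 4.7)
The plan is a standard contradiction-and-compactness scheme with Theorem \ref{T2} as the decisive input. Suppose the claim fails: there exist $\delta_0 > 0$, $\eta_j \downarrow 0$, pointed manifolds $(M_j^n, g_j, x_j)$ with $\Ric_{M_j} \geq K$, radii $r_j \in (0,1]$, and points $y_j \in \mathcal{S}_{\delta_0, r_j} \setminus \mathcal{S}^{n-2}_{\eta_j, r_j}$. Unwinding the definitions, one obtains scales $s_j \in (r_j, 1)$ at which $B_{s_j}(y_j)$ is $(n-1, \eta_j)$-symmetric, while $B_s(y_j)$ is not $(n, \delta_0)$-symmetric for any $s \in (r_j, 1)$.

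I would then rescale $g_j \mapsto g_j/s_j^2$, so that $B_1(y_j)$ is $(n-1, \eta_j)$-symmetric in the new metric, which still satisfies $\Ric \geq -|K|$. By Gromov's precompactness, after passing to a subsequence, $(M_j, g_j/s_j^2, y_j)$ converges in pGH to a limit $(X, \sd, y_\infty)$, with $r_j/s_j \to \rho \in [0,1)$ and $1/s_j \to \sigma \in [1, +\infty]$. The $(n-1, \eta_j)$-symmetry with $\eta_j \downarrow 0$ passes to the limit as an isometric splitting $B_1(y_\infty) \cong \bb{R}^{n-1} \times (Y, y_Y)$, while the failure of $(n, \delta_0)$-symmetry, being a closed condition, yields that $B_s(y_\infty)$ is not $(n, \delta_0/2)$-symmetric for every $s \in (\rho, \sigma)$. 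By Theorem \ref{T3} I would identify $X$ as a non-collapsed Ricci limit of dimension $n$, so $(X, \sd, \aH^n)$ is an $\RCD(-|K|, n)$-space. Since $B_1(y_\infty)$ contains the slice $\bb{R}^{n-1} \times \{y_Y\}$ through $y_\infty$, some tangent space to $X$ at $y_\infty$ is of the form $\bb{R}^{n-1} \times Z$, hence $y_\infty \notin \mathcal{S}_{n-2}(X)$. Theorem \ref{T2} then forces $y_\infty \notin \mathcal{S}_{n-1}(X)$, so some tangent cone at $y_\infty$ is isometric to $\bb{R}^n$. Thus there exists a scale $\tau \in (0, 1)$ at which $B_\tau(y_\infty)$ is $(n, \delta_0/4)$-close to $B_\tau(0) \subset \bb{R}^n$; propagating along the pGH convergence, $B_{\tau s_j}(y_j)$ is $(n, \delta_0)$-symmetric for all $j$ sufficiently large, contradicting $y_j \in \mathcal{S}_{\delta_0, r_j}$.

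The main technical obstacle is matching scales: the scale $\tau$ guaranteed by Theorem \ref{T2} is in principle very small (it arises from a blow-up at $y_\infty$), while the window $(\rho s_j, s_j)$ imposed by $y_j \in \mathcal{S}_{\delta_0, r_j}$ can be thin if $\rho$ is close to $1$. I would address this by choosing the $s_j$ as the supremum of scales in $(r_j, 1)$ at which $(n-1, \eta_j)$-symmetry holds, so that $\rho = 0$ in the limit and arbitrarily small $\tau$ are admissible. A parallel argument based on the Cheeger-Colding almost splitting theorem is needed to rule out a collapsed limit (which would a priori take values in the alternative case of Theorem~\ref{T3}).
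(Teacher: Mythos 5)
Your skeleton (contradiction, rescaling by $s_j$, passing to a limit space, invoking Theorem \ref{T2}, transferring an $(n,\delta_0)$-symmetric scale back to $M_j$) is exactly the route the paper's one-line proof points to, but the step ``by Theorem \ref{T3} I would identify $X$ as a non-collapsed Ricci limit of dimension $n$'' is a genuine gap: Proposition \ref{P2} carries no volume lower bound, so nothing selects the non-collapsed alternative of Theorem \ref{T3}, and Theorem \ref{T2} --- the only input that excludes ``splits $\bb{R}^{n-1}$ but not $\bb{R}^{n}$'' behaviour --- is a statement about non-collapsed limits. Moreover your proposed patch (``rule out a collapsed limit'' via almost splitting) cannot succeed, because collapse is genuinely compatible with your contradiction hypotheses: take $M_j=\bb{R}^{n-1}\times S^1(\epsilon_j)$ flat, $\epsilon_j\downarrow0$, $r_j=1/2$, $s_j=3/4$, $y_j$ arbitrary. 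Every ball is exactly $(n-1,0)$-symmetric (so the hypotheses hold for every $\eta_j$), while for $\delta_0$ below a dimensional threshold no ball at a scale $s\in(1/2,1)$ is $(n,\delta_0)$-symmetric: such a ball is within $\pi\epsilon_j$ of $B^{\bb{R}^{n-1}}_s(0)$, whereas any $B^{\bb{R}^n\times Y}_s$ contains an isometric copy of $B^{\bb{R}^n}_s$, and a packing count gives $\sd_{{\rm GH}}(B^{\bb{R}^{n-1}}_s,B^{\bb{R}^n\times Y}_s)\geq c(n)s$. The blow-up limit is the collapsed space $\bb{R}^{n-1}$, which has no $\bb{R}^n$-tangent anywhere, and no contradiction can be extracted; in fact this shows the inclusion fails as literally stated. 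The proposition has to be read with the non-collapsing under which it is applied (Theorem \ref{thm:IntRsB1Riccidelta} assumes $\ssf{Vol}(B_1(x))\geq v$, and the constant in Theorem \ref{CheegerNaberMain} depends on $v$ anyway); your proof should import that hypothesis and transfer it to the rescaled balls $B^{g_j/s_j^2}_1(y_j)$ via Bishop--Gromov, so that the limit is forced into the non-collapsed case, with $\eta$ now depending on $v$ as well.

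Separately, your handling of the scale window is not sound either. Taking $s_j$ to be the supremal $(n-1,\eta_j)$-symmetric scale in $(r_j,1)$ does not force $r_j/s_j\to0$ (the symmetry may only hold at scales barely above $r_j$), so the small scale $\tau$ obtained by blowing up at $y_\infty$ may satisfy $\tau s_j<r_j$ and contradict nothing. And the obstruction is real, not technical: the fixed flat cylinder $\bb{R}^{n-1}\times S^1(\lambda)$ is non-collapsed with $v\simeq_n\lambda$, is $(n-1,0)$-symmetric at every scale, yet is $(n,\delta_0)$-symmetric only at scales comparable to $\lambda$; for $r\gg\lambda$ the required $(n,\delta_0)$-symmetric scale simply does not exist in $(r,1)$. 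So a correct version of the statement needs, besides non-collapsing, some control tying $r$ to $v$ (for instance a lower bound $\ssf{Vol}(B_r(y))\geq vr^n$ at the bottom scale, or a conclusion formulated at a $v$-dependent smaller scale). Once such a hypothesis is in place, the cleaner way to conclude is not a blow-up at $y_\infty$: writing $B_1(y_\infty)\cong B^{\bb{R}^{n-1}\times Y}_1((0,\bar y))$ in the non-collapsed limit, the cross-section $Y$ is one-dimensional, and Theorem \ref{T2} applied at every point of the ball excludes boundary and branch points of $Y$ there (such a point would lie in $\mathcal{S}_{n-1}\setminus\mathcal{S}_{n-2}$); hence $Y$ near $\bar y$ is an interval or a circle of length bounded below in terms of $v$, which identifies a definite scale, comparable to $s_j$ or to $v$, at which Euclidean $n$-symmetry reappears and which you can then compare with $r_j$ before transferring back to $M_j$.
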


The next theorem corresponds to \cite[Theorem 1.3]{CheegerNaber}.
\begin{thm}[Cheeger-Naber] \label{CheegerNaberMain}
    Let $K \in \bb{R}$, $n \in \bb{N}$, $v,\delta,\eta >0$ be fixed and let $(M^n,g,x)$ be a manifold with $\Ric_M \geq K$ and $\ssf{Vol}(B_1(x)) \geq v$. Denoting by $T_r(\mathcal{S}^k_{\delta,r})$ the $r$-neighbourhood of $\mathcal{S}^k_{\delta,r}$, there exists  $C=C(n,v,\delta,\eta)>0$ such that
    \[
    \ssf{Vol}(T_r(\mathcal{S}^k_{\delta,r})) \leq Cr^{n-k-\eta}.
    \]
\end{thm}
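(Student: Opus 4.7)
The plan is to establish the Minkowski-type volume bound by combining a quantitative cone-splitting theorem with a careful multiscale covering argument; this is the core scheme introduced by Cheeger-Naber. The first step is to prove the cone-splitting dichotomy: there exist $\sigma=\sigma(n,\varepsilon)$ and $\delta'=\delta'(n,\varepsilon)$ such that if $B_r(p)$ is $(k,\delta')$-symmetric, with splitting direction $V\subset T_pM$, and there is a point $q\in B_{r/2}(p)$ at distance at least $\sigma r$ from the affine subspace $p+V$ for which $B_{\sigma r}(q)$ is $(0,\delta')$-symmetric, then $B_{\sigma r}(p)$ is $(k+1,\varepsilon)$-symmetric. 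This is proved by compactness: a counterexample sequence converges in pGH sense (using $\Ric\ge K$) to a limit that splits $\mathbb{R}^k$ and also has a conical point off the splitting factor, which by Cheeger-Colding's almost-rigidity forces an extra Euclidean factor, a contradiction.

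The second step is to define, for each $x\in B_1$ and $\delta>0$, the counting function
\[
N_\delta(x):=\#\big\{j\in\mathbb{N} : B_{2^{-j}}(x)\ \text{is not}\ (k+1,\delta)\text{-symmetric}\big\}.
\]
Combining the cone-splitting dichotomy with Bishop-Gromov monotonicity of the volume ratio $\theta_r(x):=r^{-n}\ssf{Vol}(B_r(x))$, I would show that at any scale where $B_{2^{-j}}(x)$ fails the $(k+1,\delta)$-symmetry but satisfies a $k$-symmetry, the density $\theta$ drops by a definite amount $\alpha(n,\delta)>0$ at a controlled smaller scale. Using the uniform lower bound $\theta_1\ge v$ and the upper bound from Bishop-Gromov, this gives a pointwise and integral control
\[
\fint_{B_1(x)} N_\delta\,\de\ssf{Vol}\leq C(n,v,\delta).
\]

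The third step is a covering/packing argument. For each dyadic scale $r_j=2^{-j}$, take a maximal disjoint collection of balls $\{B_{r_j/5}(y_i)\}$ with $y_i\in\mathcal{S}^k_{\delta,r_j}$; then $\{B_{r_j}(y_i)\}$ covers the tube $T_{r_j}(\mathcal{S}^k_{\delta,r_j})$ up to a factor of $5$. To bound the cardinality $N_j$ of this covering, one performs a Reifenberg-type induction on scales: at each scale where $(k+1,\delta)$-symmetry holds, one gains Euclidean flatness and controls the packing number by $(r_j/r_{j-1})^{-k}$, whereas at scales of failure one loses a factor absorbed by $\eta$. Iterating from scale $1$ down to $r_j$ and using the $L^1$ bound on $N_\delta$ from the previous step, one obtains $N_j\le C(n,v,\delta,\eta)\,r_j^{-k-\eta}$, and multiplying by $C(n) r_j^n$ yields the claimed volume bound.

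The main obstacle is step one (cone-splitting) together with ensuring that the constants $\delta'\to\varepsilon$ can be iterated without degeneration through scales; this is what forces the extra $\eta$ in the exponent. Once cone-splitting is in place, the density-drop counting and Vitali packing arguments are essentially bookkeeping. A finer version of the covering (neck-region decomposition of Jiang-Naber) would remove the $\eta$ and give sharp $L^p$ estimates, but the formulation stated here only requires the cruder Reifenberg-type iteration sketched above.
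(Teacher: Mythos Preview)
The paper does not give its own proof of this statement: it is quoted verbatim as \cite[Theorem~1.3]{CheegerNaber} and used as a black box in the appendix. Your sketch follows the Cheeger--Naber scheme (cone-splitting, quantitative differentiation via volume monotonicity, multiscale covering), which is the correct architecture.

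There is, however, a genuine confusion in your second step. You define $N_\delta(x)$ as the number of dyadic scales at which $B_{2^{-j}}(x)$ is \emph{not} $(k+1,\delta)$-symmetric, and then claim $\fint_{B_1} N_\delta\le C(n,v,\delta)$. But for any $x\in\mathcal{S}^k_{\delta,r}$ this symmetry fails, by definition, at \emph{every} scale in $(r,1)$, so $N_\delta(x)\sim\log_2(1/r)$ and no uniform $L^1$ bound is possible. The quantity that \emph{is} uniformly bounded is the number of scales at which $B_{2^{-j}}(x)$ fails to be $(0,\delta')$-symmetric, i.e.\ fails to be close to \emph{some} metric cone; this is what follows from ``almost volume cone implies almost metric cone'' together with the monotonicity and two-sided boundedness of the volume ratio. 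At the (many) remaining conical scales, cone-splitting then forces the points of $\mathcal{S}^k_{\delta,r}$ to lie in a $\tau r_j$-neighbourhood of a $k$-dimensional set, which is what drives the $r_j^{-k}$ packing bound in the scale-by-scale induction; the $\eta$-loss comes from the uniformly bounded number of non-conical scales where one only has the crude $r_j^{-n}$ bound. With this correction to the role of $(0,\delta')$- versus $(k+1,\delta)$-symmetry, the rest of your outline is sound.
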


\begin{thm}\label{thm:IntRsB1Riccidelta}
    Let $n \in \bb{N}$, $v>0$, $s \in (0,1)$ be fixed. There exist $\delta(n,s),C(n,v,s)>0$ such that, if $(M^n,g,x)$ has $\Ric_M \geq -\delta$ and $\ssf{Vol}(B_1(x)) \geq v$, then
    \[
    \fint_{B_{1}(x)} |\R|^s \,\de\ssf{Vol} \leq  C(n,v,s).
    \]
    \begin{proof}
        By Theorem \ref{CT1} and a standard rescaling, there exists $\delta(n,s)>0$ such that if $(M,g,x)$ has $\Ric_M \geq -\delta$, then
        any $\delta$-regular ball $B_{10r}(y) \subset M$ with $y \in B_1(x)$ and $r \in (0,1]$ satisfies
     \begin{equation}\label{eq;IntRsRegBall}
     \fint_{B_r(y)} |\R|^s \, \de \ssf{Vol} \leq r^{-2s}.
     \end{equation}
     Without loss of generality, for the rest of the proof we will assume that $\delta\leq (n-1)$.
    
    Let $\alpha\in (0,1)$ be fixed,  and consider the disjoint union
    \[
    B_1(x)=\mathcal{R}_{\delta,\alpha} \cup \bigcup_{k \in \bb{N}} (\mathcal{S}_{\delta,\alpha^{k}} \setminus \mathcal{S}_{\delta,\alpha^{k+1}}).
    \]
     Consider a covering $\{B_{r_j/10}(x_j)\}_{j \in \bb{N}}$ of $B_1(x)$ such that each $B_{r_j}(x_j)$ is $\delta$-regular and satisfies the following.
     \begin{enumerate}
         \item \label{III1} If $x_j \in \mathcal{R}_{\delta,\alpha}$, then $r_j \in [\alpha,1)$.
         \item If $x_j \in \mathcal{S}_{\delta,\alpha^{k}} \setminus \mathcal{S}_{\delta,\alpha^{k+1}}$, then $r_j \in [\alpha^{k+1},\alpha^k]$.
     \end{enumerate}
     By refining the covering, we may assume that $\{B_{r_j/50}(x_j)\}_{j \in \bb{N}}$ consists of disjoint balls.
     We will bound
    \[
    \sum_{x_j \in \mathcal{R}_{\delta,\alpha} }
    \int_{B_{r_j/10}(x_j)} |\R|^s \,\de\ssf{Vol}
    \quad
    \text{and}
    \quad 
     \sum_{k \in \bb{N}} \, \sum_{x_j \in \mathcal{S}_{\delta,\alpha^{k}} \setminus \mathcal{S}_{\delta,\alpha^{k+1}}}
    \int_{B_{r_j/10}(x_j)} |\R|^s \,\de\ssf{Vol}
    \]
    separately. 
    
    We treat the case $x_j \in \mathcal{R}_{\delta,\alpha}$ first.
    Combining the assumption that $0<v \leq \ssf{Vol}(B_1(x)) \leq C(n)$ and condition \ref{III1} with Bishop-Gromov's volume monotonicity, we deduce that the number of balls in the aforementioned covering such that $x_j \in \mathcal{R}_{\delta,\alpha}$ is bounded above by $c(n,v,s)$. Hence,  \eqref{eq;IntRsRegBall}  yields
    \begin{equation} \label{E14}
    \sum_{x_j \in \mathcal{R}_{\delta,\alpha} }
    \int_{B_{r_j/10}(x_j)} |\R|^s \,\de\ssf{Vol} \leq 10^{2s}\alpha^{-2s} \sum_{x_j \in \mathcal{R}_{\delta,\alpha} }
    \ssf{Vol}({B_{r_j/10}(x_j)}) \leq  c(n,v,s).
    \end{equation}
    
    We now consider the case $x_j \in \mathcal{S}_{\delta,\alpha^{k}} \setminus \mathcal{S}_{\delta,\alpha^{k+1}}$.  Again using  \eqref{eq;IntRsRegBall} and Bishop-Gromov's volume monotonicity, we infer that
    \begin{align} \label{a}
    \nonumber
      \sum_{x_j \in \mathcal{S}_{\delta,\alpha^{k}} \setminus \mathcal{S}_{\delta,\alpha^{k+1}}}
    \int_{B_{r_j/10}(x_j)} |\R|^s \,\de\ssf{Vol}  \leq  
     10^{2s}\sum_{x_j \in \mathcal{S}_{\delta,\alpha^{k}} \setminus \mathcal{S}_{\delta,\alpha^{k+1}}}
   \ssf{Vol}(B_{r_j/10}(x_j)) r_j^{-2s} \\
    \leq 
   \sum_{x_j \in \mathcal{S}_{\delta,\alpha^{k}} \setminus \mathcal{S}_{\delta,\alpha^{k+1}}}
   c(n) \ssf{Vol}(B_{r_j/50}(x_j)) \alpha^{-2s(k+1)}.  
    \end{align}
    We note that the balls $\{B_{r_j/50}(x_j)\}_{j \in \bb{N}}$ are disjoint and 
    \[
    \bigcup_{x_j \in \mathcal{S}_{\delta,\alpha^{k}} \setminus \mathcal{S}_{\delta,\alpha^{k+1}}} B_{r_j}(x_j) \subset T_{\alpha^k}(\mathcal{S}_{\delta,\alpha^{k}} ).
    \]
    Hence, choosing $\eta:=1-s$ in Theorem \ref{CheegerNaberMain}, and combining with Proposition \ref{P2} and \eqref{a}, we deduce 
    \[
     \sum_{x_j \in \mathcal{S}_{\delta,\alpha^{k}} \setminus \mathcal{S}_{\delta,\alpha^{k+1}}}
    \int_{B_{r_j/10}(x_j)} |\R|^s \,\de\ssf{Vol}  \leq c(n,v,s) \alpha^{(1-s)k}.
    \]
    Hence,
    \[
    \sum_{k \in \bb{N}} \, \sum_{x_j \in \mathcal{S}_{\delta,\alpha^{k}} \setminus \mathcal{S}_{\delta,\alpha^{k+1}}}
    \int_{B_{r_j/10}(x_j)} |\R|^s \,\de\ssf{Vol} \leq c(n,v,s),
    \]
    giving the statement.
    \end{proof}
\end{thm}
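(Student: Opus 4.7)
The plan is to combine Theorem \ref{CT1} (which says $\fint |\R|^s$ is arbitrarily small on sufficiently regular balls) with the Cheeger--Naber volume bound for quantitative singular strata (Theorem \ref{CheegerNaberMain}) to decompose $B_1(x)$ into a ``regular part'' and a controllable thin neighbourhood of the effective singular set. By Proposition \ref{P2}, the top-dimensional singular stratum $\mathcal{S}_{\delta,r}$ is absorbed into the codimension-$2$ stratum $\mathcal{S}^{n-2}_{\eta,r}$, which will give enough room to close a geometric sum.

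First, I would fix $\delta(n,s)>0$ via the second part of Theorem \ref{CT1} (applied to a rescaling) so that whenever $\Ric_M\geq-\delta$ and a ball $B_{10r}(y)\subset M$ is $\delta$-regular, one has the scale-invariant bound $\fint_{B_r(y)} |\R|^s\,\de\ssf{Vol}\leq r^{-2s}$. Next I would pick $\alpha\in(0,1)$ (to be tuned) and partition $B_1(x)$ as $\mathcal{R}_{\delta,\alpha}\sqcup\bigsqcup_{k\in\bb{N}}(\mathcal{S}_{\delta,\alpha^k}\setminus\mathcal{S}_{\delta,\alpha^{k+1}})$. A Vitali-style covering $\{B_{r_j/10}(x_j)\}$ of $B_1(x)$ is then extracted such that each $B_{r_j}(x_j)$ is $\delta$-regular, the balls $B_{r_j/50}(x_j)$ are disjoint, and the radii satisfy $r_j\in[\alpha,1)$ if $x_j\in\mathcal{R}_{\delta,\alpha}$ and $r_j\in[\alpha^{k+1},\alpha^k]$ if $x_j\in\mathcal{S}_{\delta,\alpha^k}\setminus\mathcal{S}_{\delta,\alpha^{k+1}}$. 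Such a covering exists by maximality at each definite scale, using that each point admits a maximal regular scale by definition of the strata.

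The estimate on $\mathcal{R}_{\delta,\alpha}$ is easy: by Bishop--Gromov and $\ssf{Vol}(B_1(x))\geq v$, only finitely many regular balls appear (number depending on $n,v,\alpha$), so Step~1 yields a bound depending only on $n,v,s,\alpha$. The stratum contributions are the main work. For the $k$-th annular stratum, Step~1 gives
\[
\sum_{x_j\in\mathcal{S}_{\delta,\alpha^k}\setminus\mathcal{S}_{\delta,\alpha^{k+1}}}\!\!\int_{B_{r_j/10}(x_j)}|\R|^s\,\de\ssf{Vol}\ \leq\ C(n)\,\alpha^{-2s(k+1)}\!\!\sum_{x_j\in\mathcal{S}_{\delta,\alpha^k}\setminus\mathcal{S}_{\delta,\alpha^{k+1}}}\!\!\ssf{Vol}(B_{r_j/50}(x_j)).
\]
Since the $B_{r_j/50}(x_j)$ are disjoint and contained in $T_{\alpha^k}(\mathcal{S}_{\delta,\alpha^k})$, Proposition \ref{P2} embeds this into $T_{\alpha^k}(\mathcal{S}^{n-2}_{\eta,\alpha^k})$, and Theorem \ref{CheegerNaberMain} with stratum dimension $n-2$ bounds its volume by $C\,\alpha^{k(2-\eta)}$.

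The delicate balance — and the main obstacle — is the choice of the Cheeger--Naber exponent $\eta$. Combining the two estimates gives a contribution of order $\alpha^{-2s}\cdot\alpha^{k(2-\eta-2s)}$ from the $k$-th annulus, so convergence of $\sum_k \alpha^{k(2-\eta-2s)}$ requires $\eta<2(1-s)$. Choosing $\eta:=1-s$ (which is admissible since $s\in(0,1)$) yields the summable geometric series $\sum_k\alpha^{k(1-s)}<\infty$, and adding the $\mathcal{R}_{\delta,\alpha}$-contribution gives the desired bound $C(n,v,s)$. The subtleties to check carefully are: (i) the scale-invariant version of Theorem \ref{CT1} really holds (rescaling $\delta$-regularity of $B_{10r}$ to scale one uses that $\Ric_M\geq-\delta$ rescales to $\geq-\delta r^2$, which is still $\geq-\delta$); (ii) Proposition \ref{P2}'s constant $\eta$ depends only on $\delta$ and $n$, so it can be taken uniform; (iii) producing a single covering with the prescribed radius-to-stratum matching, which is standard via a greedy exhaustion.
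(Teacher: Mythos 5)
Your proposal is correct and follows essentially the same argument as the paper: the rescaled version of Theorem \ref{CT1} on $\delta$-regular balls, the decomposition of $B_1(x)$ into $\mathcal{R}_{\delta,\alpha}$ and the annular strata $\mathcal{S}_{\delta,\alpha^k}\setminus\mathcal{S}_{\delta,\alpha^{k+1}}$ with a scale-adapted covering whose $B_{r_j/50}(x_j)$ are disjoint, Proposition \ref{P2} to pass to the codimension-two stratum, and Theorem \ref{CheegerNaberMain} with $\eta=1-s$ to produce the geometric series $\sum_k\alpha^{(1-s)k}$. Your exponent bookkeeping and the rescaling check match the paper's proof, so no changes are needed.
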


One can now deduce   Jiang-Naber's result \cite[Theorem 2.17]{Naberconjectures}.

\begin{thm}[Jiang-Naber] \label{JiangNaber}
    Let $K \in \bb{R}$, $n \in \bb{N}$, $v>0$, $s \in (0,1)$ be fixed and let $(M^n,g,x)$ be a manifold with $\Ric_M \geq K$ and $\ssf{Vol}(B_1(x)) \geq v$. There exists a constant $C(K,n,v,s)>0$ such that
    \begin{equation}\label{eq:IntBDRics}
    \fint_{B_1(x)} |\Ric|^s \,\de\ssf{Vol} \leq  C(K,n,v,s).
    \end{equation}
    \begin{proof}
    By Theorem \ref{thm:IntRsB1Riccidelta} and a standard rescaling, there exists $r_0(K,n,v,s)>0$ such that  
    \begin{equation}\label{eq:IntRsBr0}
    \fint_{B_{r_0}(x)} |\R|^s \,\de\ssf{Vol} \leq  c(K,n,v,s), \quad \text{for all } y \in B_1(x).
    \end{equation}
    Consider a covering $\{B_{r_0}(x_j)\}_{j \in \bb{N}}$ of $B_1(x)$ such that the balls $\{B_{r_0/3}(x_j)\}_{j \in \bb{N}}$ are disjoint.  
    Then, the combination of \eqref{eq:IntRsBr0} with Bishop-Gromov's volume monotonicity yields
    \begin{align}\label{eq:fintRsBounded}
    \int_{B_1(x)} |\R|^s \, \de \ssf{Vol}& \leq \sum_{j \in \bb{N}} \int_{B_{r_0}(x_j)} |\R|^s \, \de \ssf{Vol}\leq c(n,K,v,s) \sum_{j \in \bb{N}} \ssf{Vol}(B_{r_0/3}(x_j)) \nonumber \\
    & \leq c(n,K,v,s) \ssf{Vol}(B_1(x)).
    \end{align}
    The assumption that $\Ric\geq K$ allows to promote the integral bound \eqref{eq:fintRsBounded} on the scalar cuvature into the claimed  integral bound \eqref{eq:IntBDRics} on the Ricci curvature.
    \end{proof}
\end{thm}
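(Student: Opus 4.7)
The strategy splits cleanly into two independent steps: a pointwise linear-algebra reduction from $|\Ric|$ to $|\R|$ that uses the Ricci lower bound, followed by a scaling-plus-covering argument that deduces the $|\R|^s$ integral bound under a general $\Ric \geq K$ hypothesis from the analogous bound in the almost-non-negative Ricci regime, i.e.\ Theorem \ref{thm:IntRsB1Riccidelta}.

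For the first step, I would exploit that the hypothesis $\Ric \geq K$ gives a two-sided control on the eigenvalues of the Ricci tensor at each point. If $\lambda_1\leq \dots\leq \lambda_n$ are those eigenvalues, then $\lambda_i\geq K$ for all $i$, while $\R = \sum_i \lambda_i$ forces $\lambda_j = \R - \sum_{i\neq j}\lambda_i \leq \R - (n-1)K$. Hence $|\lambda_j| \leq c(n)\bigl(|\R| + |K|\bigr)$ for every $j$, and therefore
\[
|\Ric|^s \leq c(n,s)\bigl(|\R|^s + |K|^s\bigr)
\]
pointwise. Integrating over $B_1(x)$, this reduces the theorem to producing an estimate of the form $\fint_{B_1(x)}|\R|^s \, \de \ssf{Vol}\leq C(K,n,v,s)$.

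For the second step, I would pick a scale $r_0=r_0(K,n,v,s)>0$ small enough that $r_0^2 K \geq -\delta(n,s)$, where $\delta(n,s)$ is the threshold supplied by Theorem \ref{thm:IntRsB1Riccidelta}. After rescaling the metric by $r_0^{-2}$, every ball $B_{r_0}(y)$ with $y\in B_1(x)$ becomes a unit ball in a manifold with $\Ric\geq -\delta$; Bishop--Gromov volume comparison, coupled with the starting noncollapsing $\ssf{Vol}(B_1(x))\geq v$, keeps the rescaled unit-ball volume bounded below by some $v'=v'(K,n,v)>0$ uniformly in $y\in B_1(x)$. Theorem \ref{thm:IntRsB1Riccidelta} then applies and yields $\fint_{B_{r_0}(y)}|\R|^s \, \de \ssf{Vol} \leq C(K,n,v,s)$. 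Finally I would pick a Vitali cover $\{B_{r_0}(y_j)\}_j$ of $B_1(x)$ with the companion balls $\{B_{r_0/3}(y_j)\}_j$ disjoint; Bishop--Gromov makes $\ssf{Vol}(B_{r_0}(y_j))$ comparable to $\ssf{Vol}(B_{r_0/3}(y_j))$, so summing gives $\int_{B_1(x)}|\R|^s\,\de\ssf{Vol}\leq C(K,n,v,s)\,\ssf{Vol}(B_1(x))$, and the previous step converts this into the desired bound on $|\Ric|^s$.

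The only subtle point is making sure the volume lower bound transfers to every ball $B_{r_0}(y)$ as $y$ varies in $B_1(x)$: a priori, $B_{r_0}(y)$ could be collapsed even if $B_1(x)$ is not. This is where Bishop--Gromov is used crucially, yielding $\ssf{Vol}(B_{r_0}(y)) \geq c(K,n) r_0^n \, \ssf{Vol}(B_1(x))/\ssf{Vol}(B_2(x)) \geq c(K,n,v)r_0^n$. Apart from this, the serious analytic input is already packaged inside Theorem \ref{thm:IntRsB1Riccidelta} (whose proof combines Theorem \ref{CT1} with the Cheeger--Naber quantitative stratification volume bound), so what remains is a clean scaling and covering argument.
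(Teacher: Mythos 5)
Your proposal is correct and follows essentially the same route as the paper's proof: rescale so that Theorem \ref{thm:IntRsB1Riccidelta} applies on balls $B_{r_0}(y)$, $y\in B_1(x)$ (with Bishop--Gromov transferring the noncollapsing hypothesis to these small balls), sum over a Vitali-type cover to bound $\int_{B_1(x)}|\R|^s\,\de\ssf{Vol}$, and use the eigenvalue inequality $K\leq\lambda_j\leq \R-(n-1)K$ coming from $\Ric\geq K$ to pass from $|\R|^s$ to $|\Ric|^s$. The only difference is cosmetic: you carry out this pointwise promotion step first and spell out the rescaling and noncollapsing details that the paper leaves as ``standard,'' whereas the paper does the promotion last.
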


\textbf{Conflict of interest statement.} The authors have no relevant financial or non-financial interests to disclose. The authors
have no conflict of interest to declare that are relevant to the content of this article.
\smallskip

\textbf{Data Availability.} Data sharing is not applicable to this article as no new data were created or analyzed in this study.

\renewcommand*{\bibfont}{\normalfont\small}

\sloppy
\printbibliography

\end{document}